\documentclass[12pt]{amsart}
\usepackage[top=1in, bottom=1in, left=1in, right=1in]{geometry}
\usepackage{amssymb,mathrsfs,amsmath}
\usepackage{booktabs}

\usepackage{graphicx,color}

\usepackage{longtable}

\theoremstyle{plain}
\newtheorem{theorem}{Theorem}[section]
\newtheorem{lemma}[theorem]{Lemma}
\newtheorem{prop}[theorem]{Proposition}
\newtheorem{cor}[theorem]{Corollary}

\theoremstyle{remark}
\newtheorem{rk}{Remark}[section]

\newcommand{\abs}[1]{\left\lvert#1 \right\rvert}

\newcommand {\N} {{\mathbb N}}
\newcommand {\R} {{\mathbb R}}

\newcommand {\Z} {{\mathbb Z}}
\newcommand {\D} {{\mathbb D}}
\newcommand {\C} {{\mathbb C}}
\newcommand {\E} {{\mathbb E}}
\newcommand {\U} {{\mathbb U}}

\newcommand {\CA} {{\mathcal A}}

\newcommand {\CC} {{\mathcal C}}

\newcommand {\CG} {{\mathcal G}}

\newcommand{\CP} {{\mathcal P}}

\newcommand {\CX} {{\mathcal X}}
\newcommand {\CZ} {{\mathcal Z}}

\newcommand{\ds}{\displaystyle}

\newcommand{\eq}[2]{ \begin{equation} \label{#1}\begin{split} #2 \end{split} \end{equation} }

\newcommand{\als}[1]{\begin{align*} #1 \end{align*} }

\newcommand{\bs}\boldsymbol{}
\DeclareMathOperator{\prob}{{\bf Prob}}

\renewcommand{\pmod}[1]{\mod{#1}}
\renewcommand{\Re}{\textrm{Re}}

\newcommand{\fl}[1]{\left\lfloor#1\right\rfloor}
\newcommand{\leg}[2]{\left( \frac{#1}{#2} \right) }

\newcommand{\nin}{\noindent}
\numberwithin{equation}{section}

\renewcommand{\bar}[1]{\overline{#1}}

\renewcommand{\mod}[1]{{\ifmmode\text{\rm\,(mod\,$#1$)}\else\discretionary{}{}{\hbox{ }}\rm(mod~$#1$)\fi}}

\newcommand\ignore[1]{}

\definecolor{red}{rgb}{1,0,0}

\definecolor{orange}{rgb}{0.7,0.3,0}

\definecolor{blue}{rgb}{.2,.6,.75}

\definecolor{green}{rgb}{.4,.7,.4}



\begin{document}

\title{The frequency and the structure of large character sums}

\author[J. Bober]{Jonathan Bober}
\address{JB: Heilbronn Institute for Mathematical Research \\ 
School of Mathematics \\
University of Bristol \\
Howard House \\
Queens Avenue \\
Bristol BS8 1SN \\
United Kingdom}
\email{{\tt j.bober@bristol.ac.uk}}

\author[L. Goldmakher]{Leo Goldmakher}
\address{LG: Department of Mathematics and Statistics \\
Bronfman Science Center \\
Williams College \\
18 Hoxsey St \\
Williamstown, MA 01267 \\
USA}
\email{{\tt leo.goldmakher@williams.edu}}

\author[A. Granville]{Andrew Granville}
\address{AG: D\'epartement de math\'ematiques et de statistique\\
Universit\'e de Montr\'eal\\
CP 6128 succ. Centre-Ville\\
Montr\'eal, QC H3C 3J7\\
Canada}
\email{{\tt andrew@dms.umontreal.ca}}

\author[D. Koukoulopoulos]{Dimitris Koukoulopoulos}
\address{DK: D\'epartement de math\'ematiques et de statistique\\
Universit\'e de Montr\'eal\\
CP 6128 succ. Centre-Ville\\
Montr\'eal, QC H3C 3J7\\
Canada}
\email{{\tt koukoulo@dms.umontreal.ca}}

\subjclass[2010]{Primary: 11N60. Secondary: 11K41, 11L40}
\keywords{Distribution of character sums, distribution of Dirichlet $L$-functions, pretentious multiplicative functions, random multiplicative functions}

\date{\today}

\begin{abstract}
Let $M(\chi)$ denote the maximum of $|\sum_{n\le N}\chi(n)|$ for a given non-principal Dirichlet character $\chi \pmod q$, and let $N_\chi$ denote a point at which  the maximum is attained. In this article we study the distribution of  $M(\chi)/\sqrt{q}$ as one varies over characters $\pmod q$, where $q$ is prime, and investigate the location of $N_\chi$. We show that the distribution of  $M(\chi)/\sqrt{q}$ converges weakly to a universal distribution $\Phi$, uniformly throughout most of the possible range, and get (doubly exponential decay) estimates for $\Phi$'s tail. Almost all $\chi$ for which $M(\chi)$ is large are odd characters that are $1$-pretentious. Now, $M(\chi)\ge |\sum_{n\le q/2}\chi(n)| = \frac{|2-\chi(2)|}\pi \sqrt{q} |L(1,\chi)|$, and one knows how often the latter expression is large, which has been how earlier lower bounds on $\Phi$ were mostly proved. We show, though, that for most $\chi$ with $M(\chi)$ large, $N_\chi$ is bounded away from $q/2$, and the value of $M(\chi)$ is little bit larger than $\frac{\sqrt{q}}{\pi} |L(1,\chi)|$.
 

\end{abstract}

\maketitle

\setcounter{tocdepth}{1}
\tableofcontents

\section{Introduction}

For a given non-principal Dirichlet character $\chi \mod{q}$, where $q$ is an odd prime, let
\[
M(\chi) := 
\max_{1 \le x \le q} \left| \sum_{n \le x} \chi(n) \right| .
\]
This quantity plays a fundamental role in many areas of number theory, from modular arithmetic to $L$-functions. Our goal in this paper is to understand how often $M(\chi)$ is large, and to gain insight into the structure of those characters $\chi\mod q$ for which $M(\chi)$ is large. 

It makes sense to renormalize $M(\chi)$ by defining 
\[
m(\chi) = \frac{M(\chi)}{e^\gamma\sqrt{q}/\pi},
\]
and we believe that 
\eq{eq:ConjBound}{
   m(\chi)  \le (1+ o_{q\to\infty}(1))\log\log q,
}
where $\gamma$ is the Euler--Mascheroni constant. R\'enyi \cite{Ren} observed that $m(\chi)\ge c+ o_{q\to\infty}(1)$ with $c=e^{-\gamma}\pi /\sqrt{12} = 0.509\dots$

Upper bounds on $M(\chi)$ (and hence on $m(\chi)$) have a rich history. The 1919 P\'{o}lya--Vinogradov Theorem states that
\[
m(\chi) \ll   \log q 
\]
for all non-principal characters $\chi\mod{q}$. Apart from some improvements on the implicit constant \cite{hildebrand88,GS07}, this remains the state-of-the-art for the general non-principal character, and any improvement of this bound would have immediate consequences for other number theoretic questions (see e.g. \cite{BG14}). Montgomery and Vaughan \cite{MV77} (as improved in \cite{GS07}) have shown that the Generalized Riemann Hypothesis implies
\[
m(\chi) \le	 (2 + o_{q \to \infty}(1) )  \log\log  q ;
\]
whereas, for every prime $q$, there are characters $\chi\mod{q}$ for which 
\[
m(\chi) \ge	 (1 + o_{q \to \infty}(1) )  \log\log  q ,
\]
(see \cite{BC, GS07}, which improve on Paley \cite{Pal}),
so the conjectured upper bound \eqref{eq:ConjBound} is the best one could hope for. 
However, for the vast majority of the characters $\chi\mod q$, $M(\chi)$ is somewhat smaller, and so we study the distribution function
\[
\Phi_q(\tau) := \frac{1}{\varphi(q)} \# \left\{ \chi\mod q: m(\chi)>   \tau \right\}.
\]

Montgomery and Vaughan \cite{MV79} showed that $\Phi_q(\tau)\ll_C \tau^{-C}$ for all $\tau\ge1$ for any fixed $C\ge1$ (they equivalently phrase this in terms of the moments of $M(\chi)$). This  was recently improved by Bober and Goldmakher \cite{BG}, who proved that, for $\tau$ fixed and as $q \to \infty$ through the primes,
\eq{BGthm}{
 \exp\left\{- \frac{e^{\tau+A}}{\tau} \left(1+O(1/\sqrt{\tau})\right)  \right\} \le \Phi_q(\tau)
 	\le \exp\left\{ - Be^{\sqrt{\tau}/(\log\tau)^{1/4}} \right\}   ,
}
where $B$ is some positive constant and
\eq{A}{
A = \log2 - 1 - \int_0^2\frac{\log I_0(t)}{t^2} dt - \int_2^\infty (\log I_0(t) -  t) \frac{dt}{t^2} = 0.088546\dots,
}
and $I_0(t)$ is the modified Bessel function of the first kind given by
\eq{bessel}{
I_0(t) = \sum_{n \ge 0} \frac{(t^2/4)^n}{n!^2} .
}
We improve upon these results and demonstrate that $\Phi_q(\tau)$ decays in a double exponential fashion:


\begin{theorem}\label{main thm} Let $\eta=e^{-\gamma}\log2$. If $q$ is a prime number, and $1\le \tau\le \log\log  q-M$ for some $M\ge4$, then
\[
\exp\left\{ - \frac{e^{\tau +A - \eta }}{\tau}  (1+O(\epsilon_1))  \right\} 
	\le \Phi_q(\tau)  \le \exp\left\{ - \frac{e^{\tau -2 - \eta }}{\tau}  (1+O(\epsilon_2))\right\}  ,
\]
where
\[
\epsilon_1= (\log\tau)^2 / \sqrt{\tau} +e^{-M/2} 
\quad\text{and}\quad
\epsilon_2=(\log\tau) /\tau
\]
\end{theorem}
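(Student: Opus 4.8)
The plan is to obtain both bounds from a careful analysis of the structure of characters with $m(\chi)$ large, reducing the problem to the distribution of values of a short Dirichlet polynomial (an Euler product over small primes) and ultimately to known results about the distribution of $|L(1,\chi)|$-type quantities. The starting point is the observation, already noted in the abstract, that $M(\chi) \ge |\sum_{n\le q/2}\chi(n)| = \tfrac{|2-\chi(2)|}{\pi}\sqrt q\,|L(1,\chi)|$, together with its converse: when $M(\chi)$ is large, the maximum is attained at a point $N_\chi$ and a Fourier/Fej\'er-type expansion shows that $M(\chi)$ is controlled by a smoothed sum $\sum_n \chi(n) f(n/q)$ for a suitable test function $f$, which by Mellin inversion is essentially $\sqrt q$ times a linear combination of $L(1,\chi)$ against the Fourier transform of $f$. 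The key analytic input is that, by the pretentious theory (Hal\'asz-type estimates), $M(\chi)$ large forces $\chi$ to be "$1$-pretentious" and odd, so that $\chi(p)\approx 1$ for most small primes $p$, and $\log|L(1,\chi)| \approx \sum_{p\le y}\tfrac{\mathrm{Re}\,\chi(p)}{p}$ for an appropriate threshold $y$; the distribution of this sum over $\chi\bmod q$ matches that of $\sum_{p\le y}\tfrac{X_p}{p}$ with $X_p$ independent uniform on the unit circle, up to $q$-uniform error terms coming from orthogonality of characters.

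\textbf{Upper bound.} For the upper bound I would argue: if $m(\chi)>\tau$ then, after peeling off the main smooth term, one needs $\mathrm{Re}\sum_{p\le y}\tfrac{\chi(p)}{p} \ge \tau + \log\tau - \eta + O(1)$ for a suitable $y$ (a power of $q$), where the shift $\eta = e^{-\gamma}\log 2$ arises precisely from the factor $|2-\chi(2)|/2 \le 3/2$ being replaced, in the generic case, by something closer to $1$ because $N_\chi$ is bounded away from $q/2$ — this is the new phenomenon the paper emphasizes and is what improves the constant from $A$ to $-2-\eta$ in the exponent. Then I invoke the large-deviation estimate for $\sum_{p\le y}\tfrac{X_p}{p}$: the probability that this exceeds $\log\log y + \tau_0$ is $\exp\{-\tfrac{e^{\tau_0}}{\tau_0}(c+o(1))\}$ for the appropriate constant, and one checks that a character-sum version of this bound holds with error $O((\log\tau)/\tau)$ in the exponent, uniformly for $\tau \le \log\log q - M$ (the range restriction is exactly what lets us take $y$ a small enough power of $q$ that the error terms from orthogonality are negligible).

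\textbf{Lower bound.} For the lower bound I would reverse the construction: exhibit a positive-density (in the $\exp\{-e^{\tau+A-\eta}/\tau\}$ sense) set of characters for which $\sum_{p\le y}\tfrac{\mathrm{Re}\,\chi(p)}{p}$ is large \emph{and} $\chi(2)$ is arranged (or averaged over) so that the smoothed sum genuinely realizes a value $\ge \tau\, e^\gamma\sqrt q/\pi$ at some point $x$; here one uses that prescribing $\chi(p)$ to lie in a small arc for each $p\le y$ costs a factor that can be computed via orthogonality, and the resulting count matches the random model. The constant $A$ reappears here (rather than $-2$) because the lower-bound construction does not get to exploit the optimal choice of test function as efficiently, leaving the honest gap between $A-\eta$ and $-2-\eta$ in the exponents.

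\textbf{Main obstacle.} I expect the hard part to be making the reduction from $M(\chi)$ to the short Euler product \emph{uniform in $\tau$ up to $\log\log q - M$}: one must control the contribution of primes in the range $(y, q]$ (showing they don't typically conspire to push the maximum higher), handle the location of $N_\chi$ rigorously enough to extract the $\eta$-saving, and track the test-function optimization carefully enough to land on the constants $-2-\eta$ and $A-\eta$ with only the claimed $(\log\tau)/\tau$ and $(\log\tau)^2/\sqrt\tau$ errors. The interplay between the smoothing needed to pass from sharp cutoffs to $L(1,\chi)$ and the resulting loss in the constant is the delicate point, and is presumably where the bulk of the paper's technical work lies.
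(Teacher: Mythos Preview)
Your lower-bound sketch is roughly right in spirit (exhibit odd characters with $|L(1,\chi)|>e^\gamma\tau$ and show $m(\chi)$ is correspondingly large), but you miss the mechanism by which the paper gains the extra $\eta=e^{-\gamma}\log 2$. It is \emph{not} obtained by arranging $\chi(2)$; rather, Theorem~\ref{comparison thm} shows that for almost all such $\chi$ the \emph{average} of $\sum_{n\le N}\chi(n)$ over $N$ in a short window around $q/2$ is $\frac{\CG(\chi)}{i\pi}(L(1,\bar\chi)+\log 2)$, so that $m(\chi)\ge \tau+\eta+O((\log\tau)^2/\sqrt\tau)$. Feeding this into the Granville--Soundararajan distribution result for $|L(1,\chi)|$ (Theorem~\ref{distr of euler products}) gives the lower bound directly. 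No construction of characters with prescribed $\chi(p)$ is carried out.

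Your upper-bound plan has a genuine gap. You propose to show that $m(\chi)>\tau$ forces $\Re\sum_{p\le y}\chi(p)/p$ to be large, and then invoke the large-deviation bound for that sum. But this implication is itself a structural statement about characters with large $M(\chi)$ (essentially Theorem~\ref{structure thm}), and in the paper its proof \emph{relies on} the same moment estimate that drives the upper bound---so your argument would be circular. The paper's upper bound does not touch $L(1,\chi)$ at all. Instead it uses P\'olya's expansion and the decomposition
\[
m(\chi) \le \frac{e^{-\gamma}}{2}\Bigl|\sum_{\substack{|n|\le q^{11/21}\\ P^+(n)\le y}}\frac{\chi(n)(1-e(n\alpha))}{n}\Bigr| + 2e^{-\gamma}S_{y,q^{11/21}}(\chi) + O(q^{-1/43}).
\]
The smooth part is bounded \emph{uniformly} (in $\chi$ and $\alpha$) by $\log y+\eta+O(\log\log y/\log y)$ via Corollary~\ref{fund ineq lem}; this is where $\eta$ enters, as the maximum of $\log|1-e(a/b)|$ over reduced fractions, not from any analysis of $N_\chi$. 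The rough part $S_{y,z}$ is controlled by a high-moment bound (Theorem~\ref{tail bound}, proved via Propositions~\ref{tail bound prelim} and~\ref{tail bound prelim2}), yielding that $S_{y,q^{11/21}}(\chi)>e^\gamma\delta$ with probability at most $\exp\{-\delta^2 y/\log y\,(1+o(1))\}$. Choosing $y=e^{\tau-\eta-2\delta}$ with $\delta=1$ produces the constant $-2-\eta$; the ``$-2$'' is simply $-2\delta$ from this choice, not a test-function optimization.
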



\nin The proof implies that $1+o_{\tau\to\infty}(1)$ of the characters for which $m(\chi)>\tau$ are odd, so it makes sense to consider odd and even characters separately. Hence we define
\eq{phi odd even}{
\Phi_q^{\pm}(\tau) = \frac 1{\phi(q)/2} \  \#\left\{ \chi\mod q: \chi(-1) = \pm1 \textrm{ and } m(\chi) >  \tau  \right\} ,
}
We outline the proof of Theorem \ref{main thm} in Section \ref{outline}, and fill in the details in subsequent sections. The range of uniformity barely misses implying the upper bound in \eqref{eq:ConjBound}; even so, it does show that $m(\chi)$ is rarely large. Calculations reveal that $\Phi_q, \Phi_q^+, \Phi_q^-$ each tend to a universal distribution function $\Phi, \Phi^+, \Phi^-$, and we will show this for $\Phi_q$ later on. In Figure \ref{figure-phiq-plot} we graph $\Phi_q(\tau)$ for a typical $q$:


\begin{figure}[!htbp]
    \includegraphics[scale=.65]{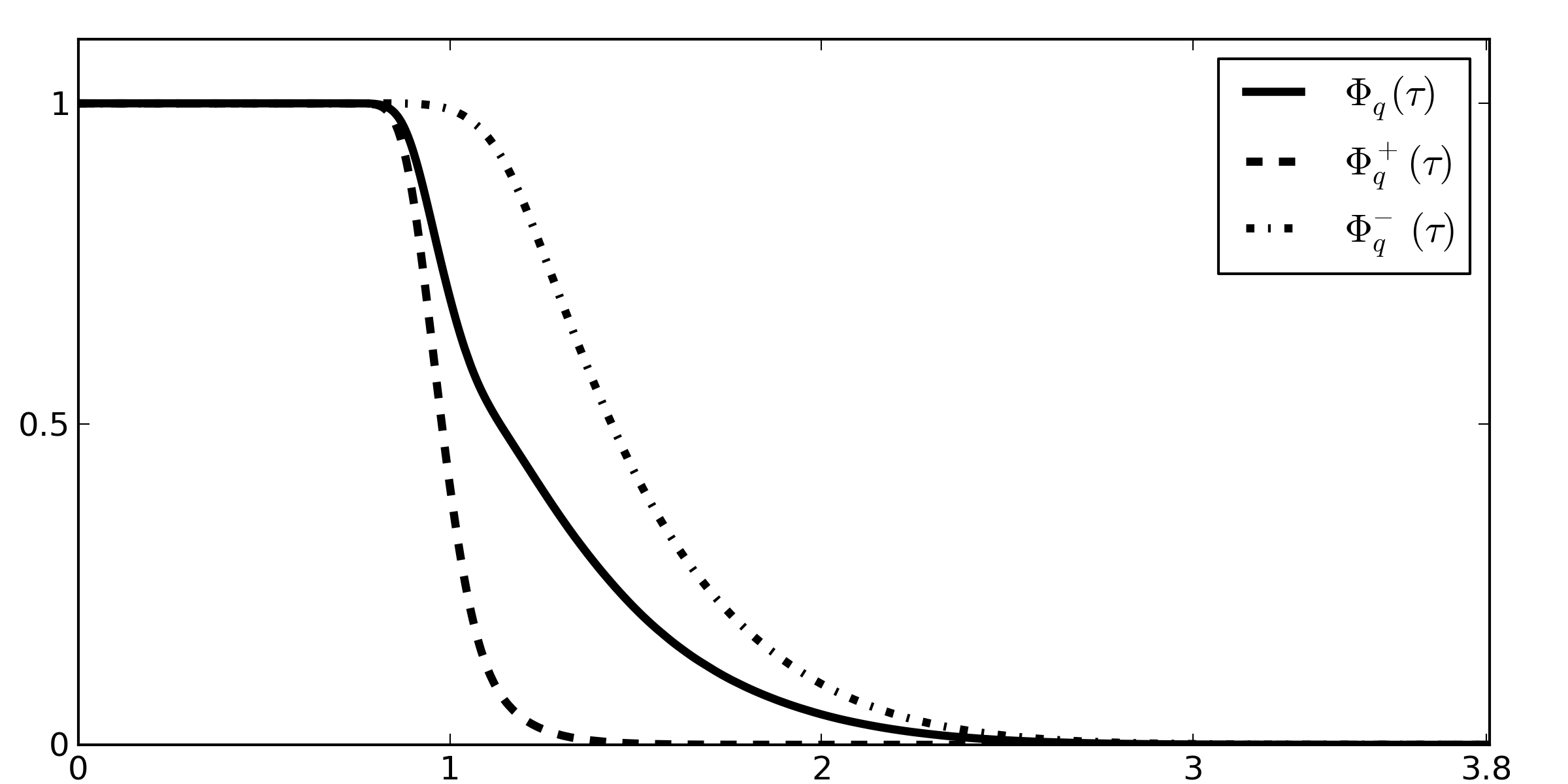}
    \caption{$\Phi_q$,
        $\Phi_q^+$,
        and
        $\Phi_q^-$
        for $q = 12000017$ 
    }\label{figure-phiq-plot}
\end{figure}


Notice that $\Phi_q(\tau) = 1$ for all $\tau\le .7227$, and then it decays
quickly: $\Phi_q(1) \approx .697, \Phi_q(2) \approx 0.0474$ and
$\Phi_q(3) \approx .000538$. For more computational data, the reader is invited to consult Table \ref{minmax} in Section \ref{moredata}.

The distribution of $|L(1,\chi)|$, for $q$ prime, decays similarly \cite{GS06}:
\eq{GS L(1,chi)}{
\Phi_q^{L}(\tau):= \frac{1}{\phi(q)}
  \#\left\{ \chi\mod q: |L(1, \chi)|> e^\gamma \tau \right\}  
   = \exp\left\{- \frac{e^{\tau+A}}{\tau} \left(1+ O(\epsilon_3)\right) \right\} 
}
with $\epsilon_3=\tau^{-1/2}+e^{-M/2}$, uniformly for $1\le\tau\le\log\log q-M$, $M\ge1$. This similarity is no accident, since for an odd, non-primitive character $\chi\mod q$, the average of the character sum is
\[
\mathbb E_{1\le N\le q} \  
   \left(  \sum_{n\le N} \chi(n) \right) = \frac{\CG(\chi)}{i\pi} L(1,\bar\chi) ,
\]
where $\CG(\chi)$ is the Gauss sum, so that $|\CG(\chi)|=\sqrt{q}$. In particular, if $L(1,\bar\chi)$ is large, then so is $m(\chi)$. Moreover, for $\chi$ as above, we have the pointwise formula
\[
 \sum_{n\le q/2}\chi(n)  = (2 - \bar{\chi}(2))\frac{\CG(\chi)}{i\pi}  L(1,\bar\chi)  ,
 \]
which implies that
\[
m(\chi) \ge 2e^{-\gamma}  \abs{ \prod_{p>2} \left(1-\frac{\chi(p)}{p}\right)^{-1} }, 
\]
a little larger than $|L(1,\chi)|$ if $|1-\chi(2)|\gg1$. The distribution of $2|\prod_{p>2}(1-\chi(p)/p)^{-1}|$ can be analyzed in the same way as the distribution of $|L(1,\chi)|$.
However, even if $\chi(2)=1$, then we can show that the average of our character sum up to $N$ is slightly larger than $|L(1,\chi)|$ when $N$ is close to $q/2$. This builds on ideas in \cite{bober-averages}.



\begin{theorem}\label{comparison thm} Let $q$ be an odd prime, $1\le \tau\le\log\log q- C$ for a sufficiently large absolute constant $C$. There exists a subset 
$\CC_q^L(\tau)\subset \left\{\chi\mod{q} : \chi(-1)=-1,\ |L(1,\chi)|>e^\gamma\tau \right\}$ of cardinality 
\eq{ccL}{
\#\CC^L_q(\tau)  =  \left(1+ O\left( e^{-e^\tau/\tau}\right)\right)
	\cdot \# \left\{\chi\mod{q} : \chi(-1)=-1,\ |L(1,\chi)|>e^\gamma\tau \right\} 
}
such that if $\chi\in\CC^L_q(\tau)$ and $c>0$ is sufficiently large, then
\eq{comp thm main}{
\E_{ q/2-q/\tau^c  \le N\le q/2+q/\tau^c} \left[ \abs{ \sum_{n\le N} \chi(n)  - 
	\frac{\CG(\chi)}{i \pi} (L(1,\bar{\chi})+\log2)} \right] \ll 	
			\sqrt{q} \cdot \frac{(\log \tau)^2}{\sqrt{\tau}} .
}
We can deduce that  
\[
m(\chi)  \ge \tau+ e^{-\gamma}  \log 2 + O((\log\tau)^2/\sqrt{\tau}) .
\]
\end{theorem}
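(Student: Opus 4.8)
The plan is to combine P\'olya's Fourier expansion of $\sum_{n\le N}\chi(n)$ with the Granville--Soundararajan machinery describing the characters for which $|L(1,\chi)|$ is large.

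\emph{Step 1: the mean of the character sum over the window.} Put $W:=[q/2-q/\tau^c,\,q/2+q/\tau^c]$. For $\chi$ odd modulo the prime $q$, P\'olya's expansion gives, for any $H\ge q$,
\[
\sum_{n\le N}\chi(n)=\frac{\CG(\chi)}{\pi i}\sum_{m\le H}\frac{\bar\chi(m)}{m}\bigl(1-\cos(2\pi mN/q)\bigr)+O(\log q),
\]
and since $N=q/2+s$ gives $\cos(2\pi mN/q)=(-1)^m\cos(2\pi ms/q)$, averaging over the integers $N\in W$ yields
\[
\E_{N\in W}\sum_{n\le N}\chi(n)=\frac{\CG(\chi)}{\pi i}\Bigl(L(1,\bar\chi)-\sum_{m\le H}\frac{(-1)^m\bar\chi(m)}{m}\,g(m)\Bigr)+O(\log q),\qquad g(m):=\frac{\sin(2\pi m/\tau^c)}{2\pi m/\tau^c}.
\]
The weight $g$ is a smooth cutoff near $m\asymp\tau^c$: $g(m)=1+O((m/\tau^c)^2)$ for $m\le\tau^c$, while for $m>\tau^c$ it oscillates and decays like $\tau^c/m$. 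The range $m>\tau^c$ contributes only $O(\log q)$: reorganized as a twisted sum $\sum_m\bar\chi(m)m^{-1}e^{2\pi i m\beta}$ with $\beta$ running over a short interval about $\tfrac12$ and evaluated using Gauss sums and the resonance structure of such sums --- the part that builds on \cite{bober-averages} --- it is of size $O(q^{-1/2}\log q)$ after integrating over $W$. On $m\le\tau^c$ we replace $g(m)$ by $1$ and then $\bar\chi(m)$ by $1$ (both replacements controlled for pretentious $\chi$, as below) and use $-\sum_{m\le\tau^c}(-1)^m/m=\log 2+O(\tau^{-c})$, obtaining
\[
\E_{N\in W}\sum_{n\le N}\chi(n)=\frac{\CG(\chi)}{\pi i}\bigl(L(1,\bar\chi)+\log 2\bigr)+\frac{\CG(\chi)}{\pi i}\sum_{m\le\tau^c}\frac{(-1)^m\bigl(1-\bar\chi(m)\bigr)}{m}+O\!\bigl(\sqrt q\,\tfrac{\log\tau}{\sqrt\tau}\bigr).
\]

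\emph{Step 2: from the mean to the $L^1$-average, and $\CC_q^L(\tau)$.} With $Z:=\tfrac{\CG(\chi)}{i\pi}(L(1,\bar\chi)+\log 2)$, Cauchy--Schwarz gives
\[
\E_{N\in W}\Bigl|\sum_{n\le N}\chi(n)-Z\Bigr|\le\Bigl(\var_{N\in W}\!\sum_{n\le N}\chi(n)\Bigr)^{1/2}+\Bigl|\E_{N\in W}\sum_{n\le N}\chi(n)-Z\Bigr|.
\]
The low frequencies $m\le\tau^c$ are nearly constant on $W$ and contribute negligibly to the variance; the high frequencies contribute $\ll q\sum_{m>\tau^c}m^{-2}\ll q\tau^{-c}$ by a Montgomery--Vaughan-type second moment computation, which is $\ll q(\log\tau)^4/\tau$ once $c$ is large. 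By Step 1 the second term is $\ll\sqrt q\,\sum_{m\le\tau^c}|1-\bar\chi(m)|/m+\sqrt q\,(\log\tau)/\sqrt\tau$, and, writing $\D(\chi,1;y)^2:=\sum_{p\le y}(1-\Re\chi(p))/p$, the elementary bound $|1-\bar\chi(m)|\le\sum_{p^a\|m}a|1-\chi(p)|$ and Cauchy--Schwarz give $\sum_{m\le\tau^c}|1-\bar\chi(m)|/m\ll(\log\tau)\sqrt{\log\log\tau}\;\D(\chi,1;\tau^c)$. Thus \eqref{comp thm main} holds provided $\D(\chi,1;\tau^c)\ll\tau^{-1/2}$. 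Accordingly we \emph{define} $\CC_q^L(\tau)$ to be the set of odd $\chi$ with $|L(1,\chi)|>e^\gamma\tau$ for which moreover $\D(\chi,1;\tau^c)^2\le 1/\tau$ and $|\arg L(1,\chi)|$ is suitably small.

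\emph{Step 3: largeness of $\CC_q^L(\tau)$, and the conclusion.} The count \eqref{ccL} is a conditional large-deviation estimate resting on \cite{GS06}: since $|L(1,\chi)|>e^\gamma\tau$ forces $\Re\sum_p\chi(p)/p\ge\log\tau+\gamma-O(1)$ while the primes $p\le\tau^c$ contribute only $O(\log\log\tau)$ to this sum, imposing $\D(\chi,1;\tau^c)^2>\epsilon$ in addition costs an extra $\epsilon$ of ``budget'' among the primes $p>\tau^c$, multiplying the probability $\exp\{-e^{\tau+A}/\tau\,(1+o(1))\}$ of \eqref{GS L(1,chi)} by $\exp\{-\epsilon\,e^{\tau+A}/\tau\,(1+o(1))\}$; choosing $\epsilon\asymp 1/\tau$ makes the excluded set $\ll e^{-e^\tau/\tau}\#\{\chi\ \text{odd}:\,|L(1,\chi)|>e^\gamma\tau\}$, and the same mechanism shows $\arg L(1,\chi)$ is tiny outside a comparably small set (forced to $1$-pretend, $\chi$ is nearly real at all primes up to $e^{c'\tau}$, while larger primes contribute negligibly to the argument). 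Finally, for $\chi\in\CC_q^L(\tau)$, \eqref{comp thm main} gives
\[
M(\chi)\ge\E_{N\in W}\Bigl|\sum_{n\le N}\chi(n)\Bigr|\ge\Bigl|\E_{N\in W}\sum_{n\le N}\chi(n)\Bigr|\ge\frac{\sqrt q}{\pi}\bigl|L(1,\bar\chi)+\log 2\bigr|-O\!\Bigl(\sqrt q\,\tfrac{(\log\tau)^2}{\sqrt\tau}\Bigr),
\]
and since $|L(1,\bar\chi)|=|L(1,\chi)|>e^\gamma\tau$ and $|L(1,\bar\chi)+\log 2|\ge|L(1,\bar\chi)|+\log 2-O\bigl((|L(1,\bar\chi)|+1)(\arg L(1,\bar\chi))^2\bigr)$ with $\arg L(1,\bar\chi)$ tiny, dividing by $e^\gamma\sqrt q/\pi$ yields $m(\chi)\ge\tau+e^{-\gamma}\log 2+O((\log\tau)^2/\sqrt\tau)$.

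\emph{Main obstacle.} The crux is the error analysis of Step 1 --- bounding the tail $m>\tau^c$ of the P\'olya expansion (equivalently, the medium-range primes) so that the $\log 2$ survives --- together with the joint calibration in Steps 2--3, where the pretentiousness conditions cutting out $\CC_q^L(\tau)$ must be strong enough for Steps 1--2 and yet weak enough for \eqref{ccL} to hold.
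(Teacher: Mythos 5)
Your Steps 1--2 follow the same overall strategy as the paper (P\'olya expansion, pretentiousness to control the $1$-pretentious part, a Cauchy--Schwarz/second-moment bound over the window to control the rest), though the medium-frequency range is only gestured at. The genuine gap is in Step 3, and it affects the whole design.

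You define $\CC_q^L(\tau)$ by \emph{imposing} the pretentiousness condition $\D(\chi,1;\tau^c)^2\le 1/\tau$ and then try to verify \eqref{ccL} via a ``conditional large deviation'' refinement of \eqref{GS L(1,chi)}. Even granting your heuristic that removing the $\chi$ with $\D(\chi,1;\tau^c)^2>\epsilon$ costs an extra factor $\exp\{-\epsilon\,e^{\tau+A}/\tau(1+o(1))\}$, the calibration $\epsilon\asymp 1/\tau$ gives a saving of only $\exp\{-e^{\tau+A}/\tau^2\}$, and $e^{\tau+A}/\tau^2<e^\tau/\tau$ whenever $\tau>e^A\approx 1.09$, so the excluded set is \emph{larger} than the $e^{-e^\tau/\tau}$ margin demanded by \eqref{ccL}. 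To get the required saving one would need $\epsilon\gg 1$, which destroys the bound you need in Step 2. This tension is the obstacle you flag at the end, but as set up it is not resolvable.

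The paper dissolves the tension by choosing a different definition: $\CC_q^L(\tau)$ consists of the odd $\chi$ with $|L(1,\chi)|>e^\gamma\tau$ satisfying the \emph{unconditional} smoothing condition $S_{y,q^{11/21}}(\chi)\le 1$, $y=e^{\tau+c}$. Theorem \ref{tail bound cor} bounds the $S$-exceptional set by roughly $\exp\{-e^{\tau+c}/(e^{2\gamma}\tau)\}$, independently of the $L$-condition, and this is $\ll e^{-e^\tau/\tau}\cdot\Phi_q^L(\tau)$ once $c$ is large. Under the $S$-condition, $L(1,\chi)=\sum_{P^+(n)\le y}\chi(n)/n+O(1)$, and then the two-sided inequality $e^\gamma\tau<|L(1,\chi)|\le e^\gamma\log y\cdot\exp\{-\D(\chi,1;y)^2/2+O(1)\}$ \emph{forces} $\D(\chi,1;y)^2\ll 1/\tau$ automatically (and likewise $\arg L(1,\chi)$ small). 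So pretentiousness is a consequence, not a constraint to be imposed, and no conditional large-deviation estimate is needed. This is the one structural change you should make; with it your Steps 1--2 are compatible with \eqref{ccL}, and the argument closes along the lines the paper gives (the paper's Lemma \ref{around a/b}(b) plays the role of your invocation of \cite{bober-averages} in handling the non-tiny frequencies, and the paper's computation of $\mu_2\ll(\log B)/B$, $B=\tau^c$, is the careful version of your variance estimate, where the off-diagonal $f((m\pm n)/B)$ terms need to be controlled).

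Two smaller points: the expansion $g(m)=1+O((m/\tau^c)^2)$ is only useful for $m=o(\tau^c)$, not up to $m\le\tau^c$ as written, so you should split at a lower cutoff; and the triangle inequality $\E|X-Z|\le(\var X)^{1/2}+|\E X-Z|$ you use is correct, but the variance computation you sketch ($\ll q\sum_{m>\tau^c}m^{-2}$) ignores off-diagonal contributions, which is exactly what the paper's $\mu_2$ analysis handles.
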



This result, together with \eqref{GS L(1,chi)}, suggests that the lower bound in Theorem \ref{main thm} should be sharp, and that $m(\chi)\approx e^{-\gamma} (|L(1, \chi)| + \log 2)$ when these quantities are ``large".
However the numerical evidence indicates that $m(\chi)$ is perhaps typically a little bigger:

\begin{figure}[!htbp]
    \includegraphics[scale=.65]{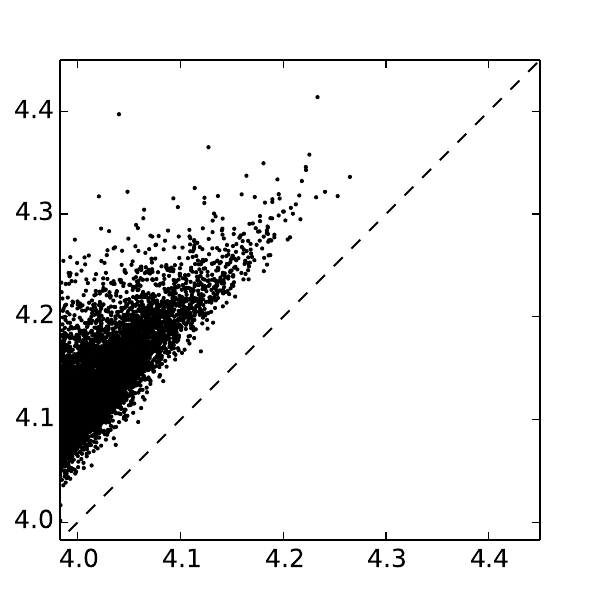}
    \caption{Scatter plot of $m(\chi)$ (vertical axis) and
        $e^{-\gamma} (|L(1, \chi)| + \log 2)$ for the 13617 odd characters with
        $|L(1, \chi)| > 6.4$, and $q$ prime, $10^9 \le q \le 10^9 + 75543$.}
    \label{figure-l-one-m-chi}
\end{figure}


\newpage

The numerical comparison between $\Phi_q$ and $\Phi_q^{L-}$ (the $|L(1,\chi)|$-distribution for odd characters $\chi$) is given in Figure 3:


\begin{figure}[!htbp]
    \includegraphics[scale=.7]{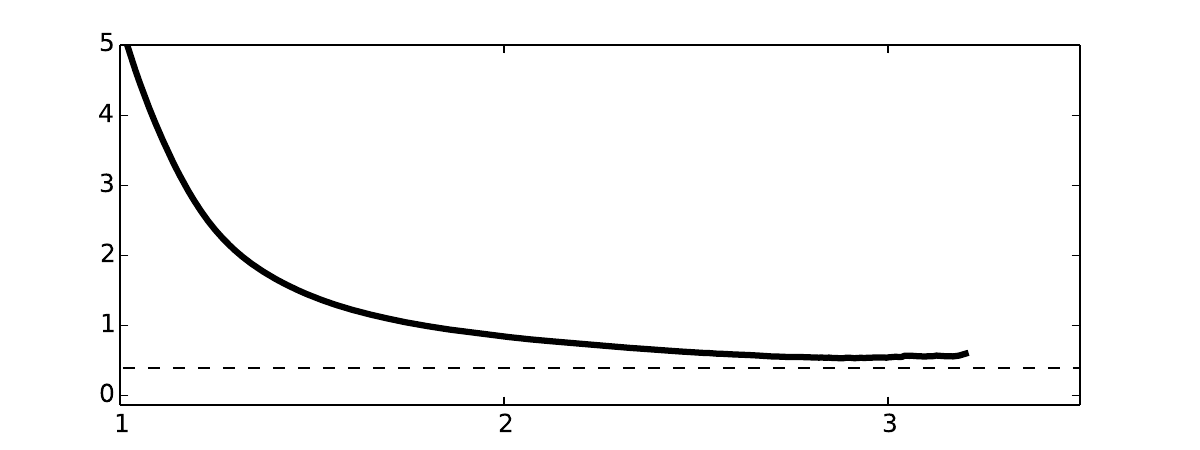}
    \caption{$\log(-\log \Phi_q^{L^-}(\tau)) - \log(-\log \Phi_q^-(\tau))$,
       $q = 12000017$, with a dashed line indicating $e^{-\gamma} \log 2$.
    }\label{figure-l-odd-plot}
\end{figure}




\subsection*{Even characters}  The focus above has been on odd characters since almost all characters with $m(\chi)>\tau$ are odd.   However, we can obtain  analogous results for even characters. 


\begin{theorem}\label{main thm even} There exists an absolute constant $c\ge1$ such that
if $q$ is an odd prime and $1\le \tau\le (\log\log q- M)/\sqrt{3}$ for some $M\ge1$, then
\[
 \exp\left\{- \frac{e^{\sqrt{3}\tau+A}}{\sqrt{3}\tau}  \left(1+O\left( \tau^{-1/2}+e^{-M/2} \right)\right) \right\}
 	\le  \Phi_q^+(\tau)
 		\ll  \exp\left\{ - \frac{e^{\sqrt{3}\tau}}{\tau^c}      \right\}  .
\]
\end{theorem}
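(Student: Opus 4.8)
The plan is to run the argument of Theorem~\ref{main thm} with the odd character $\psi_3$ (the nonprincipal character modulo $3$) playing the r\^ole taken there by the principal character. The factor $\sqrt3$ arises from combining two effects. First, for an even $\chi$, expanding the partial sum at $q/3$ by P\'olya's Fourier expansion and pairing $n$ with $-n$ (legitimate since $\chi(-1)=1$), the identity $\sin(2\pi n/3)=\tfrac{\sqrt3}{2}\psi_3(n)$ gives $\sum_{n\le q/3}\chi(n)=\tfrac{\sqrt3}{2\pi}\CG(\chi)\,L(1,\bar\chi\psi_3)+O(\log q)$, so the relevant resonance has amplitude $\tfrac{\sqrt3}{2}$. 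Second, $\bar\chi\psi_3$ has modulus $3q$, so its Euler factor at $3$ is trivial, and to make $|L(1,\bar\chi\psi_3)|$ reach size $e^\gamma s$ one must now force $\bar\chi\psi_3(p)\approx1$ for primes $p$ out to $e^{(3/2)s}$ instead of $e^{s}$. Since $\tfrac32\cdot\tfrac{2}{\sqrt3}=\sqrt3$, these turn $\tau$ into $\sqrt3\,\tau$ in the exponents.

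\emph{Lower bound.} The identity above yields $m(\chi)\ge\tfrac{\sqrt3}{2}e^{-\gamma}|L(1,\bar\chi\psi_3)|(1+o(1))$ uniformly for $\chi$ in the relevant range, so it suffices to count even $\chi\bmod q$ with $|L(1,\bar\chi\psi_3)|>\tfrac{2}{\sqrt3}e^\gamma\tau$. I would do this by running the Granville--Soundararajan resonance method (the one underlying \eqref{GS L(1,chi)}), adapted to the family $\{\bar\chi\psi_3:\chi\text{ even }\bmod q\}$ of characters of modulus $3q$: the number of such $\chi$ is $\exp\{-\tfrac{e^{(3/2)s+A}}{(3/2)s}(1+O(s^{-1/2}+e^{-M/2}))\}\cdot\tfrac{\phi(q)}{2}$ with $s=\tfrac{2}{\sqrt3}\tau$, the constant $A$ of \eqref{A} being unaffected by the deletion of a single small Euler factor. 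A positive proportion — essentially one half — of these characters are even, and in any case losing a bounded factor in the count changes the exponent only by $1+o(1)$ since the main term tends to infinity; with $\tfrac32 s=\sqrt3\,\tau$ this gives the stated lower bound.

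\emph{Upper bound.} Here only a crude constant is needed (whence the $\tau^c$), and for $\tau$ below a suitable absolute constant the bound is trivial, so assume $\tau$ large. As in the proof of Theorem~\ref{main thm}, $m(\chi)>\tau$ forces $M(\chi)/\sqrt q$ to be large, hence, by the structural theory of large character sums, $\chi$ is pretentious to $\xi(n)\,n^{it}$ for some \emph{primitive odd} character $\xi$ of conductor $f$ and some real $t$, with $m(\chi)\le e^{-\gamma}\,\theta(\xi,t)\,|L(1+it,\bar\chi\xi)|+(\text{small error})$ for an explicit resonance amplitude $\theta(\xi,t)$; $\xi$ must be odd because the partial sums of an even character involve only $\sin(2\pi n\alpha)$, which decomposes into odd characters modulo the denominator of $\alpha$. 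For each admissible pair, the crude large-deviation estimate for $|L(1,\cdot)|$ gives that the number of $\chi\bmod q$ with $|L(1+it,\bar\chi\xi)|>e^\gamma\tau/\theta(\xi,t)$ is at most $\phi(q)\exp\{-e^{s\rho_f}/s^{O(1)}\}$, where $s=\tau/\theta(\xi,t)$ and $\rho_f=\prod_{p\mid f}(1-1/p)^{-1}$ accounts for the missing Euler factors; the exponent is $e^{\tau\rho_f/\theta(\xi,t)}$ up to a power of $\tau$. The dominant term comes from $(\xi,t)=(\psi_3,0)$, and summing over the remaining pairs is harmless because the contribution of conductor $f$ decays super-exponentially in $f$ and there are $\ll\tau^{O(1)}$ relevant values of $t$ for each $\xi$; this gives the bound, provided $\rho_f/\theta(\xi,t)\ge\sqrt3$ for all admissible $(\xi,t)$.

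That last extremal inequality is the heart of the matter and the step I expect to be the main obstacle. The point is that $\theta(\xi,t)\le\sqrt f/\phi(f)$ for every primitive odd $\xi$ of conductor $f$ and every $t$, which forces $\rho_f/\theta(\xi,t)\ge\phi(f)\rho_f/\sqrt f=f/\sqrt f=\sqrt f\ge\sqrt3$, with equality exactly when $f=3$, i.e.\ $\xi=\psi_3$ (so $\theta(\psi_3,0)=\tfrac{\sqrt3}{2}$, $\rho_3=\tfrac32$, ratio $\sqrt3$). For $t=0$ this is an exact Gauss-sum computation: expanding $\operatorname{Im}(e^{2\pi i na/f})$ over the characters modulo $f$, the component along $\xi$ has coefficient of modulus $|\CG(\xi)|/\phi(f)=\sqrt f/\phi(f)$, while the remaining (imprimitive, or non-unit) parts contribute only $O(1)$ to the associated sum; for $t\ne0$ one adds a damping estimate showing that a nonzero shift cannot increase $\theta$. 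Carrying this out carefully for all small conductors and all shifts, and maintaining uniformity in $\tau$ throughout the range $1\le\tau\le(\log\log q-M)/\sqrt3$ so that the truncations in the P\'olya expansion and in the resonance argument are legitimate, is where the real technical effort goes.
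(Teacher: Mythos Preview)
Your lower bound is essentially the paper's argument: the identity at $q/3$ and then the Granville--Soundararajan distribution for $|L(1,\chi\psi_3)|$. The paper packages the latter as Theorem~\ref{distr of euler products}, which is stated directly for $\psi$ of modulus $b\in\{1,2,3\}$ with threshold $\tfrac{\phi(b)}{b}e^\gamma\tau$ and restricted to either parity class; so the step you call ``a positive proportion---essentially one half---of these characters are even'' is not needed, and there is no bounded-factor loss to absorb.

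Your upper bound diverges from the paper. The paper does \emph{not} carry out an optimization over all candidate pairs $(\xi,t)$ and then verify the extremal inequality $\rho_f/\theta(\xi,t)\ge\sqrt3$. Instead it proves a sharp structure theorem (Theorem~\ref{structure thm even}) which \emph{forces} $\xi=\leg{\cdot}{3}$ and $t=0$ for every $\chi\in\CC_q^+(\tau)$, together with the asymptotic $m(\chi)=\tfrac{\sqrt3}{2}e^{-\gamma}|L(1,\chi\leg{\cdot}{3})|+O(\log\tau)$. The upper bound then follows immediately from Theorem~\ref{distr of euler products} with $b=3$. The forcing of $\xi=\leg{\cdot}{3}$ is obtained via Lemma~\ref{formula2}, whose key inequality
\[
\abs{\sum_{\substack{1\le|n|\le N\\ P^+(n)\le y}}\frac{\chi(n)e(na/b)}{n}}
\le \frac{2e^\gamma\log y}{\sqrt D}\,(2/3)^{\omega(b/D)}+O((\log y)^{0.86})
\]
(with $D$ the conductor of the nearest $\xi$) is the precise form of the Gauss-sum bound you sketch, and comparison with the trivial lower bound $2e^\gamma\tau-O(\log\tau)$ immediately gives $b=D=3$. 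So the ``extremal inequality'' you identify is real and correct in content, but the paper uses it to pin down $\xi$ rather than as one term in a sum over all conductors.

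Two further remarks on your upper bound. First, the $n^{it}$ twist is unnecessary here: the relevant structural input (from \cite{GS07} and Section~\ref{pretentious}) compares $\chi$ only to fixed characters of small conductor, never to $\xi(n)n^{it}$, and no archimedean shift enters the argument. Second, the inequality you state as ``$m(\chi)\le e^{-\gamma}\theta(\xi,t)|L(1+it,\bar\chi\xi)|+(\text{small error})$'' is not a quotable result; establishing it (with control of the error) is precisely the work done in Sections~\ref{moments}--\ref{even}, via the moment bounds that truncate P\'olya's expansion to smooth inputs and then the pretentious analysis. Treating it as a black box is the gap in your sketch.
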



The lower bound in the above theorem has the same shape as \eqref{GS L(1,chi)} and it might be close to the truth, unlike the corresponding result for $\Phi_q$ (as explained by Theorem \ref{comparison thm}). This is supported by computations, as it can be seen in Figure \ref{figure-l-even-plot}.


\begin{figure}[!htbp]
    \includegraphics[scale=.7]{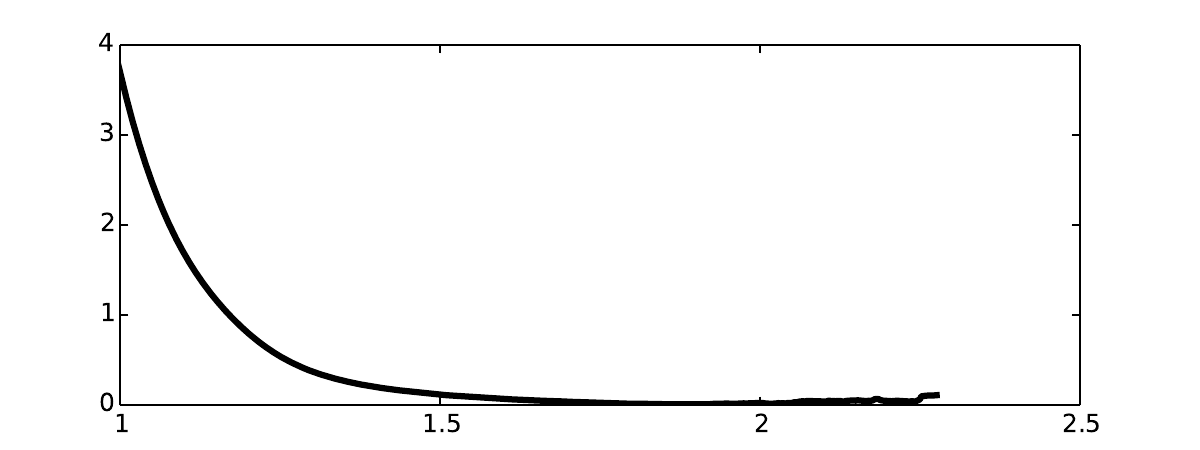}
    \caption{$\log(-\log \Phi_{3q}^{L^-}(\tau)) - \log(-\log \Phi_q^+(\tau))$,
       $q = 12000017$.
    }\label{figure-l-even-plot}
\end{figure}


\subsection*{The distribution function} Classically, the statistical behavior of $\chi(n)$ as $\chi$ varies over all non-principal characters mod $q$ is modelled by $X_n$, where the $\{X_p:p\ \text{prime}\}$ are independent random variables, each uniformly distributed on the unit circle $\U := \{z\in\C:|z|=1\}$, and  $X_n = X_{p_1}^{e_1} \cdots X_{p_k}^{e_k}$ for $n=p_1^{e_1}\ldots p_k^{e_k}$. This suggests modelling $L(1,\chi)$, $\chi\mod q$, by 
$L(1,X)=\sum_{n\ge 1} X_n/n$. Indeed, in \cite{GS03} this was shown to be a very successful model. Similarly, one might guess that the distribution of $\{ \sum_{n\le N} \chi(n)\}_{\chi \pmod q,\ \chi\ne\chi_0}$ could be accurately modelled by 
$\sum_{n\le N} X(n)$ (and thus $M(\chi)$ by $M(X)$), but this seems unlikely since, for fixed $\alpha\in(0,1)$, we have that
\[
\E\left[ \abs{\sum_{n\le \alpha q} X_n }^2\right]   \sim \alpha q  \ \ \text{  whereas  } \ \ 
\frac{1}{\phi(q)}\sum_{\substack{\chi\mod q \\ \chi\neq \chi_0}} \abs{\sum_{n\le \alpha q} \chi(n) }^2 
	 \sim \alpha(1-\alpha) q
\]
as $q\to \infty$.
Moreover Harper \cite{Har13} recently showed that $\sum_{n\le N} X(n)$ is not normally distributed, and subsequently we have no idea what its distribution should be (though \cite{CS} shows that $\sum_{N<n\le N+y} X(n)$ is normally distributed provided $y$ is not too large).

So, what is the right way to model $M(\chi)$? One reason the above model failed is that it does not take account of characters' periodicity.   The periodicity of $\chi$, via the formula
\[
\sum_{n=1}^q \chi(n)e(an/q) = \bar{\chi}(a)  \CG(\chi),
\]
where $e(x)=e^{2\pi ix}$, leads to P\'olya's   formula \cite[eqn. (9.19), p. 311]{MV07}:
\eq{polya}{
\sum_{n\le \alpha q} \chi(n)
	= \frac{\CG(\chi)}{2\pi i}  \sum_{1\le |n|\le z}  \frac{\bar{\chi}(n)(1-e\big(-n\alpha)\big)}{n}
		+ O\left(  1+ \frac{q\log q}{z} \right)  
}
(which is periodic for $\alpha \mod 1$ ). Since $|\CG(\chi)| = \sqrt{q}$, this suggests modelling 
$M(\chi)/(\sqrt{q}/2\pi) =  2e^\gamma m(\chi)$ by 
the random variable
\[
S = \max_{0\le \alpha\le 1}  \left| \sum_{\substack{n\in \mathbb Z \\ n\ne 0}}
	\frac{X_n(1-e(n\alpha))}{n} \right| ,
\]
where $X_{-1}$ is a random variable independent of the $X_p$'s, with probability 1/2 of being $-1$ or $1$, and we have set $X_{-n}=X_{-1}X_n$. The infinite sum here converges with probability 1,\footnote{This follows by the methods of Section \ref{moments} applied to $X_n$ in place of $\chi(n)$.} so that  $S$ is well defined, and we can consider its distribution function,
\[
\Phi(\tau) := \prob( S> 2 e^\gamma  \tau).
\]
The following result confirms our intuition that $S$ serves as a good model for $2e^\gamma m(\chi)$.


\begin{theorem}\label{DistributionTheorem} If $b\ge a>0$ and $\Phi$ is continuous at every point of $[a,b]$, then the sequence of functions $\{\Phi_q\}_{q\ \text{prime}}$ converges to $\Phi$ uniformly on $[a,b]$. In particular, the sequence of distributions $\{\Phi_q\}_{q\ \text{prime}}$ converges weakly to $\Phi$.
\end{theorem}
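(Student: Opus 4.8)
The plan is to prove weak convergence by the standard route: show that the relevant characteristic-function-type data for $\chi \bmod q$ converges, as $q \to \infty$ through the primes, to the corresponding data for the random model $S$. The natural tool here is P\'olya's formula \eqref{polya}, which already tells us that $M(\chi)$ is well-approximated (after dividing by $\sqrt q/2\pi$) by a \emph{finite} truncation of the trigonometric-type sum $\sum_{1\le|n|\le z}\bar\chi(n)(1-e(-n\alpha))/n$, with an error that is $o(1)$ once $z$ is a suitable power of $q$. So the first step is to fix a large parameter $z = z(q)$ (a small power of $q$) and replace $2e^\gamma m(\chi)$ by $M_z(\chi) := \max_{0\le\alpha\le 1}\bigl|\sum_{1\le|n|\le z}\bar\chi(n)(1-e(-n\alpha))/n\bigr|$, and simultaneously replace $S$ by its truncation $S_z := \max_{0\le\alpha\le 1}\bigl|\sum_{1\le|n|\le z}X_n(1-e(n\alpha))/n\bigr|$, controlling both truncation errors uniformly (for $\chi$ this is P\'olya; for $S$ this is the almost-sure convergence asserted in the footnote, quantified via second moments as in Section \ref{moments}).

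Next, for fixed $z$, the quantity $M_z(\chi)$ depends only on the finitely many values $\{\chi(p) : p \le z\}$, which as $\chi$ ranges over characters mod $q$ become jointly equidistributed, as $q\to\infty$, on the torus $\U^{\pi(z)}$ (or really on $\{\pm1\}\times\U^{\pi(z)}$ once we track $\chi(-1)$ as well) — this is an elementary orthogonality/equidistribution statement for characters of the cyclic group $(\Z/q\Z)^*$, valid since the multiplicative relations among small primes are not realized mod $q$ once $q$ is large enough. Because $M_z(\cdot)$ is a fixed continuous function of those coordinates, the distribution of $M_z(\chi)$ over $\chi \bmod q$ converges weakly to the distribution of $S_z$. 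Combining this with the two uniform truncation estimates from the first step gives, for every $\varepsilon>0$, that
\[
\Phi(\tau+\varepsilon) - o(1) \le \Phi_q(\tau) \le \Phi(\tau-\varepsilon) + o(1)
\]
as $q\to\infty$; letting $\varepsilon\to 0$ at points of continuity of $\Phi$ yields $\Phi_q(\tau)\to\Phi(\tau)$ pointwise on $[a,b]$.

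Finally, to upgrade pointwise convergence on $[a,b]$ to uniform convergence, I would use the usual monotonicity trick (a P\'olya–Cantelli-type argument): $\Phi_q$ and $\Phi$ are non-increasing, $\Phi$ is continuous on the compact interval $[a,b]$ hence uniformly continuous there, so partitioning $[a,b]$ into finitely many subintervals on whose endpoints we have convergence forces uniform convergence on all of $[a,b]$. The last sentence of the theorem (weak convergence of the distributions) then follows because $\Phi$, being monotone, has at most countably many discontinuities, so convergence holds on a dense set, which is precisely weak convergence.

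I expect the main obstacle to be making the \emph{double} truncation genuinely uniform: one must choose $z$ growing with $q$ slowly enough that the equidistribution of $\{\chi(p):p\le z\}$ still holds with a quantitative error $o(1)$, yet fast enough that P\'olya's error term $1 + q\log q / z$ is $o(\sqrt q)$, and at the same time show the tail $\sum_{|n|>z} X_n(1-e(n\alpha))/n$ is uniformly small in $\alpha$ with high probability — the supremum over $\alpha$ is what makes this a chaining/maximal-inequality estimate rather than a single second-moment bound, and it is the one place where the argument is not purely formal. Everything else (orthogonality of characters, the monotonicity upgrade, the countable-discontinuity remark) is routine.
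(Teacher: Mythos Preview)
Your overall architecture---truncate, compare to the random model via equidistribution of $\{\chi(p)\}$, sandwich $\Phi_q(\tau)$ between $\Phi(\tau\pm\varepsilon)$, then upgrade via monotonicity---is exactly the paper's, and your final two paragraphs (the monotonicity/uniform-continuity step and the weak-convergence deduction via countably many discontinuities) are correct and match Section~\ref{prob} essentially verbatim.

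The gap is the one you yourself flag, and it is fatal as stated: a \emph{size} truncation at $z$ cannot simultaneously make P\'olya's error $o(\sqrt q)$ (which forces $z\gg q^{1/2}$) and give usable equidistribution of $\{\chi(p):p\le z\}$ (which requires the number of primes involved to be tiny compared to $\log q$). There is no $z$ satisfying both; your claim that the equidistribution is ``an elementary orthogonality\ldots statement'' holds only when $\pi(z)$ is extremely small, and for $z=q^c$ it fails outright.

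The paper's resolution is to replace the size truncation by a \emph{smoothness} restriction: one passes from $m(\chi)$ to
\[
m_y(\chi)=\frac{1}{2e^\gamma}\max_{\alpha}\Bigl|\sum_{\substack{n\in\Z\setminus\{0\}\\P^+(n)\le y}}\frac{\chi(n)(1-e(n\alpha))}{n}\Bigr|
\]
with $y=\exp\{(\log\log q)^{5/9}\}$. This depends only on $\{\chi(p):p\le y\}$, with $\pi(y)$ genuinely small, and Lamzouri's quantitative equidistribution theorem (arcs of length $\ge(\log\log q)^{-5/3}$) then applies after a further discretization of the torus. The cost is showing $m(\chi)\approx m_y(\chi)$ for almost all $\chi$, uniformly in $\alpha$; this is exactly Theorem~\ref{tail bound cor} (built on the high-moment estimates of Section~\ref{moments}), not P\'olya's formula. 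P\'olya only gets you to $z=q^{11/21}$; the real work is removing the non-smooth part of that sum, and the same moment argument transferred to the $X_n$ handles the random-model side. With smoothness in place of size, the rest of your outline goes through.
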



Theorem \ref{DistributionTheorem} will be shown in Section \ref{prob}. Since $\Phi$ is a decreasing function, it 
has at most countably many discontinuities, and so Theorem \ref{DistributionTheorem} implies that $\lim_{q\to\infty,\ q\ \text{prime}}\Phi_q(\tau)=\Phi(\tau)$ for almost all $\tau>0$. We conjecture that $\Phi$ is a continuous function.

We can prove that, for any $\tau\ge1$,
\[
\exp\left\{- \frac{e^{\tau-\eta+A}}{\tau}  \left(1+O\left(\frac{(\log \tau)^2}{\sqrt{\tau}} \right) \right)   \right\} \le \Phi(\tau)
 	\le \exp\left\{ - \frac{e^{\tau-\eta-2}}{\tau}   \left(1+O\left(\frac{\log \tau}{\tau} \right)\right)   \right\} 
\]
by considering two points $\tau_1$ and $\tau_2$ where $\Phi$ is continuous, with $\tau_1<\tau<\tau_2$ so that 
$\Phi(\tau_2)\le \Phi(\tau) \le \Phi(\tau_1)$. We can bound $\Phi(\tau_1)$ and $\Phi(\tau_2)$ by Theorems \ref{DistributionTheorem} and \ref{main thm}, and then the result follows by letting $\tau_1\to\tau^-$ and $\tau_2\to\tau^+$.



\subsection*{Acknowledgements} The authors would like to thank Sandro Bettin, Adam Harper, K. Soundararajan, Akshay Venkatesh, John Voight, and Trevor Wooley for some helpful discussions. AG and DK are partially supported by Discovery Grants from the Natural Sciences and Engineering Research Council of Canada.


\subsection*{Notation} Given a positive integer $n$, we let $P^+(n)$ and $P^-(n)$ be the largest and smallest prime divisors of $n$, with the notational convention that $P^+(1)=1$ and $P^-(1)=\infty$. Furthermore, we write $\CP(y,z)$ for the set of integers all of whose prime factors lie in $(y,z]$. The symbol $d_k(n)$ denotes the number of ways $n$ can be written as a product of $k$ positive integers. We denote with $\omega(n)$ the number of distinct primes that divide $n$, and with $\Omega(n)$ the total number of prime factors of $n$, counting multiplicity. Finally, we use the symbol ${\bf 1}_A$ to indicate the characteristic function of the set $A$. For example, ${\bf 1}_{(n,a)=1}$ equals 1 if $(n,a)=1$ and 0 otherwise.


\section{The structure of characters with large $M(\chi)$}

We now determine more precise information about the structure of characters with large $M(\chi)$.

\begin{theorem}\label{structure thm} Let $q$ be an odd prime and $1\le\tau\le\log\log q- C$ for a sufficiently large absolute constant $C$. There exists a set $\CC_q(\tau)\subset  \{\chi\mod{q} : m(\chi)>  \tau\}$ of cardinality 
\eq{cc}{
\#\CC_q(\tau) = \left(1+ O\left( e^{-e^\tau/\tau}\right)\right) 
	\cdot  \#  \left\{\chi\mod{q} : m(\chi)>  \tau\right\}
}
for which the following holds: If $\chi\in \CC_q(\tau)$, then
\begin{enumerate}
\item $\chi$ is an odd character.
\item Let $\alpha=N_\chi/q\in[0,1]$, and $a/b$ be the reduced fraction for which $|\alpha-a/b|\le 1/(b\tau^{10})$ with $b\le\tau^{10}$. Let $b_0 =  b$ if $b$ is prime, and $b_0=1$ otherwise. Then 
\eq{1-pretentious}{
\sum_{\substack{p\le e^\tau \\ p\neq b_0}} \frac{|1-\chi(p)|}{p} \ll \frac{(\log\tau)^{3/4}}{\tau^{1/4}} ; \ \ \text{and}
}
\eq{M(chi) odd}{
m(\chi)  =
		e^{-\gamma}	\frac{b_0}{\phi(b_0)} \abs{ \prod_{p\neq b_0} \left(1-\frac{\chi(p)}{p}\right)^{-1} }
				+  O\left(\sqrt{\tau}\log\tau\right) .
}
\end{enumerate}
\end{theorem}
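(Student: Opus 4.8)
The plan is to pass, via P\'olya's expansion \eqref{polya}, from $M(\chi)$ to the maximum of a short Dirichlet polynomial, and then to run the pretentious-function calculus behind \eqref{GS L(1,chi)} in tandem with the moment estimates that prove Theorem \ref{main thm}. I build $\CC_q(\tau)$ by deleting from $\{\chi\bmod q:m(\chi)>\tau\}$ several exceptional subsets, each of size $o\bigl(e^{-e^\tau/\tau}\bigr)\cdot\#\{\chi:m(\chi)>\tau\}$; since $\#\{\chi:m(\chi)>\tau\}\gg\phi(q)\exp\{-(1+o(1))e^{\tau-\eta+A}/\tau\}$ by Theorem \ref{main thm}, it suffices that each deleted set have size $\ll\phi(q)\exp\{-e^\tau/\tau-e^{\tau-\eta+A}/\tau\}$. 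First, the upper bound of Theorem \ref{main thm even} gives $\#\{\chi:\chi(-1)=1,\ m(\chi)>\tau\}\ll\phi(q)\exp\{-e^{\sqrt3\tau}/\tau^c\}$, which (as $\sqrt3>1$ and $\tau\le\log\log q-C$ with $C$ large) is far below what is permitted; deleting the even characters establishes item~(1) and lets me assume $\chi$ odd. For odd $\chi$, pairing $n$ with $-n$ in \eqref{polya} makes the P\'olya weight real: $\sum_{n\le\alpha q}\chi(n)=\frac{\CG(\chi)}{2\pi i}\cdot4\sum_{1\le n\le z}\bar\chi(n)\sin^2(\pi n\alpha)/n+O(1+q\log q/z)$, so that, with $R_z(\alpha,\chi):=4\sum_{1\le n\le z}\bar\chi(n)\sin^2(\pi n\alpha)/n$,
\[
m(\chi)=\frac1{2e^\gamma}\max_{0\le\alpha\le1}\bigl|R_z(\alpha,\chi)\bigr|+O\bigl(q^{1/2}\log q/z\bigr).
\]

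Next I take $z$ a suitable power of $q$ and truncate $R_z$ to $R_y$ with $y$ slightly larger than $e^\tau$. The tail $R_z-R_y$ is controlled \emph{uniformly over the $q$ points $\alpha=N/q$} by the $2k$-th moment bounds of Section \ref{moments}, taken with $k$ essentially as large as $\asymp e^\tau$ and combined with a maximal-inequality argument, so that $\max_{1\le N\le q}|R_z(N/q,\chi)-R_y(N/q,\chi)|\ll\sqrt\tau\log\tau$ outside an admissible exceptional set. Then, still outside admissible sets, I argue that if $m(\chi)>\tau$ the maximising point $\alpha=N_\chi/q$ must lie within $q^{-c}$ of a rational $a/b$ with $b\le\tau^{10}$ — otherwise a Parseval/mean-value bound shows $|R_y(\alpha,\chi)|$ cannot reach $(2-o(1))e^\gamma\tau$. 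Since $q^{-c}=o(e^{-\tau})$, on this event $e(\pm n\alpha)$ may be replaced by $e(\pm na/b)$ for every $n\le y$ at cost $O(\sqrt\tau\log\tau)$, which fixes $\alpha$, and hence $b_0$, in the regime treated below.

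Now write $\sin^2(\pi na/b)=\tfrac12-\tfrac14 e(na/b)-\tfrac14 e(-na/b)$, so $R_y(a/b,\chi)=2\sum_{n\le y}\bar\chi(n)/n-T(a/b)-T(-a/b)$ with $T(\beta)=\sum_{n\le y}\bar\chi(n)e(n\beta)/n$. Splitting $n$ by $d=(n,b)$ and expanding $e(ma/(b/d))$, for $(m,b/d)=1$, in Dirichlet characters mod $b/d$, the principal-character parts collapse through Ramanujan sums ($\sum_{(m,e)=1}e(ma/e)=\mu(e)$) into a clean multiplicative expression; combining with $2\sum\bar\chi(n)/n$ and using the pretentiousness of $\chi$ established below, everything telescopes to $|R_y(a/b,\chi)|=\frac{2b_0}{\phi(b_0)}\bigl|\prod_{p\ne b_0}(1-\chi(p)/p)^{-1}\bigr|+O(\sqrt\tau\log\tau)$ (for composite $b$ the principal part reduces to $b_0=1$, i.e.\ to $|L(1,\chi)|$). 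The error absorbs the truncation of the Euler product past $e^\tau$, the non-principal twisted values $L(1,\bar\chi\psi)$ (bounded via $\sum_{\psi\ne\psi_0}|c_\psi|^2=1-|c_{\psi_0}|^2$, the pretentious bound on $\bar\chi\psi$, and the smallness of $b_0$ for $\chi\in\CC_q(\tau)$), and the approximations above; dividing by $2e^\gamma$ gives \eqref{M(chi) odd}. Finally, \eqref{M(chi) odd} and $m(\chi)>\tau$ force $\bigl|\prod_{p\ne b_0}(1-\chi(p)/p)^{-1}\bigr|\ge\frac{\phi(b_0)}{b_0}e^\gamma\tau\bigl(1+O(\log\tau/\sqrt\tau)\bigr)$; comparing with the elementary bound $|1-\chi(p)/p|^{-1}\le(1-1/p)^{-1}\exp\{-(1-\Re\chi(p))/p+O(1/p^2)\}$ and Mertens' theorem $\prod_{p\le e^\tau,p\ne b_0}(1-1/p)^{-1}\sim\frac{\phi(b_0)}{b_0}e^\gamma\tau$ forces $\sum_{p\le e^\tau,\,p\ne b_0}|1-\chi(p)|^2/p\ll(\log\tau)/\sqrt\tau$, and Cauchy--Schwarz against $\sum_{p\le e^\tau}1/p\asymp\log\tau$ yields \eqref{1-pretentious}.

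The main obstacle is the uniformity in the second step: controlling $\max_{1\le N\le q}|R_z(N/q,\chi)-R_y(N/q,\chi)|$ — a maximum over all $q$ points rather than a single one — with an exceptional set as small as $\exp\{-\asymp e^\tau/\tau\}\cdot\phi(q)$, which compels running the Section~\ref{moments} moment method at order $k\asymp e^\tau$ and coupling it with a maximal inequality. A secondary difficulty, handled in the same breath, is pinning $N_\chi/q$ to within $q^{-c}$ of a rational of denominator $\le\tau^{10}$ uniformly over the relevant $\chi$, and arranging that the resonating modulus $b_0$ is small enough that the non-principal twisted contributions fit inside the $O(\sqrt\tau\log\tau)$ in \eqref{M(chi) odd}.
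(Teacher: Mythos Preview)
Your truncation strategy has a fatal flaw. You replace $R_z$ by $R_y=4\sum_{n\le y}\bar\chi(n)\sin^2(\pi n\alpha)/n$ with $y$ slightly larger than $e^\tau$, but this sum is simply too small to sustain $m(\chi)>\tau$. Indeed
\[
|R_y(\alpha,\chi)|\le 2\sum_{n\le y}\frac{1-\cos(2\pi n\alpha)}{n}
=2\log y+2\gamma+2\log|2\sin\pi\alpha|+O\!\left(\frac{1}{y\|\alpha\|}\right)
\le 2\tau+O(1),
\]
whereas after your truncation you need $|R_y|\ge 2e^\gamma\tau-O(\sqrt\tau\log\tau)\approx 3.56\,\tau$. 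The shortfall $2(e^\gamma-1)\tau$ grows with $\tau$, so for large $\tau$ the inequality is vacuously violated and the rest of the argument has no content. The paper avoids this by truncating not in the \emph{length} of the sum but in the \emph{smoothness} of the summands: one keeps all $n$ with $P^+(n)\le y$, and the crucial gain is Mertens' identity $\sum_{P^+(n)\le y}1/n\sim e^\gamma\log y$, which supplies exactly the missing factor $e^\gamma$ (cf.\ Corollary~\ref{fund ineq lem}). This is why $S_{y,z}(\chi)$ in Section~\ref{moments} is defined via $P^+(n)>y$, not $n>y$.

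A second gap compounds the first. You assert that, outside an admissible set, $N_\chi/q$ lies within $q^{-c}$ of a rational $a/b$ with $b\le\tau^{10}$, invoking a ``Parseval/mean-value bound''. No such localisation is proved (or even claimed) in the paper, and Theorem~\ref{partial sums odd} shows that for $\chi\in\CC_q(\tau)$ the normalised sum is essentially constant (equal to $\tau+O(\cdot)$) throughout the whole minor-arc region $|\beta-k/\ell|\gg e^{-\tau}$; the maximum is not pinned to a single point at scale $q^{-c}$. The paper's replacement of $\alpha$ by $a/b$ is achieved instead by Lemma~\ref{centering alpha}, which works only because the sum runs over $y$-smooth $n$ up to a large height $N=|b\alpha-a|^{-1}$, not over $n\le y$; the Diophantine information $|\alpha-a/b|\le 1/(b\tau^{10})$ is then already enough. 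Finally, your one-line dismissal of the ``composite $b$'' case hides the substantive work: the paper needs the case analysis around \eqref{Cb def}--\eqref{Cb} to force $\chi(p)\approx 1$ for $p\mid b$ and thereby reduce $C_b$ to $b_0/\phi(b_0)\cdot\prod_{p\mid b}(1-\chi(p)/p)^{-1}$; a Parseval identity on $\sum_\psi|c_\psi|^2$ does not do this.
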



Granville and Soundararajan showed in \cite{GS07} that if $M(\chi)$ is large then $\chi$ must pretend to be a character $\psi$ of small conductor and of opposite parity (that is, $\chi$ is ``$\psi$-pretentious''). Building on the techniques of  \cite{GS07},
Theorem \ref{1-pretentious} shows that, for the vast majority of characters with large $M(\chi)$, the character $\psi$ is the trivial character, and $\chi$ must be  odd (and so $1$-pretentious as in \eqref{1-pretentious}).

The discrepancy between Theorem \ref{main thm} and \eqref{GS L(1,chi)} means that the error term in \eqref{M(chi) odd} cannot be made $o(1)$.


Inequality \eqref{1-pretentious}  allows us to establish  accurate estimates for  $\sum_{n\le N}\chi(n)$ for all $N$.   Here and for the rest of the paper, for $u\ge0$ we set
\eq{P}{
P(u) = e^{-\gamma}\cdot \int_0^u\rho(t)dt ,
}
where $\rho$ is the Dickman--de Bruijn function, defined by $\rho(u)=1$ for $u\in[0,1]$ and via the differential-delay equation $u\rho'(u)=-\rho(u-1)$ for $u>1$. Then we have the following estimate:


\renewcommand{\labelenumi}{(\alph{enumi})}

\begin{theorem}\label{partial sums odd} Let $q, \tau, \alpha, a, b, b_0$ and $\chi\in \CC_q(\tau)$ be as in Theorem \ref{structure thm}. Given $\beta\in[0,1]$,  let $k/\ell$ be the reduced fraction for which $|\beta-k/\ell|\le 1/(\ell \tau^{10})$ with $\ell\le\tau^{10}$. Define $u>0$ by $|\beta-k/\ell|=1/(\ell e^{\tau u})$.
\begin{enumerate}
\item If $b_0=1$, then
\[
\frac{e^{-\gamma}\pi i}{\CG(\chi)} \sum_{n\le \beta q} \chi(n)
	= \begin{cases}
		\ds \tau \big(1-P(u)\big)
		+ O( (\tau\log\tau)^{3/4}) 
			&\mbox{if $\ell=1$},\\
		\ds \tau
                +  O( (\tau\log\tau)^{3/4} )
			&\mbox{if $\ell>1$} .
		\end{cases}
\]
\item If $b_0=b$, then
\[
\frac{e^{-\gamma}\pi i}{\CG(\chi)} \sum_{n\le \beta q} \chi(n)
	= \tau\cdot  \frac{1-1/b}{1-\bar{\chi}(b)/b}\cdot\lambda+ O( (\tau\log\tau)^{3/4}) ,
\]
where
\[
\lambda =   \begin{cases}
				1-P(u)  &\mbox{if $\ell=1$}, \\ 
		 \ds 1+ P(u) \left(\frac{\bar{\chi}(b)}b\right)^{v-1} \frac{1-\bar{\chi}(b)}{b-1} 
			&\mbox{if $\ell=b^v$, $v\ge1$}, \\ 
		1
				&\mbox{otherwise}.
	\end{cases}
\]
Moreover, if we take $\alpha=\beta$ and write $|\alpha-a/b|=1/(be^{\tau u_0})$, then we have that 
\[
(1-P(u_0)) \cdot \frac{|1-\chi(b)|^2}{b^2} \ll \frac{(\log\tau)^{3/4}}{\tau^{1/4}} .
\]
\end{enumerate}
\end{theorem}


\nin
Notice that if $\beta=k/\ell$ then $P(u)=1$, so $1-P(u)$ is a measure of the distance between $\beta$ and $k/\ell$.

Theorems \ref{structure thm} and \ref{partial sums odd} will be proved in Section \ref{odd}. Analogous results can also be proven for even characters:


\renewcommand{\labelenumi}{(\arabic{enumi})}

\begin{theorem}\label{structure thm even}  Let $q$ be an odd prime and $1\le\tau\le \frac{1}{\sqrt{3}} \log\log q- C$ for a sufficiently large constant $C$. There exists a set $\CC_q^+(\tau)\subset  \{\chi\mod{q} : \chi(-1)=1,\ m(\chi)>  \tau\}$ of cardinality 
\eq{cc+}{
\#\CC_q^+(\tau) = \left(1+ O\left( e^{-e^\tau/\tau}\right)\right) 
	\cdot  \#  \left\{\chi\mod{q} : \chi(-1)=1,\  m(\chi)>  \tau\right\}
}
for which the following statements hold. If $\chi\in \CC_q^+(\tau)$ then
\begin{enumerate}
\item If $\alpha=N_\chi/q\in[0,1]$, and $a/b$ is the reduced fraction for which $|\alpha-a/b|\le 1/(b\tau^{10})$ with $b\le\tau^{10}$, then $b=3$.
\item We have that
\eq{mod3-pretentious}{
\sum_{\substack{p\le e^\tau \\ p\neq 3}} \frac{\abs{\leg{p}{3}-\chi(p)}}{p} \ll \sqrt{ \frac{\log\tau}{\tau} }; \ \ \text{and}
}
\eq{M(chi) even}{
m(\chi) 
	= \frac{e^{-\gamma}\sqrt{3}}{2} \abs{L\left(1,\chi\leg{\cdot}{3}\right) } + O(\log \tau) .
}
\item Given $\beta\in[0,1]$,  let $k/\ell$ be the reduced fraction for which $|\beta-k/\ell|\le 1/(\ell \tau^{10})$ with $\ell\le\tau^{10}$. Define $u>0$ by $|\beta-k/\ell|=1/(\ell e^{\sqrt{3}\tau u})$. Then
\[
\frac{e^{-\gamma}\pi}{\CG(\chi)} \sum_{n\le \alpha q} \chi(n)
	= \begin{cases} \ds  \tau P(u)\cdot \leg{k}{3} \cdot 
			 \frac{\bar{\chi}(3^{v-1}) }{3^{v-1}}  
		+ O(\sqrt{\tau}\log \tau)    &\mbox{if $\ell=3^v$ for some $v\ge1$}, \\ 
		\\
		  O(\sqrt{\tau}\log \tau)    &\mbox{otherwise}.
	\end{cases}
\]
\end{enumerate}
\end{theorem}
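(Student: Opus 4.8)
The plan is to mirror the treatment of odd characters (Theorems \ref{structure thm} and \ref{partial sums odd}), with the trivial character replaced by the quadratic character $\leg{\cdot}{3}$ of conductor $3$, which is the character of smallest conductor having opposite parity to an even character. First I would record the even analogue of P\'olya's formula: for an even character $\chi\mod q$, only the cosine terms survive, so $\sum_{n\le\alpha q}\chi(n) = \frac{\CG(\chi)}{2\pi i}\sum_{1\le|n|\le z}\frac{\bar\chi(n)(1-e(-n\alpha))}{n} + O(1+q\log q/z)$ with the antisymmetric part forcing the main term to be genuinely real up to the $\CG(\chi)/(\pi)$ factor, which is why \eqref{M(chi) even} involves $L(1,\chi\leg{\cdot}{3})$ rather than $L(1,\bar\chi)$. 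Next I would invoke the Granville--Soundararajan structural input from \cite{GS07}: if $m(\chi)>\tau$ with $\tau$ close to $\log\log q$, then $\chi$ must be $\psi$-pretentious for some character $\psi$ of conductor $O(1)$ and of parity opposite to $\chi$; combining this with the distributional count coming from the even analogue of \eqref{GS L(1,chi)} (an $L(1,\cdot)$-type distribution over characters of modulus $3q$, whose tail has the $\sqrt 3$-rescaled shape appearing in Theorem \ref{main thm even}), one sees that the conductor-$3$ character $\psi=\leg{\cdot}{3}$ dominates: all but a proportion $O(e^{-e^\tau/\tau})$ of the even $\chi$ with $m(\chi)>\tau$ are $\leg{\cdot}{3}$-pretentious. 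This defines $\CC_q^+(\tau)$ and gives \eqref{cc+}.

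With $\chi$ restricted to $\CC_q^+(\tau)$, the bulk of the work is a Lipschitz/Lip\-schitz-type estimate showing that $\chi$ is quantitatively $\leg{\cdot}{3}$-pretentious: I would run the argument that extracts \eqref{mod3-pretentious} from the near-maximality of $m(\chi)$, exactly as \eqref{1-pretentious} is extracted in the odd case, controlling $\sum_{p\le e^\tau,\,p\ne3}\frac{|\leg{p}{3}-\chi(p)|}{p}$ by a second-moment/entropy argument against the known size of the tail of the distribution. The exponent $\sqrt 3$ enters because the ``variance'' of the truncated sum for $\leg{\cdot}{3}$-pretentious characters is $\sum_{p\le e^\tau}\frac{1+\leg{p}{3}}{p}\cdot\frac1p$-weighted, effectively running over primes $p\equiv1\pmod 3$, which contributes a factor $1/2$ inside an exponential and hence the rescaling $\tau\mapsto\sqrt3\tau$ (this is the same mechanism behind the $\sqrt3$ in Theorem \ref{main thm even}). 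Then I would substitute \eqref{mod3-pretentious} into the even P\'olya expansion: the main term becomes $\tau$ times an Euler product over $p\ne3$ of $\bigl(1-\frac{\leg{p}{3}}{p}\bigr)^{-1}$-type factors twisted by $(1-e(-n\alpha))$, and evaluating the Euler product recovers $L(1,\chi\leg{\cdot}{3})$ up to the local factor at $3$, giving \eqref{M(chi) even}. Maximizing over $\alpha$, the optimal $\alpha$ is forced to be near a rational $a/b$ with $b=3$ (the denominator $3$ being the one that makes the $p=3$ oscillation constructive for a $\leg{\cdot}{3}$-pretentious character), which is part (1); the control $b\le\tau^{10}$ comes from the fact that a denominator much larger than a power of $\tau$ cannot be seen by the truncated sum.

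For part (3) I would compute $\sum_{n\le\beta q}\chi(n)$ for an arbitrary $\beta$ by the same expansion, splitting according to whether the best rational approximation $k/\ell$ to $\beta$ has $\ell$ a power of $3$ or not. When $\ell=3^v$, the contribution of the prime $3$ is coherent and produces the factor $\leg{k}{3}\bar\chi(3^{v-1})/3^{v-1}$, while the smooth-number count over the remaining primes produces the Dickman factor $P(u)$ through the integral representation $P(u)=e^{-\gamma}\int_0^u\rho(t)dt$, the variable $u$ measuring (logarithmically, base $e^{\sqrt3\tau}$) how close $\beta$ is to $k/\ell$; when $\ell$ is not a $3$-power the $p=3$ term cancels and only the error term $O(\sqrt\tau\log\tau)$ remains. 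The main obstacle, as in the odd case, is the passage from the qualitative pretentiousness supplied by \cite{GS07} to the sharp quantitative bound \eqref{mod3-pretentious} with the correct power of $\tau$, and then carrying that error term faithfully through the Euler-product evaluation and the maximization over $\alpha$ so that the denominator $b=3$ is pinned down exactly rather than merely up to $O(1)$ — this is where one must be careful that the $p=3$ local factor, being the only source of the $\sqrt3$ and of the constructive resonance, is isolated cleanly from the pretentious tail.
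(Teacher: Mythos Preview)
Your high-level picture is right --- the theorem is about even characters being $\leg{\cdot}{3}$-pretentious, the denominator $b=3$, and the Dickman-type asymptotic for partial sums --- but your proposed mechanism diverges from the paper's in several places, and at least two of those divergences are genuine gaps rather than alternative routes.

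\medskip

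\textbf{The definition of $\CC_q^+(\tau)$ and the missing smooth truncation.} You propose to define $\CC_q^+(\tau)$ as the set of even $\chi$ that are $\leg{\cdot}{3}$-pretentious, arguing by a GS07-plus-distribution count that this throws away only a proportion $O(e^{-e^\tau/\tau})$. The paper does something quite different: it sets $y=e^{\sqrt3\tau+c}$ and defines
\[
\CC_q^+(\tau) = \bigl\{\chi:\chi(-1)=1,\ S_{y,q^{11/21}}(\chi)\le 1,\ m(\chi)>\tau\bigr\},
\]
where $S_{y,z}(\chi)$ is the rough-number tail from Section~\ref{outline}. The cardinality bound \eqref{cc+} then follows from Theorem~\ref{tail bound cor} (the moment bound) together with the already-proved lower bound in Theorem~\ref{main thm even}. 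This smooth truncation is not a cosmetic choice: it is what lets you replace P\'olya's sum by its $y$-smooth version up to $O(1)$, after which Lemmas~\ref{centering alpha}, \ref{MV}, and \ref{formula2} apply directly. Your outline never invokes the moment/tail bound, so you have no clean way to pass to smooth sums, and without that the subsequent analysis (which is all about smooth Euler products) does not get off the ground.

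\medskip

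\textbf{The mechanism pinning down $b=D=3$, and the origin of $\sqrt3$.} You attribute the $\sqrt3$ to a density-$\tfrac12$ variance effect over primes $p\equiv 1\pmod 3$; this is not the source. The $\sqrt3$ is simply $|\CG(\leg{\cdot}{3})|$: it enters once via the identity $e(n/3)-e(-n/3)=i\sqrt3\,\leg{n}{3}$ (hence the lower bound $M(\chi)\ge\frac{\sqrt{3q}}{2\pi}|L(1,\chi\leg{\cdot}{3})|$), and a second time via the upper bound in Lemma~\ref{formula2},
\[
\Bigl|\sum_{\substack{1\le|n|\le N\\ P^+(n)\le y}}\frac{\chi(n)e(an/b)}{n}\Bigr|
\le \frac{2e^\gamma\log y}{\sqrt D}\,(2/3)^{\omega(b/D)}+O((\log y)^{0.86}).
\]
Since $\log y=\sqrt3\tau+O(1)$, comparing this with the lower bound $2e^\gamma\tau-O(\log\tau)$ forced by $m(\chi)>\tau$ gives $\sqrt3/\sqrt D\cdot(2/3)^{\omega(b/D)}\ge 1-o(1)$, whence $D=b=3$ and $\xi=\leg{\cdot}{3}$ directly. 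The paper does \emph{not} appeal to a distributional count to single out conductor $3$; it is a pointwise inequality. Your proposed route (GS07 structural input plus comparison of tail sizes for various small-conductor $\psi$) might be made to work, but as written it is circular-adjacent: ruling out conductors $d>3$ by tail counts requires an upper bound of the form $m(\chi)\ll |L(1,\chi\bar\psi)|/\sqrt d$ uniformly for $\psi$-pretentious $\chi$, which is essentially the content of Lemma~\ref{formula2} anyway.

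\medskip

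\textbf{Order of deductions.} The paper first establishes $b=3$ (Property~(1)), then writes $M(\chi)$ as $\frac{\sqrt3}{2\pi}\bigl|\sum_{P^+(n)\le y}\chi(n)\leg{n}{3}/n\bigr|+O(\log\tau)$, and only then derives the pretentiousness bound \eqref{mod3-pretentious} from the near-saturation of the inequality $|\sum\chi(n)\leg{n}{3}/n|\le\sum_{(n,3)=1,\,P^+(n)\le y}1/n$. You reverse this: pretentiousness first, then $b=3$. That reversal is not fatal in principle, but your ``second-moment/entropy argument against the tail'' for \eqref{mod3-pretentious} is not an argument --- it is a placeholder, and the actual computation (Cauchy--Schwarz on $|1-\chi(p)\leg{p}{3}|^2\le 2\,\Re(1-\chi(p)\leg{p}{3})$, exactly as in the derivation of \eqref{chi close to 1}) relies on the near-saturation, which you have not yet established at that stage of your outline.

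\medskip

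For Property~(3) you have the right shape, but note that the paper's computation hinges on the trigonometric identity $2\sin(2\pi m/3)=\sqrt3\,\leg{m}{3}$ to convert the $\sin$ coming from the even P\'olya expansion into a Legendre symbol, followed by product-to-sum for cosines and then Corollary~\ref{remove exp cor}; the Dickman factor $P(u)$ enters via Lemma~\ref{smooth2}, not via a direct integral representation.
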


The proof of Theorem \ref{structure thm even} will be given in Section \ref{even}.


\section{Auxiliary results about smooth numbers}

Before we get started with the proofs of our main results, we state and prove some facts about smooth numbers. As usual, we set
\[
\Psi(x,y) = \#\{n\le x:P^+(n)\le y\} .
\]
We begin with the following simple estimates.


\begin{lemma}\label{rho}
For $u\ge1$, we have that
\[
\rho(u),|\rho'(u)| \le \left(\frac{e^{O(1)}}{u\log u}\right)^u .
\]
\end{lemma}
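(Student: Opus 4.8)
The plan is the classical saddle‑point argument for $\rho$, with everything reduced to elementary estimates. The range $1\le u\le u_0$ for any fixed $u_0$ is trivial: there $\rho(u)\le 1$ and, for $u>1$, $|\rho'(u)|=\rho(u-1)/u\le 1$, whereas $(e^{O(1)}/(u\log u))^u$ exceeds $1$ once the implied constant satisfies $e^{O(1)}\ge u_0\log u_0$ (and it is infinite as $u\to 1^+$). So assume $u$ is large. First I would record the standard facts that $\rho$ is continuous and positive, that $\rho$ is non‑increasing on $[1,\infty)$ since $\rho'(u)=-\rho(u-1)/u\le 0$, and the integral form $u\rho(u)=\int_{u-1}^u\rho(t)\,dt$ equivalent to the delay equation. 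Iterating the consequence $\rho(u)\le\rho(u-1)/u$ (valid because $\rho$ is non‑increasing) gives $\rho(u)\le 1/\fl{u}!$, so in particular $f(\xi):=\int_0^\infty\rho(t)e^{t\xi}\,dt$ is finite for every $\xi\in\R$ and smooth in $\xi$. Multiplying the integral equation by $e^{u\xi}$, integrating over $u\ge 0$ and applying Tonelli to $\int_0^\infty e^{u\xi}\int_{u-1}^u\rho(t)\,dt\,du=\int_0^\infty\rho(t)\int_t^{t+1}e^{u\xi}\,du\,dt$ yields $f'(\xi)=\frac{e^\xi-1}{\xi}f(\xi)$, hence $f(\xi)=f(0)\exp(J(\xi))$ with $J(\xi):=\int_0^\xi\frac{e^r-1}{r}\,dr$ and $f(0)=\int_0^\infty\rho>0$ a finite constant.

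Next, for large $u$ let $\xi=\xi(u)>0$ be the unique solution of $(e^\xi-1)/\xi=u$ (the left‑hand side increases from $1$ to $\infty$ on $(0,\infty)$). Since $\rho$ is non‑increasing on $[u-1,u]$,
\[
\rho(u)\ \le\ \frac{1}{\int_{u-1}^u e^{t\xi}\,dt}\int_{u-1}^u\rho(t)e^{t\xi}\,dt\ \le\ \frac{\xi e^\xi}{e^\xi-1}\,e^{-u\xi}f(\xi)\ =\ \frac{e^\xi}{u}\,e^{-u\xi}\,f(0)\,e^{J(\xi)},
\]
where the last equality uses the defining relation for $\xi$. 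It remains to estimate $\xi$ and $J(\xi)$. From $e^\xi=u\xi+1$ a two‑line bootstrap gives $\log u<\xi<2\log u$, hence $\xi=\log(u\xi+1)=\log u+\log\log u+O(1)$; and splitting $\int_1^\xi\frac{e^r}{r}\,dr$ at $r=\xi/2$ gives $J(\xi)\le 2+e^{\xi/2}+\frac{2e^\xi}{\xi}=O(u)$, since $e^\xi/\xi\le u+1$ and $e^{\xi/2}=\sqrt{u\xi+1}\ll\sqrt{u\log u}$. Therefore $\log\rho(u)\le -u\xi+J(\xi)+O(\log u)=-u\log u-u\log\log u+O(u)$, that is $\rho(u)\le(e^{O(1)}/(u\log u))^u$.

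For $\rho'$: when $u\ge 2$, $|\rho'(u)|=\rho(u-1)/u\le\rho(u-1)$, and the same monotonicity‑plus‑Laplace bound with the same $\xi$ gives $\rho(u-1)\le\frac{e^\xi}{u}e^{-(u-1)\xi}f(0)e^{J(\xi)}=e^\xi\cdot(\text{the bound just obtained for }\rho(u))$. Since $e^\xi=u\xi+1\ll u\log u$, we get $|\rho'(u)|\ll u\log u\cdot(e^{O(1)}/(u\log u))^u=(e^{O(1)}/(u\log u))^u$, the polynomial factor being absorbed into the $u$‑th power (as $\log(u\log u)=O(u)$). The only real work is the two elementary estimates $\xi=\log u+\log\log u+O(1)$ and $J(\xi)=O(u)$; the point to watch is that one genuinely needs the second‑order term $\log\log u$ in $\xi$ — the cruder $\xi=\log u+O(\log\log u)$ would cost a factor $e^{O(u\log\log u)}$ and destroy the bound — while the factor $e^\xi/u\asymp\log u$ lost in passing from the Laplace transform back to a pointwise statement, and again in the $\rho'$ estimate, is harmless since it is only polynomial in $u$.
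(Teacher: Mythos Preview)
Your argument is correct. The paper's proof is simply a citation to Corollary 2.3 of Hildebrand--Tenenbaum \cite{HT93} and Corollary III.5.8.3 of Tenenbaum \cite{Ten}, so there is no original argument in the paper to compare against; what you have written is essentially a self-contained version of the classical Laplace-transform (saddle-point) proof that those references carry out. Your key steps --- the identity $u\rho(u)=\int_{u-1}^u\rho(t)\,dt$, the crude bound $\rho(u)\le 1/\lfloor u\rfloor!$ ensuring convergence of $f(\xi)$, the functional equation $f'=\tfrac{e^\xi-1}{\xi}f$, the choice of $\xi$ via $(e^\xi-1)/\xi=u$, and the asymptotics $\xi=\log u+\log\log u+O(1)$, $J(\xi)=O(u)$ --- are all standard and correctly executed. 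Your remark that one genuinely needs the second-order term $\log\log u$ in $\xi$ (a cruder $O(\log\log u)$ error would cost $e^{O(u\log\log u)}$ and kill the bound) is exactly right, and the absorption of the polynomial factor $e^\xi\asymp u\log u$ into the $e^{O(u)}$ at the end is legitimate since $\log(u\log u)=O(u)$.
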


\begin{proof}
The claimed estimates follow by Corollary 2.3 in \cite{HT93} and Corollary 8.3 in Section III.5 of \cite{Ten}.
\end{proof}


\begin{lemma}\label{smooth}
For $y\ge2$ and $u\ge1$, we have that
\[
\sum_{\substack{n>y^u \\ P^+(n)\le y}} \frac{1}{n} \ll \frac{\log y}{u^u} +  e^{-\sqrt{\log y}}  .
\]
\end{lemma}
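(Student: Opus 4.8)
The plan is to estimate the sum $\sum_{n > y^u,\, P^+(n) \le y} 1/n$ by relating it to the counting function $\Psi(x,y)$ of $y$-smooth numbers via partial summation, and then to feed in known estimates for $\Psi(x,y)$ in the relevant ranges of $u$. First I would write, by partial summation,
\[
\sum_{\substack{n > y^u \\ P^+(n)\le y}} \frac{1}{n} = \int_{y^u}^\infty \frac{d\Psi(t,y)}{t} = -\frac{\Psi(y^u,y)}{y^u} + \int_{y^u}^\infty \frac{\Psi(t,y)}{t^2}\, dt,
\]
so everything reduces to good upper bounds on $\Psi(t,y)$ for $t \ge y^u$.

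There are two regimes to handle. When $u$ is not too large compared to $y$ — say $y^u \le \exp((\log y)^2)$, i.e. $u \le \log y$ — one has the classical estimate $\Psi(t,y) \ll t\,\rho(v)$ where $v = \log t/\log y$, valid in the Hildebrand range, and then Lemma~\ref{rho} gives $\rho(v) \le (e^{O(1)}/(v\log v))^v$. Substituting $t = y^w$ so that $dt/t = (\log y)\, dw$, the integral $\int_u^\infty \rho(w)\, dw$ is dominated by its value near $w = u$ because $\rho$ decays faster than geometrically, giving a contribution $\ll (\log y)\, \rho(u) \cdot u \ll (\log y)/u^u$ after absorbing the $e^{O(1)}$ factors — this is the first term on the right-hand side of the claimed bound. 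When $u$ is large, $u > \log y$, the region $t \ge y^u$ has $\log t \ge (\log y)^2$, and here one uses the crude but uniform bound $\Psi(t,y) \ll t \exp(-\tfrac12 \log t/\log y) \le t\exp(-\tfrac12\sqrt{\log t})$ (valid since $\log t / \log y \ge \sqrt{\log t}$ in this range); plugging this in, $\int_{y^u}^\infty t^{-1}\exp(-\tfrac12\sqrt{\log t})\,dt \ll \exp(-c\sqrt{\log y^u}) \ll \exp(-\sqrt{\log y})$, which is the second term. The boundary term $\Psi(y^u,y)/y^u$ is always smaller than the integral, so it is harmlessly absorbed.

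The main obstacle — or rather the point requiring the most care — is bookkeeping the uniformity: one must check that the split at $u = \log y$ is placed so that the classical $\Psi(t,y) \ll t\rho(\log t/\log y)$ estimate is genuinely available on the whole interval $y^u \le t \le \exp((\log y)^2)$ (this is within Hildebrand's range $\exp((\log\log y)^{5/3+\epsilon}) \le t$ and $\log t \le y^{o(1)}$, which holds comfortably for $y$ large; for bounded $y$ the statement is trivial since the sum is over a thin set or one adjusts constants), and that for $t > \exp((\log y)^2)$ the tail of the $\rho$-integral is also $\ll (\log y)/u^u$ when $u \le \log y$ and is otherwise subsumed by the $\exp(-\sqrt{\log y})$ term. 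I would also double-check the edge case $u = 1$, where $1/u^u = 1$ and the inequality is essentially trivial, and note that the implied constants from Lemma~\ref{rho} combine with the constant in $\Psi(t,y) \ll t\rho(v)$ to produce exactly an $(e^{O(1)}/(u\log u))^u$-type saving, which for $u \ge 1$ is $\le e^{O(u)}/u^u$; since $e^{O(u)} \le (\log y)^{O(1)} \cdot (\text{something absorbed})$ is not quite what we want, I would instead keep the $\rho(u)$ itself and use the cruder $\rho(u) \le 1/u^{u(1+o(1))}$ only after first peeling off, if necessary, a factor $\log y$ from integrating over $w \in [u, u+1]$ and geometric decay beyond — the cleanest route is simply $\int_u^\infty \rho(w)\,dw \le \rho(u) \sum_{j\ge 0} 2^{-j} \ll \rho(u)$ using that $\rho(w+1) \le \rho(w)/2$ for $w \ge 2$, and $\rho(u) \le (\log y)/u^u$ absorbs into the first term for $u \le \log y$ since then $\log y \ge u \ge 1$. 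Finally, the whole argument is clean enough that no step is genuinely hard; the real content is invoking Hildebrand's $\Psi$ estimate and Lemma~\ref{rho} in the correct ranges.
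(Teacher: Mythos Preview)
Your approach is essentially the same as the paper's: both reduce to bounds on $\Psi(t,y)$ via partial summation, use the de Bruijn/Hildebrand estimate $\Psi(y^v,y)\asymp y^v\rho(v)$ together with Lemma~\ref{rho} for moderate $v$, and dispose of the far tail by Rankin's trick. The only cosmetic differences are that the paper splits at $v=\sqrt{\log y}$ rather than $v=\log y$ and applies Rankin directly to the sum $\sum_{n>e^{(\log y)^{3/2}}}1/n$ (with $\epsilon=2/\log y$) rather than via a pointwise bound on $\Psi$.
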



\begin{proof} Without loss of generality, we may assume that $y\ge100$. 
When $v\in[1,\sqrt{\log y}]$, de Bruijn \cite{Bru} showed that 
\eq{deBruijn}{
\Psi(y^v,y) = y^v\rho(v) \left(1+O\left(\frac{\log(v+1)}{\log y}\right)\right) .
}
Therefore, Lemma \ref{rho} yields that 
\eq{deBruijn2}{
\Psi(y^v,y)\ll (y/v)^v   \qquad  \left(1\le v\le \sqrt{\log y} \,\right).
}
Together with partial summation, this implies that
\[
\sum_{\substack{y^u<n\le e^{(\log y)^{3/2}} \\ P^+(n)\le y}} \frac{1}{n} \ll \frac{\log y}{u^u} .
\]
Finally, if $\epsilon=2/\log y$, then we note that
\[
\sum_{\substack{n>e^{(\log y)^{3/2}} \\ P^+(n)\le y}} \frac{1}{n} 
	\le e^{-2\sqrt{\log y}}   \sum_{P^+(n)\le y} \frac{1}{n^{1-\epsilon}} 
	\le e^{-2\sqrt{\log y}}  \prod_{p\le y} \left(1-\frac{1}{p^{1-\epsilon}} \right)^{-1} .
\]
Since $p^\epsilon=1+O(\log p/\log y)$ for $p\le y$, the product over the primes is $\ll \log y$, which completes the proof of the lemma.
\end{proof}


We also need the following result, where $P$ is defined by \eqref{P}. 


\begin{lemma}\label{smooth2} If $y\ge2$ and $u>0$, then
\[
\sum_{\substack{n\le y^u \\ P^+(n)\le y}} \frac{1}{n} = P(u)\cdot e^\gamma\log y + O(1) .
\]
In particular, $\lim_{u\to\infty}P(u)=1$.
\end{lemma}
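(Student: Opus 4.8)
The plan is to compare the smooth-number sum $\sum_{n\le y^u,\ P^+(n)\le y} 1/n$ to a Dickman-type integral, using the de Bruijn-style estimate \eqref{deBruijn} for $\Psi(y^v,y)$ together with partial summation, and then identify the resulting main term with $P(u)e^\gamma\log y$. First I would dispose of the trivial range: for $u\le 1$ the sum is just $\sum_{n\le y^u}1/n = u\log y + O(1) = e^\gamma\log y\cdot P(u)/e^\gamma\cdot e^{\gamma}$... more precisely, since $\rho(t)=1$ on $[0,1]$ we have $P(u) = e^{-\gamma}u$ there, so the claimed main term is exactly $u\log y$, matching $\sum_{n\le y^u}1/n = u\log y+O(1)$ directly from $\sum_{n\le x}1/n=\log x+\gamma+O(1/x)$ and the fact that all $n\le y^u\le y$ are automatically $y$-smooth. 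So assume $u>1$.

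For $u>1$ I would split at $y^{\min(u,\sqrt{\log y})}$. On the range $n\le y^{\sqrt{\log y}}$, write $n=y^v$ and apply partial summation to $\Psi(y^v,y)$: by \eqref{deBruijn}, $d\Psi(y^v,y) \approx y^v(\rho(v)\log y + \rho'(v))\,dv$ up to the relative error $O(\log(v+1)/\log y)$, so
\[
\sum_{\substack{y<n\le y^{w}\\ P^+(n)\le y}}\frac1n
 = \int_1^{w}\bigl(\rho(v)\log y + \rho'(v)\bigr)\,dv + (\text{error}),
\]
where $w=\min(u,\sqrt{\log y})$. The $\rho'$ term integrates to $\rho(w)-1=O(1)$ (indeed $O(1)$, and tending to $0$), while $\int_1^w \rho(v)\,dv\cdot\log y$ combined with the contribution $\log y$ from $n\le y$ gives $\log y\int_0^w\rho(v)\,dv = e^\gamma\log y\,P(w)$. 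I must then check the error terms are $O(1)$: the de Bruijn relative error contributes $\ll \int_1^w \rho(v)\log(v+1)\,dv \ll 1$ since $\rho$ decays super-exponentially (Lemma \ref{rho}), and similarly for the $\rho'$ piece. If $u\le\sqrt{\log y}$ this already finishes the proof, since then $w=u$. If $u>\sqrt{\log y}$, the tail $\sum_{n>y^{\sqrt{\log y}},\ P^+(n)\le y}1/n$ is $\ll e^{-\sqrt{\log y}}=O(1)$ by Lemma \ref{smooth} (with $u$ there taken to be $\sqrt{\log y}$), and on the other side $P(u)=P(\sqrt{\log y})+O(\int_{\sqrt{\log y}}^\infty\rho)=P(\sqrt{\log y})+o(1)$, so both expressions agree up to $O(1)$ with their values at $w=\sqrt{\log y}$.

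The main obstacle is bookkeeping the error terms in the partial-summation step: one has to be careful that the relative error $O(\log(v+1)/\log y)$ in \eqref{deBruijn}, after multiplication by $\Psi(y^v,y)\asymp (y/v)^v$ and summation/integration against $dv$, genuinely collapses to $O(1)$ rather than $O(\log\log y)$ or worse — this is exactly where the super-exponential decay of $\rho$ from Lemma \ref{rho} is essential, since $\int_1^\infty \rho(v)\log(v+1)\,dv$ converges. The final assertion $\lim_{u\to\infty}P(u)=1$ then follows immediately: taking $y$ fixed (say $y=2$) and $u\to\infty$, the left-hand side converges to $\sum_{n\ge1, P^+(n)\le 2}1/n = \sum_{k\ge0}2^{-k}=2$, wait — rather, cleaner to argue directly that $\int_0^\infty\rho(t)\,dt = e^\gamma$ is the classical identity (e.g. from $\int_0^\infty\rho(t)e^{-st}\,dt$ at $s=0$, or from $\Psi(x,x^{1/u})\sim x\rho(u)$ and Mertens), whence $P(\infty)=1$; alternatively, divide the displayed identity by $e^\gamma\log y$ and let $y\to\infty$ with $u=u(y)\to\infty$ slowly, using that $\sum_{P^+(n)\le y}1/n\sim e^\gamma\log y$ by Mertens' theorem while the missing tail is $o(\log y)$.
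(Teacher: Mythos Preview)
Your proposal is correct and follows essentially the same route as the paper: handle $u\le 1$ trivially, then for $u>1$ set $w=\min(u,\sqrt{\log y})$, use Lemma \ref{smooth} for the tail beyond $y^w$, and apply partial summation with de Bruijn's estimate \eqref{deBruijn} on $[y,y^w]$ to produce $(\log y)\int_0^w\rho(t)\,dt+O(1)$, which equals $(\log y)\int_0^u\rho(t)\,dt+O(1)$ since $\int_w^\infty\rho\ll 1$. Your brief hesitation at the end is harmless: the deduction $P(\infty)=1$ does not follow from a single fixed $y$ because of the $O(1)$, but your second alternative (let $u\to\infty$ and then $y\to\infty$, using Mertens for $\sum_{P^+(n)\le y}1/n\sim e^\gamma\log y$) is exactly the intended one-line argument.
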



\begin{proof}
If $u\le 1$, then the lemma follows from the estimate $\sum_{n\le x}1/n = \log x+O(1)$. Assume now that $u\ge1$, and set $v=\min\{u,\sqrt{\log y}\}$. Lemma \ref{smooth} implies that
\[
\sum_{\substack{n\le y^u \\ P^+(n)\le y}} \frac{1}{n}
	= \sum_{\substack{n\le y^v \\ P^+(n)\le y}} \frac{1}{n} + O(e^{-\sqrt{\log y}}) .
\]
We use de Bruijn's estimate \eqref{deBruijn} and partial summation to deduce that
\als{
\sum_{\substack{n\le y^v \\ P^+(n)\le y}} \frac{1}{n}
	= \log y + O(1)+\sum_{\substack{y<n\le y^u \\ P^+(n)\le y}} \frac{1}{n}
	&= \log y + O(1)+\int_y^{y^v} \frac{d\Psi(y^t,y)}{y^t} \\
	&=\log y+ O(1) + (\log y)\int_1^v \rho(t) dt \\
	&= (\log y)\int_0^v \rho(t)dt + O(1) \\
	&=(\log y)\int_0^u \rho(t)dt + O(1) ,
}
thus completing the proof of the lemma.
\end{proof}

\begin{rk} Using the slightly more accurate approximation 
\[
\Psi(x,y)=x\rho(u)+\frac{(\gamma-1)x\rho'(u)}{\log y} 
				+ O_\epsilon\left(\frac{x e^{-u}}{(\log y)^2}\right)
\]
for $u\in[\epsilon,1]\cup[1+\epsilon,\sqrt{\log y}]$ (see \cite{Sai89}), one can similarly deduce the stronger estimate
\[
\sum_{\substack{n\le y^u \\ P^+(n)\le y}} \frac{1}{n} = 
	P(u)\cdot e^\gamma\log y + \gamma \rho(u) + o_{y\to\infty}(1) 
\] 
for all $u>0$. 
\end{rk}


Finally, we have the following key estimate. Its second part is a strengthening of Theorem 11 in \cite{FT}.


\renewcommand{\labelenumi}{(\alph{enumi})}

\begin{lemma}\label{around a/b} Let $y\ge10$ and $\alpha\in[-1/2,1/2)$.\begin{enumerate}
\item We have that
\[
\sum_{P^+(n)\le y} \frac{e(n\alpha)}{n} = \sum_{\substack{n\le1/|\alpha| \\ P^+(n)\le y}} \frac{1}{n}+O(1) .
\]

\item There is an absolute constant $c_0\ge2$ such that if $c_1\ge 2$, $a/b$ is a reduced fraction with $b\le (\log y)^{c_1}$ and $|\alpha-a/b|\le \min\{1/(b(\log y)^{c_1}),|\alpha|/(\log y)^{c_0}\}$, then
\[
\sum_{P^+(n)\le y} \frac{e(n\alpha)}{n}
	= \log\frac{1}{1-e(a/b)} + \frac{\Lambda(b)}{\phi(b)} (1-\rho(u))
		+ O_{c_1}\left( \frac{\log\log y}{\log y} \right) ,
\]
where $u$ is defined by $y^{-u}=|\alpha-a/b|$ and $\Lambda$ is von Mangoldt's function.
\end{enumerate}
\end{lemma}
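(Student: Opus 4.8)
The plan is to prove part (a) first, and then to derive part (b) from it by analysing $y$-smooth integers in arithmetic progressions modulo $b$.

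\emph{Part (a).} Both sides are unchanged under $\alpha\mapsto-\alpha$, so assume $0<\alpha\le 1/2$ (the case $\alpha=0$ being trivial), and set $X=1/\alpha\ge2$. Splitting the sum at $X$, the contribution of $n\le X$ differs from $\sum_{n\le X,\,P^+(n)\le y}1/n$ by $\sum_{n\le X,\,P^+(n)\le y}(e(n\alpha)-1)/n$, which is $\ll\sum_{n\le 1/(2\alpha)}\alpha+\sum_{1/(2\alpha)<n\le 1/\alpha}2/n\ll1$ since $|e(n\alpha)-1|\ll\min(1,n\alpha)$. So (a) reduces to showing $\sum_{n>X,\,P^+(n)\le y}e(n\alpha)/n\ll1$. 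Writing $S(t)=\sum_{X<n\le t,\,P^+(n)\le y}e(n\alpha)$ and summing by parts, this sum equals $\int_X^\infty t^{-2}S(t)\,dt$ up to a boundary term that vanishes (as $\Psi(t,y)/t\to0$); since the integrand is $o(1)$ for $t>y^{\sqrt{\log y}}$ by Lemma~\ref{smooth}, it suffices to bound $\int_X^{y^{\sqrt{\log y}}}t^{-2}|S(t)|\,dt$.

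I would then insert de Bruijn's estimate \eqref{deBruijn}, writing $\Psi(s,y)=s\rho(v_s)+R(s)$ with $v_s=\log s/\log y$, so that $R(s)\ll s\log(v_s+1)/\log y$ on the relevant range (and $R(s)=O(1)$ for $s\le y$), and splitting $S(t)=\int e(s\alpha)\,d\bigl(s\rho(v_s)\bigr)+\int e(s\alpha)\,dR(s)$. The first integral equals $\int_X^t e(s\alpha)g(s)\,ds$ with $g(s)=\rho(v_s)+\rho'(v_s)/\log y$; a single integration by parts, using that $g$ is bounded with $\int_0^\infty(|\rho'|+|\rho''|)<\infty$ (which follows from $u\rho'(u)=-\rho(u-1)$ and Lemma~\ref{rho}), shows it is $\ll1/\alpha$ uniformly, hence contributes $\ll1$ to the integral. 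The term $\int e(s\alpha)\,dR(s)$ is the crux: $R$ is not smooth, so a naive integration by parts wastes the oscillation of $e(s\alpha)$, while $|R(s)|$ by itself integrates to something far larger than $1/\alpha$. \emph{I expect this to be the main obstacle.} The remedy is to expose the arithmetic rather than treat $R$ as a black box: writing each $y$-smooth $n$ as $n=pm$ with $p=P^-(n)$ and $P^-(m)\ge p$ turns $\sum_{n\le t,\,P^+(n)\le y}e(n\alpha)$ into bilinear sums $\sum_{p\le y}\sum_m e(pm\alpha)$ over $y$-smooth $m$ with $P^-(m)\ge p$; iterating a bounded number of times and exploiting the cancellation in the inner $m$-sums (a smooth-number analogue of the circle method, cf.\ \cite{FT}) controls $S(t)$ in the averaged sense we need. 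Carried out with the sharper form of \eqref{deBruijn} from the Remark following Lemma~\ref{smooth2}, the same argument in fact replaces the $O(1)$ in (a) by an explicit secondary term plus $o(1)$, which is what part (b) will need.

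\emph{Part (b).} Write $\alpha=a/b+\beta$ with $|\beta|=y^{-u}$; the hypothesis $|\alpha-a/b|\le|\alpha|/(\log y)^{c_0}$ forces $|\alpha|\asymp a/b$, so $1/|\alpha|\ll b\le(\log y)^{c_1}$ and smoothness is inactive below $1/|\alpha|$. Decomposing $n$ by $g=\gcd(n,b)$ and writing $n=gn'$ with $(n',b/g)=1$ (so $P^+(n)\le y\iff P^+(n')\le y$, as $g\mid b\le y$), one obtains
\[
\sum_{P^+(n)\le y}\frac{e(n\alpha)}{n}=\sum_{g\mid b}\frac1g\sum_{\substack{(n',\,b/g)=1\\ P^+(n')\le y}}\frac{e\bigl(n'a/(b/g)\bigr)\,e(n'g\beta)}{n'} .
\]
Each inner sum I would evaluate by running over residue classes modulo $b/g$ and invoking the de Bruijn estimate for $y$-smooth integers in residue classes of small modulus (for which the $b$ here is tiny). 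Its leading ($\log y$-sized) term is independent of the residue class, so after summing against $e(n'a/(b/g))$ and then over $g\mid b$ it is annihilated, since $\sum_{g\mid b}\mu(b/g)=0$ for $b\ge2$. The surviving bounded part is extracted by Taylor-expanding $P(\,\cdot\,)$ about $u$: the factor $\sum_{g\mid b}\mu(b/g)\log g=\Lambda(b)$ (nonzero only for prime powers, matching the shape of the answer), together with $P'(u)=e^{-\gamma}\rho(u)$, produces $\tfrac{\Lambda(b)}{\phi(b)}(1-\rho(u))$, while the residue-dependent and non-principal-character lower-order contributions (evaluated through $L(1,\psi)$ for $\psi\bmod b$) assemble, via Dirichlet's identity $\log\frac1{1-e(a/b)}=\sum_{k\ge1}e(ka/b)/k$, into the main term $\log\frac1{1-e(a/b)}$. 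To reach the stated error $O_{c_1}(\log\log y/\log y)$ one must replace the $O(1)$ of Lemma~\ref{smooth2} by the refinement $\sum_{n\le y^u,\,P^+(n)\le y}1/n=P(u)e^\gamma\log y+\gamma\rho(u)+o(1)$ from the Remark after it (hence Saias's asymptotic), and carry the explicit $\gamma\rho(u)$ secondary terms through the bookkeeping; that step, together with the sharp exponential-sum input from part (a), is where the real work lies, the remainder being a long but mechanical computation using \eqref{deBruijn}, Lemma~\ref{smooth2} and standard bounds for $L(1,\psi)$.
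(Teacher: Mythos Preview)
Your outline for part (a) correctly isolates the difficulty --- bounding $\sum_{n>X,\,P^+(n)\le y} e(n\alpha)/n$ --- but the proposed remedy does not close. Writing $n=pm$ with $p=P^-(n)$ and iterating yields no cancellation: the inner sum over $m$ is the same kind of object you started with (smooth integers twisted by an exponential, now with frequency $p\alpha$), so the recursion feeds on itself rather than terminating. The paper does not argue this way. It invokes two black boxes: Theorem~10 of Fouvry--Tenenbaum \cite{FT}, which bounds $\sum_{n\le x,\,P^+(n)\le y} e(n\alpha)$ whenever $\alpha$ is not too close to a rational of small denominator (this handles $n\in[w'',w']$ with $w''\asymp(\log y)^C/|\alpha|$), and Hildebrand's short-interval estimate for $\Psi(x+h,y)-\Psi(x,y)$ \cite{Hil86}, which is sharp enough to run partial summation against $\cos(2\pi n\alpha)$ on the remaining range $[w,w'']$. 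Your Stieltjes decomposition $d\Psi = g(s)\,ds + dR(s)$ is morally related to the second of these, but one genuinely needs Hildebrand's local control on increments of $\Psi$, not merely the global bound on $|R|$.

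For part (b) your decomposition by $g=\gcd(n,b)$ and residue classes is sound for the truncated sum, and the M\"obius/von Mangoldt bookkeeping you describe does produce $\tfrac{\Lambda(b)}{\phi(b)}(1-\rho(u))$; this matches the final paragraph of the paper's argument. But you have skipped the hardest step. After splitting into residue classes the factor $e(n'g\beta)$ with $|\beta|=y^{-u}$ is still present, and you must show that the tail $n'\gg 1/|g\beta|$ contributes only $O_{c_1}(\log\log y/\log y)$, not merely $O(1)$. A sharpened part (a) does not deliver this: part (a) is about the unrestricted sum, and transplanting it to residue classes mod $b/g$ requires precisely the estimate you are trying to prove. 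The paper devotes the bulk of the proof of (b) to this tail (the claim \eqref{around a/b claim}), and needs Theorem~11 of \cite{FT} --- not just Theorem~10 --- together with another appeal to Hildebrand's estimate to cover the range where the relevant rational approximation to $\alpha$ has denominator between $2$ and $(\log y)^2$. Once that tail is dispatched, the paper simply replaces $e(n\alpha)$ by $e(an/b)$ on $n\le N$ and reads off $\log\frac{1}{1-e(a/b)}$ directly from the Taylor series $\sum_{k\ge1} e(ka/b)/k$; there is no need to pass through $L(1,\psi)$.
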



\begin{proof}  (a)  If $n\le 1/|\alpha|$, then $e(n\alpha)=1+O(n|\alpha|)$, and so
\[
\sum_{\substack{n\le 1/|\alpha| \\ P^+(n)\le y}} \frac{e(n\alpha)}{n}
	 -\sum_{\substack{n\le 1/|\alpha| \\ P^+(n)\le y}} \frac{1}{n} \ll
	\sum_{n\le 1/|\alpha| } |\alpha|   \ll 1  .
\]
Hence it suffices to show that
\eq{smooth0}{
\sum_{\substack{n>1/|\alpha| \\ P^+(n)\le y}} \frac{e(n\alpha)}{n} \ll 1.
}
Note that if $y>1/|\alpha|$, then
\[
\sum_{1/|\alpha|<n\le y} \frac{e(n\alpha)}{n}  \ll 1 
\]
using partial summation on the estimate $\sum_{n\le x} e(n\alpha) \ll 1/|\alpha|$. Therefore, in order to complete the proof of \eqref{smooth0}, it suffices to show that
\[
\sum_{\substack{n>w \\ P^+(n)\le y}} \frac{e(n\alpha)}{n} \ll1, 
\]
where $w:=\max\{y,1/|\alpha|\}$. Moreover, Lemma \ref{smooth} reduces the above relation to showing that
\eq{smooth-e1}{
\sum_{\substack{w<n\le w'\\ P^+(n)\le y}} \frac{e(n\alpha)}{n} \ll 1,
}
where $w'=y^{2\log\log y/\log\log\log y}$. In particular, we may assume that $|\alpha|\ge 1/w'$. If $x=y^u\in[w,w']$, then Theorem 10 in \cite{FT} and relation \eqref{deBruijn2} imply that
\[
\sum_{\substack{n\le x \\ P^+(n)\le y}} e(n\alpha)  \ll \Psi(x,y) \cdot \frac{\log(u+1)}{\log y} \ll \frac{x}{e^u\log y},
\]
provided that $|\alpha|>(\log y)^C/x$, for $C$ sufficiently large. (This lower bound on $|\alpha|$ guarantees that the parameter $q$ in Theorem 10 of \cite{FT} is $\ge2$, as required.) Consequently, if we set $w''=\max\{y,(\log y)^C/|\alpha|\}$, then partial summation implies that
\[
\sum_{\substack{w''<n\le w' \\ P^+(n)\le y}} \frac{e(n\alpha)}{n}  \ll 1 .
\]
Then \eqref{smooth-e1} follows from
\eq{smooth-e3}{
\sum_{\substack{w<n\le w'' \\ P^+(n)\le y}} \frac{e(n\alpha)}{n} \ll 1,
}
which is trivial unless $|\alpha|\le (\log y)^C/y$, so that $w''=(\log y)^C/|\alpha|$. 

Let $y_m= w\cdot (1+1/(\log y)^{C+1})^m$ for $m\ge-1$. Let $M$ be the largest integer for which $y_M\le w''$. Then $M\ll (\log y)^{C+1}\log\log y$. If $x\in(y_{m-1},y_m]$ for some $m\in\{1,\dots,M\}$ and $u=\frac{\log x}{\log y}$, then Theorem 3 in \cite{Hil86} implies that
\als{
\Psi(x,y) &= \Psi(y_{m-2},y)+(x-y_{m-2}) \rho(u) \left( 1 + O\left(\frac{\log(u+1)}{\log y}\right) \right)  \\
	&=  \Psi(y_{m-2},y)+(x-y_{m-2}) \rho(u)  + O\left(\frac{x}{(\log y)^{C+2}}\right) .
}
Therefore, partial summation and Lemma \ref{rho} imply that
\als{
\sum_{\substack{y_{m-1}<n\le y_m\\ P^+(n)\le y}} \frac{e(n\alpha)}{n} 
	&= \int_{y_{m-1}}^{y_m} \frac{e(x\alpha)}{x} \left( \rho(u) + \frac{x-y_{m-1}}{x\log y} \rho'(u) \right) dx  
		+ O\left( \frac{1+|\alpha|(y_m-y_{m-1})}{(\log y)^{C+2}}\right)  \\
	&= \int_{y_{m-1}}^{y_m} \frac{e(x\alpha)}{x} \rho(u) dx  
		+ O\left( \frac{1+|\alpha|(y_m-y_{m-1})}{(\log y)^{C+2}}\right)  .
}
Summing the above relation over $m\in\{1,\dots,M\}$ and bounding trivially the contribution of $n\in(y_M,w'']$ implies that
\[
\sum_{\substack{w<n\le w''\\ P^+(n)\le y}} \frac{e(n\alpha)}{n} 
	=  \int_{w}^{w''} \frac{e(x\alpha)}{x} \rho(u) dx  
		+ O\left( \frac{\log\log y}{\log y} \right) .
\]
Finally, integration by parts gives us that
\als{
\int_{w}^{w''} \frac{e(x\alpha)}{x} \rho(u) dx  &= \int_w^{w''} \frac{d}{dx}\left( \frac{e(x\alpha)}{2\pi i \alpha}\right)
	\frac{\rho(u)}{x} dx \\
	&= \left.\frac{e(x\alpha)}{2\pi i\alpha x} \rho(u)\right|_{x=w}^{w''}
		+ \int_w^{w''} \frac{e(x\alpha)}{2\pi i\alpha } 
			\left( \frac{\rho(u)}{x^2} - \frac{\rho'(u)}{x^2\log y} \right) dx  \\
	&\ll \frac{1}{|\alpha| w} \le 1,
}
by the definition of $w$ and Lemma \ref{rho}. This completes the proof of \eqref{smooth-e3} and thus the proof of part (a) of the lemma.


\medskip


(b) Write $\alpha=a/b+\delta$ and set $N=1/(|\delta| \log y)$. We note that, since $|\delta|\le|\alpha|/(\log y)^{c_0}<|\alpha|$, by assumption, we must have that $b\ge2$. 

We start by estimating the part of the sum with $n>N$. We claim that
\eq{around a/b claim}{
\sum_{\substack{P^+(n)\le y \\ n>N}} \frac{e(n\alpha)}{n} \ll  \frac{\log\log y}{\log y} .
}
Note that if $y>N$, then
\[
\sum_{N<n\le y} \frac{e(n\alpha)}{n} \ll  \frac{1}{|\alpha|N} = \frac{|\delta|\log y}{|\alpha|} \le  \frac{1}{\log y}
\]
by partial summation and the estimate $\sum_{n\le x}e(n\alpha)\ll 1/|\alpha|$. Therefore, it suffices to prove that
\[
\sum_{\substack{P^+(n)\le y \\ n>N'}} \frac{e(n\alpha)}{n} \ll  \frac{\log\log y}{\log y} ,
\]
where $N':=\max\{N,y\}$. Furthermore, Lemma \ref{smooth} reduces the above estimate to showing that
\eq{around a/b claim1}{
\sum_{\substack{P^+(n)\le y \\ N'<n\le N''}} \frac{e(n\alpha)}{n} \ll  \frac{\log\log y}{\log y} ,
}
where $N''=y^{2\log\log y/\log\log\log y}$. For this sum to have any summands we need that $N\le N''$. 

Next, we fix $X\in[N',N''/2]$ and estimate 
\[
\sum_{\substack{P^+(n)\le y \\ n\le x}} e(n\alpha) 
\]
for $x\in[X,2X]$. Let $B$ be the constant in Theorem 10 of \cite{FT} when the parameters $A$ and $\delta$ there equal 100 and $1/10$, respectively, and set $c_0=B+1$ and $Q=2X/(\log y)^B$. There is some reduced fraction $r/s=r(X)/s(X)$ with $|\alpha-r/s|\le 1/(sQ)$ and $s\le Q$. Note that 
\[
|\alpha|\ge (\log y)^{c_0} |\delta| = \frac{(\log y)^B}{N} \ge \frac{(\log y)^B}{X} \ge \frac{2}{Q} .
\]
In particular, we must have that $s\ge2$. If, in addition, $s\ge(\log y)^2/2$, then
Theorem 11 in \cite{FT} and relation \eqref{deBruijn2} imply that
\als{
\sum_{\substack{P^+(n)\le y \\ n\le x}} e(n\alpha) 
	\ll \Psi(x,y) \left( \frac{1}{(\log y)^{1.9}} \cdot \frac{\log(u+1)}{\log y} +  \frac{1}{(\log y)^{100}} \right) 
	\ll \frac{x}{(\log y)^{2.9}} ,
}
where $u=\frac{\log x}{\log y}$. Therefore
\eq{around a/b e1}{
\sum_{\substack{P^+(n)\le y \\ X<n\le \min\{2X,N''\}}} \frac{e(n\alpha)}{n} \ll \frac{1}{(\log y)^{2.9}}
}
if $s\ge (\log y)^2/2$. 

Consider now $X$ such that $2\le s<(\log y)^2/2$ and set $\eta=\alpha-r/s$. We claim that $X\le N(\log y)^{B+1}$. We argue by contradiction: assume, instead, that 
$X>N(\log y)^{B+1}$. Then $Q>2N\log y=2/|\delta|$ and thus $|\delta|>1/(sQ)$ and $Q>2b$. In particular, $r/s\neq a/b$ and thus $1/(bs)\le |a/b-r/s|\le |\delta|+1/(sQ)\le |\delta|+1/(2bs)$, which implies that $s\ge1/(2b|\delta|)\ge(\log y)^{c_1}/2\ge(\log y)^2/2$, a contradiction.

Using the above information, we are going to show that
\eq{around a/b claim2}{
\sum_{\substack{P^+(n)\le y \\ X<n\le \min\{2X,N''\}}} \frac{e(n\alpha)}{n} \ll 
		\frac{1}{\log y}  
}
when $2\le s<(\log y)^2/2$. If this relation holds, then \eqref{around a/b claim1} follows by a straightforward dyadic decomposition argument. We use the stronger relation \eqref{around a/b e1} for the $O(\log N'')=O((\log y)^{1.1})$ dyadic intervals corresponding to $X\in[N(\log y)^{B+1/2},N'']$, and we use \eqref{around a/b claim2} for the $O(\log\log y)$ dyadic intervals with $X\in[N',\min\{N(\log y)^{B+1/2},N''\}]$.

It remains to prove \eqref{around a/b claim2} when $2\le s<(\log y)^2/2$, in which case $X$ lies in the interval $[N',\min\{N(\log y)^{B+1},N''\}]$. In addition, note that $|\eta|\le 1/(sQ)\le (\log y)^B/X$. Let $X_m= X\cdot (1+1/(\log y)^{B+1})^m$ for $m\ge-1$. Let $M$ be the largest integer for which $X_M\le \min\{2X,N''\}$. Then $M\ll (\log y)^{B+1}$. Consider $x\in(X_{m-1},X_m]$ for some $m\in\{1,\dots,M\}$ and set $u=\frac{\log x}{\log y}$. Since $s\le(\log y)^2/2$, Theorems 2 and 5 in \cite{FT} imply that
\als{
\sum_{\substack{P^+(n)\le y \\ n\le x }} e(nr/s) 
	&=\sum_{s=cd} \sum_{\substack{P^+(n)\le y \\ m\le x/d \\ (m,c)=1 }} e(mr/c) \\
	&=\sum_{s=cd} \sum_{\substack{1\le j\le c\\ (j,c)=1}}\frac{e(rj/c)}{\phi(c)}
			\sum_{\substack{P^+(n)\le y \\ m\le x/d \\ (m,c)=1 }} 1
		+ O\left(\frac{x}{(\log y)^{B+100}}\right) \\
	&= \sum_{s=cd}  \frac{\mu(c)}{\phi(c)} \sum_{g|c} \mu(g) \Psi(x/(gd),y) 
		+ O\left(\frac{x}{(\log y)^{B+100}}\right) ,
}
since $\sum_{1\le j\le c,\,(j,c)=1}e(rj/c)=\mu(c)$ (see, for example, Lemma \ref{gauss sum} below). Note that
\[
\sum_{s=cd}  \frac{\mu(c)}{\phi(c)} \sum_{g|c} \mu(g) \cdot \frac{1}{gd} = 0 
\]
for $s>1$. Therefore 
\eq{smooth exp sum}{
\sum_{\substack{P^+(n)\le y \\ n\le x }} e(nr/s) 
		= \sum_{s=cd}  \frac{\mu(c)}{\phi(c)} \sum_{g|c} \mu(g) \left(\Psi\left(\frac{x}{gd},y\right) 
			- \frac{\Psi(x,y)}{gd}\right)
		+ O\left(\frac{x}{(\log y)^{B+100}}\right) .
}
We apply Theorem 3 in \cite{Hil86} to find that
\als{
\Psi\left(\frac{x}{k},y\right) - \frac{\Psi(x,y)}{k} 
	&=\Psi\left(\frac{X_{m-2}}{k},y\right) - \frac{\Psi(X_{m-2},y)}{k}   + \frac{x-X_{m-2}}{k} 
		\left(\rho\left(u-\frac{\log k}{\log y}\right) - \rho(u) \right) \\ 
	&\quad+ O\left(\frac{X}{k(\log y)^{B+2}}\right)  .
}
for $x\in[X_{m-1},X_m]$ and $k|s$. Consequently,
\als{
\sum_{\substack{P^+(n)\le y \\ n\le x }} e(nr/s)  
	&=  \sum_{s=cd} \sum_{g|c} \frac{\mu(c)\mu(g)(x-X_{m-2})}{dg\phi(c)} 
				 \left(\rho\left(u-\frac{\log k}{\log y}\right) - \rho(u) \right) \\
	&\quad  + c   + O\left(\frac{X}{(\log y)^{B+2}}\right) ,
}
for some constant $c=c(X,y,m,r,s)\in\R$. Since $e(n\alpha)=e(n\eta)e(nr/s)$, applying partial summation as in the proof of part (a), we deduce that
\als{
\sum_{\substack{X_{m-1}<n\le X_m  \\ P^+(n)\le y}} \frac{e(n\alpha)}{n} 
	&=  \sum_{s=cd}  \frac{\mu(c)}{d\phi(c)} \sum_{g|c} \frac{\mu(g)}{g}
		\int_{X_{m-1}}^{X_m} \frac{e(\eta x)}{x} \left( \rho\left(u-\frac{\log(dg)}{\log y}\right) -\rho(u) \right) dx  \\
	&\quad	+ O\left( \frac{1+|\eta|(X_m-X_{m-1})}{(\log y)^{B+2}}\right)  .
}
Setting $X'=\min\{2X,N''\}$, summing the above relation over $m\in\{1,\dots,M\}$, and bounding trivially the contribution of $n\in(X_M,X']$ implies that
\[
\sum_{\substack{X<n\le X' \\ P^+(n)\le y}} \frac{e(n\alpha)}{n} 
	 =  \sum_{s=cd}  \frac{\mu(c)}{d\phi(c)} \sum_{g|c} \frac{\mu(g)}{g}
		\int_{X}^{X'}
			\frac{e(\eta x)}{x} \left( \rho\left(u-\frac{\log(dg)}{\log y}\right) -\rho(u) \right) dx   
	+ O\left( \frac{1}{\log y} \right) .
\]
Finally, we have that
\als{
\int_{X}^{X'} \frac{e(\eta x)}{x}  \left( \rho\left(u-\frac{\log(dg)}{\log y}\right) -\rho(u) \right) dx   
	\ll \frac{\log(dg)}{\log y} \int_{X}^{2X} \frac{dx}{x} 
	= \frac{(\log2)\log(dg)}{\log y} , 
}
whence
\[
\sum_{\substack{X<n\le X' \\ P^+(n)\le y}} \frac{e(n\alpha)}{n} 
	\ll\frac{1}{\log y}\left(1+ \sum_{s=cd} \sum_{g|c} \frac{c\log(dg)}{d\phi(c)^2}\right) \ll \frac{1}{\log y}, 
\]
which completes the proof of \eqref{around a/b claim2} and thus of \eqref{around a/b claim}. 

To conclude, we have shown that
\[
\sum_{P^+(n)\le y} \frac{e(n\alpha)}{n}
 	= \sum_{\substack{P^+(n)\le y \\ n\le N}} \frac{e(n\alpha)}{n} + O\left(\frac{\log\log y}{\log y}\right)  .
\]
Since $e(n\alpha)=e(n\alpha)+O(n|\delta|)$, we further deduce that
\[
\sum_{P^+(n)\le y} \frac{e(n\alpha)}{n}
 	= \sum_{\substack{P^+(n)\le y \\ n\le N}} \frac{e(an/b)}{n} + O\left(\frac{\log\log y}{\log y}\right)  .
\]
Observe that
\eq{around a/b e10}{
\sum_{n\le \min\{N,y\}} \frac{e(na/b)}{n} 
	&= \log\frac{1}{1-e(a/b)} - \sum_{n>\min\{N,y\}} \frac{e(an/b)}{n}  \\
	&=  \log\frac{1}{1-e(a/b)} + O\left(\frac{1}{\min\{N,y\}\|a/b\|}\right), 
}
where $\|x\|$ denotes the distance of $x$ from its nearest integer. Note that $\|a/b\|\ge|\alpha|-|\delta|\ge |\alpha|/2\ge|\delta|(\log y)^{c_0}/2$ by our assumption that $|\delta|\le|\alpha|/(\log y)^{c_0}$. Since we also have that $|\alpha|\ge1/(2(\log y)^{c_1})$, a consequence of the fact that $b\ge2$, the error term in \eqref{around a/b e10} is
\[
\ll \frac{1}{N|\alpha|}+\frac{1}{y|\alpha|} \le \frac{1}{(\log y)^{c_0-1}} 
	 + \frac{2(\log y)^{c_1}}{y} \ll_{c_1} \frac{1}{\log y}
\]
by our assumption that $|\delta|\le|\alpha|/(\log y)^{c_0}$ . Hence, the lemma is reduced to showing that
\[
\sum_{\substack{P^+(n)\le y \\ y<n\le N}} \frac{e(an/b)}{n}
 	= \frac{\Lambda(b)}{\phi(b)}(1-\rho(u)) + O_{c_1}\left(\frac{\log\log y}{\log y} \right) .
\]
By Lemma \ref{smooth}, it suffices to show that
\eq{around a/b claim3}{
\sum_{\substack{P^+(n)\le y \\ y<n\le y^v}} \frac{e(an/b)}{n}
 	= \frac{\Lambda(b)}{\phi(b)}(1-\rho(u)) + O_{c_1}\left(\frac{\log\log y}{\log y} \right) ,
}
where $y^v:=\min\{N,y^{2\log\log y/\log\log\log y}\}$. Since $2\le b\le(\log y)^{c_1}$, the argument leading to \eqref{smooth exp sum} implies that
\als{
\sum_{\substack{P^+(n)\le y \\ n\le x }} e(an/b) 
		= \sum_{b=cd} \sum_{g|c} \frac{\mu(c)\mu(g)}{\phi(c)} \left(\Psi\left(\frac{x}{gd},y\right) -\frac{\Psi(x,y)}{gd}\right)
		+ O_{c_1}\left(\frac{x}{(\log y)^{100}}\right) 
}
for $x\in[y,y^v]$. Therefore partial summation yields
\als{
\sum_{\substack{P^+(n)\le y \\ y<n\le y^v }} \frac{e(an/b)}{n}
	&=  \sum_{b=cd} \sum_{g|c} \frac{\mu(c)\mu(g)}{dg\phi(c)} 
			\left(\sum_{\substack{y/(dg)<n\le y^v/(dg) \\ P^+(n)\le y}} \frac{1}{n} 
				 - \sum_{\substack{y<n\le y^v \\ P^+(n)\le y}} \frac{1}{n}  \right)
		+ O_{c_1}\left(\frac{1}{(\log y)^{98}}\right) \\
	&=   \sum_{b=cd} \sum_{g|c} \frac{\mu(c)\mu(g)}{dg\phi(c)} 
			\left(\log(dg) - \sum_{\substack{y^v/(dg)<n\le y^v \\ P^+(n)\le y}} \frac{1}{n} \right) 
		+ O_{c_1}\left(\frac{1}{(\log y)^{98}}\right) .
}
Arguing as in Lemma \ref{smooth2}, we apply partial summation on relation \eqref{deBruijn} to deduce that
\[
\sum_{\substack{y^v/(dg)<n\le y^v \\ P^+(n)\le y}} \frac{1}{n}
	= \rho(v) \log(dg) + O\left(\frac{(1+\log(dg))^2}{\log y}\right) . 
\]
Consequently,
\als{
\sum_{\substack{P^+(n)\le y \\ y<n\le y^v }} \frac{e(an/b)}{n}
	&= (1-\rho(v))  \sum_{b=cd} \sum_{g|c} \frac{\mu(c)\mu(g)\log(dg)}{dg\phi(c)}  + O_{c_1}\left(\frac{1}{\log y}\right) \\
	&= \frac{\Lambda(b)}{\phi(b)}(1-\rho(v)) + O\left(\frac{1}{\log y}\right) \\
	&= \frac{\Lambda(b)}{\phi(b)}(1-\rho(u)) + O\left(\frac{\log\log y}{\log y}\right)  ,
}
since $v=\min\{u-\log\log y/\log y,2\log\log y/\log\log\log y\}$ and $\rho(u) \ll  u^{-u}$ by Lemma \ref{rho}. This completes the proof of \eqref{around a/b claim3} and thus the proof of the lemma.
\end{proof}


\begin{cor}\label{fund ineq lem} Let $\chi$ be a Dirichlet character modulo $q$ and $\alpha\in\R$. For all $z,y\ge1$, we have  
\[
\abs{ \sum_{\substack{1\le |n|\le z \\ P^+(n)\le y}} \frac{\chi(n)(1-e(n\alpha))}{n} }
	\le 2e^\gamma\log y + 2\log 2+ O\left( \frac{\log\log y}{\log y}\right) .
\]
\end{cor}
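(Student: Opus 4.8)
The plan is to pair $n$ with $-n$, split according to the parity of $\chi$, and reduce the estimate to a lower bound for the real part of a smooth exponential sum, which is then supplied by Lemma~\ref{around a/b}. Since $e(n\alpha)$ has period $1$ in $\alpha$, I may assume $\alpha\in[-1/2,1/2)$; the sum vanishes when $\alpha=0$, so assume $0<|\alpha|\le 1/2$. Grouping each $n$ with $-n$ and using $\chi(-n)=\chi(-1)\chi(n)$, the sum in the Corollary equals
\[
\sum_{\substack{1\le n\le z\\ P^+(n)\le y}}\frac{\chi(n)}{n}\Bigl((1-e(n\alpha))-\chi(-1)(1-e(-n\alpha))\Bigr).
\]
If $\chi(-1)=1$, the bracket is $e(-n\alpha)-e(n\alpha)=-2i\sin(2\pi n\alpha)$, of modulus $\le 2$, so the triangle inequality together with $\sum_{P^+(n)\le y}1/n=\prod_{p\le y}(1-1/p)^{-1}=e^\gamma\log y+O(1/\log y)$ gives a bound of $2e^\gamma\log y+O(1/\log y)$, which is more than enough. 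The substantive case is $\chi(-1)=-1$, where the bracket is $2-e(n\alpha)-e(-n\alpha)=2(1-\cos 2\pi n\alpha)\ge 0$, so the sum is bounded in absolute value by $2\sum_{n\le z,\,P^+(n)\le y}(1-\cos 2\pi n\alpha)/n=2\,\Re\sum_{n\le z,\,P^+(n)\le y}(1-e(n\alpha))/n$.

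Each summand of this last sum has nonnegative real part, so the partial sum is nondecreasing in $z$ and bounded by its value at $z=\infty$. Writing $\Sigma:=\sum_{P^+(n)\le y}e(n\alpha)/n$, an absolutely convergent sum over $y$-smooth $n$, the quantity to estimate is thus at most $2\bigl(\sum_{P^+(n)\le y}1/n-\Re\Sigma\bigr)$, and by Mertens' theorem it remains to show $\Re\Sigma\ge-\log 2+O(\log\log y/\log y)$. For this I run a Dirichlet-approximation dichotomy: with $c_0$ the constant of Lemma~\ref{around a/b}(b), choose a reduced fraction $a/b$ with $b\le(\log y)^{c_0+1}$ and $|\alpha-a/b|\le 1/(b(\log y)^{c_0+1})$. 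If also $|\alpha-a/b|\le|\alpha|/(\log y)^{c_0}$, then Lemma~\ref{around a/b}(b) applies with $c_1=c_0+1$ (its hypotheses force $b\ge 2$), and on taking real parts, using $|1-e(a/b)|=2|\sin\pi a/b|\le 2$, $\Lambda(b)\ge 0$, and $\rho(u)\le 1$,
\[
\Re\Sigma=-\log|1-e(a/b)|+\tfrac{\Lambda(b)}{\phi(b)}(1-\rho(u))+O\Bigl(\tfrac{\log\log y}{\log y}\Bigr)\ge-\log 2+O\Bigl(\tfrac{\log\log y}{\log y}\Bigr).
\]
Otherwise $|\alpha|<(\log y)^{c_0}|\alpha-a/b|\le 1/(b\log y)\le 1/\log y$, hence $1/|\alpha|\ge\log y$, and Lemma~\ref{around a/b}(a) gives $\Re\Sigma=\sum_{n\le 1/|\alpha|,\,P^+(n)\le y}1/n+O(1)\ge\log\log y+O(1)$, which exceeds $-\log 2$ once $y$ is large (the bounded range of small $y$ being absorbed into the error term). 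Combining the two cases yields the required lower bound for $\Re\Sigma$, and hence the Corollary.

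The main obstacle is precisely this last step. The naive triangle-inequality argument only produces an unspecified $O(1)$ additive constant, so to pin it down to $2\log 2$ one must pass through Lemma~\ref{around a/b}(b) and check that the two explicit contributions $-\log|1-e(a/b)|$ and $\tfrac{\Lambda(b)}{\phi(b)}(1-\rho(u))$ enter with the right sign, while isolating cleanly the complementary ``$\alpha$ small'' regime handled by part~(a); one must also treat the borderline case $\alpha=a/b$ exactly, reading Lemma~\ref{around a/b}(b) with $u=\infty$ and $\rho(u)=0$. Everything else — the periodicity reduction, the pairing, the parity split, the even case, and the monotonicity trick — is routine.
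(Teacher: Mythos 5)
Your proof is correct and follows essentially the same route as the paper's: pair $\pm n$, reduce the odd case to bounding $\sum_{P^+(n)\le y}(1-\cos 2\pi n\alpha)/n$ from above, and invoke Lemma~\ref{around a/b}(a) or (b) according to a Diophantine dichotomy on $\alpha$. The only cosmetic difference is that you phrase the dichotomy via a fixed Dirichlet approximation $a/b$ and the condition $|\alpha-a/b|\le|\alpha|/(\log y)^{c_0}$, whereas the paper splits directly on whether $|\alpha|\le(\log y)^{-c_0}$; these are interchangeable.
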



\begin{proof} If $\chi$ is an even character, then $\chi(n)/n+\chi(-n)/(-n)=0$, so that 
\[
\abs{ \sum_{\substack{1\le |n|\le z \\ P^+(n)\le y}} \frac{\chi(n)(1-e(n\alpha))}{n} }
	=2 \abs{ \sum_{\substack{n\le z \\ P^+(n)\le y}} \frac{\chi(n)e(n\alpha)}{n} }
	\le 2\sum_{P^+(n)\le y} \frac{1}{n} = 2e^\gamma\log y + O\left(\frac{1}{\log y}\right),
\]
by the Prime Number Theorem, as desired. We may therefore assume  that $\chi$ is an odd character, so that
\[
\abs{ \sum_{\substack{1\le |n|\le z \\ P^+(n)\le y}} \frac{\chi(n)(1-e(n\alpha))}{n} }
	=2 \abs{ \sum_{\substack{n\le z \\ P^+(n)\le y}} \frac{\chi(n)(1-\cos(2\pi n\alpha))}{n} }  
	\le 2 \sum_{\substack{n\ge 1 \\ P^+(n)\le y}} \frac{1-\cos(2\pi n\alpha)}{n}  .
\]
If $|\alpha|\le 1/(\log y)^{c_0}$, where $c_0$ is the constant from Lemma \ref{around a/b}(b), then Lemma \ref{around a/b}(a) implies that
\als{
 \sum_{\substack{n\ge 1 \\ P^+(n)\le y}} \frac{1-\cos(2\pi n\alpha)}{n} 
 	 = \sum_{\substack{n>1/|\alpha| \\ P^+(n)\le y}} \frac{1}{n} +O(1) 
	&\le  \sum_{P^+(n)\le y} \frac{1}{n} - \sum_{n\le (\log y)^{c_0}} \frac{1}{n} +O(1) \\
	&\le e^\gamma\log y - \log\log y +O(1),
}
which is admissible. Finally, assume that $|\alpha|>1/(\log y)^{c_0}$, and consider a reduced fraction $a/b$ with $b\le(\log y)^{2c_0}$ and $|\alpha-a/b|\le1/(b(\log y_0)^{2c_0})\le |\alpha|/(\log y)^{c_0}$. Then Lemma \ref{around a/b}(b) and the Prime Number Theorem imply that
\[
 \sum_{\substack{n\ge 1 \\ P^+(n)\le y}} \frac{1-\cos(2\pi n\alpha)}{n} 
 	= e^\gamma\log y + \log|1-e(a/b)| - \frac{\Lambda(b)}{\phi(b)}(1-\rho(u)) 
		+ O\left(\frac{\log\log y}{\log y}\right),
\]
where $|\alpha-a/b|=y^{-u}$. Since $\rho(u)\le1$ and $|1-e(a/b)|\le 2$, we deduce that
\[
 \sum_{\substack{n\ge 1 \\ P^+(n)\le y}} \frac{1-\cos(2\pi n\alpha)}{n} \le e^\gamma\log y +\log 2 
 	+ O\left(\frac{\log\log y}{\log y}\right),
\]
which concludes the proof.
\end{proof}


\section{Outline of the proofs of Theorems \ref{main thm} and \ref{main thm even} and proof of Theorem \ref{comparison thm}}\label{outline}

We first deal with Theorem \ref{main thm even}. For the lower bound, note that if $\chi$ is even and $q>3$, then \cite[eqn. (9.18), p. 310]{MV07} yields the inequality
\[
M(\chi)
	\ge \abs{\sum_{n\le q/3}\chi(n)} 
	= \frac{\sqrt{q}}{2\pi} \abs{\sum_{n=1}^\infty \frac{\chi(n)(e(n/3)-e(-n/3))}{n} }  .
\]
Since we also have that
\eq{leg 3}{
e(n/3) - e(-n/3) =  i\sqrt{3} \leg{n}{3} ,
}
we deduce that
\[
M(\chi)
	\ge \frac{\sqrt{3q}}{2\pi} \abs{\sum_{n=1}^\infty \frac{\chi(n)\leg{n}{3}}{n}} 
		= \frac{\sqrt{3q}}{2\pi} \abs{L\left(1,\chi\leg{\cdot}{3}\right)} 
\]
for all even characters $\chi$. The lower bound in Theorem \ref{main thm even} is a direct consequence of the above inequality and of the following result, whose proof is a straightforward application of the methods in \cite{GS07}:


\begin{theorem}[Granville - Soundararajan]\label{distr of euler products}
If $\psi$ is a character modulo some $b\in\{1,2,3\}$, $\CX$ is the set of odd or of even characters mod $q$, and $1\le \tau\le\log\log  q-M$ for some $M\ge0$, then
\[
\frac{1}{|\CX|} \#\left\{\chi\in\CX:   \abs{ L(1,\chi\psi) } > \frac{\phi(b)}{b} e^\gamma \tau \right\}	
	=  \exp\left\{- \frac{e^{\tau+A}}{\tau}  \left(1+O\left( \tau^{-1/2}+e^{-M/2} \right)\right)  \right\}  .
\]
\end{theorem}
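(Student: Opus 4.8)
The plan is to adapt the moment/Laplace-transform machinery of Granville--Soundararajan \cite{GS06,GS07} to the slightly more general quantity $L(1,\chi\psi)$, where $\psi$ has fixed small conductor $b\in\{1,2,3\}$ and $\chi$ ranges over characters of a fixed parity. First I would record the key structural point: since $q$ is prime and $b\in\{1,2,3\}$ is coprime to $q$, the twisted character $\chi\psi$ has conductor $q$ or $bq$, and multiplicativity gives the Euler product $L(1,\chi\psi)=\prod_p(1-\chi(p)\psi(p)/p)^{-1}$. The behaviour of this product is governed, as usual, by the short part $\prod_{p\le z}$ with $z$ a suitable power of $\log q$; the tail $\prod_{p>z}$ contributes only a $1+o(1)$ factor on a density-$1-o(1)$ set of $\chi$, by a second-moment estimate (the variance of $\sum_{p>z}\chi(p)\psi(p)/p$ over $\chi\in\CX$ is $\ll\sum_{p>z}1/p^2\ll 1/z$, using orthogonality of characters mod $q$ together with the fact that restricting to odd/even characters only costs a factor $2$ and introduces the harmless diagonal term $p\equiv\pm1$).

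The main computation is to evaluate, for $\CX$ the set of even (resp. odd) characters mod $q$, the Laplace transform
\[
\frac{1}{|\CX|}\sum_{\chi\in\CX}\exp\Bigl(s\,\Re\log L(1,\chi\psi)\Bigr)
\]
for $s$ in a suitable range, and to show it matches the corresponding integral against the random Euler product $L(1,X\psi)=\prod_p(1-X_p\psi(p)/p)^{-1}$, where the $X_p$ are independent uniform on the unit circle. Expanding $\Re\log L(1,\chi\psi)=\sum_p\sum_{k\ge1}\Re(\chi(p^k)\psi(p^k))/(kp^k)$ and truncating to $p\le z$, $p^k\le q^{1/10}$ say, the sum over $\chi\in\CX$ becomes, by character orthogonality, a sum over integers $n\le q^{1/10}$ with $n\equiv\pm1\pmod q$, i.e. only $n=1$ survives; the restriction to fixed parity replaces $n\equiv1$ by $n\equiv\pm1$ and thus only contributes the extra term $n=1$ again (the $n=q-1$ term is too large to appear). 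Hence the character average over $\CX$ agrees with the random-model average up to an error $\exp(O(q^{-1/20}))$. This reduces the theorem to the purely probabilistic statement that
\[
\prob\Bigl(|L(1,X\psi)|>\tfrac{\phi(b)}{b}e^\gamma\tau\Bigr)=\exp\Bigl\{-\tfrac{e^{\tau+A}}{\tau}(1+O(\tau^{-1/2}))\Bigr\},
\]
with the constant $A$ from \eqref{A}; for $b=1$ this is exactly the Granville--Soundararajan result, and for $b=2,3$ one checks that $\psi(p)\in\{\pm1\}$ only permutes the distribution of the relevant $X_p$ (replacing $X_p$ by $\psi(p)X_p$, still uniform on the circle), so the distribution of $|L(1,X\psi)|$ is \emph{identical} to that of $|L(1,X)|$, while the normalising constant $\frac{\phi(b)}{b}e^\gamma$ absorbs the (bounded) contribution of the primes $p\mid b$ --- for $b=2$ the factor is $(1-X_2/2)^{-1}$ with the $p=2$ factor of the product removed via $\phi(2)/2=1/2$, and similarly for $b=3$. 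One must also take the range $1\le\tau\le\log\log q-M$ into account: this is exactly where the truncation level $z$ relative to $q$ is large enough that the tail estimate and the orthogonality truncation both go through with the stated error $e^{-M/2}$, precisely as in \cite{GS06}.

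I expect the main obstacle to be purely bookkeeping: verifying that the parity restriction and the small modulus $b$ genuinely do not affect the leading asymptotics, i.e. that every place where \cite{GS06,GS07} uses ``$\chi$ ranges over all characters mod $q$'' can be replaced by ``$\chi$ ranges over a parity class, twisted by $\psi$'' at the cost only of the claimed error terms. Concretely, the one genuine check is that in the orthogonality step the congruence $n\equiv-1\pmod q$ never produces a spurious main term in the relevant range of $n$ (it does not, since $n\le q^{1/10}<q-1$), and that the Euler factors at $p\mid b$ are uniformly bounded away from $0$ and $\infty$ so that $\log|L(1,\chi\psi)|$ differs from $\log|\prod_{p\nmid b}(1-\chi(p)\psi(p)/p)^{-1}|$ by $O(1)$, which is then exactly compensated by the constant $\phi(b)/b$ in the normalisation. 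Given these checks, the theorem follows by quoting the large-deviation estimate for the random Euler product from \cite{GS06} verbatim. I would therefore present the proof as: (i) Euler product and truncation; (ii) tail via second moment; (iii) orthogonality reduces the character average to the random-model average; (iv) invoke \cite{GS06} and handle $b\in\{2,3\}$ by the substitution $X_p\mapsto\psi(p)X_p$.
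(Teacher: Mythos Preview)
The paper does not actually supply a proof of this theorem: it is stated with the remark that its ``proof is a straightforward application of the methods in \cite{GS07}'' (together with \cite{GS06}). Your outline is a correct fleshing-out of that application, and follows the same route the paper implicitly has in mind: truncate the Euler product, control the tail by a variance estimate, use orthogonality over the parity class $\CX$ to replace the character average by the random-model average, and then invoke the large-deviation estimate for the random Euler product.

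One phrasing in step (iv) should be tightened. You write that the substitution $X_p\mapsto\psi(p)X_p$ shows the distribution of $|L(1,X\psi)|$ is \emph{identical} to that of $|L(1,X)|$; strictly, it shows that $|L(1,X\psi)|$ has the same law as $\bigl|\prod_{p\nmid b}(1-X_p/p)^{-1}\bigr|$, which differs from $|L(1,X)|$ by the single Euler factor at $p\mid b$. What remains to check is that
\[
\prob\Bigl(\bigl|\textstyle\prod_{p\nmid b}(1-X_p/p)^{-1}\bigr|>\tfrac{\phi(b)}{b}e^\gamma\tau\Bigr)
=\exp\Bigl\{-\tfrac{e^{\tau+A}}{\tau}(1+O(\tau^{-1/2}))\Bigr\},
\]
i.e.\ that deleting one small prime from the product while simultaneously lowering the threshold by the corresponding Mertens factor $(1-1/p)$ leaves the saddle-point analysis of \cite{GS06} unchanged, including the constant $A$. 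This is true---the moment generating function changes by the single bounded factor $\E[|1-X_p/p|^{-s}]^{-1}$, and the constant $A$ arises from the integral form of $\sum_p\log I_0(s/p)$ via the prime number theorem, to which one prime contributes negligibly---but it is the one place where you cannot simply quote \cite{GS06} verbatim; you have to reopen the computation for a line or two. With that caveat, your sketch is correct and matches the paper's (unstated) intended argument.
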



\nin
Finally, the upper bound in Theorem \ref{main thm even} follows from Theorems \ref{distr of euler products} and \ref{structure thm even}. (The proof of the latter theorem is independent of the proof of the upper bound of Theorem \ref{main thm even}, as we will see.)

\medskip

Next, we turn to Theorem \ref{main thm}. Its lower bound is a direct consequence of Theorems \ref{comparison thm} and \ref{distr of euler products}. So it remains to outline the proof of the upper bound in Theorem \ref{main thm}, as well as to prove Theorem \ref{comparison thm}. In the heart of these two proofs lies a moment estimate which implies that the bulk of the contribution in P\'{o}lya's Fourier expansion \eqref{polya} comes from smooth inputs:
\[
\sum_{1\le |n|\le z}  \frac{\chi(n)(1-e(n\alpha))}{n}
\approx
\sum_{\substack{1\le |n|\le z \\ P^+(n)\le y}}
\frac{\chi(n)(1-e(n\alpha))}{n}
\]
for most $\chi$ and any $\alpha$. To state this more precisely, we need some notation. Given a set of positive integers $\CA$, set
\[
S_{\CA}(\chi)  =  \max_{\alpha\in[0,1]} \left| \sum_{ n\in \CA } \frac{\chi(n)e(n\alpha)}{n} \right| ;
\]
in the special case when $\CA=\{n\in\N: n\le z,\, P^+(n)>y\}$, write $S_{y,z}(\chi)$ in place of $S_{\CA}(\chi)$. Observe that 
\eq{polya 2}{
&\max_{\alpha\in[0,1]} \left| \sum_{1\le |n|\le z}  \frac{\chi(n)(1-e(n\alpha))}{n}
	-  \sum_{\substack{1\le |n|\le z \\ P^+(n)\le y}}  \frac{\chi(n)(1-e(n\alpha))}{n}  \right|  \\
	&\quad =\max_{\alpha\in[0,1]} \left| \sum_{\substack{1\le |n|\le z \\ P^+(n) > y}}  \frac{\chi(n)(1-e(n\alpha))}{n}  \right|
		 \le   \left| \sum_{\substack{1\le |n|\le z \\ P^+(n) > y}}  \frac{\chi(n)}{n}  \right|
		+  \max_{\alpha\in[0,1]} \left| \sum_{\substack{1\le |n|\le z \\ P^+(n) > y}}  \frac{\chi(n)e(n\alpha)}{n}  \right| \\
	&\quad \le  2 \left| \sum_{\substack{n\le z \\ P^+(n) > y}}  \frac{\chi(n)}{n}  \right|
		+ 2 \max_{\alpha\in[0,1]} \left| \sum_{\substack{n\le z \\ P^+(n) > y}}  \frac{\chi(n)e(n\alpha)}{n}  \right|
				\le 4 S_{y,z}(\chi) .
}
Our next goal is to show that $S_{y,z}(\chi)$ is small for most $\chi$. To do
this, we will prove in Section \ref{moments} that high moments of
$S_{y,z}(\chi)$ are small. As a straightforward application of our moment
bounds we will get the following theorem.


\begin{theorem}\label{tail bound cor}
If $q\in\N$, $3\le y \le q^{11/21}$ and $\delta\in[1/\log y,1]$, then 
\[
\frac{\#\left\{ \chi \mod q: S_{y,q^{11/21}}(\chi)> e^\gamma \delta \right\} }{\phi(q)}
		\ll \exp\left\{-\frac{\delta^2 y}{\log y}\left(1+O\left(\frac{\log\log y}{\log y}\right)\right)\right\} 
			+ q^{-1/(500\log\log q)} .
\]
\end{theorem}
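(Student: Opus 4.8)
The plan is to bound the probability that $S_{y,z}(\chi)$ is large (with $z = q^{11/21}$) via a high-moment computation followed by Markov's inequality, and to handle the maximum over $\alpha\in[0,1]$ by a discretization argument. Fix an even integer $k = 2\ell$. The first step is to replace the maximum over $\alpha$ by a maximum over a finite set: since $\sum_{n\le z, P^+(n)>y}\chi(n)e(n\alpha)/n$ is, after truncating the sum to $n\le z$, a trigonometric polynomial of degree $\le z$, its derivative in $\alpha$ is $\ll z\log z$ in absolute value, so sampling $\alpha$ on a grid of spacing $\asymp 1/(z\log z)$ and taking a union bound over the $\ll z\log z$ grid points introduces only a harmless factor (absorbed into the $q^{-1/(500\log\log q)}$ term, since $z$ is a power of $q$). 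So it suffices to bound, for a \emph{fixed} $\alpha$, the quantity $\mathbb{E}_\chi\bigl[\bigl|\sum_{n\le z,\,P^+(n)>y}\chi(n)e(n\alpha)/n\bigr|^{2\ell}\bigr]$, where $\mathbb{E}_\chi$ is the average over non-principal $\chi\bmod q$.

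The second step is the moment estimate itself. Expanding the $2\ell$-th power, we get a sum over $2\ell$-tuples $(n_1,\dots,n_\ell,m_1,\dots,m_\ell)$, each with a prime factor exceeding $y$, of $e((n_1+\cdots-m_\ell)\alpha)/(n_1\cdots m_\ell)$ times $\mathbb{E}_\chi[\chi(n_1\cdots n_\ell)\bar\chi(m_1\cdots m_\ell)]$. Orthogonality of characters gives that the character average is $1 + O(1/q)$ when $n_1\cdots n_\ell \equiv m_1\cdots m_\ell \pmod q$ and is $O(1/q)$ otherwise; since each $n_i,m_j\le z = q^{11/21}$, for the ``diagonal'' contribution the congruence forces the genuine equality $n_1\cdots n_\ell = m_1\cdots m_\ell$ provided the tuple isn't too long — one needs $z^{2\ell} < q$, roughly $\ell < 21/22$... this is where the constraint $y\le q^{11/21}$ and a careful choice of $\ell$ enter. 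The off-diagonal terms contribute $O(z^{2\ell}\cdot(\log z)^{2\ell}/q)$, which is where the $q^{-1/(500\log\log q)}$ error term comes from after optimizing $\ell$. The diagonal sum is
\[
\sum_{\substack{n_1\cdots n_\ell = m_1\cdots m_\ell\\ \text{each factor }\le z,\ P^+>y}}\frac{1}{n_1\cdots n_\ell\, m_1\cdots m_\ell}
\ \le\ \ell!^{2}\Bigl(\sum_{\substack{n\le z\\ P^+(n)>y}}\frac{1}{n}\Bigr)^{?}
\]
— more precisely, writing $N = n_1\cdots n_\ell$, the number of ordered factorizations is $d_\ell(N)^2$, and the requirement $P^+(N)>y$ combined with standard estimates (each prime block picking up a factor $\sum_{y<p\le z}1/p \ll \log\log z - \log\log y$, or using Rankin's trick / Shiu-type bounds on $\sum d_\ell(N)^2/N^2$ over $y$-rough $N$) yields a bound of the shape $\bigl(C\ell\,\tfrac{\log z}{y}\bigr)^{\ell}$ after summing; the key gain is the $1/y$ from demanding a large prime factor, since $\sum_{p>y}1/p^2 \ll 1/(y\log y)$ while the multiplicity $d_\ell$ and the $\ell!$ from reordering contribute the $\ell$-power factors. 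Assembling, $\mathbb{E}_\chi[|\cdots|^{2\ell}] \ll \bigl(C\ell\log y/y\bigr)^{\ell} + (\text{off-diagonal})$.

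The third step is optimization: by Markov, $\#\{\chi: S_{y,z}(\chi)>e^\gamma\delta\}/\phi(q) \ll (e^\gamma\delta)^{-2\ell}\mathbb{E}_\chi[|\cdots|^{2\ell}] \ll \bigl(C\ell\log y/(\delta^2 y)\bigr)^{\ell} + q^{-c\ell}z^{2\ell}(\log z)^{2\ell}$. Choosing $\ell \approx \delta^2 y/(e\cdot C\log y)$ (valid as a positive integer as long as $\delta\ge 1/\log y$, which guarantees $\ell\gtrsim y/(\log y)^3 \ge 1$) makes the first term $\exp\{-\delta^2 y/(\log y)(1+O(\log\log y/\log y))\}$, matching the claim; one checks that with this $\ell$ and $y\le q^{11/21}$ the second term is $\le q^{-1/(500\log\log q)}$ (the constant $500$ and the exponent $11/21 < 1/2$ are exactly the slack needed for $z^{2\ell} = q^{(22/21)\ell}$ to be beaten by $q^{c\ell}$ — wait, one needs $z^{2\ell}/q^{\ell\cdot\text{(something)}}$ small, so in fact the off-diagonal bound must be organized so that each factor of $z^2$ is accompanied by a saved factor of $q$; I would instead bound off-diagonal terms by noting the congruence $N\equiv M$ has $\ll N M/q + 1$ solutions, giving total $\ll (\sum 1/n)^{2\ell}/q + (\log z)^{\text{something}}$, and it is the $1/q$ here, against the at most $q^{o(1)}$-sized remaining sum, that produces $q^{-1/(500\log\log q)}$ after the union bound over $\asymp q^{11/21}\log q$ values of $\alpha$). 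The main obstacle is getting the diagonal sum bound $(C\ell\log y/y)^\ell$ with the \emph{correct} constant — i.e. ensuring the factor multiplying $\delta^2 y/\log y$ in the exponent is genuinely $1+o(1)$ and not merely $c+o(1)$ for some constant $c<1$. This requires the $d_\ell(N)^2$ over $y$-rough $N$ estimate to be sharp: the natural bound $\sum_{P^-(N)>y} d_\ell(N)^2/N^2 \le \prod_{p>y}(1 + \ell^2/p^2 + \cdots)$ combined with the constraint that $N$ ranges up to $z^\ell$ and at least one prime exceeds $y$, handled by writing $N = pN'$ and summing $1/p$ over $p\in(y,z]$ which is $\log(\log z/\log y)\le \log\log z$ — so the leading behavior is controlled by $\ell$ copies of $\log z/y$ versus $\ell$ copies of $\log\log z$; reconciling these to land on $\log y/y$ rather than $\log z/y$ uses that the dominant $N$ have $P^+(N)$ just above $y$, i.e. the smooth-number estimates (Lemmas~\ref{smooth}, \ref{smooth2}) to show $\sum_{n\le z,\,P^+(n)>y}1/n$ is genuinely $\asymp \log(\log z/\log y)$ and the second-moment refinement concentrates the mass. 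I would lean on the methods of Section~\ref{moments} (to which the statement defers the moment computation) for the precise bookkeeping.
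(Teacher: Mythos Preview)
Your overall strategy---bound high moments of $S_{y,z}(\chi)$, apply Markov, and optimize $k$---is exactly what the paper does. However, two of your steps would fail as written.

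First, the discretization. You propose to sample $\alpha$ on a grid of $\asymp z\log z\asymp q^{11/21+o(1)}$ points and union-bound. This factor multiplies \emph{both} the off-diagonal and the diagonal contributions to the moment. For the off-diagonal (where you save a factor of $q$) this is fine, as you note. But the diagonal, which you estimate as $(C\ell\log y/y)^\ell$ and which has no decay in $z$, cannot absorb the extra $q^{11/21}$ when $y$ is small: with $\ell\sim\delta^2 y/\log y$ you obtain $q^{11/21}e^{-\ell}$, and once $\ell\ll\log q$ (e.g.\ $y=(\log q)^{1/2}$) this is neither $\ll e^{-\ell(1+o(1))}$ nor $\ll q^{-1/(500\log\log q)}$. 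The paper avoids this by first splitting into dyadic ranges $(N/e,N]$ and discretizing with only $R=\lfloor N^{21/20}\rfloor$ points \emph{at each scale}; the diagonal at scale $N$ is then $\ll e^{O(k)}/N^{k/2}$, which absorbs $R$ for $k\ge3$.

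Second, and more structurally, you are missing the smooth/rough factorization. The sum defining $S_{y,z}$ runs over $n$ with $P^+(n)>y$, so each $n$ may carry a large $y$-smooth cofactor. The paper writes $n=ab$ with $P^+(a)\le y$ and $P^-(b)>y$, pulls the $a$-sum outside, and applies H\"older to reduce to moments of sums over $y$-rough $b$ (Propositions~\ref{tail bound prelim} and~\ref{tail bound prelim2}). This is what makes the diagonal tractable via Lemma~\ref{d_k^2}, and it is also what salvages the off-diagonal: the bound $d_j(h)\le q^{0.334}$ in~\eqref{large N e2} relies on $\Omega(h)\le(\log q)/(0.999\log y)$, which requires $P^-$ of each factor to exceed $y$. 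Without the rough reduction, neither your diagonal estimate nor your off-diagonal fix goes through. Your concern about the sharp constant is well-founded: the paper needs a separate argument (Proposition~\ref{tail bound prelim2}, comparing to the random multiplicative model) in the regime $y\in[k\log k,k^3]$ to obtain the leading constant $1$ rather than merely some positive constant.
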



We now show how to complete the proof of Theorems \ref{main thm} and \ref{comparison thm}.


\begin{proof}[Proof of the upper bound in Theorem \ref{main thm}] Let $\alpha\in[0,1]$ be such that $M(\chi)=\abs{\sum_{n\le\alpha q}\chi(n)}$. By P\'olya's expansion \eqref{polya}, \eqref{polya 2} and Lemma \ref{fund ineq lem}, we find that
\als{
m(\chi) & = \frac{e^{-\gamma}}{2} \abs{\sum_{1\le|n|\le  q^{11/21}} \frac{\chi(n)(1-e(n\alpha))}{n}} + O(q^{-1/43})\\
		&\le \log y + \eta + 2e^{-\gamma} S_{y,q^{11/21}}(\chi) + O\left(\frac{\log\log y}{\log y}\right) 
}
for all $y\ge10$, where $\eta=e^{-\gamma}\log 2$. We set $y=e^{\tau-\eta-2\delta}$ for some parameter $\delta\in[1/\log y,1]$ to be chosen shortly. Theorem \ref{tail bound cor} then implies that
\[
\Phi_q(\tau) \le 
	\exp\left\{-\frac{\delta^2 e^{\tau-\eta-2\delta}}{\tau}\left(1+O\left(\frac{\log\tau}{\tau}\right)\right)\right\} 
		+ q^{-1/(500\log\log q)} .
\]
Taking $\delta=1$ completes the proof of the upper bound in Theorem \ref{main thm}.
\end{proof}


We conclude this section with the proof of Theorem \ref{comparison thm}.


\begin{proof}
The set $\CC_q^L(\tau)$ in which we work is defined by
\[
\CC^L_q(\tau) = \{\chi\mod q: \chi(-1)=-1,\ |L(1,\chi)|>e^\gamma\tau,\  |S_{y,q^{11/21}}(\chi)|\le 1\} ,
\]
where we have set $y=e^{\tau+c}$ for some constant $c>0$. If the constant $C$ in the statement of Theorem \ref{comparison thm} is large enough, then Theorems \ref{tail bound cor} and \ref{distr of euler products} imply that \eqref{ccL} does hold. 

Assume now that $\chi\in\CC^L_q(\tau)$. Using partial summation on the P\'olya--Vinogradov inequality, then that 
$ |S_{y,q^{11/21}}(\chi)|\le 1$, and finally Lemma \ref{smooth}, we obtain
\eq{L1val}{
 L(1,\chi)  = \sum_{n\le q^{11/21}} \frac{\chi(n)}{n}  + O(q^{-1/43})
		=  \sum_{\substack{P^+(n)\le y \\ n\le q^{11/21}}} \frac{\chi(n)}{n}   + O(1)
     =  \sum_{P^+(n)\le y } \frac{\chi(n)}{n}   + O(1) .
}
Given that $|L(1,\chi)|>e^\gamma\tau$, we deduce that 
\eq{lb e2}{
e^\gamma\tau<|L(1,\chi)| = \abs{ \sum_{P^+(n)\le y } \frac{\chi(n)}{n} } + O(1) 
 \le \sum_{P^+(n)\le y}\frac{1}{n} + O(1)\le e^\gamma\tau+O(1), 
}
by Mertens's estimate. Therefore
\[
\abs{ \prod_{p\le y} \left(1-\frac{1}{p}\right)^{-1}\left(1-\frac{\chi(p)}{p}\right) }= 1 +O\left(\frac{1}{\tau} \right) .
\]
Taking logarithms, we find that
\[
\sum_{p\le y} \sum_{j=1}^\infty\frac{1-\Re(\chi^j(p))}{jp^j} \ll \frac{1}{\tau} ;
\]
that is, $\chi$ is ``$1$-pretentious''.
Since $|1-u|^2\le2\Re(1-u)$ for $u\in\U$, the above inequality and the Cauchy--Schwarz inequality imply that
\eq{chi close to 1}{
\sum_{p\le z} \sum_{j=1}^\infty\frac{|1-\chi^j(p)|}{p^j} \ll \sqrt{ \frac{\log\log z}{\tau } }
}
for all $z\in[10,y]$. Moreover, since
\[
|1-\chi(n)|\le \sum_{p^j\| n} |1-\chi(p^j)| ,
\]
we find that
\eq{d(chi,1) main thm}{
\sum_{P^+(n)\le z} \frac{|1-\chi(n)|}{n} 
	&\le \sum_{P^+(n)\le z } \frac{1}{n}
			\sum_{p^j\|n} |1-\chi(p^j)|  \\
	&\le \sum_{p\le z}\sum_{j\ge1} \frac{|1-\chi(p^j)|}{p^j} 
		\sum_{P^+(m)\le z} \frac{1}{m}  
		\ll \frac{(\log z)\sqrt{\log\log z}}{\sqrt{\tau}} .
}

Now, since $\chi$ is odd for $\chi\in\CC_q^L(\tau)$, P\'olya's Fourier expansion \eqref{polya} implies that
\eq{polya odd}{
\sum_{n\le \alpha q} \chi(n)
	= \frac{\CG(\chi)}{\pi i} \left( L(1,\bar{\chi}) - \sum_{n=1}^q \frac{\bar{\chi}(n)\cos(2\pi n\alpha)}{n}\right) +O(\log q) .
}
Set
\[
R_\chi(\alpha) = \sum_{n\le \alpha q}\chi(n)  - \frac{\CG(\chi)}{\pi i} ( L(1,\bar{\chi}) + \log 2) .
\]
If $1/y\le |\alpha-1/2|\le 1/\tau^{2c}$ and $c$ is sufficiently large, then Lemma \ref{around a/b} and relation \eqref{polya odd} imply that
\als{
R_\chi(\alpha) 
	&= \sum_{n\le \alpha q}\chi(n)  - \frac{\CG(\chi)}{\pi i} \left( L(1,\bar{\chi})
	- \sum_{P^+(n)\le y} \frac{\cos(2\pi n\alpha)}{n}  + O\left(\frac{\log\tau}{\tau}\right) \right) \\
	&= \frac{\CG(\chi)}{\pi i} \left( \sum_{n=1}^q \frac{\bar{\chi}(n)\cos(2\pi n\alpha)}{n} 
		-  \sum_{P^+(n)\le y} \frac{\cos(2\pi n\alpha)}{n} + O\left(\frac{\log\tau}{\tau}\right) \right)  .
}
Now, Lemma \ref{smooth} and relation \eqref{d(chi,1) main thm} yield that
\als{
&\sum_{n=1}^q \frac{\bar{\chi}(n)\cos(2\pi n\alpha)}{n} -  
		\sum_{P^+(n)\le y} \frac{\cos(2\pi n\alpha)}{n}  \\
		&\quad= \sum_{n=1}^q \frac{\bar{\chi}(n)\cos(2\pi n\alpha)}{n} -  
		\sum_{\substack{P^+(n)\le y \\ n\le q}} \frac{\cos(2\pi n\alpha)}{n} + O\left(\frac{1}{\tau}\right) \\
		&\quad= \sum_{\substack{n\le q \\ P^+(n)> y}} \frac{\bar{\chi}(n)\cos(2\pi n\alpha)}{n}
		+ \sum_{\substack{\tau^{2c}<n\le q \\ P^+(n)\le y}} \frac{(\bar{\chi}(n)-1)\cos(2\pi n\alpha)}{n} 
		+ O\left(\frac{(\log\tau)^2}{\sqrt{\tau}}\right) \\
		&\quad=: \sum_{\tau^{2c}<n\le q } \frac{c_n \cos(2\pi n\alpha)}{n}
		+ O\left(\frac{(\log\tau)^2}{\sqrt{\tau}}\right) ,
}
for some complex numbers $c_n$ of modulus $\le2$. Finally, if $|\alpha-1/2|\le 1/y$, then we simply note the trivial bound 
$R_\chi(\alpha)\ll \sqrt{q}\tau$, which follows by our assumption that $S_{y,q^{11/21}}(\chi)\le 1$ for the $\chi$ we are working with. So \eqref{comp thm main} will certainly follow if we show that
\[
\frac{\tau^c}{2} \int_{1/2-1/\tau^c}^{1/2+1/\tau^c} \abs{\sum_{\tau^{2c}<n\le q } \frac{c_n \cos(2\pi n\alpha)}{n}} d\alpha 
	\ll \frac{\log\tau}{\tau} .
\]
By Cauchy--Schwarz, it suffices to show that
\eq{comp thm claim}{
\mu_2:= \frac{\tau^c}{2} \int_{1/2-1/\tau^c}^{1/2+1/\tau^c} 
	\abs{\sum_{\tau^{2c}<n\le q } \frac{c_n \cos(2\pi n\alpha)}{n}}^2d\alpha \ll \frac{(\log\tau)^2}{\tau^2} .
}
For convenience, set $B=\tau^c$, and note that 
\[
\frac{B}{2}  \int_{1/2-1/B}^{1/2+1/B} \cos(2\pi m\alpha)\cos(2\pi n \alpha) d\alpha  
	= \frac{(-1)^{m+n}}{2} \left(f\left(\frac{m+n}{B}\right) + f\left(\frac{m-n}{B}\right)\right)  ,
\]
where
\[
f(u):= \begin{cases}
		\frac{\sin(2\pi u)}{2\pi u} &\text{if}\ u\neq0,\\
		1 &\text{otherwise}.
	\end{cases}
\]
Therefore
\[
\mu_2 \ll \sum_{k\ge0} |f(k/B)| \left(\sum_{\substack{B^2<m,n\le q \\ m+n=k}} \frac{1}{mn} 
	+\sum_{\substack{B^2<m,n\le q \\ m-n=k}} \frac{1}{mn}  \right)  .
\]
Note that 
\[
 \sum_{\substack{m+n=k \\ B^2< m,n\le q}} \frac{1}{mn}
	\le \frac{{\bf 1}_{k>2B^2}}{k} \sum_{\substack{m+n=k \\ 1\le m,n\le k-1}} \frac{m+n}{mn} 
		\ll \frac{{\bf 1}_{k>2B^2} \log k}{k}
\]
and
\als{
 \sum_{\substack{m-n=k \\ B^2< m,n\le q}} \frac{1}{mn}
 	\le \sum_{n>B^2} \frac{1}{n(n+k)} 
	&\le \frac{1}{k}\sum_{B^2<n\le k}\frac{1}{n}+\sum_{n>\max\{k,B^2\}}\frac{1}{n^2} \\
	&\ll \frac{{\bf1}_{k>2B^2} \log k}{k} +\frac{1}{\max\{k,B^2\}}  .
}
Using the bound $f(k)\ll\min\{1,B/k\}$, we conclude that
\[
\mu_2\ll \frac{\log B}{B} .
\]
Since $B=\tau^c\ge \tau^2$ for $c\ge2$, \eqref{comp thm claim} follows. This completes the proof of \eqref{comp thm main}.

Finally, note that relation \eqref{comp thm main} clearly implies that
\[
m(\chi) \ge |L(1,\chi)+\log2| + O((\log\tau)^2/\sqrt{\tau}) .
\]
Relations \eqref{L1val} and \eqref{d(chi,1) main thm} with $z=y$ imply that 
\[
L(1,\chi)=e^\gamma\tau+O(\sqrt{\tau \log \tau})
\]
If $L(1,\chi)=a+ib$ then $a/|a+ib|=(1+b^2/a^2)^{-1/2}=1+O(b^2/a^2)=1+O((\log \tau)/\tau)$. Therefore
$|L(1,\chi)+\log 2|/|L(1,\chi)|=(1+\frac{2a\log 2+(\log 2)^2}{a^2+b^2})^{1/2}
=1+\frac a{|a+ib|^2}\log 2+O(\tau^{-2})$, and so 
$|L(1,\chi)+\log 2|=|L(1,\chi)|+\log 2 +O(\log \tau)/\tau)$, which completes the proof of the theorem.
\end{proof}

\section{Truncating P\'olya's Fourier expansion}\label{moments}

In this section we show that for most $\chi$ we can limit the Fourier
expansion to a sum over very smooth numbers without much loss, which is
the content of Theorem \ref{tail bound cor}. We prove this theorem by showing
that high moments of $S_{y,z}$ are small:

\begin{theorem}\label{tail bound}
Let $q$ and $k$ be integers with $3\le k\le (\log q)/(400\log\log q)$. For $k\log k \le y\le z \le q^{11/21}$, we have that
\[
\frac1{\phi(q)} \sum_{\chi\mod q} S_{y,z}(\chi)^{2k}
	 \le e^{O(k\log\log y/\log y)} \left(\frac{e^{2\gamma-1}k\log y}{y} \right)^k  
	 	+  \frac{e^{O(k)} }{(\log y)^{19k} }   .
\]
\end{theorem}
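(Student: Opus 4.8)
\textbf{Proof plan for Theorem \ref{tail bound}.}

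The plan is to expand the $2k$-th moment of $S_{y,z}(\chi)$ combinatorially, reducing everything to a count of additive collisions among $z$-smooth--free integers, and then to exploit orthogonality of characters mod $q$. First I would fix, for each $\chi$, a point $\alpha=\alpha(\chi)\in[0,1]$ realizing the maximum in the definition of $S_{y,z}(\chi)$, so that
\[
S_{y,z}(\chi)^{2k} = \left(\sum_{\substack{n\le z\\ P^+(n)>y}} \frac{\chi(n)e(n\alpha)}{n}\right)^k
\left(\overline{\sum_{\substack{n\le z\\ P^+(n)>y}} \frac{\chi(n)e(n\alpha)}{n}}\right)^k
= \sum_{\mathbf m,\mathbf n} \frac{\chi(m_1\cdots m_k)\overline{\chi(n_1\cdots n_k)}\, e\big((m_1+\cdots+m_k-n_1-\cdots-n_k)\alpha\big)}{m_1\cdots m_k n_1\cdots n_k},
\]
where each $m_i,n_j$ ranges over integers $\le z$ with $P^+>y$. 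The key point (as in Montgomery--Vaughan and its descendants) is that the modulus of every summand is $1$ after we drop the unimodular factors, so summing over $\chi$ and using $\frac1{\phi(q)}\sum_{\chi\bmod q}\chi(a)\bar\chi(b) = \mathbf 1_{a\equiv b\,(q)} + O(1/\phi(q))\cdot(\dots)$ — more precisely $=\mathbf 1_{a\equiv b\,(q),\,(ab,q)=1}$ — we get that $\frac1{\phi(q)}\sum_\chi S_{y,z}(\chi)^{2k}$ is at most
\[
\sum_{\substack{\mathbf m,\mathbf n\\ m_1\cdots m_k\equiv n_1\cdots n_k\,(q)}} \frac{1}{m_1\cdots m_k n_1\cdots n_k}.
\]
Here I must be a little careful that $\alpha$ depends on $\chi$: the standard fix is to note $S_{y,z}(\chi)^{2k}\le \frac1{2\pi}\int_0^1\big|\sum \chi(n)e(n\alpha)/n\big|^{2k}\,\text{(something)}$ fails directly, so instead one bounds $S_{y,z}(\chi)\le \sup_\alpha$ and uses a net of $\alpha$'s of size $O(q^{O(1)})$ together with a Lipschitz estimate (the derivative in $\alpha$ is $\ll z\sum 1/n \ll q$), or — cleaner — one replaces $S_{y,z}(\chi)^{2k}$ by its value at the $\chi$-dependent $\alpha(\chi)$ and simply drops the $e(\cdots\alpha)$ factor by the triangle inequality, which is exactly what the display above does and loses nothing. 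So the maximum over $\alpha$ is harmless.

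Next I would split the congruence condition $m_1\cdots m_k \equiv n_1\cdots n_k \pmod q$ into the diagonal case $m_1\cdots m_k = n_1\cdots n_k$ and the off-diagonal case where the two products are distinct but congruent mod $q$. For the diagonal, since each factor exceeds $y\ge k\log k$ and has all prime factors $>y$, while the product is $\le z^k\le q^{11k/21}$, a divisor-type / rearrangement argument shows the diagonal sum is
\[
\le k!\sum_{\substack{P^+(n)>y\ \forall \text{factors}}}\frac{d_k(\cdot)\cdots}{(\cdot)^2}\ \ll\ e^{O(k)}\left(\sum_{p>y}\frac1p + \text{(prime powers)}\right)^k\rho\text{-type correction};
\]
the honest computation is that one orders $n_1\le\cdots\le n_k$, notes $N:=n_1\cdots n_k$ is a $y$-rough number composed of at most $k$ prime factors (counted suitably), and $\sum_{N\ y\text{-rough},\ \omega\le k}\tau_k(N)^2/N^2$-style sums evaluate to $\big(\sum_{p>y}1/p\big)^k e^{O(k)}/k!\cdot(\text{correction})$, with $\sum_{p>y}1/p = \log\log z - \log\log y + O(1/\log y)$ — but since we need the sharper constant $e^{2\gamma-1}k\log y/y$, the right move is to note that at least one $m_i>y$ but actually the product structure forces the relevant quantity to be governed by $\sum_{y< p}\,(\log p)/p$-type weights via Mertens, giving the $(k\log y/y)^k$ shape; I would mirror the computation in \cite{MV79} and in the smooth-number literature to land the constant $e^{2\gamma-1}$. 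For the off-diagonal, $m_1\cdots m_k$ and $n_1\cdots n_k$ are distinct integers in $[1,q^{11k/21}]$ that are congruent mod $q$, so their difference is a nonzero multiple of $q$ of size $\le q^{11k/21}$; counting such coincidences, weighted by $1/(\text{product})$, with $k\le (\log q)/(400\log\log q)$ ensuring $q^{11k/21}$ is genuinely smaller than a fixed power we can afford, produces the term $e^{O(k)}/(\log y)^{19k}$ — this is where the hypothesis $z\le q^{11/21}$ and the upper bound on $k$ are used crucially, to make the modular coincidences rare.

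\textbf{Main obstacle.} The delicate part is pinning down the \emph{constant} $e^{2\gamma-1}$ in the diagonal main term, rather than just $e^{O(1)}$: this requires an honest asymptotic evaluation of $\sum 1/(m_1\cdots m_k n_1\cdots n_k)$ over tuples with equal products and all prime factors $>y$, tracking how the $d_k$-type multiplicities and the Mertens-type sums $\sum_{p>y}(\log p)/p^j$ combine, and checking the error $e^{O(k\log\log y/\log y)}$ is uniform in the stated range $k\log k\le y\le z\le q^{11/21}$. The off-diagonal is conceptually easier but technically fiddly: one must count solutions of $a\equiv b\pmod q$ with $a\ne b$, $a,b$ smooth-free and $\le z^k$, with the harmonic weights, and absorb everything into $(\log y)^{-19k}$; getting the exponent $19$ (as opposed to some smaller constant) is what forces the precise interplay with $z\le q^{11/21}$ and $k\le(\log q)/(400\log\log q)$.
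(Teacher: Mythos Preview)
Your plan has a genuine structural gap stemming from a misreading of the summation set. You write that ``each factor exceeds $y\ge k\log k$ and has all prime factors $>y$'', but the condition defining $S_{y,z}$ is $P^+(n)>y$, i.e.\ the \emph{largest} prime factor of $n$ exceeds $y$. The integers in play are not $y$-rough; $n=2p$ with $p>y$ is perfectly allowed. Consequently your diagonal sum $\sum_{m_1\cdots m_k=n_1\cdots n_k}\prod(m_in_j)^{-1}$ has no clean multiplicative structure, and the sketch ``$N$ is a $y$-rough number composed of at most $k$ prime factors'' is simply false. This is exactly why the paper does \emph{not} attack the moment of $S_{y,z}$ directly: it first writes each $n$ with $P^+(n)>y$ as $n=ab$ with $P^+(a)\le y$ and $P^-(b)>Y$ (where $Y=\max\{y,k^3\}$), obtaining
\[
S_{y,z}(\chi)\le \sum_{P^+(a)\le y}\frac{S^{(1)}_{z/a}(\chi)}{a}+\sum_{P^+(a)\le Y}\frac{S^{(2)}_{z/a}(\chi)}{a},
\]
and only \emph{then} applies H\"older together with moment bounds (Propositions \ref{tail bound prelim} and \ref{tail bound prelim2}) for the genuinely rough sums $S^{(i)}$. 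The $e^{2\gamma}$ comes from the Mertens weight $(e^\gamma\log y)^{2k}$ produced by H\"older over the smooth part $a$, and the $e^{-1}$ from Stirling in the diagonal count $k!\big(\sum_{p>y}p^{-2}\big)^k$ over rough tuples; without the smooth--rough factorization neither piece is visible.

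A secondary but real problem: your ``cleaner'' handling of the $\chi$-dependent $\alpha$ does not work. You cannot sum over $\chi$ first (to get orthogonality) and then drop $e((\Sigma m-\Sigma n)\alpha(\chi))$ by the triangle inequality, because once you sum over $\chi$ the $\alpha(\chi)$ are all different and the character sum $\sum_\chi \chi(M)\bar\chi(N)e(\cdots\alpha(\chi))$ has no reason to be small when $M\not\equiv N\pmod q$. Equivalently, $\sum_\chi\sup_\alpha\ge\sup_\alpha\sum_\chi$, not the reverse. The net argument you mention first (and then set aside) is not optional: the paper discretizes $\alpha$ to a grid of size $R\asymp N^{21/20}$ (or $z^3$), bounds the max by the sum over the $R$ grid points, and only then invokes orthogonality. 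The cost $R$ is absorbed either into the off-diagonal power saving in $q$ or into the factor $z^3\le k^{3\log\log k}=e^{o(k)}$ in the short-range proposition.
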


One consequence of Theorem \ref{tail bound} is the desired conclusion that $S_{y,z}(\chi)$ is usually small:

\begin{proof}[Deduction of Theorem \ref{tail bound cor} from Theorem \ref{tail bound}] We may assume that $y$ and $q$ are large. Let $\rho$ be the proportion of characters $\chi\mod q$ such that $S_{y,q^{11/21}}(\chi)>e^\gamma\delta$. Moreover, set
\[
k=  \fl{ \min\left\{ \frac{\delta^2y}{\log y}  ,  \frac{\log q}{400\log\log q}  \right\}  },
\]
where $c$ is a constant to be determined. Then Theorem \ref{tail bound} implies that
\als{
\rho	\le \frac{(e^\gamma\delta)^{-2k}}{\phi(q)} \sum_{\chi\mod q} S_{y,q^{11/21}}(\chi)^{2k}
	&\le e^{O(k\log\log y/\log y)}\left(\frac{\delta^{-2}k\log y}{ey} \right)^k 
			+ \frac{\delta^{-2k}e^{O(k)}}{(\log y)^{19k}}   \\
	&\ll e^{-k+O(k\log\log y/\log y)} ,
}
which completes the proof.
\end{proof}


We prove Theorem \ref{tail bound} as an application of the following
technical estimates.
\begin{prop}\label{tail bound prelim} Let $q$ and $k$ be two integers with $3\le k\le (\log q)/(400\log\log  q)$, and $\CA\subset\{n\in\N: y<n\le z,\, P^-(n)>y\}$, where $y$ and $z$ are two positive real numbers such that $k^3\le y\le z\le q^{11/21}$. Then
\[
\frac1{\phi(q)} \sum_{\chi\mod q} S_{\CA}(\chi)^{2k}
	 \ll y^{-k/21} +  q^{-1/10}     \ll \frac{1}{(\log y)^{40 k}}.
\]
\end{prop}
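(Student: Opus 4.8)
The plan is to expand the $2k$-th moment of $S_{\CA}(\chi)$ by writing $S_{\CA}(\chi)^{2k}$ as a maximum over $\alpha$ of a product of $k$ copies of $\sum_{n\in\CA}\chi(n)e(n\alpha)/n$ and $k$ copies of its conjugate, then interchange the maximum over $\alpha$ with a sum. Concretely, I would first discretize: since each summand has $n\le z\le q^{11/21}$, the function $\alpha\mapsto \sum_{n\in\CA}\chi(n)e(n\alpha)/n$ has derivative $\ll \sum_{n\in\CA} 1 \ll z$ in $\alpha$, so the maximum over $[0,1]$ is attained, up to a tiny error, at one of $O(z^2)$ equally spaced points; alternatively one keeps $\alpha$ continuous and estimates an integral. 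Either way, after raising to the $2k$-th power and summing over all $\chi\bmod q$, orthogonality of characters (using $q$ prime) collapses the sum to those $2k$-tuples $(n_1,\dots,n_{2k})\in\CA^{2k}$ with $n_1\cdots n_k\equiv n_{k+1}\cdots n_{2k}\pmod q$. Since each $n_i\le q^{11/21}$, both products are at most $q^{22k/21}$; for $k$ in our range the product $q^{22k/21}$ can exceed $q$, so the congruence is \emph{not} forced to be an equality, and this is where the main work lies.

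The main obstacle is therefore bounding the off-diagonal contribution, i.e.\ the tuples with $n_1\cdots n_k\equiv n_{k+1}\cdots n_{2k}\pmod q$ but $n_1\cdots n_k\ne n_{k+1}\cdots n_{2k}$. For these, $|n_1\cdots n_k - n_{k+1}\cdots n_{2k}|$ is a nonzero multiple of $q$, while both products lie in $[y^k, q^{22k/21}]$ (using $n_i>y$ gives the lower bound, since $\CA\subset\{n:y<n\le z\}$ — note $P^-(n)>y$ forces $n>y$). So there are at most $O(q^{22k/21}/q)=O(q^{k/21})$ choices for the value of the first product once the second is fixed, and then one must count representations. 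Weighting by $1/(n_1\cdots n_{2k})$, a crude bound replaces the sum over the off-diagonal by $\ll q^{-1}\big(\sum_{n\in\CA}1\big)^{2k}\cdot(\text{something})$ — more carefully, one bounds $\sum 1/(n_1\cdots n_k)$ over $k$-tuples with product in a dyadic range by $(\log z)^{O(k)}$ times the reciprocal of that range, sums the geometric-type series over the $O(q^{k/21})$ admissible product values, and collects a factor $q^{-1}$. This yields the off-diagonal bound $\ll q^{-1}\cdot q^{k/21}\cdot(\text{polylog})\ll q^{-1/10}$ after using $k\le (\log q)/(400\log\log q)$ to absorb the polylogarithmic factors, and using $y\ge k^3$; I would split off a cleaner bound $y^{-k/21}$ by noting that when the products are equal, i.e.\ the diagonal-type terms, the constraint $P^-(n_i)>y$ together with $n_i\le z$ forces each $n_i$ to have very few prime factors, so the diagonal itself is small — specifically the number of $n\in\CA$ is $\ll z/y$ by sieve, giving $\sum_{n\in\CA}1/n^{?}$ bounds that produce the $y^{-k/21}$ term.

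Assembling these two pieces gives $\frac1{\phi(q)}\sum_\chi S_\CA(\chi)^{2k}\ll y^{-k/21}+q^{-1/10}$. For the final inequality $\ll (\log y)^{-40k}$, I would use the two hypotheses on the ranges: $y^{-k/21}\le(\log y)^{-40k}$ holds once $y\ge(\log y)^{840}$, which is automatic for $y$ large (and $y\ge k^3$ with $k\ge3$); and $q^{-1/10}\le(\log y)^{-40k}$ follows because $y\le q^{11/21}$ forces $\log y\le\log q$, while $k\le(\log q)/(400\log\log q)$ gives $40k\log\log y\le 40k\log\log q\le (\log q)/10$, hence $(\log y)^{40k}\le e^{(\log q)/10}=q^{1/10}$. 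The step I expect to be genuinely delicate is the representation-counting in the off-diagonal: one has to be careful that summing $1/(n_1\cdots n_k)$ over $k$-tuples of $y$-rough integers below $z$ with prescribed product, then over the $O(q^{k/21})$ possible products that are $\equiv$ the other side mod $q$, really does leave only a power saving of $q^{-1/10}$ and not something worse; the smoothness/roughness constraints and the bound $k^3\le y$ are exactly what make the divisor-type sums $d_k(\cdot)$ small enough on average. Everything else is bookkeeping of the kind carried out in the lemmas of Section~\ref{moments} that follow.
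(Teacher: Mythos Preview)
Your high-level structure---discretize the maximum over $\alpha$, expand the $2k$-th power, apply orthogonality of characters, and split into diagonal and off-diagonal---matches the paper. But two essential ingredients are missing, and without them the off-diagonal bound does not go through.

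First, the paper begins with a dyadic decomposition $\CA(N)=\CA\cap(N/e,N]$ and applies H\"older to reduce to estimating moments of $S_{\CA(N)}$ for each scale $N\in[y,ez]$. You skip this and work with all of $\CA$ at once. The decomposition is not mere bookkeeping: it pins down a scale $N$ so that every factor $n_i$ in a $k$-tuple lies in $(N/e,N]$, which is what makes the key off-diagonal step below possible.

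Second, your off-diagonal sketch does not contain the idea that actually wins. After fixing $N$ and discretizing to $R=\lfloor N^{21/20}\rfloor$ values of $\alpha$, the paper writes each $n\le N^k$ with $n\equiv m\pmod q$ as $n=gh$ where $g$ is a product of $k-j$ factors and $h$ of $j$ factors, choosing $j$ so that $N^{j-1}\le q<N^j$. Then for fixed $m,g$ there are $\ll N^j/q$ admissible $h$, and the crucial point is the pointwise bound
\[
d_j(h;N)\le j^{\Omega(h)}\le k^{\,j\log N/\log y}\le k^{\,\log q/(0.999\log y)}=q^{\,\log k/(0.999\log y)}\le q^{0.334},
\]
valid precisely because $y\ge k^3$ forces $\log k/\log y\le 1/3$; for small $j$ one instead uses $d_j(h)\ll_\delta h^\delta$. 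This is where the hypothesis $y\ge k^3$ enters, and it is what converts the congruence condition into a genuine power saving in $q$. Your sketch (``count representations, sum over $\ll q^{k/21}$ admissible products, collect $q^{-1}$'') does not supply any mechanism of this kind, and the line ``$O(q^{22k/21}/q)=O(q^{k/21})$'' is arithmetically wrong for $k\ge 2$ (each product has $k$ factors, so lies below $q^{11k/21}$, and $q^{11k/21}/q=q^{(11k-21)/21}$, not $q^{k/21}$). Likewise, the claim that sieve gives $\#\CA\ll z/y$ is incorrect; the right bound for $y$-rough integers up to $z$ is $\ll z/\log y$.

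In short: your diagonal treatment is fine (and indeed $\sum_{m>y^k,\,P^-(m)>y}d_k(m)^2/m^2\le y^{-k/2}e^{O(k)}$ by the paper's Lemma on $d_k^2$), but the off-diagonal needs the dyadic localization to a scale $N$ together with the $j$-splitting and the bound $d_j(h;N)\le q^{0.334}$. That is the missing idea.
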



\begin{prop}\label{tail bound prelim2} Let $q$ and $k$ be two integers with $3\le k\le (\log q)/(400\log\log  q)$, and $\CA\subset\{n\in\N: y<n\le z,\, P^-(n)>y\}$, where $y$ and $z$ are two positive real numbers such that $k\log k\le y\le z\le k^{\log\log  k}$. Then
\[
\frac1{\phi(q)} \sum_{\chi\mod q} S_{\CA}(\chi)^{2k}
	  \le \frac{e^{O(k\log\log y/\log y)}k^k}{(ey\log y)^k}  + \frac{e^{O(k)}}{(\log y)^{50k}}  .
\]
\end{prop}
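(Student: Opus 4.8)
The final statement to prove is Proposition \ref{tail bound prelim2}, a moment bound for $S_{\CA}(\chi)^{2k}$ where $\CA$ consists of integers in $(y,z]$ all of whose prime factors exceed $y$ (so these integers have at most $\log\log k$ prime factors), and $y$ is in the ``small'' range $k\log k \le y \le z \le k^{\log\log k}$.

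\medskip

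\textbf{Proof proposal for Proposition \ref{tail bound prelim2}.} The plan is to expand the $2k$-th moment of the maximum over $\alpha$ directly. First I would dispose of the maximum: since $S_{\CA}(\chi) = \max_\alpha |\sum_{n\in\CA} \chi(n)e(n\alpha)/n|$ and the summands are supported on $n\le z\le q^{11/21}$, a standard argument (as in the deduction of Theorem \ref{tail bound cor}, or via a net of $O(z)$ values of $\alpha$ together with the derivative bound $\sum_{n\in\CA} |n\chi(n)e(n\alpha)/n|\le z$) lets me replace the continuous maximum by a maximum over a discrete set of size $O(q^{O(1)})$, at the cost of a negligible multiplicative loss; more efficiently, one can bound $S_{\CA}(\chi)^{2k}$ by an average $\int_0^1 |\sum_n \chi(n)e(n\alpha)/n|^{2k}\,d\alpha$ after inserting a suitable kernel, but I expect the cleanest route is: pick $\alpha_\chi$ achieving (nearly) the max, expand $|\sum \chi(n)e(n\alpha_\chi)/n|^{2k} = \sum_{n_1,\dots,n_{2k}} \chi(n_1\cdots n_k\overline{n_{k+1}\cdots n_{2k}}) e((n_1+\cdots+n_k - n_{k+1}-\cdots-n_{2k})\alpha_\chi)/(n_1\cdots n_{2k})$, and then sum over $\chi\bmod q$. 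By orthogonality, $\frac{1}{\phi(q)}\sum_\chi \chi(m)\overline{\chi(m')}$ picks out $m\equiv m' \pmod q$ (with $(mm',q)=1$); since each $n_i\le z\le q^{11/21}$, we have $m=n_1\cdots n_k \le z^k \le q^{11k/21}$ which can genuinely exceed $q$, so I cannot simply force $m=m'$ — this is where the factor $q^{-1/10}$-type term and the restriction $k\le (\log q)/(400\log\log q)$ enter, exactly as in Proposition \ref{tail bound prelim}. The diagonal contribution $m=m'$ gives the main term, and the off-diagonal $m\equiv m'\pmod q$, $m\ne m'$ gives an error controlled by the same counting as in Proposition \ref{tail bound prelim}'s proof (yielding $e^{O(k)}/(\log y)^{50k}$ after using $z\le k^{\log\log k}$ and $y\le z$).

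\medskip

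For the diagonal term I would organize the sum by the multiset of primes involved. Write each $n_i = \prod p$ over primes $p>y$; since $n_i \le z \le k^{\log\log k}$ and all prime factors exceed $y\ge k\log k$, each $n_i$ has $\Omega(n_i) \le \log z/\log y \le \log\log k$ prime factors. The diagonal condition $n_1\cdots n_k = n_{k+1}\cdots n_{2k}$ means the two halves use the same primes with the same multiplicities. The dominant contribution comes from the ``perfectly paired'' configurations where $\{n_1,\dots,n_k\}$ equals $\{n_{k+1},\dots,n_{2k}\}$ as multisets, of which there are $k!$ pairings, each contributing $(\sum_{n\in\CA} 1/n^2)^k$. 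Now $\sum_{n\in\CA} 1/n^2 \le \sum_{n>y, P^-(n)>y} 1/n^2$; the main term here is $\sum_{p>y} 1/p^2 \sim 1/(y\log y)$ by the Prime Number Theorem (Mertens-type estimate / partial summation), and contributions from $n$ with $\Omega(n)\ge 2$ are $O(1/y^2\cdot\text{stuff})$, hence lower order. So the leading pairings give $\le k!\,(1/(y\log y))^k (1+o(1))^k = e^{O(k\log\log y/\log y)} k^k/(ey\log y)^k$ after Stirling, which is exactly the claimed main term. The remaining diagonal configurations — where the $n_i$ share primes in a more entangled way (e.g., $n_1 = p_1 p_2$, $n_2 = p_1 p_3$, paired against $n_{k+1}=p_1 p_1$... wait, no, since primes exceed $y$ and the product must match) — I would bound crudely: each such configuration forces extra coincidences among primes, saving at least a factor $y$ (really a power of $y$) per ``defect'', and since there are at most $\log\log k$ primes per $n_i$ the combinatorial count of such configurations is at most $(2k)^{O(k\log\log k)}$, which is absorbed into $e^{O(k\log\log y/\log y)}$ because $\log\log k \ll \log\log y \ll \log y$ in this range (using $y\ge k\log k$, so $\log y \ge \log k$). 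The $e(\cdots \alpha_\chi)$ phases cause no trouble on the diagonal since the exponent vanishes when $n_1+\cdots+n_k = n_{k+1}+\cdots+n_{2k}$, which holds for paired configurations; for non-paired diagonal configurations the phase is irrelevant because we only need an upper bound and $|e(\cdot)|=1$.

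\medskip

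\textbf{Main obstacle.} The genuinely delicate point is the off-diagonal $m\equiv m'\pmod q$, $m\ne m'$ term. Here $m = n_1\cdots n_k$ and $m' = n_{k+1}\cdots n_{2k}$ are distinct positive integers up to $z^k\le q^{11k/21}$ that are congruent mod $q$, so $m - m'$ is a nonzero multiple of $q$ of size $O(z^k)$, giving $O(z^k/q)$ choices for $m-m'$ — which is enormous unless one is careful — and then for each such difference one counts representations. The saving comes from: (i) the restriction to smooth-free (actually rough) numbers, which are sparse; (ii) the $1/(n_1\cdots n_{2k})$ weights; (iii) the bound $k\le (\log q)/(400\log\log q)$, which ensures $z^k \le q^{11/21}\cdot{}$... rather $z^k = q^{o(1)}\cdot$(something), making $q^{-1/10}$-type savings possible. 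I would handle this exactly as Proposition \ref{tail bound prelim} is handled (indeed Proposition \ref{tail bound prelim} is the $\CA\subset(y,z]$, $z\le q^{11/21}$ case and its error bound $\ll q^{-1/10} \ll (\log y)^{-40k}$ already covers the off-diagonal here; one just needs that in the present narrower range $k\log k\le y\le z\le k^{\log\log k}$ the same estimate gives $e^{O(k)}/(\log y)^{50k}$, which follows since $z\le k^{\log\log k}$ forces $\log z/\log y$ bounded and the implied constants improve). So the real content of Proposition \ref{tail bound prelim2} over Proposition \ref{tail bound prelim} is the \emph{precise} diagonal main term $k^k/(ey\log y)^k$, and the work is in the Stirling/Mertens bookkeeping above rather than in any new idea; I expect the reference to \cite{FT}-type exponential sum estimates is \emph{not} needed here (those are for the opposite regime $y$ large relative to $k$), and the proof is purely orthogonality plus counting.
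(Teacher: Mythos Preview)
Your overall skeleton---discretize $\alpha$, expand the $2k$-th power, apply orthogonality, and split into diagonal and off-diagonal---matches the paper. But your treatment of the diagonal term has a genuine gap, and this is exactly where the content of Proposition \ref{tail bound prelim2} (over Proposition \ref{tail bound prelim}) lies.

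You claim the ``entangled'' (non-perfectly-paired) diagonal configurations have combinatorial count at most $(2k)^{O(k\log\log k)}$ and that this is ``absorbed into $e^{O(k\log\log y/\log y)}$''. This is false as arithmetic: $(2k)^{O(k\log\log k)}=e^{O(k\log k\,\log\log k)}$, whereas in the present range $\log y\asymp\log k$ so $e^{O(k\log\log y/\log y)}=e^{O(k\log\log k/\log k)}$. The first quantity is vastly larger than the second, and no per-defect saving of a factor $y\le k^{\log\log k}=e^{\log k\,\log\log k}$ can beat a combinatorial loss of size $e^{k\log k\,\log\log k}$. The difficulty is precisely that the error term allowed in the proposition is extremely tight---essentially $(1+o(1))^k$ times the Stirling main term---and a crude combinatorial bound on entangled configurations will not achieve this.

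The paper handles the diagonal differently and avoids this combinatorics entirely. It bounds the diagonal sum by $\E\bigl[\,\bigl|\sum_{n\in\CP(y,z),\,n>1}X_n/n\bigr|^{2k}\bigr]$ and then uses the Euler-product identity
\[
\sum_{\substack{n\in\CP(y,z)\\ n>1}}\frac{X_n}{n}
=-1+\prod_{y<p\le z}\Bigl(1-\frac{X_p}{p}\Bigr)^{-1}
=e^{T}-1+O(1/y),
\qquad T:=\sum_{y<p\le z}\frac{X_p}{p},
\]
to \emph{linearize}: on the event $|T|\le\epsilon$ one has $|e^{T}-1|\le e^{\epsilon}|T|$, which reduces the main term to $\E[|T|^{2k}]\le k!\bigl(\sum_{p>y}1/p^{2}\bigr)^{k}$, and this is what gives $k^{k}/(ey\log y)^{k}$ with the correct constant. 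The complementary event $|T|>\epsilon$ is controlled by a large-deviation argument, bounding $\E\bigl[e^{2k|T|}(|T|/\epsilon)^{\ell}\bigr]$ for $\ell\asymp\epsilon^{2}y\log y$ and taking $\epsilon=(\log\log y)/\log y$. The point is that the passage to the single sum $T$ over \emph{primes} is what makes the pairing combinatorics exact; trying to pair the composite $n_i$'s directly runs into the entanglement you noticed.

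A smaller issue: you cannot invoke Proposition \ref{tail bound prelim} verbatim for the off-diagonal, since that proposition assumes $y\ge k^{3}$ while here only $y\ge k\log k$. The paper instead applies Cauchy--Schwarz and Lemma \ref{d_k^2} with the adjusted exponent $\sigma=(2+\epsilon)/(2+2\epsilon)$, $\epsilon=\min\{1/2,\log(y/k)/\log k\}$, rather than the pointwise bound $d_j(h)\le j^{\Omega(h)}$ used in Proposition \ref{tail bound prelim}.
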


\nin 
Before we proceed to the proof of these propositions, let us see how we can apply them to deduce Theorem \ref{tail bound}. 


\begin{proof}[Deduction of Theorem \ref{tail bound} from Propositions \ref{tail bound prelim} and \ref{tail bound prelim2}] 
Set $Y=\max\{y,k^3\}$. Then
\als{
\sum_{\substack{n\le z \\ P^+(n)>y}}  \frac{\chi(n)e(n\alpha)}{n} 
	&= \sum_{\substack{n\le z \\ y<P^+(n)\le Y}}  \frac{\chi(n)e(n\alpha)}{n} 	
		+ \sum_{\substack{n\le z \\ P^+(n)>Y}}  \frac{\chi(n)e(n\alpha)}{n} \\
	&= \sum_{\substack{a\le z \\ P^+(a)\le y}} \frac{\chi(a)}{a}
			\sum_{\substack{ 1<b\le z/a \\ b\in \CP(y,Y) }} \frac{\chi(b) e( ab\alpha) }{b}
				+ \sum_{\substack{ a\le z \\ P^+(a)\le Y  }} \frac{\chi(a)}{a}
					\sum_{\substack{ 1< b\le z/ a\\ P^-(b)>Y }} \frac{\chi(b) e( ab\alpha) }{b}  .
}
where $\CP(y,Y)$ is the set of integers all of whose prime factors lie in $(y,Y]$.
We let
\[
S^{(1)}_w(\chi) = \max_{\alpha\in[0,1]} \left| \sum_{\substack{y < n \le w \\ n\in \CP(y,Y) }}
	\frac{\chi(n) e(n\alpha)}{n} \right|
\quad\text{and}\quad
S^{(2)}_w(\chi) = \max_{\alpha\in[0,1]} \left| \sum_{\substack{Y < n \le w \\ P^-(n)>Y }}
	 \frac{\chi(n) e(n\alpha)}{n} \right| ,
\]
so that
\eq{tail pf e1}{
S_{y,z}(\chi) \le \sum_{P^+(a)\le y} \frac{S^{(1)}_{z/a}(\chi)}{a}
			+ \sum_{P^+(a)\le Y} \frac{S^{(2)}_{z/a}(\chi)}{a} .
}
We shall bound the moments of each summand appearing above, individually.

We start with the summand involving $S^{(1)}_{w}(\chi)$. Here we may assume that $y\le k^3$ (and thus $Y=k^3$), else
$\CP(y,Y)=\{1\}$ and so  $S^{(1)}_w(\chi)=0$ for all $w$. Set $w'=\min\{w,k^{\log\log  k}\}$ and note that
\[
S^{(1)}_{w}(\chi)
	= S^{(1)}_{w'}(\chi) + O\left(\sum_{\substack{ P^+(n)\le k^3 \\ n\ge k^{\log\log k}}}\frac{1}{n}\right)
	= S^{(1)}_{w'}(\chi) + O\left( \frac{1}{(\log y)^{100} } \right)
\]
by Lemma \ref{smooth}. So Minkowski's inequality and Proposition \ref{tail bound prelim2} imply that
\als{
\left(\frac{1}{\phi(q)} \sum_{\chi\mod q} S^{(1)}_w(\chi)^{2k}\right)^{\frac{1}{2k}}
	\le  \left(\frac{1}{\phi(q)} \sum_{\chi\mod q} S^{(1)}_{w'}(\chi)^{2k} \right)^{\frac{1}{2k}}
		+ O(1/(\log y)^{100}) \\
	\le e^{O(\log\log y/\log y)} \sqrt{\frac{k}{ey\log y}} + O(1/(\log y)^{25}) 
}
Hence, applying H\"older's inequality and Mertens' estimate $\sum_{P^+(a)\le y}1/a=e^\gamma\log y+O(1)$, we arrive at the estimate
\eq{tail pf e2}{
\frac{1}{\phi(q)} \sum_{\chi \mod q} \left( \sum_{P^+(a)\le y} \frac{S^{(1)}_{z/a}(\chi)}{a} \right)^{2k}
	&\le  \frac{(e^\gamma\log y+O(1))^{2k-1} }{\phi(q)}
		\sum_{\chi \mod q} \sum_{P^+(a)\le y}  \frac{S^{(1)}_{z/a}(\chi)^{2k}}{a}  \\
	&\le e^{O(k\log\log y/\log y)} \left( \sqrt{\frac{e^{2\gamma}k\log y}{ey}} + O(1/(\log y)^{24}) \right)^{2k} .
}

Next, in order to bound the summand in \eqref{tail pf e1} involving $S^{(2)}_{w}(\chi)$, we observe that
\[
\frac{1}{\phi(q)} \sum_{\chi \mod q} S^{(2)}_{z/a}(\chi)^{2k}   \ll \frac{1}{(\log Y)^{40k}}
\]
for all $m\ge1$ by Proposition \ref{tail bound prelim}. Therefore H\"older's inequality implies that
\eq{tail pf e3}{
\frac{1}{\phi(q)} \sum_{\chi \mod q} \left( \sum_{P^+(a)\le Y} \frac{S^{(2)}_{z/a}(\chi)}{a}  \right)^{2k}
		&\le \frac{e^{O(k)}(\log Y)^{2k-1}}{\phi(q)} \sum_{P^+(a)\le Y} \sum_{\chi \mod q} \frac{S^{(2)}_{z/a}(\chi)^{2k}}a \\
		&\le \frac{e^{O(k)} }{(\log Y)^{38k} } \le \frac{e^{O(k)} }{(\log y)^{38k} }  .
}
Finally, relations \eqref{tail pf e1}, \eqref{tail pf e2} and \eqref{tail pf e3}, together with an application of Minkowski's inequality, imply that
\eq{tail bound - almost there}{
	\frac{1}{\phi(q)} \sum_{\chi \mod q} S_{y,z}(\chi)^{2k}
			\le  e^{O(k\log\log y/\log y)} \left( \sqrt{\frac{e^{2\gamma}k\log y}{ey}} + O(1/(\log y)^{19}) \right)^{2k} .
}
We note that, for all $\epsilon,\delta>0$, we have that
\eq{eps-delta}{
(\epsilon+\delta)^{2k} \le (\epsilon^{2k}+\delta^k)(1+\sqrt{\delta})^{2k} \le  (\epsilon^{2k}+\delta^k) e^{\sqrt{\delta}/(2k)} .
}
Indeed, if $\epsilon\le \sqrt{\delta}$, then $\epsilon+\delta\le \sqrt{\delta}(1+\sqrt{\delta})$, whereas if $\epsilon>\sqrt{\delta}$, then $\epsilon+\delta\le \epsilon(1+\sqrt{\delta})$. Combining \eqref{tail bound - almost there} and \eqref{eps-delta} completes the proof of Theorem \ref{tail bound}.
\end{proof}


Our next task is to show Proposition \ref{tail bound prelim}. First, we demonstrate the following auxiliary lemma.


\begin{lemma}\label{d_k^2} Let $\epsilon\in(0,1]$ and $k\ge2$ be an integer. Uniformly for $\sigma\ge(2+\epsilon)/(2+2\epsilon)$ and for $y\ge k^{1+\epsilon}$, we have that
\[
\sum_{P^-(n)>y} \frac{d_k(n)^2}{n^{2\sigma}}  \le e^{O(k/\log k)} .
\]
\end{lemma}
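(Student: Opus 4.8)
\textbf{Proof plan for Lemma \ref{d_k^2}.}
The plan is to express the sum as an Euler product over primes $p>y$ and then estimate the local factors. For each prime $p>y$ we have
\[
\sum_{j\ge0} \frac{d_k(p^j)^2}{p^{2\sigma j}} = \sum_{j\ge0} \binom{k+j-1}{j}^2 p^{-2\sigma j},
\]
so the quantity to be bounded is $\prod_{p>y}\sum_{j\ge0}\binom{k+j-1}{j}^2 p^{-2\sigma j}$. Taking logarithms, it suffices to show that $\sum_{p>y} \log\bigl(\sum_{j\ge0}\binom{k+j-1}{j}^2 p^{-2\sigma j}\bigr) \ll k/\log k$. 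The idea is to split the $j$-sum into the main term $j=0$ (which contributes $1$), the term $j=1$ (which contributes $k^2 p^{-2\sigma}$), and the tail $j\ge2$; then $\log(1+x)\le x$ reduces everything to bounding $\sum_{p>y}\bigl(k^2 p^{-2\sigma} + \sum_{j\ge2}\binom{k+j-1}{j}^2 p^{-2\sigma j}\bigr)$.

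First I would handle the $j=1$ contribution: $\sum_{p>y} k^2 p^{-2\sigma}$. Since $\sigma\ge (2+\epsilon)/(2+2\epsilon) = 1 - \epsilon/(2+2\epsilon)$, we have $2\sigma \ge 2 - \epsilon/(1+\epsilon)$, which is bounded away from $1$ (strictly bigger than $1$ as soon as $\epsilon<2$, which holds since $\epsilon\le1$); in fact $2\sigma - 1 \ge 1 - \epsilon/(1+\epsilon) = 1/(1+\epsilon) \ge 1/2$. Hence $\sum_{p>y} p^{-2\sigma} \ll y^{1-2\sigma}/\log y \le y^{-1/2}/\log y$ by partial summation on the prime counting function. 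Using the hypothesis $y\ge k^{1+\epsilon}$, we get $k^2 y^{1-2\sigma} \le k^2 y^{-1/(1+\epsilon)} \le k^2 k^{-1} = k$, so the full $j=1$ contribution is $\ll k/\log y \ll k/\log k$ (the last step using $y\ge k$, so $\log y\gg \log k$). Here I should be a little more careful to get the exact exponent: with $2\sigma-1 \ge 1/(1+\epsilon)$ and $y\ge k^{1+\epsilon}$ we have $y^{-(2\sigma-1)} \le k^{-(1+\epsilon)/(1+\epsilon)} = k^{-1}$, giving exactly $k^2 \cdot k^{-1}/\log y = k/\log y$, which is $O(k/\log k)$.

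Next I would bound the tail $j\ge2$. Using the crude bound $\binom{k+j-1}{j}\le (k+j-1)^j/j! \le (ek)^j$ for $j\le k$ and a similar polynomial-in-$k$ bound that is still dominated by $p^{-2\sigma j/2}$ for $p>y\ge k^{1+\epsilon}$ and $\sigma$ bounded below, one sees that for $p>y$ the tail $\sum_{j\ge2}\binom{k+j-1}{j}^2 p^{-2\sigma j}$ is at most a constant times its first term $\binom{k+1}{2}^2 p^{-4\sigma} \ll k^4 p^{-4\sigma}$, because the ratio of consecutive terms is $\ll k^2 p^{-2\sigma}\le k^2 y^{-2\sigma} \le k^2 y^{-1} \le k^{1-\epsilon}\cdot y^{-\epsilon/(1+\epsilon)}$... — more simply, $k^2 p^{-2\sigma} \le k^2 y^{-2\sigma}$ and $y^{2\sigma}\ge y\ge k^{1+\epsilon}\ge k^2$ is false in general, so instead I use $y^{2\sigma} \ge y^{1/(1+\epsilon)+1} \ge y$; since $y\ge k^{1+\epsilon}$ we get $k^2 p^{-2\sigma}\le k^2 k^{-(1+\epsilon)} = k^{1-\epsilon}$, which need not be small. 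To fix this I instead separate $j$ small from $j$ large: for $2\le j\le k$ use the ratio being $\ll k^2 p^{-2\sigma}$ and sum a geometric-like series bounded by its first term up to a factor depending only on how fast it grows, absorbing any mild growth into the prime sum since $p^{-2\sigma j}$ decays fast in $j$; for $j>k$ use $\binom{k+j-1}{j}\le 2^{k+j-1}$ so the terms are $\le 4^{k+j} p^{-2\sigma j}$, a convergent geometric series since $4/p^{2\sigma}<1$. Summing over $p>y$ and invoking $\sum_{p>y} p^{-4\sigma}\ll y^{1-4\sigma}/\log y \le y^{-1}/\log y$ (as $4\sigma - 1\ge 2$) gives a tail contribution $\ll k^4 y^{-1}/\log y \le k^4 k^{-(1+\epsilon)}/\log y = k^{3-\epsilon}/\log y$, which is certainly $O(k/\log k)$ once we note — wait, $k^{3-\epsilon}$ is too big.

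\textbf{The main obstacle} is exactly this: getting the powers of $k$ to come out right in the tail, and the bound $k/\log k$ is quite tight, so I expect the real argument does something sharper than the crude estimates above. The right move is to use the generating-function identity $\sum_{j\ge0}\binom{k+j-1}{j}^2 x^j = {}_2F_1(k,k;1;x)$ together with a clean inequality valid for $x$ small relative to $k$, or — more in the spirit of this paper — to compare with $\sum_{P^-(n)>y} d_k(n)^2/n^{2\sigma}$ directly via $\sum_n d_k(n)^2/n^{2\sigma} = \prod_p(1-p^{-2\sigma})^{-k^2}\cdot(\text{correction})$, using $\sum_{p>y} k^2/p^{2\sigma} = k^2\cdot O(y^{1-2\sigma}/\log y) = O(k/\log k)$ with the \emph{single} clean bound $y^{1-2\sigma}\le y^{-1/(1+\epsilon)}\le k^{-1}$, and showing the higher-order terms (both the $j\ge2$ local corrections and the genuine difference between $d_k(n)^2$ and its multiplicative-on-primes approximation) are lower order by the same $y\ge k^{1+\epsilon}$ input. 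So concretely, the clean writeup would be: (i) write the sum as $\prod_{p>y}\sum_{j\ge0}\binom{k+j-1}{j}^2 p^{-2\sigma j}$; (ii) take logs and bound $\log(1+u)\le u$; (iii) bound $\sum_{j\ge1}\binom{k+j-1}{j}^2 p^{-2\sigma j} \ll k^2 p^{-2\sigma}$ for every $p>y$, which holds because $p^{-2\sigma}\le y^{-2\sigma}\le k^{-2}$ forces the $j\ge2$ terms to be a small perturbation of the $j=1$ term — and here the $y\ge k^{1+\epsilon}$ with $\sigma\ge (2+\epsilon)/(2+2\epsilon)$ hypothesis is precisely what makes $y^{2\sigma}\ge k^2$ (since $2\sigma(1+\epsilon)\ge 2+\epsilon\ge 2$, so $y^{2\sigma}\ge (k^{1+\epsilon})^{2\sigma}\ge k^{2+\epsilon}\ge k^2$); (iv) conclude $\sum_{p>y}\log(\cdots)\ll k^2\sum_{p>y}p^{-2\sigma}\ll k^2 y^{1-2\sigma}/\log y$, and finish with $y^{1-2\sigma} = y\cdot y^{-2\sigma}\le y\cdot k^{-2-\epsilon}$ combined with $y\le z\le q^{11/21}$... no — rather with the bound $k^2 y^{1-2\sigma}\le k^2 (k^{1+\epsilon})^{1-2\sigma}$ and the elementary check that the exponent $2 + (1+\epsilon)(1-2\sigma)\le 2 - (1+\epsilon)/(1+\epsilon) = 1$, giving $\ll k/\log k$. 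Step (iii)–(iv), making the exponent bookkeeping airtight, is the only place requiring genuine care; everything else is a routine Euler-product manipulation.
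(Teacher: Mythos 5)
Your plan is essentially the same as the paper's: write the sum as an Euler product over primes $p>y$, take logarithms, reduce to bounding $\sum_{p>y} k^2/p^{2\sigma}$, and then use $y\ge k^{1+\epsilon}$ and $2\sigma-1\ge 1/(1+\epsilon)$ to get $k^2 y^{1-2\sigma}/\log y\le k/\log y\le k/\log k$. Your step (iv) is correct and matches the paper's computation exactly.

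Where you differ is in the local-factor estimate. The paper cites Lemma 3.1 of [BG], which rewrites the local factor exactly via the modified Bessel function: $\log\bigl(\sum_{r\ge 0}d_k(p^r)^2 p^{-2r\sigma}\bigr)=\log I_0(2k/p^\sigma)+O(k/p^{2\sigma})$, and then uses the elementary bound $I_0(t)\le 1+\bigl(t^2/4\bigr)\sum_{m\ge1}1/m!^2$ for $t^2/4\le 1$; the factorial decay in $\sum 1/m!^2$ supplies a clean absolute constant. You attempt a direct elementary replacement, which is reasonable, but your step (iii) as written has a gap. You argue that $p^{-2\sigma}<y^{-2\sigma}\le k^{-2}$ makes the $j\ge 2$ terms a "small perturbation" of the $j=1$ term, but that inequality only gives the ratio $a_1/a_0=k^2 p^{-2\sigma}<1$, which is not bounded away from $1$: the resulting geometric majorant $a_1/(1-k^2p^{-2\sigma})$ can blow up, and even the sharper $k^2p^{-2\sigma}<k^{-\epsilon}$ does not give a uniform bound since $k^{-\epsilon}$ can be arbitrarily close to $1$ when $\epsilon$ is small (which is relevant here, because the paper later invokes the lemma with $\epsilon$ as small as $\log\log k/\log k$). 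The fix is to look at the ratio one step later: for $j\ge 1$,
\[
\frac{a_{j+1}}{a_j}=\left(\frac{k+j}{j+1}\right)^2 p^{-2\sigma}\le\left(\frac{k+1}{2}\right)^2 p^{-2\sigma}\le\left(\frac{k+1}{2k}\right)^2\cdot k^2 y^{-2\sigma}\le\left(\frac{3}{4}\right)^2=\frac{9}{16},
\]
using $k\ge 2$ and $k^2y^{-2\sigma}\le 1$. So $\sum_{j\ge 1}a_j\le a_1\sum_{m\ge 0}(9/16)^m=\tfrac{16}{7}a_1\ll k^2p^{-2\sigma}$ with an absolute constant. With this correction your elementary route to (iii) goes through and you recover the paper's bound without invoking the [BG]/Bessel-function lemma.
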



\begin{proof} Lemma 3.1 in  \cite{BG}, which is a generalization of Lemma 4 in \cite{GS06}, implies that
\[
\log\left(\sum_{r=0}^\infty \frac{d_k(p^r)^2}{p^{2r\sigma}}\right)
	=\log I_0(2k/p^{\sigma} ) + O(k/p^{2\sigma}) ,
\]
where $I_0$ is defined by \eqref{bessel}. Note that if $p\ge y\ge k^{1+\epsilon}$, then $k/p^{\sigma} < 1$, which implies that
\[
1\le I_0(2k/p^{\sigma} ) \le 1 + \frac{k^2}{p^{2\sigma}} \sum_{m\ge 1} \frac{1}{m!^2} \le 1 + O \left( \frac{k^2}{p^{2\sigma}} \right) .
\]
So we arrive at the estimate
\[
\log\left(\sum_{r=0}^\infty \frac{d_k(p^r)^2}{p^{2r\sigma}}\right) 
	  \ll \frac{k^2}{p^{2\sigma}}  ,
\]
which in turn yields that
\als{
\log\left( \sum_{P^-(n)>y}  \frac{d_k(n)^2}{n^{2\sigma}} \right)
	\ll \sum_{p>y} \frac{k^2}{p^{2\sigma} }
	\ll \frac{k^2}{y^{2\sigma-1} \log y} \ll \frac{k}{\log k},
}
by our assumptions that $y\ge k^{1+\epsilon}$ and $\sigma\ge(2+\epsilon)/(2+2\epsilon)\ge 3/4$.
\end{proof}


\begin{proof}[Proof of Proposition \ref{tail bound prelim}] Without loss of generality, we may assume that $q$ is large, else the result is trivially true. Set $\CA(N)=\CA\cap(N/e,N]$, so that
\[
S_{\CA}(\chi)
	\le \sum_{ \log y< j \le \log z+1 } S_{\CA(e^j)}  (\chi) .
\]
H\"older's inequality with $p=2k/(2k-1)$ and $q=2k$ implies that
\eq{starting point}{
S_{\CA}(\chi)^{2k}
	&\le \left( \sum_{\log y< j \le \log z+1 } \frac{1}{j^{\frac{4k}{2k-1} } } \right)^{2k-1}
		  \left( \sum_{ \log y< j \le \log z+1 } j^{4k} S_{\CA(e^j)}(\chi)^{2k}  \right) \\
	&\le  \frac{1}{(\log y-1)^{2k+1}} \left( \sum_{ \log y< j \le \log z+1 } j^{4k} S_{\CA(e^j)}(\chi)^{2k}  \right) ,
}
which reduces the problem to bounding
\[
\frac{1}{\phi(q)} \sum_{\chi\mod q} S_{\CA(N)}(\chi)^{2k}
\]
for $N\in[y,ez]$. In order to do this, we first decouple the $\alpha_\chi$, the point where the maximum $S_{\CA(N)}(\chi)$ occurs, from the character $\chi$. We accomplish this by noticing that for every $R\in\N$ and for every $\alpha\in(0,1]$, there is some $r\in\{1,2,\dots,R\}$ such that $|\alpha-r/R|\le 1/R$. Then
\[
S_{\CA(N)}(\chi) = \sum_{n\in\CA(N)} \frac{\chi(n)e(n\alpha)}{n}
	= \sum_{n\in\CA(N)} \frac{\chi(n)e(nr/R)}{n} + O\left( \frac{N}{R}\right) .
\]
We choose $R=\fl{N^{21/20} }$. Then Minkowski's inequality implies that
\eq{SNr}{
S_{\CA(N)}(\chi)^{2k}
	&\le 2^{2k-1}\max_{1\le r\le R} \left| \sum_{n\in\CA(N)} \frac{\chi(n)e(nr/R)}{n}\right|^{2k}
		+ O\left( \frac{e^{O(k)}}{N^{k/10} }\right) \\
	&\le 2^{2k-1}\sum_{r=1}^R \left| \sum_{n\in\CA(N)} \frac{\chi(n)e(nr/R)}{n}\right|^{2k}
		+ O\left( \frac{e^{O(k)} }{N^{k/10} }\right) ,
}
which reduces Proposition \ref{tail bound prelim} to bounding
\[
S_{N,r} := \frac{1}{\phi(q)} \sum_{\chi\mod q} \left| \sum_{n\in\CA(N)} \frac{\chi(n)e(nr/R)}{n}\right|^{2k} .
\]
Notice that
\eq{before parseval}{
S_{N,r} = \frac{1}{\phi(q)} \sum_{\chi\mod q} \left| \sum_{\substack{(N/e)^k<n\le N^k\\P^-(n)>y}}
	\frac{\widetilde{d}_k(n;N) \chi(n)}{n}\right|^{2} ,
}
where
\[
\widetilde{d}_k(n;N) :=  \sum_{\substack{ n_1\cdots n_k=n \\ n_1,\dots,n_k\in \CA(N) }} \prod_{j=1}^k e(n_j r/R) .
\]
Clearly,
\[
|\widetilde{d}_k(n;N)|\le d_k(n;N):= \sum_{\substack{n_1\cdots n_k=n \\ n_1,\dots,n_k \in \CA(N) }} 1 .
\]
So opening the square in \eqref{before parseval}, and summing over $\chi\mod q$, we find that
\eq{S_N fund ineq}{
S_{N,r} \le \sum_{\substack{N^k/e^k<m\le N^k \\ (m,q)=1,\, P^-(m)>y}} \frac{d_k(m;N)}{m}
	\sum_{\substack{N^k/e^k<n\le N^k \\ n\equiv m\mod q,\, P^-(n)>y}} \frac{d_k(n;N)}{n} .
}
The right hand side of \eqref{S_N fund ineq} is at most $S_{N,r}^{(1)}+2S_{N,r}^{(2)}$, where
\[
S_{N,r}^{(1)} := \sum_{\substack{N^k/e^k<m\le N^k\\P^-(m)>y}} \frac{d_k(m;N)^2}{m^2}
\quad\text{and}\quad
S_{N,r}^{(2)}:= \sum_{\substack{N^k/e^k<m\le N^k \\ (m,q)=1 \\ P^-(m)>y}} \frac{d_k(m;N)}{m}
	\sum_{\substack{m<n\le N^k \\ n\equiv m\mod q \\ P^-(n)>y}} \frac{d_k(n;N)}{n} .
\]
We shall bound each of these sums in a different way. 

Firstly, note that
\eq{S_N1 bound}{
S_{N,r}^{(1)} \le \frac{e^{k/2}}{N^{k/2}} \sum_{P^-(m)>y} \frac{d_k(m)^2}{m^{3/2}} \ll \frac{e^{O(k)}}{N^{k/2}},
}
by Lemma \ref{d_k^2} with $\epsilon=1$, which is admissible. 

Next, we bound $S_{N,r}^{(2)}$. Note that $n\ge m+q>q$ for $n$ and $m$ in the support of $S_{N,r}^{(2)}$. In particular, for $S_{N,r}^{(2)}$ to have any summands, we need that $N^k>q$. We choose $j\in\{2,\dots,k\}$ such that
\eq{j def}{
N^{j-1} \le q< N^j .
}
Then
\eq{large N e1}{
S_{N,r}^{(2)}
	&\le  \frac{ e^{2k}}{N^{2k}}
		\sum_{\substack{ N^k/e^k<m\le N^k \\ (m,q)=1}} d_k(m;N)
		\sum_{ \substack{ N^k/e^k<n\le N^k \\ n \equiv m \mod q }} d_k(n;N) \\
	&=	\frac{  e^{2k}}{N^{2k}}
		\sum_{\substack{ m\le N^k \\ (m,q)=1}} d_k(m;N)
		\sum_{ \substack{ g\le N^{k-j} \\ (g,q)=1}} d_{k-j}(g;N)
		\sum_{ \substack{ h \le N^j \\ h \equiv  \overline{g} m \mod q }} d_j(h;N) .	
}
Our goal is to bound
\[
D(a) = \sum_{ \substack{ h \le N^j \\h \equiv a \mod q }} d_j(h;N) ,
\]
for every $a\in\{1,\dots,q\}$ that is coprime to $q$. First, assume that $j>1000$. Note that $D(a)$ is supported on integers $h$ which can be written as a product $h=n_1\cdots n_j$ with each of the factors $n_\ell$ lying in the interval $(N/e,N]$ and having all their prime factors $>y$. In particular, $\Omega(n_\ell)\le \log N/\log y$ for all $\ell\in\{1,\dots,j\}$ and consequently 
\[
\Omega(h)\le \frac{j\log N}{\log y} \le \frac{j}{j-1} \cdot \frac{\log q}{\log y} \le \frac{\log q}{0.999\log y} ,
\]
by \eqref{j def}. In particular,
\[
d_j(h;N)\le j^{\Omega(h)} \le k^{\frac{\log q}{0.999\log y}} = q^{\frac{\log k}{0.999\log y}} \le q^{0.334} ,
\]
by our assumptions that $y\ge k^3$ and that $j\le k$. This inequality also holds when $j\le1000$, since in this case $d_j(h;N)\le d_j(h)\ll_\delta h^\delta$ for all $\delta>0$ and $h\le N^j\le N^{1000}\le q^{1000}$ for the numbers $h$ in the range of $D(a)$. So, no matter what $j$ is, we conclude that
\eq{large N e2}{
D(a) \ll q^{0.334} \sum_{ \substack{ h \le N^j \\h \equiv a \mod q }} 1 \ll \frac{N^j}{q^{0.666}} \ll \frac{N^j}{Rq^{0.116}} ,
}
since $R\le N^{21/20}\le q^{11/20}$. Inserting \eqref{large N e2} into \eqref{large N e1}, we arrive at the estimate
\eq{S_N2 bound}{
S_{N,r}^{(2)}
	\ll \frac{  e^{2k} }{N^{2k}}  \cdot N^k \cdot N^{k-j} \cdot  \frac{N^{j}}{R q^{0.116}} 
	= \frac{e^{2k} }{ R q^{0.116} } .
}
Combining relations \eqref{S_N1 bound} and \eqref{S_N2 bound} with \eqref{S_N fund ineq}, we deduce that
\[
S_{N,r} \ll \frac{e^{O(k)}}{N^{k/2}} +  \frac{e^{2k} }{ R q^{0.116} }  .
\]
Together with \eqref{SNr}, the above estimate implies that
\[
\frac{1}{\phi(q)} \sum_{\chi\mod q} S_{\CA(N)}(\chi)^{2k} 
	\ll \frac{N^{21/20} e^{O(k)}}{N^{k/2}} + \frac{ e^{O(k)} }{q^{0.116} }  + \frac{e^{O(k)}}{N^{k/20}} 
	\ll  \frac{ e^{O(k)} }{q^{0.116} }  +  \frac{e^{O(k)}}{N^{k/20}},
\]
since we have assumed that $k\ge3$. Together with \eqref{starting point}, this implies that
\als{
\frac1{\phi(q)} \sum_{\chi\mod q} S_{y,z}(\chi)^{2k}
	&\ll \frac{e^{O(k)}}{(\log y)^{2k+1}} \sum_{ \log y< j \le \log z +1 }
		\left( \frac{  j^{4k} }{ e^{jk/20 } } +
			 \frac{ j^{4k} }{q^{0.116}}   \right) \\
	&\le \frac{ e^{O(k)} (\log y)^{2k-1}}{y^{k/20}}
		+  \frac{  e^{O(k)} (\log z)^{4k+1} }{q^{0.116}(\log y)^{2k+1}} .
}
Since $z\le q^{11/21}$ and $k\le   (\log q)/(400\log\log  q)$, Proposition \ref{tail bound prelim} follows.
\end{proof}


\begin{proof}[Proof of Proposition \ref{tail bound prelim2}]
Without loss of generality, we may assume that $y$ is large enough. We start in a similar way as in the proof of Proposition \ref{tail bound prelim}: we set $R=\fl{z^3}$ and note that
\[
S_{\CA}(\chi)= \max_{1\le r\le R} 
		\abs{  \sum_{n\in\CA} \frac{\chi(n)e(nr/R)}{n} } + O(1/z^2) .
\]
Therefore, Minkowski's inequality implies that
\als{
\left(\frac{1}{\phi(q)}  \sum_{\chi\mod q}  S_{\CA}(\chi)^{2k}\right)^{\frac{1}{2k}}
	&\le \left(\frac{1}{\phi(q)}  \sum_{\chi\mod q}  \max_{1\le r\le R} 
		\abs{  \sum_{n\in\CA} \frac{\chi(n)e(nr/R)}{n} }^{2k}  \right)^{\frac{1}{2k}}
		+ O(1/z^2) \\
	&\le \left( \sum_{r=1}^R \frac{1}{\phi(q)}  \sum_{\chi\mod q}  \abs{  \sum_{n\in\CA} \frac{\chi(n)e(nr/R)}{n} }^{2k}  			\right)^{\frac{1}{2k}} + O(1/z^2) . 
}
We claim that 
\eq{prop2 key claim}{
S_r := \frac{1}{\phi(q)} \sum_{\chi\mod q} \left| \sum_{n\in\CA} \frac{\chi(n)e(nr/R)}{n}\right|^{2k} 
        \le \frac{e^{O(k\log\log y/\log y)}k^k}{(e y\log y)^k} + \frac{e^{O(k)}}{(\log y)^{100k}}
}
for all $r\in\{1,\dots,R\}$. Proposition \ref{tail bound prelim2} follows immediately if we show this relation, since we would then have that
\als{
\frac{1}{\phi(q)}  \sum_{\chi\mod q}  S_{\CA}(\chi)^{2k}
	&\le z^3 e^{O(k\log\log y/\log y)}  \left( \sqrt{\frac{k}{e y\log y}} +O(1/(\log y)^{50}) \right)^{2k}\\
	&\le   \frac{ e^{O(k\log\log y/\log y)} k^k}{ (ey\log y)^k} +\frac{e^{O(k)}}{(\log y)^{50k}} 
}
by our assumption that $z\le k^{\log\log k}$ and \eqref{eps-delta}, which establishes Proposition \ref{tail bound prelim2}. 

 Arguing as in the proof of Proposition \ref{tail bound prelim} and setting
\[
d_k'(n) = \#\{(n_1\cdots n_k)\in\N^k: n=n_1\cdots n_k,\ y<n_j\le z\ (1\le j\le k)\},
\]
we find that 
\eq{S_r fund ineq}{
S_r \le \sum_{\substack{m>y^k \\ (m,q)=1,\, P^-(m)>y}} \frac{d_k'(m)}{m}
	\sum_{\substack{n>y^k \\ n\equiv m\mod q,\, P^-(n)>y}} \frac{d_k'(n)}{n} 
	\le S_r^{(1)}+2\cdot S_r^{(2)},
}
where
\[
S_{r}^{(1)} := \sum_{P^-(m)>y} \frac{d_k'(m)}{m^2}
\quad\text{and}\quad
S_{r}^{(2)}:= \sum_{\substack{(m,q)=1 \\ m\in\CP(y,z)}} \frac{d_k'(m)}{m}
	\sum_{\substack{n>m \\ n\equiv m\mod q \\ P^-(n)>y}} \frac{d_k'(n)}{n} .
\]
We shall bound each of these sums in a different way. 

We start with bounding $S_r^{(2)}$. Set 
\[
\epsilon =   \min\left\{\frac{1}{2},\frac{ \log(y/k)}{\log k} \right\} 
	\ge \frac{\log\log k}{\log k}
\]
for large enough $k$, so that $y \ge k^{1+\epsilon}$. Also, let $\sigma=(2+\epsilon)/(2+2\epsilon)$. Fix $m\in\N$ with $(m,q)=1$, and note that if $n\equiv m\mod{q}$ with $n>m$, then $n\ge m+q>q$. Therefore
\eq{S_r2 e1}{
S_{r}^{(2)}(m) :=  \sum_{\substack{n>m \\ n\equiv m\mod q \\ P^-(n)>y}} \frac{d_k'(n)}{n}  
	&\le \sum_{\substack{ n_1,\dots,n_k\in(y,z] \\ n_1\cdots n_k>q \\ P^-(n_1\cdots n_k)>y 
		\\ n_1\cdots n_k\equiv m\mod{q} }}
		\frac{1}{n_1\cdots n_k}  \\
	&\le \sum_{\substack{1\le r_1,\dots,r_k\le \log(z/y)+1 \\ r_1+\cdots+r_k>\log(q/y^k)}} 	
		 \sum_{\substack{ ye^{r_\ell -1}<n_\ell\le ye^{r_\ell } \\ P^-(n_\ell)>y \, (1\le \ell\le k) 
		 	\\ n_1\cdots n_k\equiv m\mod{q} }}
		\frac{1}{n_1\cdots n_k}  \\
	&\le \sum_{\substack{1\le r_1,\dots,r_k\le \log(z/y)+1 \\ r_1+\cdots+r_k>\log(q/y^k)}} 	
		\frac{e^k}{y^ke^{r_1+\cdots+r_k}}
		 \sum_{\substack{ ye^{r_\ell-1}<n_\ell\le ye^{r_\ell} \\ P^-(n_\ell)>y \, (1\le \ell\le k) 
		 	\\ n_1\cdots n_k\equiv n\mod{q} }} 1  .
}
We fix $r_1,\dots,r_k$ as above, set $y_\ell=ye^{r_\ell}$ for all $\ell\in\{1,\dots,k\}$, and choose $j\in\{2,\dots,k\}$ such that
\eq{j def 2}{
y_1\cdots y_{j-1} \le q < y_1\cdots y_{j} .
}
Then, for any $a\in\N$ that is coprime to $q$, the Cauchy--Schwarz inequality implies that
\als{
\sum_{\substack{ y_\ell/e <n_\ell \le y_\ell \\ P^-(n_\ell)>y \, (1\le \ell\le j) \\ n_1\cdots n_{j}\equiv a\mod{q} }} 1  
	&\le \sum_{\substack{ n\le y_1\cdots y_{j} \\ P^-(n)>y \\ n\equiv a\mod{q} }} d_{j}(n) \\ 
	&\le \left( \sum_{ \substack{ n\le y_1\cdots y_{j} \\ n \equiv a \mod q }} 1 \right)^{1/2}
		\left(  \sum_{ \substack{ n\le y_1\cdots y_{j} \\  P^-(n)>y }}  d_{j}(n)^2 \right)^{1/2} \\
	& \ll \sqrt{\frac{ y_1\cdots y_{j}}{q} } \left( (y_1\cdots y_{j})^{2\sigma}
			 \sum_{  \substack{ n\le y_1\cdots y_j \\ P^-(n)>y }}  \frac{d_{j}(n)^2}{n^{2\sigma}} \right)^{1/2}  .
}
So, applying Lemma \ref{d_k^2}, we deduce that
\als{	
\sum_{\substack{ y_\ell/e<n_\ell \le y_\ell \\ P^-(n_\ell)>y \, (1\le \ell\le j) \\ n_1\cdots n_{j}\equiv a\mod{q} }} 1  
	 	\le e^{O(k)} \sqrt{\frac{(y_1\cdots y_{j})^{1+2\sigma}}{q} }
		\le \frac{e^{O(k)}y_1\cdots y_j}{q^{1-\sigma}} ,
}
since $y_j\le ez\le e^{O(k)}$ and $y_1\cdots y_{j-1}\le q$, by \eqref{j def 2}. Inserting this estimate into \eqref{S_r2 e1}, we deduce that
\eq{S_r2 e2}{
S_{r}^{(2)}(m) \le \frac{e^{O(k)}(\log z)^k}{q^{1-\sigma}} \le \frac{e^{O(k)}(\log z)^k}{q^{\epsilon/3}} .
}
Consequently, we immediately deduce that 
\eq{S_r2 final}{
S_r^{(2)} \le \frac{e^{O(k)}(\log z)^k}{q^{\epsilon/3}} \cdot \left(\frac{\log z}{\log y}\right)^k
		= \frac{e^{O(k)}(\log\log k)^{2k}(\log k)^k}{q^{\epsilon/3}} 
		&\le \frac{e^{O(k)}(\log\log k)^{2k}(\log k)^k}{q^{(\log\log k)/(3\log k)}}  \\
		&\le \frac{e^{O(k)}}{(\log y)^{100k}},
}
which is admissible.

It remains to bound $S_r^{(1)}$. This will be done in a very different way. We observe that if $(X_n)_{n\ge1}$ are the random variables defined in the introduction, then 
\[
S_r^{(1)} \le \E\left[ \abs{ \sum_{\substack{n\in\CP(y,z) \\ n>1 }} \frac{X_n}{n} }^{2k}\right].
\]
We have that
\als{
 \sum_{\substack{n\in\CP(y,z) \\ n>1 }}\frac{X_n}{n}
 	=-1+\prod_{y<p\le z} \left(1-\frac{X_p}{p}\right)^{-1} 
	&=-1 + \exp\left\{\sum_{y<p\le z} \frac{X_p}{p} + O\left(\frac{1}{y\log y}\right) \right\}  \\
	&=-1+ e^T + O(1/y) ,
}
where 
\[
T:=\sum_{y<p\le z} \frac{X_p}{p} .
\]
Therefore
\[
(S_r^{(1)})^{\frac{1}{2k}} \le \E\left[ \abs{ e^T-1}^{2k} \right]^{\frac{1}{2k}}+ O(1/y),
\]
by Minkowski's inequality. Fix $\epsilon\in[1/\log y,1]$. When $|T|\le\epsilon$, then $|e^T-1|\le e^\epsilon|T|$, whereas if $|T|\ge\epsilon$, then we use the trivial bound $|e^T-1|\le 2e^{|T|}\le 2e^{|T|} (|T|/\epsilon)^{\ell}$, for any $\ell\in\N$. Therefore
\[
(S_r^{(1)})^{\frac{1}{2k}} \le e^{\epsilon}\cdot \E\left[ |T|^{2k} \right]^{\frac{1}{2k}} 
	+ 2\cdot \E\left[ e^{2k|T|} (|T|/\epsilon)^{\ell} \right]^{\frac{1}{2k}} + O(1/y) .
\]
We have that
\[
\E\left[ |T|^{2k} \right] = \sum_{\substack{y<p_1,\dots,p_{2k}\le z \\  p_1\cdots p_k =p_{k+1}\cdots p_{2k}}} 
	\frac{1}{p_1\cdots p_{2k}} 
		\le k! \sum_{y<p_1,\dots,p_k\le z} \frac{1}{(p_1\cdots p_k)^2}
		\le \frac{e^{O(k/\log y)} k^k}{(ey\log y)^k} ,
\]
since $k!\ll (k/e)^k\sqrt{k}$ and $\sum_{p>y}1/p^2=1/(y\log y) + O(1/(y\log^2y))$ by the Prime Number Theorem. Moreover,
\als{
\E\left[ e^{2k|T|} (|T|/\epsilon)^{\ell} \right] 
	&= \epsilon^{-\ell}\sum_{m=0}^{\infty}  \frac{(2k)^m}{m!} \E\left[ |T|^{m+\ell} \right]
	\le \epsilon^{-\ell}\sum_{m=0}^{\infty}  \frac{(2k)^m}{m!} \E\left[ |T|^{2m+2\ell} \right]^{1/2} \\
	&\le \epsilon^{-\ell}\sum_{m=0}^{\infty}  \frac{(2k)^m \sqrt{(m+\ell)!}}{m!} \left(\frac{e^{O(1)}}{y\log y}\right)^{m/2+\ell/2}\\
	&\le \left(\frac{e^{O(1)}\ell}{\epsilon^2y\log y}\right)^{\ell/2} \sum_{m=0}^{\infty}  \frac 1{(m/2)!}  \left(\frac{e^{O(1)}k^2}{y\log y}\right)^{m/2} \ll \left(\frac{e^{O(1)}\ell}{\epsilon^2 y\log y}\right)^{\ell/2}  e^{o(k)}
}
as $(m+\ell)!\le m!\ell!2^{m+\ell}$ and $\sqrt{m!}=(m/2)!e^{O(m)}$, with $y\ge k$. Choosing $\ell=\fl{c\epsilon^2 y\log y}$ for an appropriate small constant $c>0$ makes the left hand side $\le e^{O(k)-4c'\epsilon^2 y\log y}$ for some $c'>0$. We take $\epsilon= \log\log y/\log y$ to conclude that
\[
\E\left[ e^{2k|T|} (|T|/\epsilon)^{\ell} \right] \le e^{O(k)-4c'y(\log\log y)^2/\log y} \le e^{-2kc'(\log\log y)^2} ,
\]
where we used our assumption that $k$ is large and $y\ge k\log k$. We thus conclude that
\als{
S_r^{(1)} &\le  e^{O(k\log\log y/\log y)}\left( \sqrt{\frac{k}{e y\log y} } 
	+ 2e^{-c'(\log\log y)^2} \right)^{2k}  \\
	&\le   \frac{ e^{O(k\log\log y/\log y)}  k^k}{(e y\log y)^k } 
	+ e^{O(k)-c'k(\log\log y)^2} ,
}
by \eqref{eps-delta}. Together with \eqref{S_r fund ineq} and \eqref{S_r2 final}, this completes the proof of relation 
\eqref{prop2 key claim} and, thus, of Proposition \ref{tail bound prelim2}.
\end{proof}


\section{The distribution function}\label{prob}

In this section, we prove Theorem \ref{DistributionTheorem}. Throughout this section we fix $\tau>0$ and a large odd prime number $q$ such that $\tau\le (\log\log q)^{5/9}-2\log\log\log q$. Set $y=\exp\{(\log\log  q)^{5/9}\}$ and 
\[
m_y(\chi) = \frac{1}{2e^\gamma} 
	\max_{\alpha\in[0,1]} \abs{\sum_{\substack{n\in\Z,\, n\neq 0 \\ P^+(n)\le y }} \frac{\chi(n)(1-e(n\alpha))}{n} } .
\]
Relation \eqref{polya 2} and Lemma \ref{smooth} imply that
\[
\abs{ m(\chi) - m_y(\chi) }
	\le  2e^{-\gamma} S_{y,q^{11/21}}(\chi) + O\left(q^{-1/\log y}\right).
\]
Therefore, if $q$ is large enough, then Theorem \ref{tail bound cor} and Lemma \ref{smooth} imply that
\[
\frac1{\phi(q)} \#\left\{ \chi \mod q: \abs{m_y(\chi) - m(\chi) } > 2/\log y \right\} 
	 \ll \exp\left\{-  \frac{y}{2(\log y)^3} \right\}
		= o_{q\to\infty}(1) \cdot \Phi_q(\tau)  ,
\]
where the last relation follows by Theorem \ref{main thm} and our assumption that $\tau\le (\log\log q)^{5/9}-2\log\log\log q$. Therefore we conclude that
\[
(1+o_{q\to\infty}(1))\Phi_q(\tau+2/\log y;y) \le \Phi_q(\tau) \le (1+o_{q\to\infty}(1)) \Phi_q(\tau-2/\log y;y) ,
\]
where
\eq{prob e1}{
\Phi_q(t;y):= \frac{1}{\phi(q)} \#\{\chi\mod q: m_y(\chi)  > t \} .
}

We perform the same analysis on $\Phi(\tau)$. With a slight abuse of notation, we set
\[
m_y(X) = \frac{1}{2e^\gamma} \max_{\alpha\in[0,1]} \abs{ \sum_{\substack{n\in\Z,\, n\neq 0 \\ P^+(n)\le y }}  \frac{X_n(1-e(n\alpha))}{n} } . 
\]
The analogue of Theorem \ref{tail bound cor} for the random variables $X_n$ is (more easily) proven by the same method with a few simple changes: we proceed exactly as in the proof of Theorem \ref{tail bound} but wherever we evaluated a sum $(1/\phi(q))\sum_{\chi \pmod q} \chi(h/k)$ there (as $0$, unless $h\equiv k\pmod q$ when it equals $1$), we now evaluate an expectation $\mathbb E[X_h \bar{X_k}]$ which equals $0$ unless $h=k$, when it equals $1$. (In fact this only happens in the proofs of \eqref{S_N fund ineq} and of \eqref{S_r fund ineq}.) We then conclude that 
\eq{prob e2}{
\Phi(\tau+2/\log y ; y) +o_{q\to\infty}( \Phi_q(\tau) ) \le \Phi(\tau) 
	\le  \Phi(\tau-2/\log y ; y) + o_{q\to\infty} (\Phi_q(\tau)) ,
}
where
\[
\Phi(t;y):=\prob( m_y(X) >  t)  .
\]
(We could have written $o_{q\to\infty}(\Phi(\tau))$ in place of $o_{q\to\infty}(\Phi_q(\tau))$ in \eqref{prob e2} but this would have required proving a lower bound for $\Phi(\tau)$. In order to avoid this technical issue, we use for comparison $\Phi_q(\tau)$ whose size we already know by Theorem \ref{main thm}.)
Next, for each $p\le y$ we fix a parameter $\epsilon_p$ of the form $\epsilon_p=2\pi/k_p$ with $k_p\in\N$ to be chosen, and a partition of the unit circle into arcs $\{I_{p,1},\dots,I_{p,k_p}\}$ of length $\epsilon_p$. Moreover, we let $w_{p,j}$ be the point in the middle of the arc $I_{p,j}$ and we define $\CZ$ to be the set of $(\pi(y)+1)$-tuples $\bs z:=(z_{-1},z_2,z_3,z_5,\dots)$ with $z_{-1}\in\{-1,1\}$ and $z_p\in\{w_{p,j}:1\le j\le k_p\}$ for all primes $p\le y$. Given such a choice of $\bs z$ and $n\in\N$, we set $z_n=\prod_{p^e\|n} z_p^e$ and $z_{-n}=z_{-1}z_n$. Moreover, similarly to before, we let
\[
m_y(\bs z) =  \frac{1}{2e^\gamma} \max_{0\le \alpha\le 1}  \abs{   \sum_{\substack{n\in\Z,\, n\neq 0 \\ P^+(n)\le y }} 
	\frac{z_n(1-e(n\alpha))}{n} } .
\]
We will show that there exist choices of $\epsilon_p$ and a constant $C>0$ such that if $X_{-1}=z_{-1}$ and $X_p$ belongs to the arc $I_{p,j}$ centred at $z_p$ for all $p\le y$, then $|m_y(X)  -m_y(\bs z)| \le C/\log y$. This immediately implies that
\eq{prob e3}{
\Phi^*( \tau+C/\log y ;y ) \le  \Phi(y;\tau) \le \Phi^*(\tau-C/\log y ; y) ,
}
where
\[
\Phi^*(t;y):= \frac{1}{|\CZ|} \#\{\bs z\in\CZ: m_y(\bs z) > t\} .
\]

It remains to show that $|m_y(X) -m_y(\bs z)| \le C/\log y$ if $\epsilon_p$ is chosen appropriately. We choose these parameters so that $\max\{(\log p)/2,1\}/(\log y)^3\le \epsilon_p\le \max\{\log p,2\}/(\log y)^3$. Then the condition that $|z_p-X_p|\le \epsilon_p$ for each prime $p\le y$ implies that $|z_n-X_n|\ll (\log |n|)/(\log y)^3$ for all $n\in\Z\setminus\{0\}$ with $P^+(n)\le y$. Hence
\[
|m_y(X) - m_y(\bs z)| 
	\ll  \sum_{\substack{n\ge 1\\ P^+(n)\le y}}  \frac 1n \ \frac{(\log n)}{(\log y)^3}
	= \sum_{\substack{p\le y \\ e\ge 1}}  \frac{(\log p)}{p^e(\log y)^3}
		 \sum_{\substack{m\ge 1 \\ P^+(m)\le y}}  \frac 1m \ll  \frac{1}{\log y},
\] 
which proves our claim. 

We now prove similar inequalities for $\Phi_q$, namely that
\eq{prob e4}{
(1+o_{q\to\infty}(1)) \Phi^*(\tau+C/\log y ; y) \le  \Phi_q(\tau; y) \le (1+o_{q\to\infty}(1))\Phi^*(\tau-C/\log y ;y ) .
}
Theorem 9.3 of \cite{Lamz} states that if $I_p$ is an arc of length $\epsilon_p\ge 1/(\log\log  q)^{5/3}$ for each $p\le y$, then
\[
\frac{1}{\phi(q)} \#\{\chi \mod q : \chi(p)\in I_p \ \text{for each} \ p\le y\} 
	\sim \prod_{p\le y} \epsilon_p \quad(q\to\infty).
\]
The same methods can easily be adapted to also show that
\[
\frac{1}{\phi(q)} \#\{\chi \mod q : \chi(-1)=\sigma,\, \chi(p)\in I_p \ \text{for each} \ p\le y\} \sim 
	\frac{1}{2} \prod_{p\le y} \epsilon_p \quad(q\to\infty)
\]
for $\sigma\in\{-1,1\}$. Since $y=\exp\{(\log\log  q)^{5/9}\}$, the inequality $\epsilon_p\ge 1/(\log\log  q)^{5/3}$ is indeed satisfied for our choice of $\epsilon_p$, thus completing the proof of \eqref{prob e4}. 

Finally, combining relations \eqref{prob e1}, \eqref{prob e2}, \eqref{prob e3} and \eqref{prob e4}, we obtain
\[ 
 \Phi(\tau+(2C+2)/\log y) \le (1+o_{q\to\infty}(1)) \Phi_q(\tau)  
	\le  \Phi(\tau- (2C+2)/\log y). 
\]
If $\Phi$ is continuous in $[a,b]$, then it is also uniformly continuous. It then follows immediately from the above estimate that $\Phi_q\to \Phi$ as $q\to\infty$ over primes, uniformly on $[a,b]$. In order to see that $\Phi_q$ converges also weakly to $\Phi$, consider a continuous function $f:\R\to\R$ of bounded support. Fix $\epsilon>0$. Since $\Phi$ has at most countable many discontinuity points, we deduce that there is an open set $E$ of Lebesgue measure $<\epsilon$ that contains all discontinuities of $\Phi$. If $I$ is a bounded closed interval containing the support of $f$, then the set $I\setminus E$ is compact and thus it can be written as a finite union of closed intervals. Since $(\Phi_q)_{q\ \text{prime}}$ converges uniformly to $\Phi$ on each such interval, it does so on $E\setminus I$ as well. Therefore
\als{
&\limsup_{\substack{q\to\infty \\ q\ \text{prime}}} \abs{ \int_{\R}f(\tau)\Phi_q(\tau) d\tau - \int_{\R}f(\tau)\Phi(\tau) d\tau}\\
	&\quad \le \|f\|_\infty\cdot \left( 2\,\text{meas}(E) 
		+ \text{meas}(I\setminus E) \limsup_{\substack{q\to\infty \\ q\ \text{prime}}}
			\sup_{\tau \in I\setminus E}|\Phi_q(\tau)-\Phi(\tau)|\right) \\ 
	&\quad <2\epsilon  \|f\|_\infty .
}
Since $\epsilon$ was arbitrary, we conclude that 
\[
\lim_{\substack{q\to\infty \\ q\ \text{prime}}} \int_{\R}f(\tau)\Phi_q(\tau) d\tau
	= \int_{\R}f(\tau)\Phi(\tau) d\tau,
\]
thus completing the proof of Theorem \ref{DistributionTheorem}.


\section{Some pretentious results}\label{pretentious}

In this section, we develop some general tools we will use to prove Theorems \ref{structure thm} and \ref{structure thm even}. We begin by stating a result that allows us to concentrate on the case when $\alpha$ is a rational number with a relatively small denominator.


\begin{lemma}\label{centering alpha} Let $y\ge2$, $z\ge(\log y)^5$, $\chi$ be a Dirichlet character, $\alpha\in\R$ and $B\in[(\log y)^5,z]$. Let $a/b$ be a reduced fraction with $b\le B$ and $|\alpha-a/b|\le 1/(bB)$. Then
\[
\sum_{\substack{1\le |n| \le z \\ P^+(n)\le y}} \frac{\chi(n)e(n \alpha)}{n}
	= \sum_{\substack{1\le |n| \le N \\ P^+(n)\le y}} \frac{\chi(n)e(n a/b)}{n}
		+ O(\log B) ,
\]
where $N=\min\{z,|b\alpha - a|^{-1}\}$.
\end{lemma}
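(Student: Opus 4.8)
The plan is to write $\alpha = a/b + \delta$ with $|\delta| \le 1/(bB)$ and split the smooth sum at the threshold $N = \min\{z, 1/|b\delta|\} = \min\{z, |b\alpha-a|^{-1}\}$. First I would handle the tail $N < |n| \le z$: here one expects $e(n\alpha)$ to oscillate enough, since $\|n\alpha\|$ is controlled by $\|n a/b\|$ together with the small perturbation $n\delta$, and $|n\delta|$ can be as large as $z/N$ on this range. The cleanest route is a dyadic decomposition of $(N, z]$ into $O(\log z)$ ranges $(X, 2X]$, and on each such range apply partial summation to $\sum_{n \le x, P^+(n)\le y} \chi(n) e(n\alpha)$. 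To bound this exponential sum over smooth numbers one can either invoke the machinery of \cite{FT} used in the proof of Lemma \ref{around a/b}, or — more economically, since we only need the crude bound $O(\log B)$ — reduce to Lemma \ref{smooth} plus the trivial bound $\sum_{n\le x} e(n\alpha) \ll 1/\|\alpha\|$ after separating the arithmetic progressions mod $b$ (writing $n = bm + j$ and using $e(n\alpha) = e(ja/b) e((bm+j)\delta)$, so that the sum over $m$ is a geometric-type sum of length controlled by $1/|b\delta| \asymp N$). The contribution of the tail is then $O(\log B)$ after summing the $O(\log z)$ dyadic pieces, using $z \le$ (a small power of $q$, hence) $\log z \ll \log B$ is \emph{not} automatic — so actually I would be more careful here and absorb $\log z$ only if the per-block bound carries an extra saving; otherwise I would keep the tail estimate as $O(\log z)$ and note $\log z \asymp \log B$ fails in general. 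This is the point to watch.

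For the main range $1 \le |n| \le N$, I would replace $e(n\alpha)$ by $e(na/b)$ at a cost of $O(n|\delta|)$ per term, so the total error introduced is
\[
\sum_{\substack{1\le|n|\le N\\ P^+(n)\le y}} \frac{|n\delta|}{|n|} \ll N|\delta| \le \frac{1}{b} \le 1,
\]
by the definition of $N$. This gives
\[
\sum_{\substack{1\le|n|\le N\\ P^+(n)\le y}} \frac{\chi(n)e(n\alpha)}{n}
 = \sum_{\substack{1\le|n|\le N\\ P^+(n)\le y}} \frac{\chi(n)e(na/b)}{n} + O(1),
\]
which is exactly the shape we want. Combining with the tail estimate completes the proof.

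The main obstacle I anticipate is getting the tail sum $\sum_{N<|n|\le z,\,P^+(n)\le y} \chi(n)e(n\alpha)/n$ down to $O(\log B)$ rather than $O(\log z)$: a naive dyadic argument loses a factor $\log z$, and one needs either a genuine power-saving cancellation estimate for exponential sums over $y$-smooth numbers (as in Theorem 10 of \cite{FT}, which requires the rational approximation's denominator to be $\ge 2$ — here played by $b$, and indeed $b \ge 1$ with the degenerate case $b=1$, i.e. $\alpha$ very close to an integer, handled separately since then $N = \min\{z, |\alpha|^{-1}\}$ and $e(na/b)=1$), or a clever reorganization exploiting that on $(N,z]$ the perturbation $n\delta$ already makes $\sum_n e(n\alpha)$ small on dyadic blocks. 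I would structure the proof to quote the relevant estimate from \cite{FT} or \cite{Hil86} — exactly as Lemma \ref{around a/b} does — so that the $O(\log\log y/\log y)$-type savings on each dyadic block sum to $O(\log B)$ against the $O(\log z) = O(\log q)$ blocks, using $\log z \le \log q \ll (\log B)\cdot(\text{something})$; if that fails, the honest statement is $O(\log z + \log B)$ and one checks that in every application (Theorems \ref{structure thm}, \ref{structure thm even}) we have $z$ a fixed small power of $q$ and $B$ a fixed power of $\tau$, so that the lemma is applied with $\log z \asymp \log q$ dwarfing $\log B$ — meaning the as-stated bound $O(\log B)$ should really be read with the genuine smooth-number cancellation doing the work. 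I would therefore commit to the \cite{FT}-based argument as the backbone.
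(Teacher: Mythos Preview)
The paper does not prove this lemma; its entire proof is a one-line citation to Lemma~4.1(ii) of \cite{G} (and Lemma~6.2 of \cite{GS07}), so there is no in-paper argument to compare against.

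Your split at $N$ and your main-range bound
\[
\sum_{\substack{1\le|n|\le N\\ P^+(n)\le y}} \frac{|e(n\alpha)-e(na/b)|}{|n|}
\ \ll\ \sum_{|n|\le N}|\delta|\ \ll\ N|\delta|\ \le\ \frac{1}{b}\ \le\ 1
\]
are correct and constitute the standard opening move. The gap in your proposal is the tail $N<|n|\le z$: you correctly diagnose that the target is $O(\log B)$ rather than $O(\log z)$, and that this requires genuine cancellation beyond a naive dyadic argument---but you do not actually carry the bound out. You oscillate between a residue-class approach (which you dismiss too quickly), an \cite{FT}-based approach, and finally concede ``if that fails, the honest statement is $O(\log z+\log B)$''. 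That concession is precisely where a proof is still owed; note that in the applications (Sections~\ref{even} and~\ref{odd}) the lemma is invoked with $z=\infty$ after a smooth-number truncation and $B=\tau^{10}$, so $O(\log z)$ would be useless. The cited references do establish the tail as $O(\log B)$ by a more elementary partial-summation argument than the full \cite{FT} machinery; if you want to supply a self-contained proof you should consult \cite{G} or \cite{GS07} directly rather than build on \cite{FT}, which is heavier than necessary here.
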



\begin{proof}
This follows immediately by the second part of Lemma 4.1 in \cite{G} (see also Lemma 6.2 in \cite{GS07}).
\end{proof}


When $b$ is large, we have the following result.


\begin{lemma}\label{MV} Let $|\alpha-a/b|\le 1/b^2$, where $(a,b)=1$. For all $z, y\ge 3$, we have that
\[
\sum_{\substack{n \le z \\ P^+(n)\le y}} \frac{\chi(n)e(n \alpha)}{n}
	\ll \log b+ \log\log  y +\frac{(\log b)^{5/2}}{\sqrt{b}} \log y .
\]
\end{lemma}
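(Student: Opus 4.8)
The plan is to reduce the smooth exponential sum to an ordinary one via Rankin's trick on the tail, and then invoke a standard exponential-sum bound of the Montgomery--Vaughan type for $\sum_{n\le x}e(n\alpha)$. First I would split off the tail: by Lemma \ref{smooth}, for any parameter $w\ge y$ we have
\[
\sum_{\substack{w<n\le z\\ P^+(n)\le y}}\frac1n \ll \frac{\log y}{u^u}+e^{-\sqrt{\log y}},
\qquad w=y^u,
\]
so taking $u$ a large absolute constant (or $u=\log\log y$, say) shows that the contribution of $n>y^{O(1)}$ to $\sum_{n\le z,\,P^+(n)\le y}\chi(n)e(n\alpha)/n$ is $O(1)$. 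Hence it suffices to bound $\sum_{n\le w,\,P^+(n)\le y}\chi(n)e(n\alpha)/n$ for $w=y^{O(1)}$, which by partial summation reduces to bounding $T(x):=\sum_{n\le x,\,P^+(n)\le y}\chi(n)e(n\alpha)$ for $x\le w$.

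The main step is to estimate $T(x)$ when $|\alpha-a/b|\le 1/b^2$. Dropping the smoothness condition is wasteful, so instead I would keep it and use the classical bound for the \emph{non-smooth} sum together with a hyperbola/convolution manoeuvre: write $\mathbf{1}_{P^+(n)\le y}$ trivially and bound $\sum_{n\le x}e(n\alpha)\ll \min\{x,1/\|\alpha\|\}$ together with the Montgomery--Vaughan inequality $\sum_{n\le x}e(n\alpha)\ll x/b+b$ (valid since $|\alpha-a/b|\le1/b^2$), and then incorporate the $y$-smoothness by the standard device of writing a $y$-smooth number as $n=\ell m$ with $\ell$ ranging over $y$-smooth numbers up to a small power and $m$ over the complementary range, applying the exponential-sum bound to the $m$-sum for each fixed $\ell$. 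The factor $(\log b)^{5/2}/\sqrt b$ in the statement is the tell-tale sign that one is using a bound of the form $\sum_{n\le x}\Lambda(n)e(n\alpha)$ or $\sum_{n\le x}e(n\alpha)$ refined by a Vinogradov-type argument — more precisely, I expect the route is: detect $y$-smoothness via $\log$ and the identity $\log n=\sum_{d\mid n}\Lambda(d)$, so that $\sum_{n\le x,P^+(n)\le y}e(n\alpha)\log n=\sum_{d\le y}\Lambda(d)\sum_{m\le x/d,P^+(m)\le y}e(dm\alpha)$, bound the inner sum by Montgomery--Vaughan applied to $d\alpha$ (whose continued-fraction denominator is controlled by that of $\alpha$), and sum over $d$ using $\sum_{d\le y}\Lambda(d)/d\ll\log y$ and $\sum_{d\le y}\Lambda(d)\ll y$. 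Dividing back out by $\log x\asymp\log y$ then yields the shape $\log b+\log\log y+\frac{(\log b)^{5/2}}{\sqrt b}\log y$ after partial summation.

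The hard part will be bookkeeping the exponential-sum estimate uniformly in $d$: when one replaces $\alpha$ by $d\alpha$, the good rational approximation $a/b$ to $\alpha$ gives an approximation to $d\alpha$ only with denominator $b/(b,d)$, and one must track the resulting $\min\{x/d\cdot (b,d)/b,\ b/(b,d)\}$-type bounds and show they sum to the claimed expression; the worst case is $b$ of intermediate size, which is exactly what produces the $\sqrt b$ and the power of $\log b$. I would organise this by treating $b\le(\log y)^{5}$ (where the trivial bound $\sum_{n\le x,P^+(n)\le y}1/n\ll\log y$ combined with $\frac{(\log b)^{5/2}}{\sqrt b}\log y\gg \log y$ makes the statement automatic) separately from $b>(\log y)^{5}$, and in the latter regime carefully apply the divisor-switching estimate. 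I would lean on the fact that the analogous bound without the smoothness restriction is classical (Montgomery--Vaughan, or \cite[Chapter~25]{MV07}), so that the only genuine work is the smooth-number overlay, which is routine given Lemma \ref{smooth} and \eqref{deBruijn2}.
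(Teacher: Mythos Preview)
Your plan has a genuine gap, and it stems from missing the key observation. The paper simply cites Corollary~2.2 of \cite{G}, which in turn rests on the Montgomery--Vaughan theorem from \cite{MV77} for exponential sums $\sum_{n\le N}f(n)e(n\alpha)$ with $f$ multiplicative and $|f|\le 1$. The point you overlook is that $f(n):=\chi(n)\,\mathbf{1}_{P^+(n)\le y}$ is itself a completely multiplicative function bounded by $1$ in absolute value. Hence the Montgomery--Vaughan bound applies \emph{directly} to $\sum_{n\le x}\chi(n)\,\mathbf{1}_{P^+(n)\le y}\,e(n\alpha)$; after truncating the tail via Lemma~\ref{smooth} and applying partial summation, the lemma drops out. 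There is no need for any separate ``smooth-number overlay''.

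Your proposed route, by contrast, does not close. Writing $\log n=\sum_{d\mid n}\Lambda(d)$ and rearranging gives
\[
\sum_{\substack{n\le x\\ P^+(n)\le y}} e(n\alpha)\log n
=\sum_{\substack{d=p^k\\ p\le y}}\Lambda(d)\sum_{\substack{m\le x/d\\ P^+(m)\le y}} e(dm\alpha),
\]
but the inner sum is still a $y$-smooth exponential sum, so you have not reduced to anything simpler; the argument is circular. You cannot ``bound the inner sum by Montgomery--Vaughan applied to $d\alpha$'' in the sense of the elementary estimate for $\sum_{m\le M}e(m\beta)$, because that estimate has no smoothness restriction and a smooth sub-sum is not dominated by the full sum. (Also, the bound you quote, $\sum_{n\le x}e(n\alpha)\ll x/b+b$, is not a correct consequence of $|\alpha-a/b|\le 1/b^2$.) Finally, your claim that $b\le(\log y)^5$ is handled trivially because $\frac{(\log b)^{5/2}}{\sqrt b}\log y\gg\log y$ is false: the factor $(\log b)^{5/2}/\sqrt b$ tends to $0$ as $b\to\infty$, so for $b=(\log y)^5$ the right-hand side of the lemma is only $O(\log\log y)$, far below the trivial bound $\sum_{P^+(n)\le y}1/n\asymp\log y$. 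The trivial argument works only for $b$ bounded by an absolute constant.
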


\begin{proof} This is Corollary 2.2 in \cite{G}, which is based on a result due to Montgomery and Vaughan \cite{MV77}.
\end{proof}


For smaller $b$, we shall use the following formula.


\begin{lemma}\label{formula} Let $\chi$ be a Dirichlet character and $(a,b)=1$. For $z,y\ge 1$ we have that
\[
\sum_{ \substack{ 1\le |n|\le z \\ P^+(n)\le y} } \frac{\chi(n)e(an/b)}{n}
	= \frac{2}{b} \sum_{d|b} \frac{\chi(b/d) d}{\phi(d)}
		\sum_{\substack{ \psi \mod d \\ \chi\bar{\psi}\, \text{odd} }} \bar{\psi}(a) \CG(\psi)
		\sum_{ \substack{ n\le zd/b \\ P^+(n)\le y} }  \frac{\chi(n)\bar{\psi}(n)}{n}   .
\]
\end{lemma}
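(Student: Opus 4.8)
The plan is to expand the exponential $e(an/b)$ into additive characters mod $b$, convert these into multiplicative characters via Gauss sums, and then reorganize the sum by pulling out a common factor. Concretely, the key identity is the standard expansion
\[
e(an/b) = \sum_{d\mid b}\ \frac{1}{\phi(d)}\sum_{\psi\bmod d}\bar\psi(a)\,\CG(\psi)\,\psi(n)\cdot{\bf 1}_{(n,b)=b/d'}\ \text{-type terms},
\]
but it is cleaner to first split $n$ according to $g=(n,b)$: writing $n=gm$ with $(m,b/g)=1$, the sum $\sum_{g\mid b}$ groups all $n$ with a fixed gcd with $b$. For each fixed $g$, the inner sum over $m$ coprime to $b/g$ can be detected by characters mod $b/g$, and the factor $e(agm/b)=e(am/(b/g))$ is a primitive-type additive character which, by the classical Gauss-sum identity
\[
{\bf 1}_{(m,r)=1}\,e(am/r)=\frac{1}{\phi(r)}\sum_{\psi\bmod r}\bar\psi(a)\,\CG(\psi)\,\bar\psi(m)\qquad(r=b/g),
\]
(see Lemma~\ref{gauss sum} referenced earlier, or \cite[Ch.~3]{MV07}) turns into a sum over $\psi\bmod(b/g)$.

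First I would substitute $n=gm$, $g\mid b$, so that $(m,b/g)=1$ and $P^+(m)\le y$ automatically (since $P^+(n)\le y$ forces $P^+(g)\le y$ too, but $g\mid b$ only constrains $g$), and use $\chi(n)=\chi(g)\chi(m)$, $e(an/b)=e(am/(b/g))$. This gives
\[
\sum_{\substack{1\le|n|\le z\\ P^+(n)\le y}}\frac{\chi(n)e(an/b)}{n}
=\sum_{g\mid b}\frac{\chi(g)}{g}\sum_{\substack{1\le|m|\le z/g,\ (m,b/g)=1\\ P^+(m)\le y}}\frac{\chi(m)e(am/(b/g))}{m}.
\]
Then I would apply the Gauss-sum identity with $r=b/g$ to the inner sum, interchange the order of summation to bring the $\psi\bmod r$ sum outside, and collect $\bar\psi(a)\CG(\psi)$. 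The remaining inner sum is $\sum_{1\le|m|\le z/g,\ P^+(m)\le y}\chi(m)\bar\psi(m)/m$; note the coprimality condition $(m,b/g)=1$ is now absorbed because $\bar\psi(m)=0$ when $(m,b/g)>1$. Since the sum runs over $\pm m$, the summand $\chi(m)\bar\psi(m)/m$ survives only when $\chi\bar\psi$ is odd (the even part cancels between $m$ and $-m$), which is exactly the parity condition in the statement, and in that case the $\pm$ sum contributes a factor $2$. Finally I would reindex by setting $d=b/g$, so $g=b/d$, $\chi(g)/g=\chi(b/d)/(b/d)=\chi(b/d)d/b$, $z/g=zd/b$, and $r=d$; summing over $d\mid b$ reproduces exactly the claimed formula, with the factor $\tfrac{2}{b}$ coming from combining the $2$ (from the $\pm$ symmetrization) with the $1/b$ hidden in $\chi(b/d)d/b$ after we factor $d/\phi(d)$ appropriately — i.e.\ $\frac{\chi(b/d)}{b/d}\cdot\frac{1}{\phi(d)}\cdot 2 = \frac{2}{b}\cdot\frac{\chi(b/d)d}{\phi(d)}$.

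The only genuine bookkeeping obstacle is making sure the coprimality conditions line up correctly after the substitution $n=gm$: one must check that every $n$ with $P^+(n)\le y$ is counted exactly once (it is, via $g=(n,b)$), that the condition $(m,b/g)=1$ is genuinely equivalent to what the character sum mod $d=b/g$ detects, and that the range $|m|\le z/g$ translates to $|m|\le zd/b$. The parity bookkeeping (isolating the odd part via $m\leftrightarrow -m$ and producing the factor $2$) is routine but needs to be stated carefully since $\CG(\psi)$, $\bar\psi(a)$, and $\chi(n)$ all interact. No analytic input is needed here at all — this is a purely algebraic identity, so there are no error terms and the proof is short once the indices are set up. I would present it as a direct computation, perhaps a few lines, citing Lemma~\ref{gauss sum} (or \cite{MV07}) for the elementary Gauss-sum expansion of $e(am/r)$.
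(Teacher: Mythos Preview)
Your proposal is correct and follows essentially the same approach as the paper: split $n$ according to $g=(n,b)$, apply the Gauss-sum identity $e(c/d)=\frac{1}{\phi(d)}\sum_{\psi\bmod d}\bar\psi(c)\CG(\psi)$ for $(c,d)=1$, then use the $m\leftrightarrow -m$ symmetry to kill the even-$\chi\bar\psi$ terms and produce the factor~$2$. The paper does exactly this, writing $b/d$ for the gcd from the outset rather than introducing $g$ and reindexing at the end, but the computation is identical.
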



\begin{proof}
If  $(c,d)=1$, then
\[
e(c/d)=\frac{1}{\phi(d)} \sum_{\psi\mod d} \bar{\psi}(c) \CG(\psi).
\]
So, writing $b/d$ for the greatest common divisor of $n$ and $b$, we find that
\als{
\sum_{ \substack{ 1\le |n|\le z \\ P^+(n)\le y }} \frac{\chi(n)e(na/b)}{n}
	&= \sum_{d|b} \frac{\chi(b/d)d}{b}
		\sum_{ \substack{ 1\le |m|\le zd/b \\ P^+(n)\le y,\,(m,d)=1}} \frac{\chi(m)e(am/d)}{m} \\
	& = \sum_{d|b} \frac{\chi(b/d)d}{b\phi(d)}\sum_{\psi\mod d} \bar{\psi}(a) \CG(\psi)
	 \sum_{\substack{ 1\le |m| \le zd/b \\ P^+(n)\le y }} \frac{\chi(m) \bar{\psi}(m) }{m}  .
}
Finally, observe that the innermost sum vanishes if $\chi\bar{\psi}$ is an even character, whereas if $\chi\bar{\psi}$ is odd it equals
\[
2\sum_{ \substack{ m\le z/d \\  P^+(m)\le y }} \frac{\chi(m) \bar{\psi}(m)} {m}.
\]
This concludes the proof of the lemma.
\end{proof}


Following Granville and Soundararajan \cite{GS07}, we are going to show that the all the terms in the right hand side in Lemma \ref{formula} are small unless $\psi$ is induced by some fixed character $\xi$ which depends at most on $\chi$ and $y$. As in \cite{GS07}, in order to accomplish this, we define a certain kind of ``distance'' between two multiplicative functions $f$ and $g$ of modulus $\le1$:
\[
\D(f,g;y)^2 := \sum_{p\le y} \frac{1-\Re(f(p)\bar{g(p)})}{p} .
\]
Then we let $\xi=\xi(\chi,y)$ be a primitive character of conductor $D=D(\chi,y) \le \log y$ such that
\eq{most pretentious}{
\D(\chi,\xi; y) = \min_{\substack{d\le \log y \\ \psi \mod d \\ \psi\ \text{primitive} }} \D(\chi,\psi; y).
}
We need a preliminary result on certain sums of multiplicative functions, which is the second part of Lemma 4.3 in \cite{GS07}.


\begin{lemma}\label{general ub}
Let $f:\N\to\U$ be a multiplicative function. For $z,y\ge 1$ we have that
\[
\sum_{ \substack{ n\le z \\ P^+(n) \le y}} \frac{f(n)}{n} \ll (\log y) \exp\{-\D^2(f,1;y) / 2 \}.
\]
\end{lemma}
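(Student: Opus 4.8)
The trivial estimate $\abs{\sum_{n\le z,\,P^+(n)\le y}f(n)/n}\le\sum_{P^+(n)\le y}1/n=e^\gamma\log y+O(1)$ (Mertens) shows that the whole content of the lemma is the saving $\exp\{-\D(f,1;y)^2/2\}$, and this must come from genuine cancellation in the sum: no Cauchy--Schwarz or Rankin-type bound can produce it (for the Liouville function $\lambda$, say, $\sum_{n\le z,\,P^+(n)\le y}\lambda(n)/n=o(1)$ while $\sum_{n\le z,\,P^+(n)\le y}1/n\asymp\log y$). The plan is to dispose first of the case where $z$ is a large power of $y$ --- the only case actually needed in this paper --- by comparison with an Euler product, and then to reduce the general statement to a Hal\'asz--Wirsing type mean-value argument.

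For the first part, the untruncated sum $\widehat{S}:=\sum_{P^+(n)\le y}f(n)/n$ converges absolutely and equals $\prod_{p\le y}\big(1+f(p)/p+O(1/p^2)\big)$, so $\abs{1+w}\le\exp\{\Re w+\abs{w}^2/2\}$ applied to each Euler factor, together with Mertens, gives
\[
\abs{\widehat{S}}\le\exp\Big\{\sum_{p\le y}\frac{\Re f(p)}{p}+O(1)\Big\}\ll(\log y)\exp\{-\D(f,1;y)^2\}.
\]
Now suppose $z\ge y^u$ with $u\gg\log\log y$. Since $\D(f,1;y)^2\le2\log\log y+O(1)$ this forces $u\log u\ge\D(f,1;y)^2$, and Lemma~\ref{smooth} bounds the discarded tail:
\[
\abs{\widehat{S}-\sum_{\substack{n\le z\\ P^+(n)\le y}}\frac{f(n)}{n}}\le\sum_{\substack{n>y^u\\ P^+(n)\le y}}\frac1n\ll\frac{\log y}{u^u}+e^{-\sqrt{\log y}},
\]
and both terms on the right are $\ll(\log y)\exp\{-\D(f,1;y)^2/2\}$ (for the second, using again $\D(f,1;y)^2\le2\log\log y+O(1)\le2\sqrt{\log y}$ for $y$ large). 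This already covers every application of the lemma below, where $y=e^\tau\le\log q$ while $z$ is a fixed positive power of $q$, so that $u=\log z/\log y$ is enormous.

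It remains to allow $z=y^u$ with $u$ bounded (the case $z<y$ reduces to $y=z$, since $\D(f,1;y)^2-\D(f,1;z)^2\le2\log(\log y/\log z)+O(1)$ makes $(\log y)\exp\{-\D(f,1;y)^2/2\}\gg(\log z)\exp\{-\D(f,1;z)^2/2\}$). In this regime the saving must be extracted by a mean-value argument. Set $S(x)=\sum_{n\le x,\,P^+(n)\le y}f(n)/n$ and let $\Lambda_f$ be the generalized von Mangoldt function of the Dirichlet series $\sum_{P^+(n)\le y}f(n)n^{-s}$ (supported on $p^k$, $p\le y$, with $\Lambda_f(p)=f(p)\log p$ and $\abs{\Lambda_f(p^k)}\ll k\log p$); then $f(n)\log n=\sum_{de=n}\Lambda_f(e)f(d)$ yields, after pushing the $k\ge2$ terms into the error (using $\abs{S}\ll\log y$ and $\sum_{p,\,k\ge2}\abs{\Lambda_f(p^k)}/p^k=O(1)$),
\[
S(x)\log x=\int_1^x S(t)\,\frac{dt}{t}+\sum_{p\le y}\frac{f(p)\log p}{p}\,S(x/p)+O(\log y).
\]
One then iterates this identity Hal\'asz--Wirsing style --- working with the smoothed average $\int_1^x S(t)\,dt/t$ rather than $S$ itself, since $S(x/p)$ need not be close to $S(x)$ when $u$ is bounded --- and the geometric gain per step is governed, after trading the $\log p/p$ weight for the $1/p$ weight by partial summation and the prime number theorem, by $\D(f,1;y)^2$; the iteration converges to $\abs{S(x)}\ll(\log y)\exp\{-\tfrac12\D(f,1;y)^2\}$, the factor $\tfrac12$ (rather than the $1$ of the untruncated case) being the loss incurred by the truncation $n\le z$. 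This last step is exactly the second assertion of Lemma~4.3 of \cite{GS07}, which one may simply cite. The main obstacle is precisely this Hal\'asz--Wirsing step: organizing the iteration so that the exponent comes out as a positive constant (indeed $\tfrac12$), absorbing the non-completely-multiplicative contributions into the $O(\log y)$ term, and checking that replacing the $\log p/p$ weighting produced by the identity with the $1/p$ weighting of $\D(f,1;y)^2$ costs only $O(\log y)$.
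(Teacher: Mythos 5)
Your argument ultimately relies on the same citation as the paper, whose entire proof of this lemma is the single sentence ``This is the second part of Lemma~4.3 in \cite{GS07}.'' Your extra Euler-product argument for the case $z\ge y^u$ with $u\gg\log\log y$ is correct (and in fact gives the sharper exponent $-\D^2(f,1;y)$ in place of $-\D^2(f,1;y)/2$, reflecting that truncation is what costs the factor $1/2$), but the parenthetical claim that this range ``covers every application of the lemma below'' is wrong. For instance, in the proof of Theorem~\ref{structure thm}, Property~(1), Lemma~\ref{general ub} is invoked to bound $\sum_{n\le N,\,P^+(n)\le y}\chi(n)/n$ with $N\ge\tau^{10}$ and $y=e^{\tau+c}$, so $u=\log N/\log y$ may be as small as $\asymp(\log\tau)/\tau\to 0$; the same small-$z$ regime arises inside Lemma~\ref{repulsion} and Lemma~\ref{formula2}, which are stated for arbitrary $z\ge 1$. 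That bounded-$u$ regime is exactly where the Hal\'asz--Wirsing content of \cite{GS07}, Lemma~4.3, is unavoidable. Since you cite that lemma for the general case anyway, your proof is not broken---but the shortcut you suggest (that the easy Euler-product case suffices for this paper) is not available, and one must genuinely import the result from \cite{GS07}, as the paper does.
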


Then we have the following ``repulsion'' result.


\begin{lemma}\label{repulsion} Let $\chi$, $y$ and $\xi$ be as above. If $\psi$ is a Dirichlet character modulo $d\le y$, of conductor $\le \log y$, that is not induced by $\xi$, then
\[
\sum_{ \substack{ n\le z \\ P^+(n) \le y}} \frac{\chi(n)\bar{\psi}(n)}{n} \ll (\log y)^{1/2+\sqrt{2}/4+o(1)}    \quad( y\to \infty ) .
\]
\end{lemma}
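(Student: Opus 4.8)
The plan is to apply Lemma \ref{general ub} with $f = \chi\bar\psi$, which immediately gives
\[
\sum_{\substack{n\le z\\ P^+(n)\le y}} \frac{\chi(n)\bar\psi(n)}{n} \ll (\log y)\exp\{-\D(\chi\bar\psi,1;y)^2/2\} = (\log y)\exp\{-\D(\chi,\psi;y)^2/2\},
\]
so everything reduces to a lower bound for $\D(\chi,\psi;y)$ valid for every $\psi$ of conductor $\le\log y$ not induced by $\xi$. The key point is that such a $\psi$ is "far" from $\chi$ in the $\D$-metric because $\chi$ is already close to $\xi$ (by the defining minimality property \eqref{most pretentious}), and $\xi$ and $\psi$ cannot both be close to $\chi$ — this is the pretentious triangle inequality
\[
\D(\chi,\psi;y) \ge \D(\xi,\psi^*;y) - \D(\chi,\xi;y),
\]
where $\psi^*$ is the primitive character inducing $\psi$. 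Since $\psi$ is not induced by $\xi$ we have $\psi^*\ne\xi$, both of conductor $\le\log y$, so $\xi\bar{\psi^*}$ is a non-principal character of conductor $\le(\log y)^2$; by the standard "$L(1,\cdot)$ is not too small / large sieve-type" estimate of Granville–Soundararajan (the same input used in \cite{GS07}) one has $\D(\xi,\psi^*;y)^2 \ge (\tfrac14 - o(1))\log\log y$, equivalently $\exp\{-\D(\xi,\psi^*;y)^2/2\}\ll (\log y)^{-1/4+o(1)}$. Meanwhile, by minimality of $\xi$ and the fact that the principal character is itself a competitor of conductor $1$, one also has $\D(\chi,\xi;y)^2 \le \D(\chi,1;y)^2$; but more usefully we just need $\D(\chi,\xi;y)$ itself is no obstruction because it appears subtracted — actually we want an \emph{upper} bound on $\D(\chi,\xi;y)$, which is not available in general, so the triangle inequality must be used in the squared form instead.

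More carefully, the right way is to argue with squared distances. Writing $D_1=\D(\chi,\xi;y)$, $D_2=\D(\chi,\psi^*;y)$, $D_3=\D(\xi,\psi^*;y)$, the triangle inequality gives $D_3\le D_1+D_2$, hence $D_2\ge D_3-D_1$. If $D_1\ge\tfrac12 D_3$ then, since $\xi$ is a competitor of conductor $\le\log y$ in \eqref{most pretentious} while $\psi^*$ may not be (its conductor could exceed $\log y$)—no: $\psi$ has conductor $\le y$ but $\psi^*$ has conductor $\le\log y$ by hypothesis, so $\psi^*$ \emph{is} a competitor; thus $D_1=\D(\chi,\xi;y)\le\D(\chi,\psi^*;y)=D_2$. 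Combining $D_2\ge D_3-D_1\ge D_3-D_2$ yields $D_2\ge D_3/2$, so in all cases
\[
\D(\chi,\psi;y)^2 = \D(\chi,\psi^*;y)^2 = D_2^2 \ge \frac{D_3^2}{4} \ge \Bigl(\frac18 - o(1)\Bigr)\log\log y.
\]
Plugging this into the bound from Lemma \ref{general ub} gives $\ll (\log y)\cdot(\log y)^{-1/8+o(1)}$, which is weaker than claimed. To recover the sharp exponent $1/2+\sqrt2/4$ one must instead use that $\D(\xi,\psi^*;y)^2\ge(1-o(1))\log\log y$ fails, but the correct sharp input is the Granville–Soundararajan bound $\prod_{p\le y}|1-\xi\bar{\psi^*}(p)/p|^{-1}\ll(\log y)^{\sqrt2+o(1)}$-type estimate; unwinding, one gets $\D(\chi,\psi;y)^2\ge(1-\tfrac{\sqrt2}{2}-o(1))\log\log y$, so that $(\log y)\exp\{-\D^2/2\}\ll(\log y)^{1/2+\sqrt2/4+o(1)}$, exactly the claim.

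The main obstacle is establishing the sharp repulsion estimate $\D(\xi,\psi^*;y)^2\ge(\tfrac12+\tfrac{\sqrt2}{4}-o(1))\log\log y\cdot 2$, i.e. controlling how close two distinct primitive characters of small conductor can pretend to be each other. This is precisely the content of the estimates in Section 4 of \cite{GS07} (built on the classical bounds for $L(1,\chi)$ and the Pólya–Vinogradov/large-sieve philosophy), and I would import it essentially verbatim: the quantity $\D(\xi,\psi^*;y)^2 = \sum_{p\le y}\frac{1-\Re(\xi\bar{\psi^*}(p))}{p}$ is compared to $\log\log y$ using that $\xi\bar{\psi^*}$ is a non-principal character (of modulus $\le(\log y)^2 \le y$ for $y$ large), whose partial sums of $\Lambda(n)\Re(\xi\bar{\psi^*}(n))/n$ over $p\le y$ are $\le \sqrt2\log\log y + o(\log\log y)$ by the zero-free-region-free argument of Granville–Soundararajan. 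Everything else — the triangle inequality manipulation above and the application of Lemma \ref{general ub} — is routine.
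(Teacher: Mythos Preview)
Your overall plan matches the paper's: apply Lemma~\ref{general ub} to reduce to the distance bound $\D(\chi,\psi;y)^2 \ge (1-\tfrac{\sqrt2}{2}+o(1))\log\log y$, then invoke the repulsion results of \cite{GS07}. The paper's proof does exactly this, first noting $\D(\chi,\psi;y)^2 \ge \D(\chi,\psi^*;y)^2 - O(\log\log\log d)$ (where $\psi^*$ is the primitive character inducing $\psi$) and then citing Lemma~3.4 of \cite{GS07} together with the minimality of $\xi$.

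The gap in your proposal is the mechanism you offer for the repulsion bound. The triangle-inequality route you develop, combining $D_1+D_2\ge D_3$ with the minimality $D_1\le D_2$, can at best yield $D_2\ge D_3/2$ and hence $D_2^2\ge (\tfrac14+o(1))\log\log y$; as you correctly note, this is too weak. You then attempt to recover the sharp constant by requiring a stronger bound on $D_3=\D(\xi,\psi^*;y)$, writing that one needs $D_3^2\ge (1+\tfrac{\sqrt2}{2}+o(1))\log\log y$. But this is false: for distinct primitive $\xi,\psi^*$ of conductor $\le\log y$, the character $\xi\bar{\psi^*}$ is non-principal of conductor $\le(\log y)^2$, and Siegel--Walfisz gives $\sum_{p\le y}\xi\bar{\psi^*}(p)/p = O(1)$, so $D_3^2 = (1+o(1))\log\log y$ exactly --- nothing more is available, and nothing more is needed. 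The constant $1-\tfrac{\sqrt2}{2}$ does \emph{not} come from a sharper estimate on $D_3$; it comes from the stronger pairwise inequality (this is the content of Lemma~3.4 in \cite{GS07}, proved via Cauchy--Schwarz on $|\bar\chi\xi(p)+\bar\chi\psi^*(p)|$ using $|a+b|^2 = 2+2\Re(a\bar b)$)
\[
\D(\chi,\xi;y)^2 + \D(\chi,\psi^*;y)^2 \ \ge\ (2-\sqrt{2}+o(1))\log\log y,
\]
after which minimality $D_1\le D_2$ gives $2D_2^2\ge(2-\sqrt2+o(1))\log\log y$, as required. The triangle inequality alone loses the factor needed to reach $1-\tfrac{\sqrt2}{2}$ rather than $\tfrac14$.
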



\begin{proof} Let $\psi_1\mod{d_1}$ be the primitive character inducing $\psi$. Since
\[
\D^2(\chi,\psi;y) \ge \D^2(\chi,\psi_1;y)  - \sum_{p|d} \frac{2}{p} \ge \D^2(\chi,\psi_1;y)  - O(\log\log\log  d) ,
\]
Lemma 3.4 in \cite{GS07} and the definition of $\xi$ imply that
\[
\D^2(\chi,\psi;y)\ge \left( 1 - \frac{\sqrt{2}}{2} +o(1) \right) \log\log  y \quad(y\to\infty) .
\]
The claimed estimate then follows by Lemma \ref{general ub} above.
\end{proof}


When applying Lemma \ref{formula}, we will need to evaluate the Gauss sum that arises. In order to do this, we shall use the following classical result (see, for example, Theorem 9.10 in \cite[p. 289]{MV07}).


 \begin{lemma}\label{gauss sum}
Let $\psi$ be a character modulo $d$ induced by the primitive character $\psi_1$ modulo $d_1$. Then
\[
\CG(\psi) = \mu(d/d_1)\psi_1(d/d_1) \CG(\psi_1) .
\]
\end{lemma}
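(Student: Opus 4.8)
The plan is to compute $\CG(\psi)=\sum_{a\bmod d}\psi(a)e(a/d)$ directly from the definition, exploiting that $\psi(a)=\psi_1(a)\mathbf{1}_{(a,d)=1}$ and that $\psi_1(a)$ already vanishes unless $(a,d_1)=1$. Write $e=d/d_1$, which is an integer since $\psi$ is induced by $\psi_1$. Once $(a,d_1)=1$ is assumed, a prime dividing $\gcd(a,d)=\gcd(a,d_1 e)$ must divide $e$; hence under this hypothesis $(a,d)=1$ is equivalent to $(a,e)=1$, and therefore $\CG(\psi)=\sum_{\substack{a\bmod d\\ (a,e)=1}}\psi_1(a)e(a/d)$.

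First I would strip off the coprimality condition with Möbius, writing $\mathbf{1}_{(a,e)=1}=\sum_{g\mid (a,e)}\mu(g)$, interchanging the order of summation, and substituting $a=gb$ (so that $b$ runs over a complete residue system modulo $d/g$, while $\psi_1(gb)=\psi_1(g)\psi_1(b)$ by complete multiplicativity and $e(gb/d)=e(b/(d/g))$). This yields
\[
\CG(\psi)=\sum_{g\mid e}\mu(g)\psi_1(g)\sum_{b\bmod d/g}\psi_1(b)\,e\!\left(\frac{b}{d/g}\right).
\]

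Next I would show that the inner sum vanishes unless $g=e$. Put $f=e/g$, so that $d/g=d_1 f$, and split $b=b_1+d_1 t$ with $b_1$ running modulo $d_1$ and $t$ modulo $f$; then $\psi_1(b)=\psi_1(b_1)$ and the inner sum factors as $\bigl(\sum_{b_1\bmod d_1}\psi_1(b_1)e(b_1/(d_1 f))\bigr)\bigl(\sum_{t\bmod f}e(t/f)\bigr)$, whose second factor equals $f$ if $f=1$ and $0$ otherwise. Hence only the term $g=e$ (i.e. $f=1$) survives, and it gives $\CG(\psi)=\mu(e)\psi_1(e)\CG(\psi_1)$ with $e=d/d_1$, which is exactly the claim.

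There is no real obstacle here; the only thing to watch is the bookkeeping among the three conditions in play — the coprimality $(a,d)=1$, the period $d_1$ of $\psi_1$, and which divisors $g$ of $e$ actually contribute — and in particular verifying the elementary equivalence $(a,d)=1\iff(a,e)=1$ under the standing hypothesis $(a,d_1)=1$. Alternatively, since this identity is entirely classical, one may simply invoke Theorem 9.10 of \cite{MV07}.
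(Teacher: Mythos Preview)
Your proof is correct. The paper itself does not prove this lemma at all: it simply cites Theorem 9.10 of \cite[p.~289]{MV07}, which is precisely the alternative you mention at the end. So your direct computation via M\"obius inversion and the splitting $b=b_1+d_1 t$ is more than the paper provides, but it is the standard argument (and indeed essentially how the cited theorem is proved in \cite{MV07}).
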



We also need the following simple estimate, which we state below for easy reference.


\begin{lemma}\label{change gcd}
Let $f:\N \to \U$ be a completely multiplicative function. For all $a\in\N$, we have that
\[
\sum_{ n\le z } \frac{f(n)}{n} 
	=  \prod_{p|a} \left(1-\frac{f(p)}{p}\right)^{-1}
		\sum_{\substack{ n\le z \\ (n,a)=1 }} \frac{f(n)}{n} 
		+ O\left(\frac{a}{\phi(a)}\sum_{p|a} \frac{\log p}{p}  \right) 
\]
and
\[
\sum_{\substack{ n\le z \\ (n,a)=1 }} \frac{f(n)}{n} 
	 = \prod_{p|a} \left(1-\frac{f(p)}{p}\right)
		\sum_{n\le z }\frac{f(n)}{n}  
		+ O\left(\frac{a}{\phi(a)}\sum_{p|a} \frac{\log p}{p} \right)  .
\]
\end{lemma}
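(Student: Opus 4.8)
The plan is to prove the two identities directly and independently. A tempting shortcut --- deducing the second from the first by multiplying through by $\prod_{p\mid a}(1-f(p)/p)$, or the reverse --- does not work, since each of $\prod_{p\mid a}|1-f(p)/p|$ and $\prod_{p\mid a}|1-f(p)/p|^{-1}$ can be as large as $a/\phi(a)$, which would inflate the error by a factor $a/\phi(a)$. I would first dispose of $a=1$, which is trivial: all three sums in the statement then coincide, the empty product is $1$, and $\sum_{p\mid a}(\log p)/p=0$, so both identities hold as exact equalities. Henceforth assume $a>1$ and let $r=\mathrm{rad}(a)=\prod_{p\mid a}p$. Two facts get used repeatedly: $\sum_{\mathrm{rad}(t)\mid r}1/t=\prod_{p\mid r}(1-1/p)^{-1}=a/\phi(a)$, and $\sum_{p\mid a}(\log p)/p\ge(\log2)/2$, the latter allowing any bounded additive constant to be absorbed into $\tfrac a{\phi(a)}\sum_{p\mid a}\tfrac{\log p}p$. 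The single elementary estimate behind both parts is that, for every real $c\ge1$,
\[
\Bigl|\sum_{z/c<m\le z}\frac{f(m)}{m}\Bigr|\le\sum_{z/c<m\le z}\frac1m\le 1+\log c ,
\]
the last inequality being elementary and uniform in $z$ (when $c>z$ the sum runs over all $m\le z$ and is $\le1+\log z\le1+\log c$).

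For the first identity I would factor each $n$ uniquely as $n=tm$ with $t=\prod_{p\mid a}p^{v_p(n)}$ (so $\mathrm{rad}(t)\mid r$) and $(m,a)=1$; complete multiplicativity then yields
\[
\sum_{n\le z}\frac{f(n)}n=\sum_{\mathrm{rad}(t)\mid r}\frac{f(t)}t\sum_{\substack{m\le z/t\\(m,a)=1}}\frac{f(m)}m .
\]
Since $|f(p)/p|\le1/p<1$ and $a$ has only finitely many prime factors, $\prod_{p\mid a}(1-f(p)/p)^{-1}=\sum_{\mathrm{rad}(t)\mid r}f(t)/t$ absolutely. Subtracting $\prod_{p\mid a}(1-f(p)/p)^{-1}\sum_{m\le z,\,(m,a)=1}f(m)/m$ from the display above leaves exactly $-\sum_{\mathrm{rad}(t)\mid r}\tfrac{f(t)}t\sum_{z/t<m\le z,\,(m,a)=1}\tfrac{f(m)}m$, which by the elementary estimate is $\ll\sum_{\mathrm{rad}(t)\mid r}(1+\log t)/t$. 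A short computation (writing $\log t=\sum_pv_p(t)\log p$ and using $\sum_{\mathrm{rad}(t)\mid r}v_p(t)/t=\tfrac{a}{(p-1)\phi(a)}$) shows this equals $\tfrac a{\phi(a)}\bigl(1+\sum_{p\mid a}\tfrac{\log p}{p-1}\bigr)\le\tfrac a{\phi(a)}\bigl(1+2\sum_{p\mid a}\tfrac{\log p}p\bigr)\ll\tfrac a{\phi(a)}\sum_{p\mid a}\tfrac{\log p}p$, the last step using $a>1$.

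For the second identity I would instead expand $\mathbf{1}_{(n,a)=1}=\sum_{d\mid(n,a)}\mu(d)$ and use complete multiplicativity to obtain $\sum_{n\le z,\,(n,a)=1}f(n)/n=\sum_{d\mid r}\tfrac{\mu(d)f(d)}d\sum_{m\le z/d}f(m)/m$. Replacing each $z/d$ by $z$ costs at most $\sum_{d\mid r}(1+\log d)/d$ in absolute value (again the elementary estimate), while the resulting main term is $\bigl(\sum_{d\mid r}\mu(d)f(d)/d\bigr)\sum_{m\le z}f(m)/m=\prod_{p\mid a}(1-f(p)/p)\sum_{m\le z}f(m)/m$. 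Since every squarefree divisor $d$ of $r$ has $\mathrm{rad}(d)\mid r$, the error obeys $\sum_{d\mid r}(1+\log d)/d\le\sum_{\mathrm{rad}(t)\mid r}(1+\log t)/t\ll\tfrac a{\phi(a)}\sum_{p\mid a}\tfrac{\log p}p$ by the same computation as before.

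I do not expect a genuine obstacle: this is a routine sieve-type manipulation. The one thing to watch is keeping every implied constant absolute (independent of $a$, $f$ and $z$) --- which is exactly why one separates off the case $a=1$ (there the absorption $1\ll\sum_{p\mid a}(\log p)/p$ fails, but there is nothing to prove) and why the two identities must be established separately rather than derived from each other.
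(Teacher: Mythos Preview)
Your proof is correct and follows essentially the same approach as the paper: for the first identity, factor $n=tm$ with $t\mid a^\infty$ and $(m,a)=1$ (the paper writes $d\mid a^\infty$ for your $\mathrm{rad}(t)\mid r$), then replace the inner range $m\le z/t$ by $m\le z$; for the second, use M\"obius over $d\mid\mathrm{rad}(a)$ and do the same range extension. Your treatment is slightly more careful in two respects---you track the additive constant $1$ in the bound $\sum_{z/c<m\le z}1/m\le 1+\log c$ and explicitly separate off $a=1$ to justify absorbing it, and you note that the two identities cannot be derived from one another without losing a factor $a/\phi(a)$---but these are refinements of presentation rather than a different argument.
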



\begin{proof} We write $d|a^\infty$ if $p|a$ for all primes $p|d$. Then
\als{
\sum_{n\le z}   \frac{f(n)}{n} 
	= \sum_{d|a^\infty }\frac{f(d)}{d} 
		\sum_{\substack{ m\le z/d \\ (m,a)=1  }} \frac{f(m)}{m}  
	& =  \sum_{d|a^\infty} \frac{f(d)}{d} 
		\sum_{\substack{ m\le z \\ (m,a)=1  }} \frac{f(m)}{m} 
			+ O\left(\sum_{d|a^\infty} \frac{ \log d}{d} \right) \\
	&=  \prod_{p|a} \left(1-\frac{f(p)}{p}\right)^{-1}
		\sum_{\substack{ m\le z \\ (m,a)=1}} \frac{f(m)}{m} 
		+ O\left( \frac{a}{\phi(a)}\sum_{p|a} \frac{\log p}{p}  \right) .
}
The second part is proved similarly, starting from the identity
\[
\sum_{\substack{ n\le z \\ (n,a)=1 }} \frac{f(n)}{n} 
	= \sum_{d|a} \frac{\mu(d)f(d)}{d} \sum_{ m\le z/d } \frac{f(m)}{m}.
        \qedhere
\]
\end{proof}


Combining the above results, we prove the following simplified version of Lemma \ref{formula}.

\begin{lemma}\label{formula2}
Let $\chi$, $y$, $\xi$ and $D$ be as above, and consider a real number $z\ge1$ and a reduced fraction $a/b$ with $1\le b\le (\log y)^{1/100}$ and $(b,q)=1$. If either $D\nmid b$ or $\chi\bar{\xi}$ is even, then
\[
\sum_{ \substack{ 1\le |n|\le z \\ P^+(n)\le y }} \frac{\chi(n) e(na/b) }{n}  \ll  (\log y)^{0.86} ,
\]
whereas if $D|b$ and $\chi\bar{\xi}$ is odd, then
\[
\abs{\sum_{ \substack{ 1\le |n|\le z \\ P^+(n)\le y }} \frac{\chi(n) e(n a/b )}{n}}
		\le (\log y) \min\left\{   \frac{2e^\gamma}{\sqrt{D}} \cdot   (2/3)^{\omega(b/D)}, 
			e^{-\D^2(\chi,\xi;y)/2+O(1)}  \right\}
	+ O((\log y)^{0.86})  .
\]
\end{lemma}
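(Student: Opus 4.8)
The plan is to start from the exact identity in Lemma \ref{formula}, which expresses $\sum_{1\le|n|\le z,\,P^+(n)\le y}\chi(n)e(an/b)/n$ as a sum over divisors $d\mid b$ and characters $\psi\bmod d$ with $\chi\bar\psi$ odd, weighted by $\bar\psi(a)\CG(\psi)$ and an inner sum $\sum_{n\le zd/b,\,P^+(n)\le y}\chi(n)\bar\psi(n)/n$. The first step is to discard all the terms in which $\psi$ is \emph{not} induced by $\xi$: for such $\psi$ the conductor is automatically $\le d\le b\le(\log y)^{1/100}\le\log y$, so Lemma \ref{repulsion} bounds the inner sum by $(\log y)^{1/2+\sqrt2/4+o(1)}=(\log y)^{0.8536\ldots}$, while $|\CG(\psi)|\le\sqrt d\le(\log y)^{1/200}$ and $d/\phi(d)\ll\log\log y$ and the number of divisors of $b$ is $\ll(\log y)^{o(1)}$; multiplying these together stays below $(\log y)^{0.86}$. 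Hence only divisors $d$ that are multiples of $D$ survive, and for each such $d$ only the single character $\psi=\xi_d$ (the one mod $d$ induced by $\xi$) contributes a main term. If $D\nmid b$ there are no such $d$, and if $\chi\bar\xi$ is even then $\chi\bar{\xi_d}$ is even for every $d$, so the inner sum vanishes identically; either way the whole expression is $O((\log y)^{0.86})$, which is the first claimed bound.

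For the second bound we are in the case $D\mid b$ and $\chi\bar\xi$ odd. Now we collect the surviving terms, one for each $d$ with $D\mid d\mid b$, carrying $\psi=\xi_d$. Using Lemma \ref{gauss sum} we replace $\CG(\xi_d)$ by $\mu(d/D)\xi(d/D)\CG(\xi)$, so $\CG(\xi_d)=0$ unless $d/D$ is squarefree and coprime to $D$; since $|\CG(\xi)|=\sqrt D$ and the factor $\bar{\xi_d}(a)$ has modulus $1$, and $d/\phi(d)\le (b/D)\cdot D/(\phi(b/D)\phi(D))\cdot(\text{something})$—more simply $d/\phi(d)\ll D/\phi(D)\cdot\prod_{p\mid d/D}(1-1/p)^{-1}$—each term is $O\big((\sqrt D/b)\cdot (D/\phi(D))\cdot (d/D)/\phi(d/D)\cdot |\text{inner sum}|\big)$. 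The inner sum $\sum_{n\le zd/b,\,P^+(n)\le y}\chi(n)\bar{\xi_d}(n)/n=\sum_{n\le zd/b,\,P^+(n)\le y,\,(n,d)=1}\chi(n)\bar\xi(n)/n$ is bounded in two ways: trivially by $\sum_{P^+(n)\le y}1/n=e^\gamma\log y+O(1)$, and via Lemma \ref{general ub} by $\ll(\log y)\exp\{-\tfrac12\D^2(\chi\bar\xi,1;y)\}=(\log y)\exp\{-\tfrac12\D^2(\chi,\xi;y)\}$ (the coprimality restriction to $d$ only changes things by $O(\sum_{p\mid d}1/p)=O(\log\log\log y)$ in the exponent, negligible). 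Summing the trivial bound over all admissible $d$ gives the factor
\[
\frac{\sqrt D}{b}\cdot\frac{D}{\phi(D)}\cdot e^\gamma\log y\cdot\frac{b}{D}\sum_{\substack{e\mid b/D\\ e\text{ squarefree},\,(e,D)=1}}\frac{1}{\phi(e)}
\ll\frac{e^\gamma\log y}{\sqrt D}\prod_{p\mid b/D}\Big(1+\frac{1}{p-1}\Big),
\]
and one checks $\tfrac{D}{\phi(D)\sqrt D}\cdot\frac1{\phi(e)}$-type bookkeeping turns the product into $(2/3)^{\omega(b/D)}$ after pulling out a $2e^\gamma/\sqrt D$: indeed $\prod_{p\mid b/D}(1+1/(p-1))\le 2^{\omega(b/D)}$ while the normalization absorbs a compensating factor—the cleanest route is to note $\frac{d}{\phi(d)}\cdot\frac{1}{|b/d|}$ summed against squarefree $d/D$ coprime to $D$ yields $\prod_{p\mid b/D}(1/p+1/\phi(p)\cdot\dots)$ and each local factor is $\le 2/3$ once $p\ge2$. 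Running the same sum with the $\D$-bound instead of the trivial bound gives the alternative factor $(\log y)e^{-\D^2(\chi,\xi;y)/2+O(1)}$, and taking the minimum of the two (plus the already-established $O((\log y)^{0.86})$ error from the discarded characters) yields exactly the stated inequality.

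The main obstacle I anticipate is the bookkeeping of the arithmetic factors in the second case: keeping careful track of $\CG(\xi_d)$, $\xi_d(a)$, the Möbius and $\xi$ factors from Lemma \ref{gauss sum}, and the ratios $d/\phi(d)$ and $d/b$, so that the divisor sum over $d$ with $D\mid d\mid b$ cleanly collapses to the single factor $\tfrac{2e^\gamma}{\sqrt D}(2/3)^{\omega(b/D)}$ with the right constant. The use of the constraint $b\le(\log y)^{1/100}$ is only to guarantee $|\CG(\psi)|$ and $d_3(b)$-type quantities are $\ll(\log y)^{o(1)}$, so that the $(\log y)^{0.8536+o(1)}$ error from Lemma \ref{repulsion} genuinely absorbs into $(\log y)^{0.86}$; that part is routine. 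Everything else is a direct assembly of Lemmas \ref{formula}, \ref{repulsion}, \ref{general ub}, and \ref{gauss sum}.
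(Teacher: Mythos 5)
Your treatment of the first bound is correct and essentially identical to the paper's: in Lemma \ref{formula}, every $\psi$ that is not induced by $\xi$ has conductor $\le d\le b\le\log y$, so Lemma \ref{repulsion} kills the inner sum, and the remaining arithmetic factors are $(\log y)^{o(1)}$. Likewise your use of Lemma \ref{general ub} for the $e^{-\D^2(\chi,\xi;y)/2+O(1)}$ alternative is fine. The gap is in how you propose to extract the factor $\frac{2e^\gamma}{\sqrt D}(2/3)^{\omega(b/D)}$.

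You propose to bound the surviving divisor sum by taking absolute values term by term: bound the inner sum by $e^\gamma\log y$ and then sum $\frac{2}{b}\frac{d}{\phi(d)}|\CG(\xi_d)|$ over the admissible $d$. That cannot work. The admissible $d$ are $d=Dc$ with $c\mid m:=b/D$ squarefree and $(c,D)=1$, and for those $\frac{d}{\phi(d)}|\CG(\xi_d)| = \frac{D\sqrt D}{\phi(D)}\cdot\frac{c}{\phi(c)}$, so the term-by-term sum contributes at least
\[
\frac{2\sqrt D}{b}\cdot\frac{D}{\phi(D)}\cdot e^\gamma\log y\cdot\sum_{\substack{c\mid m\\c\text{ sqfree}}}\frac{c}{\phi(c)}
 = \frac{2\sqrt D}{b}\cdot\frac{D}{\phi(D)}\cdot e^\gamma\log y\cdot\prod_{p\mid m}\frac{2p-1}{p-1},
\]
and the local factor $\frac{1}{p}\cdot\frac{2p-1}{p-1}$ equals $\frac{3}{2}$ at $p=2$ and $\frac{5}{6}$ at $p=3$ — both larger than $\frac{2}{3}$. (Concretely, for $D=1$, $b=2$ your method gives $2e^\gamma\log y$, which is just the trivial Corollary \ref{fund ineq lem} bound, not $\frac{4}{3}e^\gamma\log y$.) Your sentence claiming ``each local factor is $\le 2/3$ once $p\ge2$'' is simply false in the absolute-value calculation; the ``compensating factor'' you invoke does not exist. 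The paper does something genuinely different: after extending the range $n\le dz/b$ to $n\le z$ and using Lemma \ref{change gcd} to make the inner sums over $n$ \emph{identical} for all $c$ (same coprimality condition $(n,mD)=1$), it evaluates the resulting M\"obius sum $\sum_{c\mid m}\frac{\mu(c)\chi(m/c)c\,\xi(c)}{\phi(c)}\prod_{p\mid m,\,p\nmid cD}(1-\chi(p)\bar\xi(p)/p)^{-1}$ \emph{exactly} as a product $\chi(m)\frac{m\phi(D)}{\phi(mD)}\prod_{p\mid m,\,p\nmid D}\frac{1-\bar\chi(p)\xi(p)}{1-\chi(p)\bar\xi(p)/p}$. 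The cancellation from $\mu(c)$ is essential. Only then does one take absolute values, and each prime factor is bounded by $\frac{1}{p}\left|\frac{1-\bar z}{1-z/p}\right|\le\frac{2}{p+1}\le\frac{2}{3}$ for $|z|=1$, $p\ge 2$. Without collapsing the M\"obius sum first, the constant $2/3$ is unreachable.
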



\begin{proof} 
By Lemma \ref{repulsion}, we see that if $\psi$ is a character modulo $d$ that is not induced by $\xi$, then
\eq{psi not xi}{
\sum_{\substack{ m \le x \\ P^+(n)\le y }} \frac{\bar{\psi}(m)\chi(m) }{m}
	 \ll (\log y)^{1/2+\sqrt{2}/4+o(1)} \le (\log y)^{0.854}   \quad(x\ge 1) .
}
So the first result follows by Lemma \ref{formula}. Finally, if $D|b$ and $\chi\xi$ is odd, then Lemmas \ref{formula} and \ref{gauss sum}, and \eqref{psi not xi} imply that
\als{
\sum_{ \substack{ 1\le |n|\le z \\ P^+(n)\le y }} \frac{\chi(n) e(n a/b )}{n}
	&= \frac{2\bar{\xi}(a)\CG(\xi)}{b} \sum_{\substack{d|b \\ d\equiv 0\mod D}} \frac{\chi(b/d) d\, \mu(d/D) \xi(d/D) }{ \phi(d)}
			\sum_{ \substack{ n\le dz/b,\,(n,d)=1 \\ P^+(n)\le y} }  \frac{\chi(n)\bar{\xi}(n)}{n}  \\
	&\quad	+ O((\log y)^{0.86}) .
}
Writing $d=Dc$, noticing that $(c,D)=1$ if $\xi(c)\neq0$, and using the trivial bound \\ $\sum_{dz/b<n\le z}1/n\ll \log(b/d)$ to extend the sum over $n\le dz/b$ to a sum over $n\le z$, we find that 
\begin{multline*}
\sum_{\substack{ 1\le |n|\le z \\ P^+(n)\le y }} \frac{\chi(n) e(n a/b )}{n}
    = \frac{2\bar{\xi}(a)\CG(\xi)}{b} \frac{D}{\phi(D)}
      \sum_{\substack{c|b/D \\ (c,D)=1}} \frac{\chi(b/(Dc)) c\, \mu(c) \xi(c) }{ \phi(c)}
      \sum_{ \substack{  n\le z,\, P^+(n)\le y \\ (n,cD)=1 }}
      \frac{\chi(n)\bar{\xi}(n)}{n}  \\
      + O( (\log y)^{0.86}) .
\end{multline*}
Setting $m=b/D$ and applying Lemma \ref{change gcd} with $f(n)=\chi(n)\bar{\xi}(n){\bf 1}_{(n,cD)=1}{\bf 1}_{P^+(n)\le y}$ and $m$ in place of $a$ we deduce that
\begin{multline*}
\sum_{ \substack{ 1\le |n|\le z \\ P^+(n)\le y }} \frac{\chi(n) e(n a/b )}{n}
	= \frac{2\bar{\xi}(a)\CG(\xi)}{b} \frac{D}{\phi(D)} 
		\sum_{\substack{c|m \\ (c,D)=1 }} \frac{\chi(m/c) c\, \mu(c) \xi(c) }{ \phi(c)}
		\prod_{p|m,\,p\nmid cD}\left(1-\frac{\chi(p)\bar{\xi}(p)}{p}\right)^{-1} \\
	\times		\sum_{ \substack{  n\le z\, P^+(n)\le y \\ (n,mD)=1 }}  \frac{\chi(n)\bar{\xi}(n)}{n}  
					+ O( (\log y)^{0.86}) .
\end{multline*}
Since we have assumed that $(b,q)=1$, we have that
\eq{sum over c}{
\sum_{\substack{c|m \\ (c,D)=1 }} \frac{\chi(m/c) c\, \mu(c) \xi(c) }{\phi(c)}
    &\prod_{p|m,\,p\nmid cD}\left(1-\frac{\chi(p)\bar{\xi}(p)}{p}\right)^{-1} \\
    = \chi(m)&
    \prod_{p|m,\,p\nmid D} \left(1- \frac{\bar{\chi}(p) \xi(p) }{1-1/p}
    \left(1-\frac{\chi(p)\bar{\xi}(p)}{p} \right)\right)\left(1-\frac{\chi(p)\bar{\xi}(p)}{p}\right)^{-1} \\
    = \chi(m)&\frac{m\phi(D)}{\phi(mD)}
    \prod_{p|m,\,p\nmid D} \frac{1-\bar{\chi}(p) \xi(p)}{1-\chi(p)\bar{\xi}(p)/p}.
}
If $z\in\C$ with $|z|=1$, then 
\[
\abs{\frac{1-\bar{z}}{1-z/p}}\le \sqrt{\frac{2}{1+1/p^2}} .
\]
Therefore the absolute value of the sum in \eqref{sum over c} is
\[
\le\frac{m\phi(D)}{\phi(mD)} 
		\prod_{p|m} \sqrt{\frac{2}{1+1/p^2}} .
\]
Since we also have that $mD=b$, we deduce that
\als{
\abs{\sum_{ \substack{ 1\le |n|\le z \\ P^+(n)\le y }} \frac{\chi(n) e(n a/b )}{n} }
	&\le \frac{2}{\sqrt{D}m} \frac{b}{\phi(b)} \abs{\sum_{\substack{ P^+(n)\le  y \\ (n,b)=1}} \frac{\chi(n)\bar{\xi}(n)}{n} }
		\prod_{p|m} \sqrt{\frac{2}{1+1/p^2}}
		+ O((\log y)^{0.86}) \\
	&\le\frac{2}{\sqrt{D}m} \min\left\{ e^\gamma\log y+O(1),
		\left(\frac{b}{\phi(b)}\right)^{3/2} (\log y) e^{-\D^2(\chi,\xi;y)/2+O(1)} \right\}  \\
	&\qquad\times\prod_{p|m} \sqrt{\frac{2}{1+1/p^2}}+  O((\log y)^{0.86}) \\
	&\le (\log y)\min\left\{ \frac{2e^\gamma}{\sqrt{D}} (2/3)^{\omega(m)},  e^{-\D^2(\chi,\xi;y)/2+O(1)} \right\} 
		+ O((\log y)^{0.86}) ,
}
where we used Lemma \ref{general ub} and the inequality $\frac{1}{p}\sqrt{\frac{2}{1+1/p^2}}\le \sqrt{2/5}\le2/3$ for $p\ge2$. This completes the proof of the lemma.
\end{proof}


Finally, if $\chi$ pretends to be 1 in a strong way, then we can get a very precise estimate on the sum of $\chi(n)e(an/b)/n$ over smooth number using estimates for such numbers in arithmetic progressions due to Fouvry and Tenenbaum \cite{FT}.


\begin{lemma}\label{remove exp} Let $z,y\ge2$, $\chi$ be a character mod $q$, and $a/b$ be a reduced fraction with $1\le b\le (\log y)^{100}$. Then
\als{
\sum_{\substack{ n\le z \\ P^+(n)\le y }} \frac{\chi(n) e(na/b)}{n}
	&= \sum_{b=cd} \frac{\mu(c)\chi(d)}{\phi(c)d}
		\sum_{\substack{ n\le z,\, (n,c)=1  \\ P^+(n)\le y }} \frac{\chi(n)}{n}
		 + O(E) \\
	&= \frac{ \mu(q_1)}{\phi(q_1)} \frac{ \chi(b_1)}{\phi(b_1)} \prod_{p|b_1} (1-\overline{\chi}(p)) \left(1-\frac{\chi(p)}{p}\right)^{-1}
		 \sum_{\substack{ n\le z,\,(n,b)=1 \\ P^+(n)\le y  }} \frac{\chi(n)}{n}
		+ O( E) ,
}
where $b_1$ is the largest divisor of $b$ with $(b_1,q)=1$ and $b=b_1q_1$, with 
\[
E = \left(1+\frac{b}{\phi(b)}(e^\Delta-1)\right)\log\log y 
\quad\text{where}\quad 
\Delta = \sum_{p\le y} \frac{|1-\chi(p)|}{p-1} \ll  \D(\chi,1;y) \sqrt{\log\log y}  .
\]
\end{lemma}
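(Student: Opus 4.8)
\emph{Plan.} The idea is to strip the additive factor $e(na/b)$ from the sum, using the near-equidistribution of $y$-smooth numbers in residue classes modulo the small modulus $b\le(\log y)^{100}$ --- the Fouvry--Tenenbaum input, already packaged in Lemma~\ref{around a/b} --- together with the fact that $\chi$ is strongly pretentious to $1$. One may assume $b>1$ (the case $b=1$ being the trivial identity) and that $z$ is large in terms of $y$, the complementary range of $z$ being settled by the trivial bound $\sum_{n\le z}1/n\ll\log z$, which makes both sides $O(E)$. The device for bringing in the pretentiousness is the factorisation $\chi=\mathbf 1*\lambda$ with $\lambda=\mu*\chi$: since $\lambda$ is multiplicative with $|\lambda(p^j)|=|1-\chi(p)|$ for all $j\ge1$,
\[
\sum_{P^+(e)\le y}\frac{|\lambda(e)|}{e}\le\prod_{p\le y}\Bigl(1+\frac{|1-\chi(p)|}{p-1}\Bigr)\le e^{\Delta},
\]
which is where the factor $e^\Delta$ in $E$ comes from (and, once the $e=1$ term is split off, the factor $e^\Delta-1$). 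Writing $\chi(n)/n=\sum_{ek=n}(\lambda(e)/e)(1/k)$ and separating variables gives
\[
\sum_{\substack{n\le z\\ P^+(n)\le y}}\frac{\chi(n)e(na/b)}{n}
=\sum_{P^+(e)\le y}\frac{\lambda(e)}{e}\sum_{\substack{k\le z/e\\ P^+(k)\le y}}\frac{e(kea/b)}{k}\,,
\]
and (using Lemma~\ref{smooth}) one may in the large-$z$ regime replace the inner sum by the full smooth sum $\sum_{P^+(k)\le y}e(kea/b)/k$ at a negligible cost.

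Next I would evaluate each inner sum via Lemma~\ref{around a/b}. Since $(a,b)=1$, $\gcd(ea,b)=\gcd(e,b)=:g_e$, so $kea/b\equiv ka_e/d_e\pmod1$ with $d_e=b/g_e\,|\,b$ and $(a_e,d_e)=1$; because $d_e\le b\le(\log y)^{100}$, the lemma applies at the exact rational $\alpha=a_e/d_e$ (so that $u=\infty$ and $\rho(u)=0$), giving $e^\gamma\log y+O(1)$ when $d_e=1$ and $\log\frac1{1-e(a_e/d_e)}+\frac{\Lambda(d_e)}{\phi(d_e)}+O((\log\log y)/\log y)$ when $d_e>1$. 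Multiplied by $\lambda(e)/e$ and summed over $e$, the error contributes $\ll e^\Delta(\log\log y)/\log y\ll E$. It then remains to reorganise the main term
\[
e^\gamma(\log y)\!\!\sum_{\substack{P^+(e)\le y\\ b\,|\,e}}\!\!\frac{\lambda(e)}{e}
\;+\;\sum_{\substack{P^+(e)\le y\\ b\,\nmid\,e}}\frac{\lambda(e)}{e}\Bigl(\log\frac1{1-e(a_e/d_e)}+\frac{\Lambda(d_e)}{\phi(d_e)}\Bigr)\,.
\]
Grouping $e$ according to $g=\gcd(e,b)$ and using multiplicativity of $\lambda$, the $\log\frac1{1-e(\cdot)}$ and $\Lambda(\cdot)/\phi(\cdot)$ contributions recombine --- just as in the $\chi=1$ computation underlying Lemma~\ref{around a/b}(b) --- into the Ramanujan-sum weights $\mu(c)/\phi(c)$, while reassembling $\sum_{ek=n}\lambda(e)=\chi(n)$. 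This gives the first displayed identity of the lemma.

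For the second equality I would start from the first and apply Lemma~\ref{change gcd} with $f=\chi$ to replace each $\sum_{(n,c)=1}\chi(n)/n$ by $\prod_{p|b,\,p\nmid c}(1-\chi(p)/p)^{-1}\sum_{(n,b)=1}\chi(n)/n$, at the cost of $O\bigl(\tfrac{b}{\phi(b)}\sum_{p|b}\tfrac{\log p}{p}\bigr)=O(E)$. The remaining coefficient $\sum_{b=cd}\frac{\mu(c)\chi(d)}{\phi(c)d}\prod_{p|b,\,p\nmid c}(1-\chi(p)/p)^{-1}$ is multiplicative in $b$, so it suffices to evaluate it at each prime power $p^a\|b$: for $p\ne q$ a short telescoping gives $\frac{\chi(p^a)}{\phi(p^a)}(1-\bar{\chi}(p))(1-\chi(p)/p)^{-1}$, whereas for $p=q$ (so $\chi(q)=0$) every term with $q\nmid c$ vanishes and one is left with $\mu(q^a)/\phi(q^a)$. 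Taking the product over $p^a\|b$ and writing $b=b_1q_1$ with $(b_1,q)=1$ yields the stated closed form. Finally, $\Delta\ll\D(\chi,1;y)\sqrt{\log\log y}$ is immediate from Cauchy--Schwarz together with $|1-\chi(p)|^2\le2(1-\Re\chi(p))$ and $\sum_{p\le y}1/p\ll\log\log y$.

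The delicate part is the bookkeeping in the middle two paragraphs: one must verify that, after substituting the Fouvry--Tenenbaum evaluation, the sum over $e$ really does collapse to the $\mu(c)/\phi(c)$-weighted expression, and --- harder --- that \emph{all} the accumulated errors (from Lemma~\ref{around a/b}, from truncating the smooth sums, from the $\gcd(e,b)$-grouping, and from Lemma~\ref{change gcd}) fit inside $E=(1+\tfrac{b}{\phi(b)}(e^\Delta-1))\log\log y$ with precisely this shape --- in particular that the $\chi$-independent part of the error stays $O(\log\log y)$ and that the $\chi$-dependent part carries only $e^\Delta-1$, which is exactly why the $e=1$ term of the convolution is isolated. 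A minor additional nuisance is keeping track of non-squarefree $b$ and of the prime $q$ when $q\,|\,b$ in the final Euler-product step.
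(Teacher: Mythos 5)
Your overall framework — writing $\chi=\mathbf 1*\lambda$ with $\lambda=\mu*\chi$, using $\sum_e|\lambda(e)|/e\le e^\Delta$ as the lever for pretentiousness, appealing to Fouvry--Tenenbaum-type input for smooth numbers in progressions, and then grouping by $\gcd(e,b)$ to reassemble the Ramanujan weights $\mu(c)/\phi(c)$ — is the correct framework, and the final passage via Lemma~\ref{change gcd} to the Euler-product form is exactly what the paper does. But there is a genuine gap at the central step: the claim that replacing $\sum_{k\le z/e}$ by the full smooth sum $\sum_{P^+(k)\le y}$ costs only a negligible amount.

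After exchanging the order of summation, that replacement error is precisely the tail $\sum_{n>z,\,P^+(n)\le y}\chi(n)e(na/b)/n$, and its absolute value can be as large as $\sum_{n>z,\,P^+(n)\le y}1/n$. Lemma~\ref{smooth} makes this small only once $z\ge y^{u}$ with $u^u\gg\log y$; e.g.\ at $z=y$ the tail is $(e^\gamma-1)\log y+O(1)$, which vastly exceeds $E\asymp\log\log y$ when $\Delta=O(1)$. Your trivial-bound reduction, on the other hand, only disposes of $z\le(\log y)^{O(1)}$ (since both sides are $\ll\log z$). In the entire intermediate range $(\log y)^{O(1)}\ll z\ll y^{\sqrt{\log y}}$ neither argument applies, and your approach effectively proves the $z=\infty$ version of the lemma rather than the statement with both sides truncated at $z$. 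That $z$-dependence is not cosmetic: the lemma is later used (e.g.\ via Corollary~\ref{remove exp cor} inside the proof of Theorem~\ref{partial sums odd}, and in the $b>1$ case with $N=1/|b\alpha-a|$) precisely at moderate truncation heights.

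The paper circumvents this by \emph{not} putting the $1/n$ weight in from the start. It estimates the unweighted partial sums $S(t)=\sum_{n\le t,P^+(n)\le y}\chi(n)e(na/b)$ at each scale $t\in[(\log y)^{400},z]$, obtaining an error $R(t)$ of the shape $t/(\log y)^2 + \tfrac{tb}{\phi(b)}\sum_{t/(\log y)^{300}<k\le t}|h(k)|/k$, and then runs Abel summation $\int dS(t)/t$. The crucial point is that each $k>1$ contributes to $R(t)/t$ only over an interval of logarithmic length $\asymp\log\log y$, so the accumulated error is $\ll\tfrac{b}{\phi(b)}(e^\Delta-1)\log\log y$ rather than $\cdot\log y$. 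Your direct evaluation of the weighted sums $\sum_k e(kea/b)/k$ via Lemma~\ref{around a/b} has no analogous mechanism, because Lemma~\ref{around a/b} only produces the untruncated value. To make your route work you would need a truncated analogue of Lemma~\ref{around a/b}(b) uniform enough in the cut-off, which is essentially what the $S(t)$-plus-partial-summation argument manufactures.

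A secondary, more cosmetic point: Lemma~\ref{around a/b}(b) as stated requires $|\alpha-a/b|>0$ (the quantity $u$ with $y^{-u}=|\alpha-a/b|$ is undefined otherwise), so invoking it ``at the exact rational $\alpha=a_e/d_e$'' is outside its hypotheses. The evaluation at exact rationals does hold — it is in effect what the end of the proof of Lemma~\ref{around a/b}(b) and the later Corollary~\ref{remove exp cor} establish — but it needs to be stated and proved separately (or extracted carefully as a limit), not cited as a special case.
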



\begin{proof} First, note that the inequality $\Delta\ll \D(\chi,1;y) \sqrt{\log\log  y}$ follows by the Cauchy--Schwarz inequality and the fact that $|1-z|^2\le2\Re(1-z)$ for $z\in\U$. We write $\chi=h*1$. Then we have that $|h(p^j)|=|\chi(p^j)-\chi(p^{j-1})|\le|1-\chi(p)|\le2$ and thus
\eq{h bound}{
\sum_{P^+(m)\le y} \frac{|h(m)|}{m} \le e^\Delta \ll (\log y)^2 .
}
Moreover, observe that, using Lemma \ref{smooth}, we may assume that $z\le y^{\log\log y}$. 

We begin by estimating the sum
\[
S(t):=\sum_{\substack{ n\le t \\ P^+(n)\le y }} \chi(n) e(na/b) 
\]
for $t\in[(\log y)^{400},z]$. Set $\ell_0=(\log y)^{200}$ and note that
\eq{remove e1}{
S(t)  &= \sum_{b=cd} \chi(d) \sum_{\substack{ m\le t/d \\ P^+(m)\le y,\,(m,c)=1 }} \chi(m) e(ma/c) \\
	&= \sum_{b=cd} \chi(d)  \sum_{\substack{ k\le t/d \\ P^+(k)\le y,\,(k,c)=1 }} h(k) 	
		  \sum_{\substack{ \ell \le t/(dk) \\ P^+(\ell)\le y,\,(\ell,c)=1 }} e(k\ell a/c) \\
	&= \sum_{b=cd} \chi(d)  \sum_{\substack{ k\le t/(d\ell_0) \\ P^+(k)\le y,\,(k,c)=1 }} h(k) 	
		\sum_{\substack{1\le j\le c \\ (j,c)=1}} e(kja/c)
		 \sum_{\substack{ \ell \le t/(dk) \\ P^+(\ell)\le y,\,\ell\equiv j\mod c }} 1  \\
	 &\quad +   O\left( \frac{tb}{\phi(b)} 
	 	\sum_{\substack{t/(\log y)^{300}<k\le t\\ P^+(k)\le y }} \frac{|h(k)|}{k}\right) 
}
where we bounded trivially the sum over $\ell$ when $k>t/(d\ell_0)$ (note that $d\ell_0\le(\log y)^{300}$ for all $d|b$).
		
Next, we need an estimate for the sum
\[
\sum_{\substack{ \ell \le t/(dk) \\ P^+(\ell)\le y,\,\ell\equiv j\mod c }} 1 
\]
when $dk\le t/\ell_0$. Note that, since $b\le(\log y)^{100}$, Theorems 2 and 5 in \cite{FT} imply that
\[
 \sum_{\substack{ \ell \le t/(dk) \\ P^+(\ell)\le y,\,\ell\equiv j\mod c }} 1
 	 =   \frac{1}{\phi(c)} \sum_{\substack{ \ell \le t/(dk) \\ P^+(\ell)\le y,\,(\ell,c)=1}} 1
		+  O\left( \frac{t}{dk\phi(c) (\log y)^5} \right) 
\]
when $t/(dk)\ge y$; the same result also holds when $t/(dk)\le y$ by elementary techniques since $t/(dkc)\ge \ell_0/c\ge (\log y)^{100}$ for $c|b$ and $dk\le t/\ell_0$. So, using the identity
\[
\sum_{\substack{1\le j\le c \\ (j,c)=1}} e(kja/c)  = \mu(c),
\]
we deduce that
\als{
S(t) &= \sum_{b=cd} \frac{\mu(c)\chi(d) }{\phi(c)}
		\sum_{\substack{ k\le t/(d\ell_0) \\ P^+(k)\le y,\,(k,c)=1 }} h(k)
		\sum_{\substack{ \ell \le x/(dk) \\ P^+(\ell)\le y,\,(\ell,c)=1}} 1  \\
	&\quad+ O\left(\frac{t}{(\log y)^2} +  \frac{tb}{\phi(b)} \sum_{\substack{t/(\log y)^{300}<k\le t\\ P^+(k)\le y }} 
		\frac{|h(k)|}{k}\right) ,
}
where we used \eqref{h bound}. We get the same right side no matter what the value of $a$, as long as $(a,b)=1$. Hence
\[
S(t) = \frac 1{\phi(b)}
	\sum_{\substack{ 1\le r\le b \\ (r,b)=1}} \sum_{\substack{ n\le t \\ P^+(n)\le y }} \chi(n) e(nr/b)
	+ R(t) 
\]
for some function $R(t)$ satisfying the bound
\[
R(t) \ll \frac{t}{(\log y)^2} +  \frac{tb}{\phi(b)} \sum_{\substack{t/(\log y)^{300}<k\le t\\ P^+(k)\le y }} 
		\frac{|h(k)|}{k} .
\]
Letting $d=(n,b)$, and writing $n=md$ and $b=cd$ so that $(m,c)=1$, we deduce that
\[
S(t) =  \sum_{b=cd} \frac{\mu(c)\chi(d)}{\phi(c)}
		\sum_{\substack{ m\le x/d \\ P^+(m)\le y,\, (m,c)=1 }} \chi(m)   + R(t) 
\]
by \cite[eq (7), p. 149]{Dav}, for all $t\in [(\log y)^{400},z]$. Therefore partial summation implies that
\als{
\sum_{\substack{ n\le z \\ P^+(n)\le y }} \frac{\chi(n) e(na/b)}{n}
	&= \sum_{\substack{(\log y)^{400}<n\le z \\ P^+(n)\le y }} \frac{\chi(n) e(na/b)}{n} 
		+ O(\log\log y) \\
	&= \sum_{b=cd} \frac{\mu(c)\chi(d)}{\phi(c)d}
		\sum_{\substack{ (\log y)^{400}/d<m\le z/d,\, (m,c)=1  \\ P^+(m)\le y }} \frac{\chi(m)}{m} \\
	&\quad	 + \int_{(\log y)^{400}}^z \frac{dR(t)}{t} + O(\log\log y) .
}
Integrating by parts and applying relation \eqref{h bound} and our assumption that $z\le y^{\log\log y}$ we conclude that
\als{
 \int_{(\log y)^{400}}^z \frac{dR(t)}{t}  
 	\ll 1 + \frac{b\log\log y}{\phi(b)} \sum_{\substack{k>1 \\ P^+(k)\le y }} \frac{|h(k)|}{k}  
		\le 1+ \frac{b\log\log y}{\phi(b)} (e^\Delta-1)  .
}
Since we also have that
\[
\sum_{b=cd} \frac{\mu(c)\chi(d)}{\phi(c)d}
		\sum_{\substack{ m\in[1,(\log y)^{400}/d]\cup(z/d,z] \\ (m,c)=1,\, P^+(m)\le y }} \frac{\chi(m)}{m}
	\ll\sum_{b=cd} \frac{ \log\log y }{\phi(c)d} \ll\log\log y,
\]
we deduce that
\eq{remove exp-final}{
\sum_{\substack{ n\le z \\ P^+(n)\le y }} \frac{\chi(n) e(na/b)}{n}
	= \sum_{b=cd} \frac{\mu(c)\chi(d)}{\phi(c)d}
		\sum_{\substack{ m\le z,\, (m,c)=1  \\ P^+(m)\le y }} \frac{\chi(m)}{m}
		 + O(E) .
}
Applying Lemma \ref{change gcd} with $f(n) = \chi(n){\bf 1}_{(n,c)=1}{\bf 1}_{P^+(n)\le y}$ and $b$ in place of $a$ implies that
\[
\sum_{\substack{ m\le z,\, (m,c)=1  \\ P^+(m)\le y }} \frac{\chi(m)}{m}
	=   \prod_{p|b,\,p\nmid c }\left(1-\frac{\chi(p)}{p}\right)^{-1}
		\sum_{\substack{ n\le z,\,(n,b)=1 \\ P^+(n)\le y  }} \frac{\chi(n)}{n} 
			+ O\left(\frac{b}{\phi(b)}\sum_{p|b}\frac{\log p}{p}\right) .
\]
Inserting this formula into \eqref{remove exp-final} leads to an error term of size
\[
\ll E+ \sum_{b=cd} \frac{1}{\phi(c)d} \cdot \frac{b}{\phi(b)}\sum_{p|b}\frac{\log p}{p}
	\ll E,
\]
and a main term of
\[
  \prod_{p|b }\left(1-\frac{\chi(p)}{p}\right)^{-1}
		\sum_{\substack{ n\le z,\,(n,b)=1 \\ P^+(n)\le y  }} \frac{\chi(n)}{n}
\]
times
\[
\sum_{b=cd} \frac{\mu(c)\chi(d)}{\phi(c)d} \prod_{p| c}\left(1-\frac{\chi(p)}{p}\right) =
\frac{ \mu(q_1)}{\phi(q_1)} \frac{ \chi(b_1)}{\phi(b_1)} \prod_{p|b_1} (1-\overline{\chi}(p)) ,
\]
thus completing the proof of the lemma.
\end{proof}


\begin{cor}\label{remove exp cor} Let $q$ be an integer that either equals 1 or is prime. Let $z,y\ge 2$, and $a/b$ be a reduced fraction with $1< b\le (\log y)^{100}$. Then
\[
\sum_{\substack{ n\le z \\ P^+(n)\le y  \\ (n,q)=1}} \frac{ e(na/b)}{n}
	= -\frac{{\bf 1}_{b=q}}{\phi(q)}\sum_{\substack{ n\le z \\ P^+(n)\le y  \\ (n,q)=1}} \frac{1}{n} 
		+O\left( \left(1+\frac{{\bf 1}_{q>1}}{q} \frac{b}{\phi(b)}\right)\log\log y \right) .
\]
\end{cor}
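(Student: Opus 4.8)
The plan is to invoke Lemma \ref{remove exp} with $\chi=\chi_0$ the principal character modulo $q$; when $q=1$ this means $\chi_0$ is identically $1$. Since $\chi_0(n)={\bf 1}_{(n,q)=1}$, the sum on the left of Lemma \ref{remove exp} is exactly the sum we must estimate, and the lemma gives
\[
\sum_{\substack{n\le z\\ P^+(n)\le y\\ (n,q)=1}}\frac{e(na/b)}{n}
=\frac{\mu(q_1)}{\phi(q_1)}\cdot\frac{\chi_0(b_1)}{\phi(b_1)}\prod_{p\mid b_1}(1-\overline{\chi_0}(p))\Bigl(1-\frac{\chi_0(p)}{p}\Bigr)^{-1}
\sum_{\substack{n\le z,\ (n,b)=1\\ P^+(n)\le y}}\frac{\chi_0(n)}{n}+O(E),
\]
where $b=b_1q_1$ with $b_1$ the largest divisor of $b$ coprime to $q$, and $E=\bigl(1+\tfrac{b}{\phi(b)}(e^{\Delta}-1)\bigr)\log\log y$ with $\Delta=\sum_{p\le y}\tfrac{|1-\chi_0(p)|}{p-1}$. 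It remains to simplify the main term and to bound $E$ in the stated form.

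First I would determine when the main term survives. Every prime $p\mid b_1$ satisfies $(p,q)=1$, so $\chi_0(p)=1$ and hence $1-\overline{\chi_0}(p)=0$; as $b>1$, the product over $p\mid b_1$ vanishes unless $b_1=1$. Moreover, since $q$ is $1$ or prime, $q_1$ is a power of $q$, so $\mu(q_1)\ne0$ only when $q_1\in\{1,q\}$. Together with $b_1=1$ and $b>1$ this forces $b=q$ (so in particular $q>1$), and in that case $b_1=1$, $q_1=q$, $\mu(q_1)=-1$, the empty product is $1$, and $(n,b)=1$ is the same condition as $(n,q)=1$; the main term therefore equals $-\tfrac1{\phi(q)}\sum_{n\le z,\ (n,q)=1,\ P^+(n)\le y}1/n$, which matches $-\tfrac{{\bf 1}_{b=q}}{\phi(q)}\sum_{\cdots}$. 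In every remaining case the main term is $0$, again in agreement with the factor ${\bf 1}_{b=q}$.

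Finally I would check the error term. Here $\Delta=\sum_{p\mid q,\ p\le y}\tfrac1{p-1}$ equals $0$ if $q=1$ and is at most $1/(q-1)$ if $q$ is prime; using $e^{x}-1\le xe^{x}$ (and disposing of the single value $q=2$, where $\Delta\le1$, by hand) one gets $e^{\Delta}-1\ll {\bf 1}_{q>1}/q$ with an absolute constant. Since $\tfrac{b}{\phi(b)}\ge1$, this yields $\tfrac{b}{\phi(b)}(e^{\Delta}-1)\ll 1+\tfrac{{\bf 1}_{q>1}}{q}\tfrac{b}{\phi(b)}$ and hence $E\ll\bigl(1+\tfrac{{\bf 1}_{q>1}}{q}\tfrac{b}{\phi(b)}\bigr)\log\log y$, completing the argument. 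I do not anticipate any genuine obstacle: the corollary is a bookkeeping specialization of Lemma \ref{remove exp}, and the only slightly delicate point is the case analysis isolating $b=q$ as the unique configuration in which the multiplicative factor in front of the main sum is nonzero.
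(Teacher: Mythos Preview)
Your proposal is correct and is exactly the intended derivation: the paper states the corollary without proof, and it is precisely the specialization of Lemma~\ref{remove exp} to $\chi=\chi_0$, with the case analysis you describe isolating $b=q$ as the only surviving main term. Your handling of the error term (bounding $\Delta\le {\bf 1}_{q>1}/(q-1)$ and hence $e^\Delta-1\ll {\bf 1}_{q>1}/q$) is also what is implicit in the paper.
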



\section{Structure of even characters with large $M(\chi)$: proof of Theorem \ref{structure thm even}}\label{even}

The goal of this section is to prove Theorem \ref{structure thm even}. Throughout this section, we set $y=e^{\sqrt{3}\tau+c}$ for some constant $c$. We will show this theorem with
\[
\CC_q^+(\tau) := \left\{\chi\mod q:\chi\neq \chi_0,\ \chi(-1)=1,\ S_{y,q^{11/21}}(\chi)\le 1,\ 
	m(\chi)>\tau \right\},
\]
where the quantity $S_{y,z}(\chi)$ is defined as in Section \ref{outline}. Theorem \ref{tail bound cor} and the lower bound in Theorem \ref{main thm even}, which we already proved in the beginning of Section \ref{outline} (independently of the proof of Theorem \ref{structure thm even}), guarantee that the cardinality of $\CC_q^+(\tau)$ satisfies  \eqref{cc+}, provided that the constant $c$ in the definition of $y$ and the constant $C$ in the statement of Theorem \ref{structure thm even} are large enough. 

We fix a large $\tau\le \log\log q$ and we consider a character $\chi\in \CC_q^+(\tau)$. Let $\alpha=N_\chi/q$. Then
\als{
m(\chi)
	= \frac{1}{2e^\gamma} \abs{ \sum_{1\le |n|\le q^{11/21}} \frac{\chi(n)e(n\alpha )}{n} } 
		+ O(q^{-1/43}) 
	&= \frac{1}{2e^\gamma} \Bigg| \sum_{\substack{1\le |n|\le q^{11/21} \\  P^+(n)\le y}} \frac{\chi(n)e(n\alpha)}{n} \Bigg| 
		+ O(1) \\
	&= \frac{1}{2e^\gamma} \Bigg| \sum_{\substack{n\in\Z\setminus\{0\} \\  P^+(n)\le y}} \frac{\chi(n)e(n\alpha)}{n} \Bigg| + O(1),
}
by  \eqref{polya}, our assumption that $|S_{y,q^{11/21}}(\chi)|\le 1$ for $\chi\in\CC^+_q(\tau)$, and Lemma \ref{smooth}. As in the statement of Theorem \ref{structure thm even}, we approximate $\alpha$ by a reduced fraction $a/b$ with $b\le \tau^{10}$ and $|\alpha-a/b|\le 1/(b\tau^{10})$. We let $N=1/|b\alpha-a|\ge\tau^{10}$ and apply Lemma \ref{centering alpha} with $z=\infty$ to find that
\eq{even e0}{
m(\chi)
	= \frac{1}{2e^\gamma}  \Bigg|\sum_{\substack{1\le |n|\le N \\  P^+(n)\le y}} \frac{\chi(n)e(an/b)}{n} \Bigg|
		+ O(\log \tau) .
}
Since $\chi\in\CC_q^+(\tau)$, we must have that $m(\chi)> \tau$, which implies that
\eq{even e1}{
\Bigg| \sum_{\substack{1\le |n|\le N \\  P^+(n)\le y}} \frac{\chi(n)e(an/b)}{n} \Bigg| 
	\ge 2e^\gamma \tau - O(\log \tau).
}
We now proceed to show that $\chi$ satisfies properties (1) and (2) of Theorem \ref{structure thm even}.


\begin{proof}[Proof of Theorem \ref{structure thm even} - Property (1)]
We choose $\xi\mod D$ with $D\le \log y$ to satisfy  \eqref{most pretentious}. We claim that 
\eq{xi=leg mod 3}{
b=D=3\quad \text{and}\quad\ \xi=\leg{\cdot}{3} ,
}
the first claim being equivalent to $a/b\in\{1/3,2/3\}$. 

Firstly, note that Lemma \ref{MV} in conjunction with  \eqref{even e1} implies that $b\ll1$. Equation \eqref{even e1} also tells us that we must be in the second case of Lemma \ref{formula2} (provided that $\tau$ is large enough), that is to say $D|b$ and $\chi\bar{\xi}$ is odd. Since $\chi$ is even, we conclude that $\xi$ is odd. Thus $3\le D\le b\ll1 $. Moreover, the last inequality in Lemma \ref{formula2} implies that
\[
\Bigg| sum_{\substack{1\le|n|\le N \\ P^+(n)\le y}} \frac{\chi(n)e(an/b)}{n} \Bigg|
	\le \frac{2e^\gamma \sqrt{3}\tau}{\sqrt{D}} \cdot (2/3)^{\omega(b/D)} +O(\tau^{0.86}) .
\]
Comparing this inequality with \eqref{even e1}, we deduce that $b=D=3$ and thus $\xi=(\cdot/3)$, the quadratic character modulo 3, which completes the proof of our claim and hence of the fact that Property (1) holds.   
\end{proof}


\begin{proof}[Proof of Theorem \ref{structure thm even} - Property (2)]
We start with the proof of  \eqref{M(chi) even}. Note that
\als{
\sum_{ \substack{1\le |n|\le N \\ P^+(n)\le y}}\frac{\chi(n)e(an/3)}{n}
	= \sum_{\substack{n\le N \\ P^+(n)\le y}}\frac{\chi(n)(e(an/3)-e(-an/3))}{n} ,
}
where $a\in\{1,2\}$. Then using  \eqref{leg 3}, we find that
\[
\sum_{ \substack{|n|\le N \\ P^+(n)\le y}} \frac{\chi(n)e(an/3)}{n}
	= i\sqrt{3}\leg{a}{3} \sum_{ \substack{ n\le N \\ P^+(n)\le y}} \frac{\chi(n) \leg{n}{3}}{n} ,
\]
so that, by \eqref{even e0},
\[
m(\chi) = \frac{\sqrt{3}}{2e^\gamma} \Bigg| \sum_{ \substack{ n\le N \\ P^+(n)\le y}} \frac{\chi(n) \leg{n}{3}}{n} \Bigg| 
	+O(\log \tau) .
\]
Hence, we conclude that
\als{
2e^\gamma\tau-O(\log\tau) 
	&\le \sqrt{3} \, \Bigg| \sum_{ \substack{ n\le N \\ P^+(n)\le y}} \frac{\chi(n) \leg{n}{3}}{n} \Bigg|
		\le \sqrt{3}\sum_{ \substack{ n\le N,\,(n,3)=1\\ P^+(n)\le y}} \frac{1}{n}  \\
	&= \sqrt{3} \Bigg( \frac{2}{3} e^\gamma\log y + O(1)
		-  \sum_{ \substack{ n> N,\,(n,3)=1\\ P^+(n)\le y}} \frac{1}{n} \Bigg) \\
	&= 2e^\gamma \tau + O(1) - \sqrt{3} \sum_{ \substack{ n > N,\,(n,3)=1\\ P^+(n)\le y}} \frac{1}{n} .
}
Therefore
\[
\sum_{ \substack{ n>N,\,(n,3)=1\\ P^+(n)\le y}} \frac{1}{n} \ll \log \tau,
\]
which, in turn, implies that
\[
\sum_{ \substack{ n\le N \\ P^+(n)\le y}} \frac{\chi(n) \leg{n}{3}}{n}
	= \sum_{P^+(n)\le y } \frac{\chi(n) \leg{n}{3}}{n}  + O(\log \tau) ,
\]
so that
\eq{M(chi) even, y-smooth}{
m(\chi) = \frac{\sqrt{3}}{2e^\gamma} \abs{ \sum_{P^+(n)\le y} \frac{\chi(n) \leg{n}{3}}{n} } 
	+O(\log \tau) .
}
Finally, we have that
\als{
L\left(1,\chi \leg{\cdot}{3}\right) 
	&= \sum_{n\le q^{11/21}} \frac{\chi(n)\leg{n}{3}}{n} + O(q^{-1/43})  \\
	&= \sum_{\substack{n\le q^{11/21} \\ P^+(n)\le y}} \frac{\chi(n)\leg{n}{3}}{n} 
		+ \frac{1}{i\sqrt{3}}\sum_{\substack{n\le q^{11/21} \\ P^+(n)>y}} \frac{\chi(n) (e(n/3) - e(-n/3) )}{n} 
		+ O(q^{-1/43})  \\
	&= \sum_{P^+(n)\le y} \frac{\chi(n)\leg{n}{3}}{n} +O(\log\tau),
}
by the P\'olya--Vinogradov inequality, our assumption that $\chi\in\CC_q^+(\tau)$ and Lemma \ref{smooth}. Inserting the above estimates into \eqref{M(chi) even, y-smooth} completes the proof of  \eqref{M(chi) even}.  

\medskip

Finally, we prove  \eqref{mod3-pretentious}. For convenience, we set $\psi(n)=\chi(n)\leg{n}{3}$. Then  \eqref{M(chi) even, y-smooth} and our assumption that $M(\chi)>\frac{e^\gamma}{\pi} \tau\sqrt{q}$ for $\chi\in\CC_q^+(\tau)$ imply that
\[
\abs{\sum_{P^+(n)\le y} \frac{\psi(n)}{n} }\ge\frac{2}{3} e^\gamma\log y + O(\log\tau)
	= \sum_{\substack{P^+(n)\le y\\(n,3)=1}} \frac{1}{n} + O(\log\tau) .
\]
Since this lower bound is also  an upper bound, we deduce that
\[
\abs{\prod_{\substack{p\le y \\ p\neq 3}}\left(1-\frac{\psi(p)}{p}\right)^{-1}\left(1-\frac{1}{p}\right) } 
	= 1+ O\left(\frac{\log \tau}{\tau}\right) .
\]
Then, the argument leading to  \eqref{chi close to 1} implies that
\eq{strong prop3}{
\sum_{\substack{p\le y \\ p\neq 3}} \sum_{j=1}^\infty\frac{|1-\psi^j(p)|}{p^j} \ll \frac{\log \tau}{\sqrt{\tau}} ,
}
thus completing the proof of slightly more than Property (2).
\end{proof}


\begin{proof}[Proof of Theorem \ref{structure thm even} - Property (3)] Define $w$ via the relation $|\beta-k/\ell|=1/(\ell y^w)$. Note that $w=u(1+O(1/\tau))$ as $y=e^{\sqrt{3}\tau+c}$. So we may show the theorem with $w$ in place of $u$. Arguing as at the beginning of Section \ref{even}, and applying Lemma \ref{centering alpha} with $z=\infty$, we find that
\als{
\frac{\pi }{\CG(\chi)}\sum_{n\le\beta q} \chi(n)
	&= \frac{-1}{2i} \sum_{\substack{n\in\Z,\, n\neq 0 \\ P^+(n)\le y}}
	\frac{\bar{\chi}(n)e(-\beta n)}{n} + O(1) \\
	&= \frac{-1}{2i}   \sum_{\substack{1\le |n|\le y^w \\ P^+(n)\le y}}
			\frac{\bar{\chi}(n)e(-k n/\ell)}{n} + O(\log \tau) \\
	&= \sum_{\substack{n\le y^w \\ P^+(n)\le y}} \frac{\bar{\chi}(n)\sin(2\pi kn/\ell)}{n} + O(\log \tau) .
}
We note that inequality \eqref{strong prop3} and the argument leading to \eqref{d(chi,1) main thm} imply that
\eq{d(psi,1)}{
\sum_{\substack{P^+(n)\le y \\ (n,3)=1}} \frac{|1-\psi(n)|}{n}  \ll \sqrt{\tau}  \log\tau ,
}
where we have set $\psi(n)=\chi(n)\leg{n}{3}$. We write $n=3^jm$ with $(m,3)=1$, so that
\als{
\frac{\pi }{\CG(\chi)} \sum_{n\le\beta q} \chi(n)
	&=  \sum_{j=0}^\infty \frac{\bar{\chi}(3^j)}{3^j}
	\sum_{\substack{P^+(m)\le y \\ m\le y^w/3^j \\ (m,3)=1}} \frac{\bar{\chi}(m)\sin(2\pi 3^jkm/\ell)}{m} 
	 		+ O(\log \tau) \\
	&= \sum_{j=0}^\infty \frac{\bar{\chi}(3^j)}{3^j}
	\sum_{\substack{P^+(m)\le y \\ m\le y^w}} \frac{\leg{m}{3}\sin(2\pi 3^jkm/\ell)}{m} 
	 + O(\sqrt{\tau}\log \tau) ,
}
by \eqref{d(psi,1)} and as $\sum_{y^w/3^j<m\le y^w}1/m\ll j$. Using the formula $2\sin(2\pi m/3) = \sqrt{3} \leg{m}{3}$, we deduce that
\als{
\frac{\pi}{\CG(\chi)}\sum_{n\le\beta q} \chi(n) 
	&= \frac{1}{\sqrt{3}} \sum_{j=0}^\infty \frac{\bar{\chi}(3^j)}{3^j}
	\sum_{\substack{P^+(m)\le y \\ m\le y^w}} \frac{2\sin(2\pi m/3)\sin(2\pi 3^jkm/\ell)}{m} 
	 + O(\sqrt{\tau}\log \tau) \\
	&=  \frac{1}{\sqrt{3}} \sum_{j=0}^\infty \frac{\bar{\chi}(3^j)}{3^j}
	\sum_{\substack{P^+(m)\le y \\ m\le y^w}} \frac{ \cos(2\pi m\frac{3^{j+1}k-\ell}{3\ell}) - \cos(2\pi m\frac{3^{j+1}k+\ell}{3\ell}) }{m} 
	 + O(\sqrt{\tau}\log \tau) .
}
If $\ell$ is not a power of 3, then $\frac{3^{j+1}k\pm\ell}{3\ell}\notin\Z$ for each $j$, so Corollary \ref{remove exp cor}  implies that 
\[
\frac{\pi}{\CG(\chi)} \sum_{n\le\beta q} \chi(n) \ll \sqrt{\tau}\log \tau
\]
as claimed. Finally, if $\ell=3^v$ and $\epsilon\in\{-1,1\}$, then $\frac{3^{j+1}k-\epsilon\ell}{3\ell}\notin\Z$, unless $j=v-1$ and $k\equiv \epsilon\mod3$, so that $\epsilon=\leg{k}{3}$. Therefore Corollary \ref{remove exp cor} and Lemma \ref{smooth2} imply that 
\als{
\frac{\pi}{\CG(\chi)} \sum_{n\le\beta q} \chi(n) 
	&=  \frac{\leg{k}{3}}{\sqrt{3}} \frac{\bar{\chi}(3^{v-1})}{3^{v-1}}
		\sum_{\substack{P^+(m)\le y \\ m\le y^w}} \frac{1}{m} + O(\sqrt{\tau}\log \tau)  \\
	&= \frac{\leg{k}{3}\bar{\chi}(3^{v-1})}{3^{v-1}}  e^\gamma\tau P(w) 
		+ O(\sqrt{\tau}\log \tau).
}
This completes the proof of Theorem \ref{structure thm even}.
\end{proof}


\section{The structure of characters with large $M(\chi)$: proof of Theorems \ref{structure thm} and \ref{partial sums odd}}\label{odd}

The goal of this section is to prove Theorems \ref{structure thm} and \ref{partial sums odd}. Throughout this section, we set $y=e^{\tau+c}$ for some constant $c>0$ (note that this a different value of $y$ than in the previous section). We will show this theorem with
\[
\CC_q(\tau) := \left\{\chi\mod q:\chi\neq \chi_0,\ S_{y,q^{11/21}}(\chi)\le 1,\ 
	m(\chi) > \tau  \right\},
\]
where the quantity $S_{y,z}(\chi)$ is defined as in Section \ref{outline}. Theorem \ref{tail bound cor} and the lower bound in Theorem \ref{main thm}, which we already proved in the beginning of Section \ref{outline}, guarantee that the cardinality of $\CC_q(\tau)$ satisfies  \eqref{cc}, provided that the constant $c$ in the definition of $y$ and the constant $C$ in the statement of Theorem \ref{structure thm} are large enough.

We fix a large $\tau\le \log\log q$ and we consider a character $\chi\in \CC_q(\tau)$. Let $\alpha\in[0,1)$ be such that
\[
M(\chi) = \abs{ \sum_{n\le \alpha q} \chi(n) }.
\]
As in the statement of Theorem \ref{structure thm}, we pick a reduced fraction $a/b$ such that $1\le b\le \tau^{10}$ and $|\alpha-a/b|\le1/(b\tau^{10})$, and we define $b_0$ to equal $b$ if $b$ is prime, and 1 otherwise. Following the argument leading to \eqref{even e0}, we deduce that
\eq{odd e0}{
m(\chi)
	= \frac{1}{2e^\gamma}  \Bigg|
		 \sum_{\substack{ n\in\Z\setminus\{0\} \\  P^+(n)\le y}} \frac{\chi(n)}{n}  
		 	- \sum_{\substack{1\le |n|\le N \\  P^+(n)\le y}} \frac{\chi(n)e(an/b)}{n} \Bigg|
		+ O(\log \tau) ,
}
where $N=1/|b\alpha -a|\ge \tau^{10}$.


\begin{proof}[Proof of Theorem \ref{structure thm} - Property (1)]
We choose $\xi\mod D$ with $D\le \log y$ to satisfy  \eqref{most pretentious}. Since $\chi\in\CC_q(\tau)$, we must have that $m(\chi)> \tau$, which implies that
\eq{odd e1}{
\Bigg| \sum_{\substack{ n\in\Z\setminus\{0\} \\  P^+(n)\le y}} \frac{\chi(n)}{n}  
		 	- \sum_{\substack{1\le |n|\le N \\  P^+(n)\le y}} \frac{\chi(n)e(an/b)}{n} \Bigg|
	\ge 2e^\gamma \tau - O(\log \tau).
}
We claim that
\eq{xi=1}{
\chi\ \text{is odd},\quad D=1,\quad \xi=1 ,\quad \text{and}\quad   \D(\chi,1;y) \ll 1 ,
}
the first relation being Property (1). We separate two cases.

First, assume that $b\ge \tau^{1/100}$. Then we apply Lemma \ref{MV} to find that
\[
\sum_{\substack{1\le |n|\le N \\  P^+(n)\le y}} \frac{\chi(n)e(an/b)}{n} 
	\ll \tau^{1-1/300} ,
\]
which, together with \eqref{odd e1}, implies that
\eq{odd e100}{
|1-\chi(-1)|\abs{ \sum_{P^+(n)\le y} \frac{\chi(n)}{n}  } \ge 2e^\gamma\tau  - O(\tau^{299/300}) .
}
Then we must have that $\chi(-1)=-1$. Furthermore, the first part of Lemma \ref{formula2} implies that $D=1$ and thus $\xi=1$. Finally, Lemma \ref{general ub} and  \eqref{odd e100} imply that $\D(\chi,1;y)\ll1$, which completes the proof of \eqref{xi=1} in this case.

Finally, assume that $b\le \tau^{1/100}$. Suppose that either $\chi$ is even or $\xi\neq1$. Then Lemma \ref{formula2} implies that
\[
\sum_{\substack{ n\in\Z\setminus\{0\} \\  P^+(n)\le y}} \frac{\chi(n)}{n}  \ll \tau^{0.86} .
\]
So \eqref{odd e1} becomes
\eq{odd e2}{
\abs{ \sum_{\substack{1\le |n|\le N \\  P^+(n)\le y}} \frac{\chi(n)e(an/b)}{n} } 
	\ge 2e^\gamma \tau - O(\tau^{0.86}) .
}
Thus we must be in the second case of Lemma \ref{formula2} as far as the above sum is concerned, that is to say that $D|b$ and $\chi\bar{\xi}$ is odd. Then the second part of Lemma \ref{formula2} implies that
\[
\abs{\sum_{\substack{1\le|n|\le N \\ P^+(n)\le y}} \frac{\chi(n)e(an/b)}{n}}
	\le  \frac{2e^\gamma \tau}{\sqrt{D}} \cdot (2/3)^{\omega(b/D)} +O(\tau^{0.86}) .
\]
Comparing this inequality with \eqref{odd e2}, we deduce that $b=D=1$ and thus $\xi=1$, provided that $\tau$ is large enough. But then $\chi$ has to be odd, which contradicts our initial assumption. So we conclude that our initial assumption must be wrong, that is to say $\chi$ must be odd and $\xi=1$, so that $D=1$. 

It remains to show that $\D(\chi,1;y)\ll1$ in the case when $b\le \tau^{1/100}$. We apply Lemma \ref{general ub} and the second part of Lemma \ref{formula2}    to deduce that
\als{
\sum_{\substack{ 1\le |n| \le N \\ P^+(n)\le y}} \frac{\chi(n)}{n}
	-  \sum_{\substack{ 1\le |n| \le z \\ P^+(n)\le y}} \frac{\chi(n)e(an/b)}{n}
		\ll  \tau \exp\{ - \D^2(\chi,1;y)/2\}  + \tau^{0.86} .
}
Combining the above inequality with  \eqref{odd e1} yields the estimate $\D(\chi,1;y)\ll1$, thus completing the proof of our claim \eqref{xi=1} and, consequently, of Property (1).
\end{proof}


In order to prove Property (2) in Theorem \ref{structure thm}, we need an intermediate result: we set
\[
L_d(\chi)= \sum_{\substack{ n\ge 1 ,\, (n,d)=1 \\ P^+(n)\le y}} \frac{\chi(n)}{n} 
\]
for $d\in\N$, and
\[
\Delta=\sum_{p\le y} \frac{|1-\chi(p)|}{p-1}   \ll \D(\chi,1;y) \sqrt{\log\tau} \ll\sqrt{\log\tau}
\]
as in Lemma \ref{remove exp}, where we used \eqref{xi=1}. Additionally, we set
\[
E= \left(1+\frac{b}{\phi(b)}(e^\Delta-1) \right) \log\tau = \tau^{o(1)} \quad(\tau\to\infty),
\]
and we write $L_d(\chi)=L_d^{(1)}(\chi)+L_d^{(2)}(\chi)$, where
\[
L_d^{(1)}(\chi)=\sum_{\substack{ n\le N ,\, (n,d)=1 \\ P^+(n)\le y}} \frac{\chi(n)}{n}
\quad\text{and}\quad
L_d^{(2)}(\chi)= \sum_{\substack{ n>N,\, (n,d)=1 \\ P^+(n)\le y}} \frac{\chi(n)}{n}  .
\]
The intermediate result we need to show is that
\eq{final claim}{
m(\chi)
	= e^{-\gamma}\cdot \begin{cases}
		\ds\abs{L_1(\chi)} + O(E)  						&\mbox{if $b$ is not a prime power},\\
		\ds\abs{L_1(\chi)} + O\left(\sqrt{\tau E} \right)  					&\mbox{if $b=p^e,\ e\ge 2$}, \\
		\ds \frac{b}{\phi(b)} \abs{L_b(\chi)} + O\left( \sqrt{\tau E} \right)  		&\mbox{if $b$ is prime}.
	\end{cases}
}
Before proving this, we show how to use it to complete the proof of Theorem \ref{structure thm}.


\begin{proof}[Proof of Theorem \ref{structure thm} - Property (2)]
We argue as in the proof of Property (2) in Theorem \ref{structure thm even}. For  \eqref{M(chi) odd}, note that our assumption that $\chi\in\CC_q(\tau)$ implies, as $S_{y,q^{11/21}}(\chi)\le 1$, that, for all $d\le \tau^{10}$,
\als{
\sum_{\substack{P^+(n)> y \\ (n,d)=1 \\  n\le q^{11/21} }} \frac{\chi(n)}{n} 
	&=\sum_{g|d} \frac{\mu(g)\chi(g)}{g} \sum_{\substack{P^+(mg)> y \\  m\le q^{11/21}/g}} \frac{\chi(m)}{m} \\
	&=\sum_{g|d} \frac{\mu(g)\chi(g)}{g} \sum_{\substack{P^+(m)> y \\  m\le q^{11/21}}} \frac{\chi(m)}{m} 
		+ O\left(\sum_{g|d} \frac{\mu^2(g)\log g}{g} \right) \\
	&\ll \sum_{g|d} \frac{\mu^2(g)\log(2g)}{g} \ll \frac{d}{\phi(d)}\left(1+\sum_{p|d}\frac{\log p}{p}\right)
		\ll (\log\log\tau)^2 .
}
Therefore,
\als{
\sum_{(n,d)=1} \frac{\chi(n)}{n} 
	= \sum_{\substack{(n,d)=1 \\  n\le q^{11/21} }} \frac{\chi(n)}{n} +O(q^{-1/43}) 
	&= \sum_{\substack{P^+(n)\le y \\ (n,d)=1 \\  n\le q^{11/21} }} \frac{\chi(n)}{n} +O( (\log\log\tau)^2)  \\
	&= L_d(\chi) +O( (\log\log\tau)^2) 
}
for all $d\le \tau^{10}$, by the P\'olya--Vinogradov inequality, and Lemma \ref{smooth}. So  \eqref{M(chi) odd} follows from  \eqref{final claim} but with the weaker error term $O(E_1)$ in place of $O(\sqrt{\tau}\log\tau)$, where $E_1=\sqrt{E\tau}$ if $b=p^e$, and $E_1=E$ otherwise, so that $E_1\ll  \tau^{1/2+o(1)}$. We argue much like we did getting to to  \eqref{chi close to 1}: \ We have
\[
m(\chi) = 
			 \frac{b_0}{e^\gamma\phi(b_0)} 
			 \Bigg| \prod_{\substack{p\neq b_0\\ p\le y}} \left(1-\frac{\chi(p)}{p}\right)^{-1} 
				+  O\left(E_1\right)   \Bigg| 
				\ge e^{-\gamma} \prod_{p\le y} \left( 1 -\frac 1p \right)^{-1},
\]
so that 
\[
\Bigg| \prod_{\substack{p\neq b_0\\ p\le y}} \left(1-\frac{\chi(p)}{p}\right)^{-1} \left( 1 -\frac 1p \right)\Bigg|
	=1+ O(E_1/\tau) 
\]
and therefore
\[
\sum_{\substack{p\le y \\ p\neq b_0}} \frac{1-\Re(\chi(p))}{p} \ll \frac{E_1}\tau.
\]
This implies that
\[
\sum_{\substack{p\le y \\ p\neq b_0}} \frac{|1-\chi(p)|}{p-1} \ll \left( \frac{E_1\log \tau}\tau \right)^{1/2}
\]
In particular, $\Delta = \sum_{p\le y}|1-\chi(p)|/(p-1)\ll 1$, so that we always have that $E\ll \frac{b}{\phi(b)}\log \tau$ and $E_1\ll \sqrt{\tau\log \tau}$. This completes the proof of Property (2) in Theorem \ref{structure thm}.
\end{proof}


\begin{proof}[Proof of  \eqref{final claim}]
We separate two main cases.

\medskip

{\bf Case 1.}  Assume that $b=1$. Then  \eqref{odd e0} and the fact that $\chi$ is odd imply that
\eq{formula b=1}{
m(\chi)
	&= e^{-\gamma}\, \Bigg| \sum_{ P^+(n)\le y}  \frac{\chi(n)}{n}
		- \sum_{\substack{ n\le N \\ P^+(n)\le y}} \frac{\chi(n) }{n}  \Bigg| 
			+ O(\log \tau)\\
	&= e^{-\gamma} |L_1^{(2)}(\chi)| + O(\log \tau) .
}
Since $m(\chi)>\tau$, we find that
\[
e^\gamma\tau + O(\log \tau)
	\le  |L_1^{(2)}(\chi)|
		\le \sum_{\substack{n>N\\P^+(n)\le y}} \frac{1}{n}
		\le e^\gamma\tau   -  \sum_{\substack{ n\le N \\ P^+(n)\le y}} \frac{1}{n}  +O(\log\tau) .
\]
Consequently, $\sum_{n\le N,\,P^+(n)\le y} 1/n\ll \log \tau$, which in turn gives us that $L_1^{(2)}(\chi)=L_1(\chi)+O(\log \tau)$. Inserting this estimate into \eqref{formula b=1}, we deduce that
\als{
m(\chi)  = e^{-\gamma} |L_1(\chi)| + O(\log \tau),
}
that is to say, \eqref{final claim} holds (with a stronger error term).


\medskip

{\bf Case 2.} Assume that $1<b\le \tau^{10}$. Then \eqref{odd e0} and Lemma \ref{remove exp} implies that
\als{
m(\chi)
	&=  e^{-\gamma} \,\Bigg|\sum_{\substack{ n \le z \\ P^+(n)\le y}} \frac{\chi(n)}{n}
		-   \sum_{\substack{ n \le N \\ P^+(n)\le y}} \frac{\chi(n)e(na/b)}{n}  \Bigg| + O(\log \tau)\\
	&=e^{-\gamma}\,  \abs{L_1(\chi)
		-   L_b^{(1)}(\chi)  \frac{ \chi(b)}{\phi(b)} \prod_{p|b} (1-\overline{\chi}(p)) \left(1-\frac{\chi(p)}{p}\right)^{-1}}
		  + O( E) .
}
Now Lemma \ref{change gcd}, applied with $f(n)=\chi(n){\bf 1}_{P^+(n)\le y}$ and $b$ in place of $a$, implies that
\eq{change gcd 2}{
L_1^{(1)}(\chi)
	&= L_b^{(1)}(\chi)   \prod_{p|b }\left(1- \frac{\chi(p)}{p} \right)^{-1}
		 	+ O\bigg( \frac{b}{\phi(b)} \sum_{p|b} \frac{\log p}{p} \bigg) .
}
So, if we set
\eq{Cb def}{
C_b=
	\bigg( 1 - \frac{ \chi(b)}{\phi(b)} \prod_{p|b} (1-\overline{\chi}(p)) \bigg) 
		 \prod_{p|b }\left(1- \frac{\chi(p)}{p} \right)^{-1},
}
then we have that
\eq{fund formula}{
m(\chi)
	& = e^{-\gamma} \abs{L_1^{(2)}(\chi) + C_b L_b^{(1)}(\chi) } +  O( E)    \\
	& =e^{-\gamma}\, \bigg| L_1(\chi)
		-   L_1^{(1)}(\chi)  \frac{ \chi(b)}{\phi(b)} \prod_{p|b} (1-\overline{\chi}(p)) \bigg|
		  + O(E) .
}

Our goal is to show, using formula \eqref{fund formula}, that $\chi(c)\approx 1$ for all $c|b$. Indeed, if this were true, then $C_b\approx b/\phi(b) \approx \prod_{p|b} (1-\chi(p)/p)^{-1}$. We start by observing that 
\eq{Cb formula}{
C_b = \sum_{b=cd} \frac{\chi(d)}{d} \left( 1 - \frac{\mu(c)}{\phi(c)}\right) \prod_{p|b, \ p\nmid c}\left(1- \frac{\chi(p)}{p} \right)^{-1} .
}
Indeed, reversing the last steps of the proof of Lemma \ref{remove exp}, we find that
\[
\frac{\chi(b)}{\phi(b)} \prod_{p|b }\left(1- \frac{\chi(p)}{p} \right)^{-1}   (1-\overline{\chi}(p)) 
	= \sum_{b=cd} \frac{\chi(d)}{d}  \frac{\mu(c)}{\phi(c)} \prod_{p|b, \ p\nmid c}\left(1- \frac{\chi(p)}{p} \right)^{-1} .
\]
Moreover,
\als{
\prod_{p|b }\left(1- \frac{\chi(p)}{p} \right)^{-1}
 =  \sum_{p|n\ \Rightarrow\ p|b} \frac{\chi(n)}{n} 
 = \sum_{d|b} \sum_{\substack{p|n\ \Rightarrow\ p|b \\ (n,b)=d}} \frac{\chi(n)}{n}  
 &=\sum_{b=cd} \frac{\chi(d)}{d} \sum_{\substack{p|m\ \Rightarrow\ p|b \\ (m,c)=1}} \frac{\chi(m)}{m} \\
 &=\sum_{b=cd} \frac{\chi(d)}{d} \prod_{p|b,\,p\nmid c}\left(1-\frac{\chi(p)}{p}\right)^{-1} .
}
Combining the two above relations, we obtain \eqref{Cb formula}. 

Using \eqref{Cb formula} and the inequality $|1- \chi(p)/p |^{-1}=p/|p-\chi(p)| \le \frac p{p-1}$, we deduce that
\eq{C_b-ub}{
|C_b| \le \sum_{b=cd} \frac{1}{d} \left( 1 - \frac{\mu(c)}{\phi(c)}\right)
\frac{b/\phi(b)}{c/\phi(c)} = \frac{b}{\phi(b)}
}
for $b>1$. Inserting this into \eqref{fund formula}, and since 
\[
m(\chi)  >\tau =e^{-\gamma} \sum_{P^+(n)\le y} \frac{1}{n} +O(1),
\]
we obtain that
\eq{L1L2 ineq}{
\sum_{P^+(n)\le y}\frac{1}{n}
	&\le |L_1^{(2)}(\chi)| + |C_b| \cdot |L_b^{(1)}(\chi)|  +  O( E ) \\
	&\le |L_1^{(2)}(\chi)| + \frac{b}{\phi(b)} |L_b^{(1)}(\chi)|  +  O( E ) . 
}
We also have
\[
|L_b^{(1)}(\chi)|
	\le \sum_{ \substack{ n\le N,\,(n,b)=1 \\ P^+(n)\le y}} \frac{1}{n} 	
	 = \frac{\phi(b)}{b} \sum_{ \substack{ m\le N\\ P^+(m)\le y}} \frac{1}{m} 
	 	+ O\left(\frac{b}{\phi(b)}\sum_{p|b}\frac{\log p}{p}\right) 
\]
by Lemma \ref{change gcd}. Together with \eqref{L1L2 ineq}, this implies that
\[
|L_1^{(2)}(\chi)| \ge \sum_{ \substack{ N<n\le z,  \\ P^+(n)\le y}} \frac{1}{n} +  O( E)  
	=: S^{(2)} + O( E)  .
\]
Since this holds   as an upper bound too, we deduce that
\eq{L2}{
|L_1^{(2)}(\chi)| = S^{(2)} + O( E ) .
}
Substituting \eqref{L2} into \eqref{L1L2 ineq} we obtain
\eq{L1}{
\frac{b}{\phi(b)}|L_b^{(1)}(\chi)|
	= \sum_{ \substack{ n\le N \\ P^+(n)\le y}} \frac{1}{n} +  O( E ) 
	&=: S^{(1)} + O(E) 
}
and then comparing \eqref{L1L2 ineq} with the displayed line above,
\eq{Cb}{
\frac{\phi(b)}{b} |C_b|= 1 + O(\epsilon_1) ,
}
where we have set $\epsilon_j := E/S^{(j)}$ for $j\in\{1,2\}$. 

\medskip

{\bf Case 2a.} Assume that $b$ has at least two distinct prime factors. If $b\neq 6$, then we can find two distinct primes $p$ and $q$ such that $b=p^eq^fb'$ with $p,q\nmid b'$, $p^eq^f\neq6$, and $\phi(b')\ge2^{\omega(b')}$ (which only fails for $b'=2$ or $6$). Therefore, taking absolute values in \eqref{Cb def}, we find that
\[
\frac{\phi(b)}b |C_b|
	\le  \left( 1 + \frac{|1-\chi(p)|}{\phi(p^e)}\frac{|1-\chi(q)|}{\phi(q^f)} \right)
		\cdot  \left(1- \frac{1}{p} \right) \left|1- \frac{\chi(p)}{p} \right|^{-1} 
		\cdot  \left(1- \frac{1}{q} \right) \left|1- \frac{\chi(q)}{q} \right|^{-1} .
\]
Note that
\eq{euler factor ineq}{
\left(1- \frac{1}{\ell} \right) \left|1- \frac{\chi(\ell)}{\ell} \right|^{-1}
	= \exp\left( -\sum_k \frac{ \text{Re}(1-\chi(\ell^k))} {k \ell^k} \right)
	\le  \exp\left( - \frac{ \text{Re}(1-\chi(\ell))} {\ell} \right) .
}
Therefore
\[
\epsilon_1 \gg \frac{ \text{Re}(1-\chi(p))} {p} + \frac{ \text{Re}(1-\chi(q))} {q} 
	- \log \left( 1  +     \frac{|1- \chi(p)|}{\phi(p^e)} \cdot   \frac{|1- \chi(q)|}{\phi(q^f)} \right)  .
\]
Now $\phi(p^eq^f)\ge (8/7)\sqrt{pq}$ when $p^eq^f\ne 6$ so that, for $\alpha=\text{Re}(1-\chi(p))$ and $\beta=\text{Re}(1-\chi(q))$,
\[
\log \left( 1 +     \frac{|1- \chi(p)|}{\phi(p^e)} \cdot   \frac{|1- \chi(q)|}{\phi(q^f)} \right)
	\le \frac{|1- \chi(p)||1- \chi(q)|}{\phi(p^eq^f)}
	\le \frac{2\sqrt{\alpha\beta}}{(8/7)\sqrt{pq}}
	\le  \frac{7}{8} \left( \frac \alpha p + \frac \beta q   \right)
\]
since $|1- \chi(p)|^2=2\alpha$ and $|1- \chi(q)|^2=2\beta$. We therefore deduce that
\[
\frac{ \text{Re}(1-\chi(p))} {p} , \frac{ \text{Re}(1-\chi(q))} {q} \ll \epsilon_1 .
\]
Now $|1-\chi(p)|^2=2\text{Re}(1-\chi(p))$, and so 
\[
|1-\chi(p)| \cdot |1-\chi(q)| \ll \sqrt{pq}\ \epsilon_1  \le \sqrt{b}\ \epsilon_1 ,
\] 
and therefore, by \eqref{Cb def},
\[
C_b =  (1+O(\epsilon_1 ))    \prod_{p|b }\left(1- \frac{\chi(p)}{p} \right)^{-1} .
\]
Substituting this into \eqref{fund formula}, and using \eqref{change gcd 2}, yields \eqref{final claim} in this case, except if $b=6$.

When $b=6$, we use relations \eqref{Cb formula} and \eqref{euler factor ineq} to deduce that 
\[
\frac{\phi(b)}{b}|C_b| \le
	\frac{1}{3}\left( \frac{1}{2}+\frac{3}{2} \exp\left\{-\frac{\Re(1-\chi(2))}{2}\right\} 	
		+ \exp\left\{ - \frac{\Re(1-\chi(3))}{3}\right\}\right) .
\]
Since the left hand side is $\ge1+O(\epsilon_1)$ and $\Re(1-\chi(2)),\Re(1-\chi(3))\ge0$, we deduce that $\Re(1-\chi(2)),\Re(1-\chi(3))\ll\epsilon_1$, as before. Proceeding now as in the case $b\neq 6$ completes the proof \eqref{final claim} when $b=6$ too.



\medskip

{\bf Case 2b.} Now suppose that $b=p^e$ is a prime power with $e\ge 2$. Using \eqref{Cb formula}, we see that
\[
\frac{\phi(b)}{b} C_b = \left(1- \frac{1}{p} \right)  \sum_{j=0}^{e-2}  \frac{ \chi(p^j)}{p^j} 
	+ \frac{ \chi(p^{e-1})}{p^{e-1}} .
\]
Define $\lambda$ so that $|\lambda|=1$ and $\lambda C_b=|C_b|$. Then
\[
\left(1- \frac{1}{p} \right)  \sum_{j=0}^{e-2}  \frac{1- \lambda\chi(p^j)}{p^j} 
+ \frac{1- \lambda\chi(p^{e-1})}{p^{e-1}} =  1- \frac{\phi(b)}{b} |C_b|  
\]
which is $\ge 0$ and $O(\epsilon_1)$ by \eqref{C_b-ub} and \eqref{Cb}. Taking real parts, and noting that each
Re$(1- \lambda\chi(p^j))\ge 0$, we deduce that Re$(1- \lambda),$ Re$(1- \lambda\chi(p))/p=O(\epsilon_1)$
by considering the $j=0$ and $1$ terms. Hence 
\[
|1-\chi(p)| = |\lambda(1-\chi(p))|\le |1- \lambda|+|1- \lambda\chi(p)|\ll \sqrt{\epsilon_1p} 
\]
and therefore 
\[
C_b = (1+O(\epsilon_1^{1/2} ) )   \left(1- \frac{\chi(p)}{p} \right)^{-1}   .
\]
Substituting this into \eqref{fund formula}, and using \eqref{change gcd 2}, yields the result in this case.


\medskip

{\bf Case 2c.} Now suppose that $b=p$ is a prime, so that $C_p=p/(p-1)$. Hence we cannot use \eqref{Cb} to gain information on $\chi(p)$. Now, using  Lemma \ref{change gcd}, we have
\[
|L_1^{(2)}(\chi)| = \abs{ \frac{p}{p-\chi(p)} L_p^{(2)}(\chi) } + O\left(\frac{\log p}{p}\right)
	\le \frac{p}{ |p-\chi(p)| } \frac{p-1}{p} S^{(2)} + O\left(\frac{\log p}{p}\right) .
\]
Combining this with \eqref{L2}   yields that $(p-1)/|p-\chi(p)|\ge1+O(\epsilon_2)$; that is
\[
\abs{1+\frac{1-\chi(p)}{p-1} } \le 1+O(\epsilon_2),
\]
Taking real parts, we deduce that
\[
1\le 1+ \frac{\Re(1-\chi(p))}{p-1}  \le 1+O(\epsilon_2) ,
\]
with the lower bound being trivial. This implies that
\[
|1-\chi(p)| \ll \sqrt{\epsilon_2p} .
\]
Using Lemma \ref{change gcd}, we then conclude that
\als{
L_1^{(2)}(\chi) + C_b L_b^{(1)}(\chi)
	&= L_1^{(2)}(\chi) + \frac{p}{p-1} L_p^{(1)}(\chi) \\
	&= \frac{p}{p-\chi(p)} L_p^{(2)}(\chi) +  \frac{p}{p-1} L_p^{(1)}(\chi) + O\left(\frac{\log p}{p}\right)  \\
	&= \frac{p}{p-1} L_p^{(2)}(\chi) + \frac{p}{p-1} L_p^{(1)}(\chi) +  O\left(\sqrt{\tau E}\right) ,
}
which, together with \eqref{fund formula}, completes the proof of \eqref{final claim} in this last case too. 
\end{proof}

\begin{rk}
Note that $\epsilon_2$ can be quite big if $N$ is small and if $\alpha$ is not very close to $a/b$. So  we cannot say more than the last formula  without more information on the location of $\alpha$.
\end{rk}


We conclude the paper with the proof of Theorem \ref{partial sums odd}.


\begin{proof}[Proof of Theorem \ref{partial sums odd}]
Let $y=e^{\tau+c}$ for a large enough constant $c$, as above, and define $w$ via the relation $|\beta-k/\ell|=1/(\ell y^w)$. Note that $w=u(1+O(1/\tau))$. So we may show the theorem with $w$ in place of $u$. Arguing as at the beginning of this section, and applying Lemma \ref{centering alpha} with $z=\infty$, we find that
\als{
\frac{\pi i}{\CG(\chi)} \sum_{n\le\beta q} \chi(n) 
	&= \frac{1}{2} \sum_{\substack{n\in\Z,\, n\neq 0 \\ P^+(n)\le y}} 
	\frac{\bar{\chi}(n)}{n} - \frac{1}{2}
	 \sum_{\substack{P^+(n)\le y \\ 1\le|n|\le y^w}}\frac{\bar{\chi}(n)e(-kn/\ell)}{n}
		+ O(\log \tau) \\
	&= \sum_{P^+(n)\le y}\frac{\bar{\chi}(n)}{n} 
		-  \sum_{\substack{P^+(n)\le y \\ n\le y^w}}\frac{\bar{\chi}(n)\cos(2\pi kn/\ell)}{n}
		+ O(\log\tau) .
}
Note that \eqref{1-pretentious} and the argument leading to \eqref{d(chi,1) main thm} imply that
\eq{d(chi,1)}{
\sum_{\substack{P^+(n)\le y \\ (n,b_0)=1}} \frac{|1-\chi(n)|}{n} 
		\ll (\tau\log\tau)^{3/4}
}
for all $\epsilon>0$. Hence $\chi$ is $1$-pretentious.

Now suppose that  $b_0=1$. Substituting \eqref{d(chi,1)} in our formula for $\sum_{n\le\beta q}\chi(n)$, we obtain 
\[
\frac{\pi i}{\CG(\chi)} \sum_{n\le\beta q} \chi(n) = \sum_{P^+(n)\le y}\frac{1}{n} 
		-  \sum_{\substack{P^+(n)\le y \\ n\le y^w}}\frac{\cos(2\pi kn/\ell)}{n}
		+ O( (\tau\log\tau)^{3/4}) 
\]
If $\ell>1$, then we bound the second sum using Corollary \ref{remove exp cor}. If $\ell=1$, then the result follows from Lemma \ref{smooth2}. This concludes the proof of part (a).

Finally, assume that $b$ is a prime number so that $b_0=b$. Writing $n=b^jm$ with $(m,b)=1$, we find that
\als{
\frac{\pi i}{\CG(\chi)}\sum_{n\le\beta q} \chi(n) 
	&= \sum_{j=0}^\infty \frac{\bar{\chi}(b^j)}{b^j}
		\left(\sum_{\substack{P^+(m)\le y \\ (m,b)=1}}\frac{\bar{\chi}(m)}{m} 
			- \sum_{\substack{P^+(m)\le y \\ m\le y^w,\ (m,b)=1}}\frac{\bar{\chi}(m)\cos(2\pi b^jk m / \ell ) }{m} \right)
				+O(\log\tau) \\ 
	&=  \sum_{j=0}^\infty  \frac{\bar{\chi}(b^j)}{b^j}
		\left(\sum_{\substack{P^+(m)\le y \\ (m,b)=1}}\frac{1}{m} 
			- \sum_{\substack{P^+(m)\le y \\ m\le y^w,\ (m,b)=1}}\frac{\cos(2\pi b^jk m / \ell ) }{m} \right)
			+ O( (\tau\log\tau)^{3/4}) 
}
by \eqref{d(chi,1)} and the trivial estimate $\sum_{y^w/b^j<m\le y^w}1/m\ll j\log b\ll j\log \tau$. If $\ell=1$, then we have that 
\als{
\frac{\pi i}{\CG(\chi)}\sum_{n\le\beta q} \chi(n) 
	&= \sum_{j=0}^\infty  \frac{\bar{\chi}(b^j)}{b^j} \sum_{\substack{P^+(m)\le y \\ m>y^w,\ (m,b)=1}}\frac{1}{m} 
			 + O(\tau^{3/4}\log\tau ) \\
	&=e^\gamma \tau (1-P(w)) \cdot \frac{1-1/b}{1-\bar{\chi}(b)/b}
		+ O( (\tau\log\tau)^{3/4}) 
}
by Lemmas \ref{change gcd} and \ref{smooth2}. Next, if $\ell\neq b^v$ for all $v\ge0$, then $b^jk/\ell\notin\Z$ for all $j\ge0$. So Corollary \ref{remove exp cor}  implies that
\[
\frac{\pi i}{\CG(\chi)}\sum_{n\le\beta q} \chi(n) 
	= \sum_{j=0}^\infty  \frac{\bar{\chi}(b^j)}{b^j}
		\sum_{\substack{P^+(m)\le y \\ (m,b)=1}}  \frac{1}{m} 
			+ O(\tau^{3/4}\log\tau )
	= \frac{1-1/b}{1-\bar{\chi}(b)/b} e^\gamma\tau
		+ O( (\tau\log\tau)^{3/4}) 
\]
as claimed. Finally, assume that $\ell=b^v$ for some $v\ge1$. The terms with $j\ge v$ contribute
\[
\sum_{j=v}^\infty  \frac{\bar{\chi}(b^j)}{b^j} \sum_{\substack{P^+(m)\le y \\ m>y^w,\ (m,b)=1}}\frac{1}{m}  
	 = e^\gamma \tau (1-P(w)) \cdot \frac{1-1/b}{1-\bar{\chi}(b)/b} \cdot \frac{\bar{\chi}(b^v)}{b^v}
			 + O(1)
\]
by Lemmas \ref{change gcd} and \ref{smooth2}. When $j\le v-1$, we apply Corollary \ref{remove exp cor}. The total contribution of those terms is
\als{
	&\sum_{j=0}^{v-1}  \frac{\bar{\chi}(b^j)}{b^j} \sum_{\substack{P^+(m)\le y \\ (m,b)=1}} \frac{1}{m}
		+ \frac{\bar{\chi}(b^{v-1})}{b^{v-1}} \frac{1}{\phi(b)}
		 \sum_{\substack{P^+(m)\le y \\ m\le y^w,\, (m,b)=1}} \frac{1}{m}  
			 + O(\log\tau) \\
	&\quad= e^\gamma\tau \left(1-\frac{1}{b}\right)  \left( \frac{1-\bar{\chi}(b^{v})/b^{v}}{1-\bar{\chi}(b)/b}
		+  \frac{\bar{\chi}(b^{v-1})}{b^{v-1}} \cdot  \frac{P(w)}{b-1} \right)
		 + O(\log\tau)
}
by Lemmas \ref{change gcd} and \ref{smooth2}. Putting the above estimates together, yields the estimate
\[
\frac{e^{-\gamma}\pi i}{\CG(\chi)}\sum_{n\le\beta q} \chi(n) 
	= \tau \cdot \frac{1-\bar{\chi}(b)/b}{1-1/b} \cdot\left( 1 + P(u) \left(\frac{\bar{\chi}(b)}{b}\right)^{v-1} \frac{1-\bar{\chi}(b)}{b-1} + O(\epsilon)\right)
\]
with $\epsilon = (\log \tau)^{3/4}/\tau^{1/4}$. To finish the proof of the theorem, we specialize the above formula when $\beta=\alpha$, in which case $k/\ell=a/b$, so that $v=1$. Then the modulus of the left hand side equals $m(\chi)$, which is $>\tau$ by assumption. On the other hand, if we set $z=(1-\bar{\chi}(b))/(b-1)$, then
\[
\frac{1-\bar{\chi}(b)/b}{1-1/b} \cdot \left( 1 + P(u_0) \frac{1-\bar{\chi}(b)}{b-1}\right) 
	= \frac{1+P(u_0)z}{1+z} .
\]
Consequently,
\[
\abs{\frac{1+P(u_0)z}{1+z}} \ge 1+O(\epsilon) .
\]
Since $0\le P(u_0)\le1$ and $2\Re(z)=|z|^2\ge0$, we have that
\[
\abs{\frac{1+P(u_0)z}{1+z}}^2 = 1 - \frac{|z|^2(1-P(u_0)^2)+2(1-P(u_0))\Re(z)}{|1+z|^2}
	\le 1-\frac{(1-P(u_0))|z|^2}{|1+z|^2} .
\]
Putting together the above inequalities proves the last claim of part (b). Hence the proof of Theorem \ref{partial sums odd} is now complete.
\end{proof}

\newpage

\section{Additional tables}\label{moredata}



\begin{table}[!htbp]
\begin{tabular}{ccccccccccccc}
 & \multicolumn{4}{c}{even} &&\multicolumn{4}{c}{odd} &&\multicolumn{2}{c}{all} \\
\cmidrule{2-5} \cmidrule{7-10} \cmidrule{12-13}
$q$ & min & mean & $.9999$ & max&& min & mean & $.9999$ & max&& mean & $.9999$\\
10000019 & 0.728 & 0.994 & 1.74 & 2.11& & 0.788 & 1.51 & 3.35 & 3.74& & 1.25 & 3.25\\
10000079 & 0.725 & 0.994 & 1.75 & 2.05& & 0.795 & 1.51 & 3.35 & 3.81& & 1.25 & 3.26\\
10000103 & 0.725 & 0.994 & 1.75 & 2& & 0.793 & 1.51 & 3.34 & 3.83& & 1.25 & 3.25\\
10000121 & 0.724 & 0.994 & 1.75 & 2.02& & 0.793 & 1.51 & 3.34 & 3.78& & 1.25 & 3.26\\
10000139 & 0.726 & 0.994 & 1.75 & 2.01& & 0.797 & 1.51 & 3.35 & 3.74& & 1.25 & 3.25\\
10000141 & 0.719 & 0.994 & 1.74 & 2.02& & 0.79 & 1.51 & 3.33 & 3.82& & 1.25 & 3.25\\
10000169 & 0.721 & 0.994 & 1.75 & 2.06& & 0.788 & 1.51 & 3.34 & 3.73& & 1.25 & 3.25\\
10000189 & 0.709 & 0.994 & 1.75 & 2.01& & 0.793 & 1.51 & 3.35 & 3.71& & 1.25 & 3.25\\
10000223 & 0.73 & 0.994 & 1.75 & 2& & 0.783 & 1.51 & 3.34 & 3.81& & 1.25 & 3.25\\
10000229 & 0.723 & 0.994 & 1.74 & 2.04& & 0.784 & 1.51 & 3.33 & 3.79& & 1.25 & 3.25\\
10000247 & 0.716 & 0.994 & 1.74 & 2.05& & 0.794 & 1.51 & 3.34 & 3.71& & 1.25 & 3.25\\
10000253 & 0.724 & 0.994 & 1.75 & 2.05& & 0.783 & 1.51 & 3.34 & 3.75& & 1.25 & 3.25\\
11000027 & 0.724 & 0.994 & 1.75 & 2.03& & 0.797 & 1.51 & 3.34 & 3.72& & 1.25 & 3.25\\
11000053 & 0.733 & 0.994 & 1.75 & 2.07& & 0.781 & 1.51 & 3.34 & 3.81& & 1.25 & 3.25\\
11000057 & 0.707 & 0.994 & 1.75 & 2.03& & 0.79 & 1.51 & 3.34 & 3.74& & 1.25 & 3.25\\
11000081 & 0.724 & 0.994 & 1.75 & 2.01& & 0.789 & 1.51 & 3.33 & 3.77& & 1.25 & 3.25\\
11000083 & 0.724 & 0.994 & 1.75 & 2.05& & 0.799 & 1.51 & 3.34 & 3.84& & 1.25 & 3.25\\
11000089 & 0.728 & 0.994 & 1.75 & 2.05& & 0.794 & 1.51 & 3.33 & 3.77& & 1.25 & 3.26\\
11000111 & 0.724 & 0.994 & 1.75 & 2.03& & 0.796 & 1.51 & 3.33 & 3.72& & 1.25 & 3.26\\
11000113 & 0.719 & 0.994 & 1.75 & 2.01& & 0.781 & 1.51 & 3.34 & 3.73& & 1.25 & 3.25\\
11000149 & 0.731 & 0.994 & 1.75 & 2.03& & 0.805 & 1.51 & 3.34 & 3.72& & 1.25 & 3.25\\
11000159 & 0.722 & 0.994 & 1.75 & 2& & 0.797 & 1.51 & 3.33 & 3.83& & 1.25 & 3.25\\
11000179 & 0.724 & 0.994 & 1.74 & 2.03& & 0.796 & 1.51 & 3.35 & 3.86& & 1.25 & 3.26\\
11000189 & 0.728 & 0.994 & 1.75 & 2.1& & 0.794 & 1.51 & 3.34 & 3.68& & 1.25 & 3.26\\
12000017 & 0.723 & 0.994 & 1.74 & 2.08& & 0.8 & 1.51 & 3.34 & 3.8& & 1.25 & 3.26\\
12000029 & 0.72 & 0.994 & 1.74 & 2.06& & 0.791 & 1.51 & 3.33 & 3.84& & 1.25 & 3.26\\
12000073 & 0.735 & 0.994 & 1.75 & 2.05& & 0.794 & 1.51 & 3.34 & 3.9& & 1.25 & 3.26\\
12000091 & 0.728 & 0.994 & 1.75 & 2.02& & 0.794 & 1.51 & 3.35 & 3.73& & 1.25 & 3.26\\
12000097 & 0.719 & 0.994 & 1.75 & 2.08& & 0.788 & 1.51 & 3.34 & 3.75& & 1.25 & 3.25\\
12000127 & 0.724 & 0.994 & 1.75 & 2.09& & 0.794 & 1.51 & 3.34 & 3.71& & 1.25 & 3.25\\
12000133 & 0.727 & 0.994 & 1.75 & 2.11& & 0.785 & 1.51 & 3.34 & 3.73& & 1.25 & 3.25\\
12000239 & 0.715 & 0.994 & 1.75 & 2.02& & 0.797 & 1.51 & 3.34 & 3.8& & 1.25 & 3.25\\
12000253 & 0.713 & 0.994 & 1.75 & 2.02& & 0.786 & 1.51 & 3.34 & 3.76& & 1.25 & 3.26\\
    \end{tabular}
\caption{The minimum, maximum, mean, and $.9999$-quantile for $m(\chi)$ over even,
    odd, and all nontrivial $\chi$ mod $q$ for some selected values of $q$.}
\label{minmax}
    \end{table}


\bibliographystyle{alpha}

\end{document}